\numberwithin{equation}{section}
\renewcommand{\phi}{\varphi}
\newcommand{\WR}{\mathcal{WR}}
\renewcommand{\d }{{\rm d} }
\renewcommand{\dh}{\widehat \d}
\newcommand{\dl }{\dh_{H_i} }
\newcommand{\h}{\hookrightarrow_h}
\newcommand{\G }{\Gamma (G, \mathcal A)}
\newcommand{\C }{\mathcal C}
\newcommand{\Hl }{\{ H_i \} _{i\in I } }
\newcommand{\e }{\varepsilon }
\renewcommand{\P }{\mathcal P}
\renewcommand{\kappa }{\varkappa}
\newcommand{\lab}{{\bf Lab}}
\renewcommand{\ll }{\langle\hspace{-.7mm}\langle }
\newcommand{\rr }{\rangle\hspace{-.7mm}\rangle }
\renewcommand{\wr}{{\,\rm wr\,}}
\newcommand{\NN}{\mathbb N}
\newcommand{\Ast}{\mathop{\scalebox{1.5}{\raisebox{-0.2ex}{$\ast$}}}}
\DeclareMathOperator{\Ker}{Ker}
\newcommand{\M}{\mathcal M}
\newcommand{\Nn}{\mathcal N}
\newcommand{\ra}{\rightarrow}
\newcommand{\ca}{\curvearrowright}
\newcommand{\A}{\mathcal A}
\newcommand{\B}{\mathcal B}
\newcommand{\Q}{\mathcal Q}
\newcommand{\R}{\mathcal R}
\newcommand{\W}{\mathcal W}
\newcommand{\Ss}{\mathcal S}
\newcommand{\sU}{\mathscr U}
\newcommand{\sN}{\mathscr N}
\newcommand{\sR}{\mathscr R}
\newcommand{\sF}{\mathscr F}
\newcommand{\ZZ}{\mathbb Z}
\newtheorem{thm}{Theorem}[section]
\newtheorem*{thm*}{Theorem}
\newtheorem{cor}[thm]{Corollary}
\newtheorem{lem}[thm]{Lemma}
\newtheorem{claim}[thm]{Claim}
\newtheorem{prop}[thm]{Proposition}
\newtheorem{prob}[thm]{Problem}
\newtheorem{conj}[thm]{Conjecture}
\theoremstyle{definition}
\newtheorem{defn}[thm]{Definition}
\newtheorem{conv}[thm]{Convention}
\newtheorem{fact}[thm]{Fact}
\theoremstyle{remark}
\newtheorem{rem}[thm]{Remark}
\newtheorem{ex}[thm]{Example}
\let\OLDthebibliography\thebibliography
\renewcommand\thebibliography[1]{
  \OLDthebibliography{#1}
  \setlength{\parskip}{1.5pt}
  \setlength{\itemsep}{1.5pt plus 0.3ex}
}
\begin{document}

\title{\vspace*{-8mm}Wreath-like products of groups and their von Neumann algebras II: Outer automorphisms}

\author{Ionu\c t Chifan, Adrian Ioana, Denis Osin and Bin Sun}

\date{}

\maketitle

\vspace*{-7mm}

\begin{abstract}
Given a countable group $G$, let ${\rm L}(G)$ denote its von Neumann algebra. For a wide class of ICC groups with Kazhdan's property (T), we confirm  V.F.R. Jones' conjecture asserting that $Out(\text{L}(G))\cong Char (G)\rtimes Out(G)$. As an application, we show that, for every countable group $Q$, there exists an ICC group $G$ with property (T) such that $Out(\text{L}(G))\cong Q$. 
\end{abstract}

\tableofcontents


\section{Introduction}


A \emph{ von Neumann algebra} on a Hilbert space $\mathcal H$ is a $\ast$-subalgebra of the algebra of bounded linear operators $B(\mathcal H)$ that contains the identity operator and is closed in the weak (equivalently, strong) operator topology. Every countable discrete group $G$ gives rise to a von Neumann algebra ${\rm L}(G)$, which is defined as the weak operator closure of the span of the left regular representation of $G$ in  $B(\ell^2 (G))$. This construction was introduced by Murray and von Neumann in \cite{MvN36} and has served as an important source of examples ever since. 

General von Neumann algebras can be canonically decomposed into building blocks called factors \cite{vN49}. Recall that a von Neumann algebra $\M$ is a \textit{factor} if its \textit{center} 
$$
Z(\M)=\{ a\in M\mid ax=xa \; \text{ for all } x\in \M\}
$$
is trivial, i.e., $Z(\M)=\mathbb C \cdot 1$. The von Neumann algebra of a countable group $G$ is a factor if and only if all nontrivial conjugacy classes of $G$ are infinite (abbreviated \emph{ICC})  \cite{MvN43}; moreover, in this case L$(G)$ admits a finite positive trace, i.e., is a factor of type II$_1$. 

Understanding rigidity properties of group II$_1$ factors is one of the key challenges in the field of operator algebras. In this paper, we focus on computing the symmetries of von Neumann algebras of groups with Kazhdan's property (T). We begin by reviewing the basic definitions and some previously known results.

Let $\M$ be a $\ast$-algebra. An automorphism $\alpha$ of the underlying associative algebra over $\mathbb C$ is said to be an \emph{automorphism of $\M$} if it is $\ast$\textit{-preserving}, i.e., satisfies $\alpha (x^\ast)=\alpha(x)^\ast$ for all  $x\in\M$. If $\M$ is unital, automorphisms of the form $x\mapsto uxu^{-1}$, where $u$ is an invertible element of $\M$, are called \textit{inner}. Note that such a map is $\ast$-preserving if and only if $u^*u$ belongs to the center of $\M$.
Furthermore, if $\M$ is a von Neumann algebra (or, more generally, a C$^*$-algebra), the polar decomposition theorem and the existence of square roots of positive elements imply that every inner automorphism is induced by conjugation by a unitary element. 

As usual, the group of all (respectively, inner) automorphisms of $\M$ is denoted by $Aut(\M)$ (respectively, $Inn(\M)$), and the \emph{outer automorphism group} of $\M$ is defined to be the quotient $$Out(\M)=Aut(\M)/Inn(\M).$$

\smallskip

\begin{ex}\label{ex0}
The group ring $\mathbb CG$ of a group $G$ is a $\ast$-algebra,
where the operation $\ast$ is uniquely determined by the rule ${ g}^\ast=g^{-1}$, for all
$g\in G$. Let 
$$
\mathbb T=\{z\in \mathbb C\mid |z|=1\}\;\;\;\;\; {\rm and}\;\;\;\;\; Char(G)=Hom(G, \mathbb T).
$$
For any $\delta \in Aut(G)$ and $\rho\in Char(G)$, the map $g\mapsto \rho(g) \delta(g)$ uniquely extends to an automorphism of the $\ast$-algebra $\mathbb CG$. This yields a homomorphism $Char(G)\rtimes Out(G)\rightarrow Out(\mathbb CG)$. In case $G$ is torsion-free and satisfies the Kaplansky unit conjecture (i.e., the only units of $\mathbb CG$ are scalar multiples of elements of $G$), one can show this homomorphism is actually an isomorphism. The crucial point in proving surjectivity is the observation that every automorphism of $\mathbb C G$ preserves its set of units.
\end{ex}

\smallskip

Replacing $\mathbb CG$ with the von Neumann algebra $\text{L}(G)$ of $G$, we still get a well-defined homomorphism 
\begin{equation}\label{Jmap}
Char(G)\rtimes Out(G)\rightarrow Out(\text{L}(G)),
\end{equation}
which is neither injective, nor surjective in general.  If $G$ satisfies the ICC condition, the injectivity of this map is not difficult to prove (see Proposition \ref{inj} and Remark \ref{RemFin}). However, proving surjectivity would require additional assumptions, since the rigidity of $\mathbb CG$ caused by the Kaplansky unit conjecture has no analog in  ${\rm L}(G)$. Indeed, being a finite von Neumann algebra, 
${\rm L}(G)$ has a norm dense  set of invertible elements.

In fact, ${\rm L}(G)$ often possesses a huge group of outer automorphisms.  For instance, Connes' work \cite{Co76} implies that the von Neumann algebra of any non-trivial, ICC, amenable group $G$ is isomorphic to the Murray and von Neumann's hyperfinite II$_1$ factor $\mathcal R$, 
whose outer automorphism group contains every second countable locally compact group as a subgroup. In particular, $Out({\rm L}(G))$ has the cardinality of the continuum. 

In contrast, Connes showed that von Neumann algebras of groups with property (T) are much more rigid.

\begin{thm}[Connes, {\cite{Co80}}]\label{rigidOut}
For any ICC group $G$ with Kazhdan's property (T), $Out({\rm L}(G))$ is countable.
\end{thm}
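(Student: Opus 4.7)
The plan is to follow Connes' strategy of exhibiting $Inn(\M)$ as an \emph{open} subgroup of a Polish group, where throughout $\M := {\rm L}(G)$. The ICC hypothesis makes $\M$ a II$_1$ factor with canonical tracial state $\tau$; set $\|x\|_2 := \tau(x^*x)^{1/2}$. First, I would equip $Aut(\M)$ with its standard Polish topology, under which $\alpha_n \to \alpha$ iff $\|\alpha_n(x) - \alpha(x)\|_2 \to 0$ and $\|\alpha_n^{-1}(x) - \alpha^{-1}(x)\|_2 \to 0$ for every $x \in \M$; separability of the predual of $\M$ ensures this is a Polish group topology. The subgroup $Inn(\M)$ is normal, since $\alpha \circ Ad(u) \circ \alpha^{-1} = Ad(\alpha(u))$. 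Once $Inn(\M)$ is shown to be open, the partition of $Aut(\M)$ into cosets of $Inn(\M)$ becomes a countable disjoint cover by open sets, and hence $|Out(\M)|$ is at most countable.

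The next step is to translate Kazhdan's property (T) of $G$ into Connes' property (T) of $\M$: there should exist a finite set $F \subset \M$ and $\e > 0$ such that every Hilbert $\M$-bimodule containing a tracial vector $\xi$ with $\|x\xi - \xi x\| < \e$ for all $x \in F$ also contains a non-zero $\M$-central vector. The key mechanism is that an almost-central tracial vector in an $\M$-bimodule yields an almost-invariant vector under the diagonal $G$-action (using the inclusion $G \subset \U(\M)$); property (T) of $G$ then produces a $G$-invariant vector, which by density of $\mathbb{C} G$ in $\M$ together with the normality of the bimodule structure is in fact $\M$-central.

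The heart of the proof is then a twisted bimodule construction. Given $\alpha \in Aut(\M)$, define $\mathcal{H}_\alpha := L^2(\M,\tau)$ with the bimodule structure $x \cdot \xi \cdot y := x\, \xi\, \alpha(y)$. The trace vector $\widehat{1}$ satisfies $\|x \cdot \widehat{1} - \widehat{1} \cdot x\|_2 = \|x - \alpha(x)\|_2$, so whenever $\alpha$ lies in the neighborhood of the identity determined by $F$ and $\e$, this vector is almost central. Property (T) then yields a non-zero central vector $\xi \in \mathcal{H}_\alpha$, which by construction satisfies $x\xi = \xi \alpha(x)$ in $L^2(\M,\tau)$ for every $x \in \M$. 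Taking adjoints gives $\xi^* x = \alpha(x)\xi^*$, from which $\xi\xi^*$ commutes with every element of $\M$; factoriality of $\M$ forces $\xi\xi^* = c \cdot 1$ for some $c > 0$, and after rescaling $\xi$ becomes a co-isometry sitting inside the finite factor $\M$, hence a unitary. The intertwining relation then reads $\alpha = Ad(\xi^*)$, so $\alpha \in Inn(\M)$, and $Inn(\M)$ is open as required.

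The main obstacle, in my view, is the bimodular reformulation of property (T) in the second paragraph: one must carefully set up the dictionary between Connes correspondences and unitary representations of an appropriate group, and verify that almost-central \emph{tracial} vectors genuinely behave like almost-invariant vectors under the corresponding $G$-representation. Once this bridge is in place, the twisted bimodule argument, the factoriality step promoting the $L^2$-central vector to a genuine unitary, and the separability-based counting argument are all relatively routine.
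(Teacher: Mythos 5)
Your proposal is correct and is essentially Connes' original argument, which the paper does not reproduce but simply cites as \cite{Co80}: property (T) passes from $G$ to ${\rm L}(G)$ in the correspondence sense, the twisted bimodule $L^2(\M)$ with right action deformed by $\alpha$ shows that every automorphism sufficiently close to the identity in the Polish ($u$-)topology is inner, hence $Inn(\M)$ is open and the coset decomposition of the separable group $Aut(\M)$ forces $Out(\M)$ to be countable. The one step you flag as delicate (promoting a $G$-invariant vector to an $\M$-central one) is indeed where care is needed, but it goes through by Kaplansky density together with normality of the two module actions, so there is no genuine gap.
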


The main goal of our paper is to address the following natural question prompted by Connes' theorem.

\begin{prob}\label{MainProb}
Which countable groups can be realized as outer automorphism groups of von Neumann algebras of ICC groups with property (T)?
\end{prob}

Theorem \ref{rigidOut} led Connes \cite{Co82} to formulate  his rigidity conjecture: if ${\rm L}(G)\cong{\rm L}(H)$, for ICC property (T) groups $G$ and $H$, then $G\cong H$. On a related note, Jones \cite[Problem 8]{Jo00} conjectured that property (T) endows ${\rm L}(G)$ with enough rigidity for the map (\ref{Jmap}) to be surjective. More precisely, he suggested the following as one of his millenium problems (see also \cite[Section 3]{Po06a}). 

\begin{conj}[Jones]\label{Jconj}
If $G$ is an ICC group with property (T), then  
\begin{equation}\label{Eq:JC}
Out(\emph{L}(G))\cong Char(G)\rtimes Out(G).
\end{equation}
\end{conj}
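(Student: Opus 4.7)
The plan is to establish the injectivity and surjectivity of the homomorphism (\ref{Jmap}) separately. Injectivity is the easier half and should follow from a direct Fourier-expansion argument: if $\rho\in Char(G)$, $\delta\in Aut(G)$ and a unitary $v\in{\rm L}(G)$ satisfy $v u_g v^\ast=\rho(g)u_{\delta(g)}$ for all $g\in G$, then writing $v=\sum_h \hat v(h) u_h$ and comparing Fourier coefficients forces, via the ICC condition, $\rho\equiv 1$ and $\delta=\mathrm{id}$. This is essentially the content of the Proposition \ref{inj} mentioned right after (\ref{Jmap}), so I would only sketch it and move on.

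The substantial direction is surjectivity. Given $\theta\in Aut({\rm L}(G))$, the goal is to produce $\rho\in Char(G)$, $\delta\in Aut(G)$ and a unitary $v\in{\rm L}(G)$ with $\theta(u_g)=v\,\rho(g)u_{\delta(g)}v^\ast$ for every $g\in G$; equivalently, modulo an inner automorphism, $\theta({\rm L}(G))$ should carry the Fourier basis $\{u_g\}_{g\in G}$ into the coset $\mathbb T\cdot\{u_g\}_{g\in G}$. The natural framework is Popa's deformation/rigidity theory. View the map $g\mapsto\theta(u_g)$ as a unitary representation $\pi$ of $G$ on $\ell^2(G)$ that is quasi-equivalent to the regular representation. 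Property (T) of $G$ should provide the rigidity needed to locate $\pi(G)$ inside $\mathbb T\cdot G$: one constructs a (possibly s-malleable) deformation $\{\alpha_t\}$ of ${\rm L}(G)$, uses property (T) to get uniform convergence $\alpha_t\circ\theta(u_g)\to\theta(u_g)$ on $G$, and then invokes Popa's intertwining-by-bimodules to extract a nonzero partial isometry $w\in{\rm L}(G)$, a finite-index subgroup $H\leq G$, a character $\rho_0\in Char(H)$, and an injective homomorphism $\delta_0\colon H\to G$ with $w\,\theta(u_h)=\rho_0(h)u_{\delta_0(h)}w$ for $h\in H$.

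The final step would be to upgrade this partial intertwiner to a genuine global element of $Char(G)\rtimes Aut(G)$. Running the same argument with $\theta^{-1}$ in place of $\theta$ yields a symmetric partial intertwiner in the opposite direction; combining the two, together with the ICC condition and the factoriality of ${\rm L}(G)$, one should be able to cut $w$ up to a unitary $v$, extend $\delta_0$ to an automorphism $\delta$ of all of $G$, and promote $\rho_0$ to a character $\rho$ on $G$ implementing $\theta$ modulo $\mathrm{Ad}(v)$.

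The main obstacle is the very first move in the surjectivity argument: producing the initial partial intertwiner. For an arbitrary ICC property (T) group $G$, there is no canonical malleable (or s-malleable) deformation of ${\rm L}(G)$, and without such a deformation one has no handle on $\theta(G)\subset\U({\rm L}(G))$ beyond its abstract unitary representation; property (T) alone does not suffice, since it rules out nontrivial deformations rather than providing one. This is precisely why Jones' conjecture remains open in full generality, and it is the reason the present paper works instead with groups $G$ built from a wreath-like product structure: that structure supplies an external Gaussian/Bernoulli deformation on a coefficient subalgebra which can be combined with property (T) to run the above rigidity argument. A proof of the full conjecture would require either a genuinely new deformation intrinsic to arbitrary property (T) II$_1$ factors, or a fundamentally different route to characterising $\mathbb T\cdot G\subset\U({\rm L}(G))$ intrinsically.
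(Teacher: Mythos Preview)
The statement is a \emph{conjecture}, and the paper does not prove it in full generality; it remains open. Your proposal is honest about this: you outline a natural deformation/rigidity strategy, correctly locate the fatal gap (no intrinsic malleable deformation of ${\rm L}(G)$ for an arbitrary ICC property (T) group $G$), and observe that the paper sidesteps the issue by restricting to wreath-like products. That diagnosis is accurate, and your treatment of injectivity matches the paper's Proposition~\ref{inj}.

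For the restricted class where the paper \emph{does} establish (\ref{Eq:JC}) (Theorem~\ref{symmetries'} and Corollary~\ref{Cor:JC}), the actual argument differs from your sketch in its mechanics. The paper does not build an s-malleable deformation of ${\rm L}(G)$ and run a spectral-gap argument directly on $\theta(u_g)$. Instead, the wreath-like structure $G\in\WR(A,B\curvearrowright I)$ singles out the subalgebra $\P={\rm L}(A^{(I)})$; Popa--Vaes structure results for normalizers in crossed products by (relatively) hyperbolic groups, packaged as Theorem~\ref{relativeT}, force $\theta(\P)\prec_{\Nn}\Q$ for the corresponding core on the target side; conjugacy criteria for Cartan subalgebras or regular irreducible subfactors (Theorems~\ref{Po01b} and~\ref{IPP05}) then make $\theta(\P)=\Q$ after an inner perturbation. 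The passage from ``cores match'' to ``$\theta$ is induced by $(\rho,\delta)$'' is not done via your partial-intertwiner upgrade, but through the comultiplication trick of \cite{PV09,IPV10}: one shows $\Delta(u_g)=w_g(u_g\otimes u_g)$ with $(w_g)$ a $1$-cocycle for the built-over action $\sigma\otimes\sigma$ (Claim~\ref{1-cocycle}), and then invokes cocycle superrigidity (Corollary~\ref{CS}, descending from \cite{Po05,Dr15,VV14}) to untwist. So the ``deformation'' you anticipate is effectively replaced by the built-over/Bernoulli structure of $G\curvearrowright {\rm L}(A^{(I)})$, which is precisely what an arbitrary property~(T) group lacks.
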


In the past two decades, remarkable progress in understanding 
automorphism
groups of II$_1$ factors has been achieved via Popa's deformation/rigidity theory \cite{Po06a, Va10, Io18}. 
Ioana, Peterson, and Popa first showed that every compact abelian group arises as the outer automorphism group of a II$_1$ factor \cite{IPP05}. This result was generalized to all finitely presented groups by Popa and Vaes \cite{PV}, all countable groups by Vaes \cite{Va08} and all compact second countable groups by Falgui\`eres and Vaes \cite{FV} (see also \cite{Dep10}). These results were recovered and extended recently by Popa and Vaes, see \cite[Corollary 8.2]{PV22}.

Despite all of these advances, both Problem \ref{MainProb} and Jones' conjecture remained wide open. 
The II$_1$ factors with prescribed finitely presented outer automorphism groups found in \cite{PV} are actually von Neumann algebras of certain ICC groups such that the isomorphism~(\ref{Eq:JC}) holds. However, the algebraic structure of these groups is incompatible with property (T).  Moreover, \emph{no computation of $Out({\rm L}(G))$ for a non-trivial, ICC group $G$ with property (T) has been performed until now.} Even the existence of a single non-trivial ICC, property (T) group $G$ such that $Out({\rm L}(G))=\{ 1\}$ remained an open question.

In the present paper, we first show that a wide class of ICC groups with property (T) satisfy Jones' conjecture and then construct groups from this class that have prescribed outer automorphisms and no characters. This allows us to give a complete solution to Problem~\ref{MainProb} by proving the following.

\begin{thm}\label{MainThm}
For any countable group $Q$, there exists a non-trivial, ICC group $G$ with property (T) such that $Out({\rm L}(G))\cong Q$.
\end{thm}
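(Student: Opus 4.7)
The plan mirrors the roadmap flagged in the introduction: split Theorem~\ref{MainThm} into (a) a rigidity statement establishing Jones' Conjecture~\ref{Jconj} for a sufficiently flexible class $\WR$ of ICC property (T) groups, and (b) a purely group-theoretic realization producing, inside $\WR$, a group $G$ with $Out(G)\cong Q$ and $Char(G)=1$. Once both are in place, the theorem follows immediately: $Out(\mathrm{L}(G))\cong Char(G)\rtimes Out(G)\cong 1\rtimes Q\cong Q$.

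For step~(a), the class $\WR$ should, given the paper's title and its sequel nature, consist of wreath-like products in the sense of the authors' Part~I: extensions $1\to N\to G\to\Gamma\to 1$ in which $N$ is an abelian base assembled from copies of a fixed group indexed by cosets of a subgroup of a hyperbolic property (T) group $\Gamma$ acting by coset permutation. The strategy for verifying Jones' conjecture in this setting is to combine Popa-style deformation/rigidity with the geometry of $\Gamma$: spectral gap and malleable deformations available from property (T) should force any $\theta\in Aut(\mathrm{L}(G))$, after an inner perturbation, to preserve the canonical von Neumann subalgebra $\mathrm{L}(N)$ arising from the base, thereby inducing an automorphism of $\Gamma$; a cocycle/coboundary argument in the spirit of Popa's cocycle superrigidity then shows that the remaining ambiguity is implemented by a character of $G$, establishing surjectivity of~\eqref{Jmap}. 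Injectivity of~\eqref{Jmap} under the ICC hypothesis is essentially routine, as already announced in the introduction via Proposition~\ref{inj}.

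For step~(b), apply Belegradek--Osin / Dehn-filling style constructions from geometric group theory to produce, for an arbitrary countable $Q$, a hyperbolic property (T) group $\Gamma$ with $Out(\Gamma)\cong Q$ and trivial abelianization. Assemble a wreath-like product $G\in\WR$ over $\Gamma$ with a suitably chosen base so that $G$ is ICC and non-trivial, $G^{\mathrm{ab}}=1$, and the wreath-like decomposition is rigid enough that every automorphism of $G$ preserves the base and descends to an automorphism of $\Gamma$. Triviality of $G^{\mathrm{ab}}$ forces $Char(G)=Hom(G^{\mathrm{ab}},\mathbb T)=1$, while rigidity of the wreath-like decomposition forces $Out(G)\cong Out(\Gamma)\cong Q$, and together these complete the proof via step~(a). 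The principal obstacle is unambiguously step~(a): as the introduction emphasizes, Jones' conjecture has not previously been verified for a single non-trivial property (T) group, and pushing it through for an entire flexible class requires genuinely new rigidity input marrying Popa's deformation/rigidity theory with the hyperbolic geometry of the acting quotient. Step~(b) is comparatively a matter of combining known geometric constructions with the internal structure of wreath-like products, but still demands care to control $Out(G)$, the vanishing of $Char(G)$, and membership in $\WR$ simultaneously.
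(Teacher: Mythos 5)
Your step~(a) matches the paper's route (Theorem \ref{symmetries'}: deformation/rigidity plus cocycle superrigidity for property (T) wreath-like products, together with the easy injectivity of \eqref{Jmap} from Proposition \ref{inj}), but step~(b) contains a genuine gap, and in fact an impossible requirement. You propose to find a \emph{hyperbolic} property (T) group $\Gamma$ with $Out(\Gamma)\cong Q$ and then arrange $Out(G)\cong Out(\Gamma)$ for a wreath-like product $G$ over $\Gamma$. No such $\Gamma$ exists once $Q$ is infinite: for a hyperbolic group with property (T), Paulin's argument shows that an infinite outer automorphism group yields an action on an $\mathbb R$-tree without a global fixed point, contradicting (T); and even without (T), $Out$ of a hyperbolic group is finitely presented, so an arbitrary countable $Q$ (say $Q=\mathbb Q$) cannot occur. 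This smallness of $Out$ in the hyperbolic-like setting is exactly what the paper exploits in the opposite direction (Lemma \ref{Lem:OutAH}(b) shows $\iota(\overline G)$ has finite index in $Aut(B)$), and it is why the paper's remark after Corollary \ref{Out} stresses that previously known property (T) groups with prescribed $Out$ lack the wreath-like structure needed for Theorem \ref{symmetries'}.

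The paper's mechanism for prescribing $Out(G)$ is therefore different from, and cannot be replaced by, ``$Out(G)\cong Out(B)$ for the acting group $B$.'' In Theorem \ref{Thm:MC}, the group $U_j\in\WR(A_j,B\curvearrowright I)$ sits as a normal subgroup inside a larger wreath-like product $W_{j,R}$, and $Out(U_j)$ is computed as the \emph{relative} quotient $\iota(K)/\iota(B)\cong K/B$, where $K\le Aut(B)$ strictly contains $Inn(B)\cong B$; injectivity of $Out(U_j)\to Out(B)$ uses the cocycle-superrigidity statement Proposition \ref{Prop:WRAut}, and pinning down the image requires the Cohen--Lyndon/untwisting machinery (Lemmas \ref{Lem:CLmaln}, \ref{Lem:N(R)}, \ref{Lem:centr}). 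To make $K/B\cong Q$ while keeping $B$ inside a relatively hyperbolic group with residually finite peripherals, one must first embed $Q$ into a quotient $S/M$ of a finitely generated residually finite group (Proposition \ref{Prop:Embed}, which itself needs Higman's group, small cancellation, and Agol--Haglund--Wise), and then run the Rips construction (Proposition \ref{Rips}) and Dehn-filling/Cohen--Lyndon quotients. None of this is ``a matter of combining known geometric constructions'' around a $\Gamma$ with $Out(\Gamma)\cong Q$; the realization of $Q$ as a relative outer automorphism group $K/B$, rather than as $Out$ of the acting group, is the missing idea in your step~(b), and without it the proposed construction cannot produce the required examples.
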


Our approach utilizes the concept of a wreath-like product of groups introduced in our earlier paper \cite{CIOS1}. In addition to ideas elaborated in \cite{CIOS1}, the proof of Theorem~\ref{MainThm} requires further study of wreath-like products arising from the group-theoretic analog of Dehn filling in $3$-manifolds (see \cite{Osi07,DGO}). Our results in this direction seem to be of independent interest; they will be used in the forthcoming papers \cite{CIOS3, CIOS4} to study automorphisms of reduced group $C^\ast$-algebras and embeddings of group II$_1$ factors. For a more detailed discussion of the ideas and concepts involved in the proof of Theorem \ref{MainThm}, we refer to the next section.

To the best of our knowledge, Theorem \ref{MainThm} is new even without the property (T) assumption on the group $G$. While arbitrary countable groups $Q$ were shown to arise as outer automorphism groups of II$_1$ factors in \cite{Va08,PV22}, the II$_1$ factors considered therein do not arise from groups.
 
\paragraph{Acknowledgments.} I. Chifan was supported by the NSF grants DMS-1854194 and DMS-2154637. A. Ioana was supported by the NSF grants DMS-1854074 and DMS-2153805, and a Simons Fellowship. D. Osin was supported by the NSF grants DMS-1853989 and DMS-2405032. B. Sun received funding from the European Research Council (ERC) under the European Union’s Horizon 2020 research and innovation programme (Grant agreement No. 850930), and the Simons Foundation (Grant agreement No. IP00672308). We thank the anonymous referees for several comments which helped improve the exposition.


\section{Outline of the paper}


\subsection{Wreath-like products of groups and property (T)} 

We begin by recalling the following definition proposed in \cite{CIOS1}.

\begin{defn}\label{wlp}
Let $A$, $B$ be arbitrary groups, $I$ an abstract set, $B\curvearrowright I$ a (left) action of $B$ on $I$. A group $W$ is a \emph{wreath-like product} of groups $A$ and $B$ corresponding to the action $B\curvearrowright I$ if $W$
is an extension of the form
\begin{equation}\label{ext}
1\longrightarrow \bigoplus_{i\in I}A_i \longrightarrow  W \stackrel{\e}\longrightarrow B\longrightarrow 1,
\end{equation}
where $A_i\cong A$ and the action of $W$ on $A^{(I)}=\bigoplus_{i\in I}A_i$ by conjugation satisfies the rule
$$wA_iw^{-1} = A_{\e(w)i}\;\;\; \forall\, i\in I.$$  
We call $A$ the \textit{base} of a wreath-like product $W\in\W\R(A,B\curvearrowright I)$.

If the action $B\curvearrowright I$ is regular (i.e., free and transitive), we say that $W$ is a \emph{regular wreath-like product} of $A$ and $B$. The set of all wreath-like  products of groups $A$ and $B$ corresponding to an action $B\curvearrowright I$ (respectively, all regular wreath-like products) is denoted by $\WR(A, B\curvearrowright I)$ (respectively, $\WR(A,B)$). 
\end{defn}

\begin{ex}
Clearly, $A {\, \rm wr\,} B\in \WR(A,B)$ for any groups $A$ and $B$.
\end{ex} 

It is not difficult to show that $W\cong A{\, \rm wr\,} B$ in the setting of Definition \ref{wlp} whenever the extension (\ref{ext}) splits. One of the main findings of the paper \cite{CIOS1} is that a group theoretic version of Thurston's theory of hyperbolic Dehn filling in $3$-manifolds can be used to produce non-splitting wreath-like products of a completely different nature. The theorem below is a particular case of our construction. For a subset $S$ of a group $G$, we denote by $\ll S\rr$ the {\it normal closure} of $S$ in $G$, i.e., the smallest normal subgroup of $G$ containing $S$.

\begin{thm}[Chifan--Ioana--Osin--Sun, {\cite[Theorem 1.4]{CIOS1}}]\label{Thm:HypWR}
Let $H$ be a torsion-free hyperbolic group and let $h\in H$ be a non-trivial element. For any sufficiently large $k\in\mathbb N$, the group $H/\ll h^k\rr $ is  hyperbolic, ICC, and we have $$H/[\ll h^k\rr, \ll h^k\rr]\in \WR (\ZZ, H/\ll h^k\rr\curvearrowright I),$$ where the action $H/\ll h^k\rr\curvearrowright I$ is transitive with finite cyclic stabilizers. 
\end{thm}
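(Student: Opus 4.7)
The plan is to combine the Dehn filling theorem for hyperbolic groups with a Cohen--Lyndon-type free product decomposition of the normal closure $\ll h^k\rr$. Let $E=E_H(h)$ denote the maximal elementary subgroup of $H$ containing $h$. Since $H$ is torsion-free hyperbolic, $E$ is infinite cyclic, so write $E=\langle r_0\rangle$ with $h=r_0^m$ for some $m\geq 1$. The subgroup $E$ is almost malnormal (equivalently, hyperbolically embedded in $H$), so the pair $(H,E)$ is a valid input for Dehn filling. For all sufficiently large $k$, I would apply the Dehn filling theorem (Osin; Dahmani--Guirardel--Osin in the hyperbolically embedded setting) with filling kernel $\langle h^k\rangle\triangleleft E$. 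This yields: (i) $\bar H := H/\ll h^k\rr$ is hyperbolic; (ii) the composition $E\hookrightarrow H\twoheadrightarrow\bar H$ has kernel exactly $\langle h^k\rangle$, so the image $\bar E\leq\bar H$ is finite cyclic of order $mk$; and (iii) $\bar H$ is non-elementary with trivial finite radical, hence ICC.

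The decisive step is a Cohen--Lyndon-type decomposition. Set $N := \ll h^k\rr$. For $k$ large enough, upon fixing a left transversal $T$ of $NE$ in $H$, I would establish
\[
N \;=\; \Ast_{t\in T} t\langle h^k\rangle t^{-1},
\]
each factor infinite cyclic. Classically this is the Cohen--Lyndon theorem for free groups; in the hyperbolic setting it is extracted from the small-cancellation / rotating-family description of $N$ underlying the Dehn filling construction, which forces every reduced word in the conjugates of $h^k$ that lies in $N$ to equal an expression built from these factors without collapse. Abelianizing,
\[
N^{ab}\;\cong\;\bigoplus_{i\in I}\ZZ_i, \qquad I := H/NE \,\cong\, \bar H/\bar E,
\]
where $\ZZ_i$ is the image of the $i$-th cyclic free factor.

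The wreath-like structure is then read off from the extension $1\to N^{ab}\to H/[N,N]\to \bar H\to 1$. Conjugation by $g\in H$ sends $\ZZ_i$ to $\ZZ_{g\cdot i}$, where $g$ acts on $H/NE$ by left multiplication; since inner automorphisms of $N$ act trivially on $N^{ab}$, this descends to a well-defined action of $\bar H$ on $I = \bar H/\bar E$, which is transitive with point stabilizer $\bar E$, finite cyclic of order $mk$. Hence $H/[N,N]\in\WR(\ZZ,\bar H\curvearrowright I)$ with a transitive action and finite cyclic stabilizers, as required. The main obstacle I anticipate is the Cohen--Lyndon decomposition itself: while hyperbolicity of $\bar H$ and the control on $E\to\bar H$ are by now fairly standard outputs of Dehn filling, showing that $\{th^kt^{-1}\}_{t\in T}$ freely generates a free product inside $N$ demands a detailed analysis of van Kampen diagrams (or the rotating-family geometry) for the filling, ruling out any collapse between different conjugates beyond what is forced by the abelian structure within each cyclic factor.
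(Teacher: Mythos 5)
Your proposal is correct and follows essentially the same route as the paper's: perform group-theoretic Dehn filling on the pair $(H,E(h))$ with kernel $\langle h^k\rangle$ to get hyperbolicity, ICC, and injectivity of $E(h)/\langle h^k\rangle\to H/\ll h^k\rr$, then use the Cohen--Lyndon free-product decomposition $\ll h^k\rr=\Ast_{t\in T}t\langle h^k\rangle t^{-1}$ and pass to $H/[\ll h^k\rr,\ll h^k\rr]$, exactly as in Proposition \ref{WRCL} and Corollary \ref{Cor:WRDF}. The one step you flag as the ``main obstacle'' does not need a fresh van Kampen/rotating-family analysis: it is precisely the Cohen--Lyndon theorem for hyperbolically embedded subgroups (Sun's result, Theorem \ref{Thm:Sun}), which is the same ingredient the authors invoke.
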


Unfortunately, the construction based on Theorem \ref{Thm:HypWR} is not flexible enough to prove our main result, which requires constructing wreath-like products with prescribed outer automorphisms and certain additional properties (see Theorem \ref{Thm:Out} below). In fact, the approach suggested in \cite{CIOS1} is deficient in a fundamental way: although it allowed us to construct continuously many $W^\ast$-superrigid wreath-like products, their outer automorphism groups fall into countably many isomorphism classes (for the ultimate manifestation of this phenomenon, see Corollary \ref{Out} below).

To address this problem, we first suggest a more general approach to constructing wreath-like products based on the notions of a {\it Cohen-Lyndon triple} and {\it Cohen-Lyndon subgroup} introduced in Section~\ref{Sec:WRDF} and Section~\ref{Sec:CL}, respectively. 
Unlike examples provided by Theorem \ref{Thm:HypWR}, wreath-like products obtained via our new approach are regular and can have non-abelian bases. Furthermore, regularity allows us to consider additional factorizations, thus resulting in a very flexible construction leading to the proof of Theorem \ref{Thm:Out} (and eventually Theorem \ref{MainThm}). 

Our new approach to constructing wreath-like products appears to be of independent interest, and we discuss it in more detail here. We first consider a simple example illustrating the main idea.

\begin{ex}\label{ExCL}
Let $A$ and $B$ be arbitrary groups. It is well-known and easy to prove that the normal subgroup $\ll A\rr $ of $A\ast B$ is isomorphic to the free product of conjugates of $A$. Let $C$ denote the kernel of the natural homomorphism from $\ll A\rr$ to the direct sum of the same conjugates of $A$. It is not difficult to show that $C$ 
is normal in $A\ast B$ and $$(A\ast B)/C\cong A{\,\rm wr\,} B \in \WR(A,B).$$
\end{ex}

To generalize this example, we introduce the following.

\begin{defn}\label{Defn:CL}
Let $G$ be a group, $H$ a subgroup of $G$. Recall that a \emph{left transversal} of $H$ in $G$ is a subset of $G$ that contains exactly one representative from each left coset of $H$. We say that $H$ is a \emph{Cohen-Lyndon subgroup} of $G$ if there exists a left transversal $T$ of $H$ in $G$ such that $\ll H \rr=\Ast_{t\in T}tHt^{-1}$.
\end{defn}

Cohen and Lyndon \cite{CL} showed that every maximal cyclic subgroup of a free group satisfies this property, hence the name. Further, it is easy to see that every free factor is a Cohen-Lyndon subgroup. 

The relevance of Definition \ref{Defn:CL} in the context of wreath-like products is elucidated by the following result generalizing Example \ref{ExCL} (see Corollary \ref{Cor:CLWR}).

\begin{prop}
For any group $G$ and any Cohen-Lyndon subgroup $H\le G$, the subgroup generated by the set
$$
S=\{ [h_1^{g_1}, h_2^{g_2}] \mid g_1, g_2\in G,\; g_1\ll H\rr \ne g_2\ll H\rr\}.
$$ 
is normal in $G$ and we have $G/\langle S\rangle  \in \WR (H, G/\ll H\rr)$. 
\end{prop}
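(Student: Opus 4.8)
The plan is to exhibit $G/\langle S\rangle$ as a regular wreath-like product of $H$ and $B:=G/\ll H\rr$; morally, $G/\langle S\rangle$ is obtained from $G$ by imposing precisely the relations that make two conjugates of $H$ lying in distinct $\ll H\rr$-cosets commute, and the Cohen--Lyndon hypothesis is exactly what ensures these relations collapse $\ll H\rr$ onto the direct sum of its Cohen--Lyndon free factors (and no further). Throughout we write $h^g:=ghg^{-1}$.

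First, $\langle S\rangle$ is normal in $G$: conjugating $[h_1^{g_1},h_2^{g_2}]$ by $g\in G$ gives $[h_1^{gg_1},h_2^{gg_2}]$, and the condition $g_1\ll H\rr\neq g_2\ll H\rr$ is preserved by $g_i\mapsto gg_i$, so $gSg^{-1}=S$. Moreover $h_i^{g_i}\in\ll H\rr$, since $\ll H\rr$ is normal and contains $H$; hence $S\subseteq\ll H\rr$ and $\langle S\rangle\subseteq\ll H\rr$. Consequently the quotient map $G\to B$ factors through $G/\langle S\rangle$, yielding a short exact sequence $1\to\ll H\rr/\langle S\rangle\to G/\langle S\rangle\stackrel{\e}\longrightarrow B\to 1$, and it remains to identify the kernel and the conjugation action of $G/\langle S\rangle$ on it.

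Next, use the Cohen--Lyndon property: choose representatives $\{t_b\}_{b\in B}$ for the cosets of $\ll H\rr$ in $G$, with $t_{1_B}=1$, such that $\ll H\rr=\Ast_{b\in B}H_b$, where $H_b:=t_bHt_b^{-1}$ (so $H_{1_B}=H$). Let $q\colon\ll H\rr\to\bigoplus_{b\in B}H_b$ be the canonical surjection and set $C:=\ker q$; by the standard presentation of a direct sum as a quotient of a free product, $C$ is the normal closure in $\ll H\rr$ of $\{[H_b,H_{b'}]:b\neq b'\}$. The heart of the proof is the identity $\langle S\rangle=C$. For $C\subseteq\langle S\rangle$: since $H_b=\langle h^{t_b}:h\in H\rangle$, a routine commutator computation identifies $C$ with the normal closure in $\ll H\rr$ of $\{[h_1^{t_b},h_2^{t_{b'}}]:b\neq b',\ h_1,h_2\in H\}$, and each such commutator lies in $S$ (take $g_1=t_b$, $g_2=t_{b'}$ and note $t_b\ll H\rr=b\neq b'=t_{b'}\ll H\rr$); as $\langle S\rangle$ is normal in $\ll H\rr$, it contains $C$. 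For $\langle S\rangle\subseteq C$, the key observation is that $q$ carries every conjugate $gHg^{-1}$ $(g\in G)$ into the single summand $H_b$ with $b:=g\ll H\rr$: indeed $gt_b^{-1}\in\ll H\rr$ and $gHg^{-1}=(gt_b^{-1})H_b(gt_b^{-1})^{-1}$, so $q(gHg^{-1})=q(gt_b^{-1})\,q(H_b)\,q(gt_b^{-1})^{-1}=q(H_b)$, because conjugation by an element of $\bigoplus_{b\in B}H_b$ stabilizes each summand. Hence for $[h_1^{g_1},h_2^{g_2}]\in S$ the images $q(h_1^{g_1})$ and $q(h_2^{g_2})$ lie in distinct summands and therefore commute; thus $S\subseteq C$ and $\langle S\rangle\subseteq C$.

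Granting $\langle S\rangle=C$, we conclude: the kernel $\ll H\rr/\langle S\rangle\cong\bigoplus_{b\in B}H_b\cong H^{(B)}$ is a direct sum of copies of $H$ indexed by $B$, and (since $C=\langle S\rangle$ is now known to be normal in $G$) the computation above, applied inside $G/\langle S\rangle$, shows that conjugation by any lift of $b\in B$ sends the summand $H_{b'}$ to the summand $H_{bb'}$ — using $t_bH_{b'}t_b^{-1}=(t_bt_{b'})H(t_bt_{b'})^{-1}$, $t_bt_{b'}\in t_{bb'}\ll H\rr$, and once more the fact that conjugating $H_{bb'}$ by an element of the kernel fixes it (so that the answer is independent of the chosen lift). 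Thus $G/\langle S\rangle$ is an extension of $B$ by $H^{(B)}$ whose conjugation action on the base is the left regular action of $B$ on itself, i.e.\ $G/\langle S\rangle\in\WR(H,G/\ll H\rr)$. The only non-formal step is the inclusion $\langle S\rangle\subseteq C$: one must verify that an arbitrary conjugate of $H$ meets the Cohen--Lyndon free-product decomposition of $\ll H\rr$ in the controlled, coset-indexed fashion above, so that the relations $S$ collapse $\ll H\rr$ exactly onto $\bigoplus_bH_b$. Everything else is bookkeeping once the Cohen--Lyndon decomposition is in place.
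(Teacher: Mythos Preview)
Your proof is correct and follows essentially the same approach as the paper's. The paper proves a more general statement (Proposition~\ref{WRCL}, for arbitrary Cohen--Lyndon triples $(G,H,N)$) and then specializes to $N=H$; your argument is precisely this specialization. In particular, your ``key observation'' that $q$ sends $gHg^{-1}$ onto the summand indexed by $g\ll H\rr$ is exactly the Claim in the paper's proof of Proposition~\ref{WRCL}, proved in the same way (write $g=nt_b$ with $n\in\ll H\rr$, use that $q$ is a homomorphism on $\ll H\rr$, and that each summand is normal in the direct sum).
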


In Section \ref{Sec:CL}, we show how to construct Cohen-Lyndon subgroups of acylindrically hyperbolic groups using a generalization of relative hyperbolicity and group theoretic Dehn filling technique developed in \cite{DGO}. As we already mentioned, this construction provides us with enough flexibility to prove Theorem \ref{Thm:Out} (see below), which is one of the two major ingredients in the proof of our main result. 

As a byproduct, we also obtain some results of independent interest; for example, we prove the following.

\begin{thm}\label{Thm:WLP_hyp}
Let $G$ be a non-elementary hyperbolic group. For any finitely generated group $A$, there exists a quotient group $W$ of $G$ such that $W\in \WR(A,B)$, where $B$ is non-elementary hyperbolic.
\end{thm}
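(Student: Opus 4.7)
The proof is a straightforward application of the two technologies developed in the paper: the proposition that converts a Cohen-Lyndon subgroup into a wreath-like extension, and Proposition~\ref{freeCL} which manufactures Cohen-Lyndon subgroups via a generalized Dehn filling in acylindrically hyperbolic groups. The plan is to produce a quotient $\bar G$ of $G$ in which a copy of $A$ embeds as a Cohen-Lyndon subgroup with non-elementary hyperbolic ``outer quotient'', and then collapse the relevant commutators.

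\emph{Step 1 (choosing a free filling-target).} Write $A = F/N$ with $F$ a free group of some finite rank $n$ and $N\trianglelefteq F$. Since $G$ is non-elementary hyperbolic, and hence acylindrically hyperbolic, one can embed $F$ as a hyperbolically embedded subgroup of $G$ of rank $n$; this is standard and follows, for instance, from the existence of sufficiently many independent loxodromic elements in $G$.

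\emph{Step 2 (Cohen-Lyndon Dehn filling).} Apply Proposition~\ref{freeCL} to the pair $(G,F)$ with ``filling kernel'' $N$. This is the main technical input and should yield a normal subgroup $R\trianglelefteq G$ contained in $\ll F\rr$ such that: (i) $R\cap F = N$, so the image $\bar F$ of $F$ in $\bar G := G/R$ is isomorphic to $F/N\cong A$; (ii) $\bar F$ is a Cohen-Lyndon subgroup of $\bar G$ in the sense of Definition~\ref{Defn:CL}; and (iii) the further quotient $\bar G/\ll \bar F\rr \cong G/\ll F\rr$ is non-elementary hyperbolic. Item (iii) is ensured by choosing $F$ to be sufficiently ``small'' (e.g., contained in a proper hyperbolically embedded subgroup of $G$), using Olshanskii-style genericity of Dehn filling in the hyperbolic setting.

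\emph{Step 3 (from Cohen-Lyndon to wreath-like).} Apply the proposition above (the one immediately preceding Theorem~\ref{Thm:WLP_hyp}) to the Cohen-Lyndon subgroup $\bar F\le \bar G$. With $S = \{[h_1^{g_1},h_2^{g_2}]\mid g_1\ll \bar F\rr \ne g_2\ll \bar F\rr\}$, the subgroup $\langle S\rangle$ is normal in $\bar G$, and the quotient
\[
W \;:=\; \bar G/\langle S\rangle \;\in\; \WR\bigl(\bar F,\; \bar G/\ll \bar F\rr\bigr) \;=\; \WR\bigl(A,\; G/\ll F\rr\bigr).
\]
By construction, $W$ is a quotient of $G$ (composing the quotients $G\twoheadrightarrow \bar G\twoheadrightarrow W$), and by (iii) the base $B = G/\ll F\rr$ is non-elementary hyperbolic, which is exactly what is required.

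\emph{Main obstacle.} The heart of the argument is Step 2, i.e., Proposition~\ref{freeCL}. Classical group-theoretic Dehn filling (as in \cite{DGO}) typically requires $N$ to be of finite index in, or at least ``deep'' inside, the hyperbolically embedded subgroup $F$, which would force $A = F/N$ to be finite or otherwise constrained. The novelty needed here is to allow $N\trianglelefteq F$ to be \emph{arbitrary}, while simultaneously guaranteeing the Cohen-Lyndon structure on the image of $F$ (so that the second proposition is applicable) and preserving non-elementary hyperbolicity of the outer quotient. This is the point at which the Cohen-Lyndon machinery developed in Section~\ref{Sec:CL}, as a generalization of \cite{CL} to the acylindrically hyperbolic setting, is genuinely used; once it is in place, the passage from Cohen-Lyndon subgroup to wreath-like product is the purely algebraic content of the preceding proposition.
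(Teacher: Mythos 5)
Your Step 2 is where the argument breaks down. Proposition \ref{freeCL} takes no ``filling kernel'' as input: it produces a Cohen-Lyndon subgroup $H\cong F_n$ of $G$ and says nothing about filling along a prescribed $N\lhd F$. More importantly, no statement of the kind you need can hold for an \emph{arbitrary} $N$ inside a merely hyperbolically embedded $F$: all the Dehn-filling results in play (Theorem \ref{Thm:DF}, Sun's Theorem \ref{Thm:Sun}) require $N$ to avoid a finite set $\mathcal F\subset F\setminus\{1\}$ depending on the embedding, and for a prescribed quotient $A=F/N$ this cannot be arranged (if $A$ is trivial or finite, $N$ meets every such $\mathcal F$). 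Without deepness, $\ll N\rr^G\cap F$ may be strictly larger than $N$, so the image of $F$ in $\bar G=G/\ll N\rr^G$ need not be $A$, need not be a Cohen-Lyndon subgroup, and $\bar G/\ll \bar F\rr$ need not be $G/\ll F\rr$; your claims (i)--(iii) are therefore unproven and false in general. (A smaller issue in Step 1: Theorem \ref{Thm:Fhe} only gives $K(G)\times F_n\hookrightarrow_h G$, so one should first pass to $G/K(G)$, as Proposition \ref{freeCL} assumes trivial finite radical.)

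The paper sidesteps this obstacle entirely by reversing the order of the two quotients. It first applies Proposition \ref{freeCL} to get a Cohen-Lyndon subgroup $H\cong F_n\le G$ with $B=G/\ll H\rr^G$ non-elementary hyperbolic; the deepness requirement is used there only for the triple $(G,K,\ll H\rr^K)$, where $K\cong F_{7n}$ is hyperbolically embedded and $H$ is chosen inside $K$ by small cancellation (Lemma \ref{Lem:F7n}), so it can always be met. Then Corollary \ref{Cor:CLWR} gives a quotient $U=W(G,H)\in\WR(F_n,B)$ of $G$, and only afterwards is $N$ killed, inside the base of $U$, via Lemma \ref{A/N}: $U/\ll N\rr^U\in\WR(F_n/N,B)=\WR(A,B)$. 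This last step is purely algebraic and needs no deepness. Your skeleton (write $A=F_n/N$, use a Cohen-Lyndon subgroup and the $W(G,H)$ construction) is the right one, and your route could be repaired by first making $F$ Cohen-Lyndon and then verifying directly that its image in $G/\ll N\rr^G$ is a Cohen-Lyndon copy of $A$ with unchanged outer quotient; but that verification is precisely the content you would have to supply, and as written you attribute it to Proposition \ref{freeCL}, which does not provide it.
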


It is worth noting that the ordinary wreath product $A\, {\rm wr}\, B$ of a non-trivial group $A$ and an infinite group $B$ never has property (T) (see, for example, \cite[Theorem 2.8.2]{HV}). In contrast, Theorem \ref{Thm:WLP_hyp} applied to a uniform lattice in $Sp(n,1)$ provides us with a rich source of regular wreath-like products with property (T) (and even a stronger fixed-point property, see Corollary \ref{Cor:FLp}). 
The mere existence of such examples is a remarkable phenomenon with numerous non-trivial implications. Here we mention just three of them.

In \cite[Section 6.6]{Po05}, Popa asked whether $\text{H}^2(\alpha)=\text{H}^2(B)$, for Bernoulli actions $\alpha$ of property (T) groups $B$. Here, $\text{H}^2(B)$ and $\text{H}^2(\alpha)$ denote the  second cohomology groups of $B$ and $\alpha$ with values in $\mathbb T$ (see \cite{FM77a,Ji15}). This question was answered negatively in  \cite{Ji15} using Popa's cocycle superrigidity theorem \cite{Po05}.  The existence of property (T) groups $G\in \WR(A,B)$, where $A$ is nontrivial abelian,  allows us to give a simpler solution (see Proposition \ref{H^2} for details).

Further, Theorem \ref{Thm:WLP_hyp} was recently used in \cite{CDI22} to prove that {\it any separable II$_1$ factor embeds into a II$_1$ factor with property (T)}. This result can be thought as a von Neumann algebraic counterpart of the well-known fact that every countable group embeds into a countable group with property (T) (see \cite{De96,Ols95}). The proof relies on the existence of property (T) groups $G\in\W\R(A,B)$, where $A$ is the free group on two generators. 

Finally, the existence of wreath-like products with property (T) and non-commutative base plays a crucial role in our study of $C^\ast$-superrigidity and outer automorphism groups of reduced group $C^\ast$-algebras in \cite{CIOS4}. 

\subsection{Rigidity and outer automorphisms of von Neumann algebras} 

Connes' rigidity conjecture is equivalent to asking that any ICC group $G$ with property (T) is W$^*$-superrigid: for any group $H$, the isomorphism ${\rm L}(G)\cong {\rm L}(H)$ implies that $G\cong H$ \cite{Co82}. The first examples of groups satisfying Connes' conjecture were constructed in our earlier paper \cite{CIOS1}. In \cite{Po06a}, Popa proposed a generalization of Connes' conjecture that also incorporates Jones' formula (\ref{Eq:JC}). Informally, it can be stated as follows. 
\begin{conj}[Popa]
For any ICC group $G$ with property (T) and any group $H$, every isomorphism ${\rm L}(G)\cong {\rm L}(H)$ is induced by a character $G\to \mathbb T$ and an a group isomorphism $G\ra H$, up to conjugation by a unitary element.
\end{conj}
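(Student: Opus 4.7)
The plan is to derive Popa's conjecture for an arbitrary ICC property (T) group $G$ by extracting from the factor ${\rm L}(G)$ a canonical copy of $\mathbb{C}G$, uniquely determined up to inner conjugacy and character twist. Given an isomorphism $\theta:{\rm L}(G)\to{\rm L}(H)$, the strategy proceeds in three stages: (i) show that $H$ automatically inherits the ICC condition and property (T) from the identification ${\rm L}(G)\cong{\rm L}(H)$; (ii) identify inside each of ${\rm L}(G)$ and ${\rm L}(H)$ a canonical conjugacy class of dense $*$-subalgebras playing the role of the group algebra, using the spectral gap rigidity provided by property (T); and (iii) apply the unit-theoretic argument of Example~\ref{ex0} at the level of these canonical subalgebras to conclude that $\theta$, after composition with an inner automorphism, sends the canonical copy of $\mathbb{C}G$ to the canonical copy of $\mathbb{C}H$ and is therefore induced by a character $G\to\mathbb{T}$ and a group isomorphism $G\to H$.

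Stage (i) is essentially automatic: factoriality is preserved by $\theta$, forcing $H$ to be ICC, while property (T) of $G$ is reflected in a spectral gap property of ${\rm L}(G)$ in the Connes--Jones correspondence sense, which is an isomorphism invariant and can be transferred back to a group-theoretic property (T) statement for $H$ via the canonical trace. The substantive content is Stage (ii). The proposal is to define the canonical class as the collection of trace-dense $*$-subalgebras $\A\subset{\rm L}(G)$ that are maximally rigid in the sense of Popa's deformation/rigidity theory: no non-trivial malleable deformation of $\A$ exists inside ${\rm L}(G)$, and $\A$ is maximal among such subalgebras generated by a $\mathbb{T}$-multiple of a group of unitaries. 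Kazhdan's property (T) guarantees that $\mathbb{C}G$ belongs to this class, and the spectral gap dichotomy coming from property (T) combined with Popa's intertwining-by-bimodules is intended to force any other member of the class to coincide with $\mathbb{C}G$ up to a unitary conjugation and a character twist. With this in hand, Stage (iii) is a direct extension of the $\mathbb{C}G$ computation of Example~\ref{ex0} to the canonical $*$-subalgebra inside ${\rm L}(G)$; the injectivity half of Jones' formula from Proposition~\ref{inj} then gives uniqueness of the factorization, and Connes' Theorem~\ref{rigidOut} provides the a priori countability of the space of possibilities that makes the argument tractable.

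The main obstacle, and the reason Popa's conjecture has resisted proof since \cite{Po06a}, is the intrinsic identification of $\mathbb{C}G$ demanded by Stage (ii). All existing W$^*$-superrigidity results, including those of \cite{CIOS1} and the present paper realized through Theorem~\ref{MainThm}, succeed only when $G$ exhibits additional structural features—wreath-like product decompositions, cocycle superrigid actions, and the like—that leave a detectable algebraic imprint inside ${\rm L}(G)$, letting one pin down the group. For an arbitrary ICC property (T) group no such invariant is known, and the spectral gap rigidity afforded by property (T) alone secures rigidity of a given subalgebra rather than singling out a canonical one. A complete attack would therefore require either a new deformation/rigidity invariant capable of selecting the group $*$-algebra inside an arbitrary property (T) factor, or a structural dichotomy reducing the general case to classes where the reconstruction is already available—for instance via functorial embeddings into property (T) wreath-like products in the spirit of Theorem~\ref{Thm:WLP_hyp}, through which the canonical structure on the ambient factor could be pulled back to ${\rm L}(G)$.
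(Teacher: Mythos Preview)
This statement is Popa's conjecture, recorded in the paper as an \emph{open} conjecture; the paper does not claim to prove it. What the paper does prove is Theorem~\ref{symmetries'}, which verifies the conjecture only for the restricted class of property~(T) wreath-like products $G\in\WR(A,B\curvearrowright I)$ satisfying the hypotheses listed there. That proof succeeds precisely because the wreath-like structure leaves a detectable trace inside ${\rm L}(G)$: the base ${\rm L}(A^{(I)})$ is a canonical regular subalgebra (Cartan when $A$ is abelian, an irreducible regular subfactor when $A$ is ICC), and once it is located via the Popa--Vaes structural results and the intertwining machinery, the cocycle superrigidity of Corollary~\ref{CS} and the criteria of Theorems~\ref{criterion1}--\ref{criterion} finish the job. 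None of this is available for a general ICC property~(T) group.

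Your Stage~(ii) is not a proof sketch but a restatement of the open problem. The proposed ``canonical class'' of maximally rigid dense $*$-subalgebras is not well-defined: you have not specified what ``no non-trivial malleable deformation of $\A$ exists'' means intrinsically (malleable deformations are auxiliary data attached to an inclusion, not properties of a subalgebra sitting inside a fixed factor), nor given any mechanism by which property~(T) would force two such subalgebras to be unitarily conjugate modulo a character. Spectral gap rigidity tells you that a \emph{given} copy of $\mathbb{C}G$ is rigid under a \emph{given} deformation; it does not single out $\mathbb{C}G$ among all subalgebras of ${\rm L}(G)$ generated by a group of unitaries. You acknowledge this yourself in your final paragraph, which correctly identifies the obstruction as the absence of any known invariant reconstructing $\mathbb{C}G$ from ${\rm L}(G)$ for a general property~(T) group. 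That obstruction is exactly why Popa's conjecture remains open, and your proposal does not overcome it.
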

Our proof of Theorem \ref{MainThm} relies on a verification of Popa's conjecture within a wide class of wreath-like products. To state this result, we need to recall some definitions. Let $\mathcal M$ be a II$_1$ factor. The \textit{amplification} of $\mathcal M$ by some $t>0$ is the isomorphism class of the II$_1$ factor $p\mathbb M_n(\mathcal M)p$, where $n\geq t$ and $p\in\mathbb M_n(\mathcal M)$ is a projection of normalized trace $t/n$.

\begin{thm}\label{symmetries'}
Let $A$, $C$ be non-trivial groups that are either abelian or ICC and  $B$, $D$ non-parabolic ICC subgroups of groups that are hyperbolic relative to a finite family of finitely generated, residually finite groups. Let also $B \ca I$ and $D\ca J$ be faithful actions with infinite orbits. 

Suppose that $G\in\mathcal W\mathcal R(A,B \ca I)$ and $H\in\mathcal W\mathcal R(C, D\ca J)$ are groups with property (T) and $\theta:{\rm L}(G)^t\rightarrow {\rm L}(H)$, $t>0$, is a ($*$-preserving) isomorphism.  Then $t=1$ and there exist a group isomorphism $\delta:G\rightarrow H$,  a character $\rho:G\rightarrow\mathbb T$, and a unitary element $u\in {\rm L}(H)$ such that $\theta(u_g)=\rho(g)uv_{\delta(g)}u^*$ for every $g\in G$.

Here $(u_g)_{g\in G}$ and $(v_h)_{h\in H}$ denote  the canonical generating unitaries of ${\rm L}(G)$ and ${\rm L}(H)$, respectively. 
\end{thm}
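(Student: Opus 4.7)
The plan is to follow the Popa deformation/rigidity strategy for W$^\ast$-superrigidity, adapted to wreath-like products as developed in our earlier paper \cite{CIOS1}. For $G \in \WR(A, B \ca I)$, set $N = {\rm L}(A^{(I)}) \subset {\rm L}(G)$ for the canonical base von Neumann algebra, on which $B$ acts by permuting the coordinates. The hypothesis that $A$ is abelian or ICC translates into $N$ being either abelian (hence a Cartan-like subalgebra) or an infinite tensor product of II$_1$ factors indexed by $I$. Similarly set $P = {\rm L}(C^{(J)}) \subset {\rm L}(H)$. The overall strategy is: (i) show that, up to unitary conjugation and amplification, $\theta$ sends $N$ to $P$; (ii) deduce from this that $\theta$ identifies the quotient action $B \ca N$ with $D \ca P$; and (iii) use a cocycle superrigidity argument together with the wreath-like extension structure to extract a group isomorphism $\delta:G \to H$, a character $\rho:G \to \mathbb T$, and a unitary $u \in {\rm L}(H)$ implementing $\theta$.

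Step (i), the identification of the base, is the heart of the proof. One would combine Popa's spectral gap and intertwining-by-bimodules techniques with property (T) of $G$ and $H$ and with the strong relative mixing of the action $B \ca I$ coming from the Cohen--Lyndon / Dehn filling construction of Section \ref{Sec:CL}. The assumption that $B$ is non-parabolic in a relatively hyperbolic group, together with residual finiteness of the peripheral subgroups, plays the role of an \emph{avoidance} hypothesis: it rules out that $\theta(N^t)$ intertwines into a subalgebra arising from a stabilizer or an overgroup of the base. Concretely, non-parabolicity supplies loxodromic elements in $B$ acting with unbounded displacement on the relative Cayley geometry, while residual finiteness permits passing to finite-index subgroups and separating quasi-normalizers. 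Packaging these with malleable deformations (in the ICC case for $A$) or with Cartan-type uniqueness (in the abelian case) yields $\theta(N^t) = u P u^\ast$ for some unitary $u \in {\rm L}(H)$.

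Step (ii) is then mostly bookkeeping. After conjugating by $u$, we may assume $\theta(N^t) = P$. Taking normalizers and applying Popa's reconstruction of crossed products, $\theta$ intertwines the conjugation action $B \ca N$ (up to finite orbit and stabilizer corrections encoded by the non-regular structure of $B \ca I$) with $D \ca P$; this immediately forces $t = 1$ because both factors carry canonical normalized traces that $\theta$ must match. For step (iii), one feeds the resulting action isomorphism into a Popa-type cocycle superrigidity theorem, available here because $B$ inherits property (T) and the action on $N$ is weakly mixing along every infinite orbit of $B \ca I$. The untwisting of the cocycle produces the group homomorphism $\delta: G \to H$, the residual scalar cocycle produces the character $\rho$, and symmetry of the hypotheses under $\theta^{-1}$ upgrades $\delta$ to an isomorphism.

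The hard part is step (i), specifically in the regimes that go beyond \cite{CIOS1}: the non-regular actions $B \ca I$, where distinct $A_i$ may share common $B$-stabilizers, and the non-abelian ICC base, where $N$ is no longer maximal abelian so classical Cartan uniqueness does not apply. My expectation is that one must exploit the Cohen--Lyndon structure of Section \ref{Sec:CL} directly at the level of subalgebras: each conjugate $tA^{(I)}t^{-1}$ inside the normal closure $\ll A\rr$ enters as a free factor, which after descending to the quotient $G$ forces $N$ to be rigidly embedded in a manner detectable by deformation/rigidity. It is precisely here that Proposition \ref{freeCL} and the residual finiteness of parabolics combine to supply the missing control on quasi-normalizers and intertwiners that is needed to go beyond the abelian, regular setting of \cite{CIOS1}.
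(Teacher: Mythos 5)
Your high-level skeleton (conjugate the base algebras, identify the quotient actions, untwist a cocycle) matches the strategy of the paper, but the mechanisms you propose for the two decisive steps are not the ones that work, and one justification is simply wrong. First, the identification $\theta(N^t)=uPu^*$ is not obtained from malleable deformations, "strong relative mixing", or any Cohen--Lyndon structure of $G$ itself: the theorem is about abstract wreath-like products, which need not arise from Proposition \ref{freeCL} at all, and the extension $1\to A^{(I)}\to G\to B\to 1$ need not split, so Bernoulli-type malleability is unavailable. What the paper actually does is purely on the target side: residual finiteness of the peripheral subgroups is used through Lemma \ref{exact} to Dehn-fill them along finite-index normal subgroups, producing a short exact sequence $1\to S\to D\to K\to 1$ with $K$ a non-elementary subgroup of a hyperbolic group and $S$ trivial or a nontrivial free product (here non-parabolicity and property (T) of $D$ enter, via the Kurosh theorem and almost malnormality). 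One then intertwines $\theta(p\P p)$ first into ${\rm L}(T)$ using the comultiplication trick and the Popa--Vaes structure theorem for hyperbolic groups, and then into $\Q={\rm L}(C^{(J)})$ using the free-product structure of $S$ and the Ioana/Vaes normalizer theorems (Theorem \ref{relativeT}); unitary conjugacy then comes from Popa's Cartan conjugacy theorem in the abelian case and from the IPP/Vaes--Verraedt conjugacy of regular irreducible subfactors (with Lemma \ref{Lem:Centr} giving irreducibility) in the ICC case. Your suggested route via loxodromic displacement and "separating quasi-normalizers" does not supply these intertwining statements.

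Second, your claim that $t=1$ "immediately" follows because both factors carry normalized traces is a genuine error: every isomorphism between II$_1$ factors is trace-preserving, including isomorphisms with nontrivial amplifications, so nothing is forced. In the paper $t=1$ is extracted from rigidity, namely that every \emph{generalized} $1$-cocycle for $\sigma\otimes\sigma$ (where $\sigma_g={\rm Ad}(u_g)$ on the base) has full support projection, which uses property (T) of $G$ itself and the fact that $\sigma$ is built over $G\ca I$ (Corollary \ref{CS} feeding into Theorems \ref{criterion1} and \ref{criterion}). Relatedly, in your step (iii) the cocycle superrigidity you need is not for $B$-cocycles but for $G$-cocycles for $\sigma\otimes\sigma$: after the discretization one only knows that the map $\delta$ descends to a homomorphism $B\to D$, and the paper bypasses this by transporting the comultiplication $\Delta(v_h)=v_h\otimes v_h$ back to $G$, obtaining a $1$-cocycle for $\sigma\otimes\sigma$ which is untwisted to a character using property (T) of $G$ and then concluding via \cite[Lemma 3.4]{IPV10}. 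Without the generalized-cocycle statement and the comultiplication step, your argument does not yield $t=1$ nor the isomorphism $\delta:G\to H$.
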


In particular, Theorem \ref{symmetries'} applies to wreath-like products from Theorem \ref{Thm:WLP_hyp} whenever $A$ is abelian or ICC. 

The proof of Theorem \ref{symmetries'} is presented in Section \ref{OUT}. It relies on deformation/rigidity methods (see \cite{Po01b,Po04,Po05,IPP05,IPV10,PV12}) and uses the interaction between property (T) and certain properties that wreath-like products share with ordinary wreath products. Note, however, that ordinary wreath products do not satisfy the conclusion of Theorem \ref{symmetries'} despite having remarkably rigid von Neumann algebras (see \cite{Po03,Po04,Po05,Po06b,CI08,Io10,IPV10,IM19}). Indeed, we have $\text{L}(A\wr B)\cong \text{L}(A'\wr B)$, for any abelian groups $A,A'$ with $|A|=|A'|$ and any group $B$.
Theorem \ref{symmetries'} immediately implies the following. 

\begin{cor}\label{Cor:JC}
Under the assumptions of Theorem \ref{symmetries'}, we have $Out(\emph{L}(G))\cong Char(G)\rtimes Out(G)$  and $\mathcal F({\rm L}(G))=\{ 1\}$.
\end{cor}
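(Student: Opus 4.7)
The plan is to derive the corollary as an almost immediate consequence of Theorem \ref{symmetries'}, specialized to the case $H = G$ (with the same action $B \ca I$). Under this specialization, the theorem asserts that any isomorphism $\theta \colon {\rm L}(G)^t \to {\rm L}(G)$ is forced to have $t = 1$ and to take the form
\[
\theta(u_g) \;=\; \rho(g)\, u\, u_{\delta(g)}\, u^{\ast} \qquad (g \in G)
\]
for some $\delta \in Aut(G)$, some $\rho \in Char(G)$, and some unitary $u \in {\rm L}(G)$. Both conclusions of the corollary will be read off directly from this rigidity statement.

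For the fundamental group I would just unpack definitions: $t \in \mathcal F({\rm L}(G))$ if and only if ${\rm L}(G)^t \cong {\rm L}(G)$, and Theorem \ref{symmetries'} with $H = G$ forces $t = 1$ for any such isomorphism, yielding $\mathcal F({\rm L}(G)) = \{1\}$.

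For the outer automorphism group, the canonical homomorphism (\ref{Jmap}) is injective by Proposition \ref{inj} and Remark \ref{RemFin}, using only that $G$ is ICC. For surjectivity, given any $\theta \in Aut({\rm L}(G))$, I would apply Theorem \ref{symmetries'} with $H = G$ and $t = 1$ to obtain $\delta \in Aut(G)$, $\rho \in Char(G)$, and a unitary $u \in {\rm L}(G)$ satisfying the displayed formula. Writing $\alpha_{\rho,\delta} \in Aut({\rm L}(G))$ for the automorphism associated with $(\rho, \delta)$ under (\ref{Jmap}), namely $\alpha_{\rho,\delta}(u_g) = \rho(g) u_{\delta(g)}$, the displayed formula becomes $\theta = {\rm Ad}(u) \circ \alpha_{\rho,\delta}$, so $\theta$ and $\alpha_{\rho,\delta}$ determine the same class in $Out({\rm L}(G))$. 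No genuine obstacle is anticipated at this stage, since all the substantive work has been absorbed into Theorem \ref{symmetries'}. The only points left are routine bookkeeping: that the assignment $(\rho,\delta) \mapsto \alpha_{\rho,\delta}$ is compatible with the semidirect product structure on $Char(G) \rtimes Out(G)$ (with $Out(G)$ acting on $Char(G)$ by precomposition), and that passing from $\theta$ to the pair $(\rho,\delta)$ is well defined modulo inner automorphisms, both of which follow from the explicit formula for (\ref{Jmap}) and the ICC assumption.
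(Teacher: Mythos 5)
Your proposal is correct and follows essentially the same route as the paper: specialize Theorem \ref{symmetries'} to $H=G$ to get $t=1$ (hence $\mathcal F({\rm L}(G))=\{1\}$) and to write any $\theta\in Aut({\rm L}(G))$ as ${\rm Ad}(u)$ composed with the automorphism induced by $(\rho,\delta)$, giving surjectivity of the map (\ref{Jmap}), whose injectivity is Proposition \ref{inj}. The paper phrases the surjectivity step via the homomorphism $\overline{\Psi}$ defined before Proposition \ref{inj}, but this is the same bookkeeping you describe.
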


Here, 
$\mathcal F(\mathcal M)=\{t>0\mid \mathcal M^t\cong \mathcal M\}$
denotes the {\it fundamental group} of a II$_1$ factor $\mathcal M$.

Thus,  Conjecture \ref{Jconj} holds for any property (T) group $G\in\W\R(A,B\ca I)$  as in the statement of Theorem \ref{symmetries'}. Corollary \ref{Cor:JC} also makes additional progress on the problem of calculating the fundamental group of $\text{L}(G)$ for ICC, property (T) groups $G$ (see \cite[Problem 2, page 551]{Co94}) by providing new evidence supporting a conjecture of Popa \cite{Po06a} that this group should be trivial. For other results in this direction, see \cite{CDHK20,CIOS1}.

A result similar to Theorem \ref{symmetries'} was obtained in \cite[Theorem 1.3]{CIOS1}. Note, however, that Theorem \ref{symmetries'} applies to a significantly wider class of wreath-like products than the one considered in \cite{CIOS1}.  This generalization provides us with enough flexibility to construct wreath-like products with property (T) that satisfy the assumptions of Theorem \ref{symmetries'}, have prescribed outer automorphism groups, and admit no non-trivial characters. Namely, in Section \ref{Sec:App}, we prove the following.

\begin{thm}\label{Thm:Out}
For any countable group $Q$, there exist a countable group $B$, a countable set $I$,  and $2^{\aleph_0}$ pairwise non-isomorphic finitely generated groups $\{U_j\}_{j\in J}$ such that the following conditions hold.
\begin{enumerate}
\item[(a)] For any $j\in J$,  $U_j\in \WR(A_j,B\curvearrowright I)$, where $A_j$ is non-trivial abelian and $B\curvearrowright I$ is a faithful action with infinite orbits.

\item[(b)] $B$ is a  non-parabolic ICC subgroup of a finitely generated, relatively hyperbolic group with residually finite peripheral subgroups.

\item[(c)] For any $j\in J$, $U_j$ has property (T), $[U_j,U_j]=U_j$, and $Out(U_j)\cong Q$.
\end{enumerate}
\end{thm}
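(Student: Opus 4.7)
The plan is to build each $U_j$ as a wreath-like product $\WR(A_j, B \curvearrowright I)$ with a single base group $B$ (independent of $j$) whose outer automorphism group equals $Q$, and then invoke a group-level rigidity showing $Out(U_j) \cong Out(B)$. For the first step, I would realize $Q$ as $Out(B)$ for a finitely generated, perfect, ICC group $B$ embedding as a non-parabolic subgroup of a finitely generated group $G_0$ hyperbolic relative to a finite family of finitely generated residually finite subgroups. The starting point is the classical result realizing any countable $Q$ as the outer automorphism group of a finitely generated small-cancellation (or hyperbolic) group, adapted to the relatively hyperbolic setting with residually finite peripherals using the Dehn filling technology of \cite{Osi07,DGO}; perfection and ICC of $B$ are imposed by the choice of defining relators, and perfection of $B$ ensures that it admits no non-trivial characters.

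Next, I would apply Proposition~\ref{freeCL} to a relatively hyperbolic group $G$ built from $G_0$, producing Cohen--Lyndon subgroups isomorphic to a continuum of pairwise non-isomorphic non-trivial finitely generated abelian groups $A_j$, and corresponding quotient groups $U_j \in \WR(A_j, B \curvearrowright I)$ for a faithful action $B \curvearrowright I$ with infinite orbits. Property (T) for $U_j$ is inherited from the Dehn filling input by choosing the ambient hyperbolic group to have property (T), as in Theorem~\ref{Thm:WLP_hyp} (e.g.\ using uniform lattices in $Sp(n,1)$). Perfection $[U_j,U_j] = U_j$ is arranged by starting from a perfect input at the level of $G$, passing to a finite-index subgroup of the property (T) lattice if needed to kill any finite abelianization, from which perfection descends to $U_j$ as a quotient. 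An alternative way to produce the $2^{\aleph_0}$ examples is to fix $A_j \cong A$ and vary instead the Dehn filling kernel among the continuum of admissible choices afforded by Proposition~\ref{freeCL}.

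The main obstacle is the identification $Out(U_j) \cong Q$, which requires a group-theoretic counterpart of the von Neumann algebraic rigidity of Theorem~\ref{symmetries'}: the base subgroup $\bigoplus_{i \in I} A_i$ must be characteristic in $U_j$, so that every automorphism of $U_j$ descends to one of $B$. This should follow from the non-parabolic ICC structure of $B$ inside its relatively hyperbolic ambient group --- the same input driving Theorem~\ref{symmetries'} and the W$^*$-superrigidity in \cite{CIOS1} --- carried out at the purely group-theoretic level rather than at the von Neumann algebraic one. Once characteristicness is established, the residual twist data by $Char(B)$ and by fiberwise automorphisms of $A_j$ collapses, using perfection of $B$ and absorption into inner automorphisms, to yield $Out(U_j) \cong Out(B) \cong Q$; distinct $U_j$ are then pairwise non-isomorphic because the isomorphism class of $A_j$ (or the Dehn filling kernel) can be recovered from $U_j$ via the characteristic base subgroup.
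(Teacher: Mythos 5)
There is a genuine gap at the heart of your plan: the identification $Out(U_j)\cong Out(B)$. You treat surjectivity of the induced map $Out(U_j)\to Out(B)$ as essentially automatic once the base $A_j^{(I)}$ is characteristic, and you dispose of the kernel by ``perfection of $B$ and absorption into inner automorphisms.'' Neither half works as stated. For surjectivity, an automorphism of $B$ lifts to $U_j$ only if it is compatible with the action $B\curvearrowright I$ (it must permute the conjugacy classes of the point stabilizers, which here are conjugates of a specific infinite cyclic subgroup $R$) and with the cohomology class of the extension $1\to A_j^{(I)}\to U_j\to B\to 1$; you provide no mechanism forcing this, and in the paper's actual construction the image of $Out(U_j)\to Out(B)$ is a \emph{proper} subgroup, namely $\iota(K)/\iota(B)$, cut out exactly by the condition that the induced automorphism of $B$ normalize $\ll R\rr^K$ (this is extracted from the characteristic subgroup generated by centralizers of base elements, the untwisted property from Lemma \ref{Lem:N(R)}, and Lemma \ref{Lem:CLmaln}). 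The paper never arranges $Out(B)\cong Q$ at all --- $Aut(B)$ contains $\overline G$ with finite index, so $Out(B)$ is typically much larger than $Q$ --- instead $Q$ is realized as $K/B$ for an intermediate group $B\lhd K\le \overline G$, and the required outer automorphisms of $U_j$ are produced concretely as conjugations by elements of the overgroup $W_{j,R}$ in which $U_j$ is normal; that is precisely what makes the ``enough automorphisms exist'' direction automatic, and it is the reason for the seemingly roundabout architecture of Theorem \ref{Thm:MC}. On the injectivity side, killing the automorphisms of $U_j$ that induce an inner automorphism of $B$ is Proposition \ref{Prop:WRAut}, whose proof goes through cocycle superrigidity (Corollary \ref{CS}) for the conjugation action on ${\rm L}(A_j^{(I)})$ and uses property (T) of $U_j$; perfection of $B$ plays no role there, and your hoped-for ``purely group-theoretic counterpart'' is exactly the missing ingredient, not a routine reduction.

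Two secondary problems: there is no continuum of pairwise non-isomorphic finitely generated abelian groups (there are only countably many), so your source of $2^{\aleph_0}$ examples cannot be ``f.g.\ abelian $A_j$''; the paper instead takes $A_j\cong\bigoplus_{p\in j}\ZZ_p$ over infinite sets $j$ of primes (the $U_j$ stay finitely generated because they are extensions of $B$ by these bases), and distinguishes them by prime torsion. Relatedly, Proposition \ref{freeCL} produces \emph{free} Cohen--Lyndon subgroups; abelian bases only appear after the further quotients of Lemma \ref{A/N} and the untwisting quotient of Lemma \ref{Lem:N(R)}, the latter being essential for the centralizer computation that pins down the image in $Out(B)$. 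Finally, realizing an arbitrary countable $Q$ as $Out(B)$ for a property (T) group $B$ is itself a nontrivial theorem (\cite{Min,CIOS2}), and, as the paper remarks, those groups lack the wreath-like structure needed here --- which is again why the paper routes $Q$ through $K/B$ rather than through $Out(B)$.
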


The proof of Theorem \ref{Thm:Out} relies on the results about wreath-like products obtained in Section \ref{Sec:WR} and on a number of other non-trivial facts and group theoretic techniques, including small cancellation theory in relatively hyperbolic groups \cite{Osi10} and the work Agol, Haglund, and Wise on groups acting on $CAT(0)$ cube complexes \cite{A,HW,Wis}.

Recall that two II$_1$ factors $\mathcal M$ and $\mathcal N$ are said to be {\it stably isomorphic} if $\mathcal M^t\cong \mathcal N$, for some $t>0$. Combining Theorems \ref{symmetries'} and \ref{Thm:Out}, we obtain the following strong version of Theorem~\ref{MainThm}. 

\begin{cor}\label{Out}
Let $Q$ be any countable group. Then there is a continuum of ICC property (T) groups $(G_i)_{i\in I}$ such that the \emph{II}$_1$ factors $(\emph{L}(G_i))_{i\in I}$ are pairwise not stably isomorphic and satisfy $Out(\emph{L}(G_i))\cong Q$ and $\mathcal F(\emph{L}(G_i))=\{1\}$, for every $i\in I$.
\end{cor}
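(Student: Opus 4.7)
The proof is a direct combination of Theorem~\ref{symmetries'} and Theorem~\ref{Thm:Out}. First, I would invoke Theorem~\ref{Thm:Out} to produce the family $\{U_j\}_{j\in J}$ of $2^{\aleph_0}$ pairwise non-isomorphic, finitely generated groups with $U_j\in\WR(A_j, B\ca I)$, where $A_j$ is non-trivial abelian, $B$ is a non-parabolic ICC subgroup of a finitely generated relatively hyperbolic group with residually finite peripheral subgroups, and $B\ca I$ is a faithful action with infinite orbits. Moreover, each $U_j$ has property (T), is perfect ($[U_j,U_j]=U_j$), and satisfies $Out(U_j)\cong Q$. Each $U_j$ is ICC, since $B$ is ICC, the fiber $A_j$ is non-trivial, and $B\ca I$ has infinite orbits; these features propagate to the wreath-like product via its normal form, which also makes each $U_j$ countably infinite.

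Next, for each fixed $j\in J$, the hypotheses of Theorem~\ref{symmetries'} are satisfied with $G=H=U_j$, so Corollary~\ref{Cor:JC} yields $\mathcal F({\rm L}(U_j))=\{1\}$ and $Out({\rm L}(U_j))\cong Char(U_j)\rtimes Out(U_j)$. Because $U_j$ is perfect, every homomorphism $U_j\to\mathbb T$ factors through the trivial abelianization, so $Char(U_j)=\{1\}$ and consequently $Out({\rm L}(U_j))\cong Out(U_j)\cong Q$.

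To establish pairwise non-stable-isomorphism, I would fix $i\ne j$ in $J$ and suppose towards contradiction that ${\rm L}(U_i)^t\cong {\rm L}(U_j)$ for some $t>0$. Applying Theorem~\ref{symmetries'} to this isomorphism (now with $G=U_i$ and $H=U_j$, which both satisfy the stated hypotheses with the \emph{same} base $B\ca I$) forces $t=1$ and produces a group isomorphism $U_i\cong U_j$, contradicting the choice of the family. Since $|J|=2^{\aleph_0}$, relabeling $\{U_j\}_{j\in J}$ as $(G_i)_{i\in I}$ with $|I|=2^{\aleph_0}$ yields the desired continuum and completes the proof.

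The main obstacle lies not in this combination step but in the prior construction: it is Theorem~\ref{Thm:Out} that demands the Cohen-Lyndon/Dehn filling machinery developed in Sections~\ref{Sec:WR} and~\ref{Sec:CL}, together with small cancellation techniques in relatively hyperbolic groups and the Agol-Haglund-Wise machinery, to simultaneously prescribe $Out(U_j)\cong Q$, enforce perfection and property (T), keep a fixed wreath-like base $B\ca I$ compatible with Theorem~\ref{symmetries'}, and produce continuum-many distinct isomorphism types. Once Theorems~\ref{symmetries'} and~\ref{Thm:Out} are in hand, the corollary follows formally in the manner sketched above.
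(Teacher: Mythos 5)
Your proposal is correct and follows essentially the same route as the paper: the authors likewise combine the construction of Theorem \ref{Thm:Out} (proved as Theorem \ref{Thm:MC}) with Theorem \ref{symmetries'} (via Proposition \ref{inj}, which is the content of Corollary \ref{Cor:JC}), use perfection of the groups to kill $Char$, and rule out stable isomorphisms for $i\ne j$ exactly as you do. The only cosmetic difference is that you quote Corollary \ref{Cor:JC} where the paper invokes Theorem \ref{symmetries} and Proposition \ref{inj} directly.
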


\begin{rem}
The fact that every countable group realizes as $Out(G)$ for some property (T) group was known before, see \cite{Min} and \cite{CIOS2}. However, groups $G$ constructed in these papers do not have have the required wreath-like structure and, therefore, Theorem~\ref{symmetries'} does not apply to them.    
\end{rem}


\section{Group theoretic preliminaries}\label{Sec:GTPrelim}



\subsection{Hyperbolic groups and their generalizations} \label{Sec:Hyp}


The proofs of the main results of our paper make use of relatively hyperbolic groups and their further generalization proposed in \cite{DGO}. In this context, it is convenient to work with generating alphabets instead of generating sets of groups. We begin by reviewing the necessary definitions. 

By a \emph{generating alphabet} $\mathcal A$ of a group $G$ we mean an abstract set given together with a (not necessarily injective) map $\mathcal A\to G$ whose image generates $G$. Note that every generating set $X$ of $G$ can be thought of as a generating alphabet with the obvious inclusion map $X\to G$. By $\mathcal A^\ast$ we denote the free monoid on $\mathcal A$. For a word $w\in \mathcal A^*$, let $\|w\|$ denote its length. Given a word $a_1\ldots a_k\in \mathcal A^*$, we say that it \emph{represents} an element $g\in G$ if $g=a_1\cdots a_k$ in $G$. If no confusion is possible, we identify elements of $\mathcal A^*$ with elements of $G$ represented by them.

By the \emph{Cayley graph} of $G$ with respect to a generating alphabet $\mathcal A$, denoted $\Gamma (G, \mathcal A)$, we mean a graph with the vertex set $G$ and the set of edges defined as follows. For every $a\in \mathcal A$ and every $g\in G$, there is an oriented edge $e$ going from $g$ to $ga$ in $\Gamma (G, \mathcal A)$ and labelled by $a$. We denote by $e^{-1}$ the opposite edge going from $ga$ to $g$ and labelled by the letter $a^{-1}$ (where $a^{-1}$ is the letter of $\mathcal A$ chosen according to the convention discussed above).

Given a combinatorial path $p$ in $\Gamma (G, \mathcal A)$, we denote by $\ell (p)$ its length, by $\lab (p)$ its label defined in the usual way, and by $p^{-1}$ the combinatorial inverse of $p$. We use the notation $\d_{\mathcal A}$ and $|\cdot|_{\mathcal A}$ to denote the standard metric on $\Gamma (G,\mathcal A)$ and the length function on $G$ with respect to (the image of) $\mathcal A$.

Recall that a metric space $S$ with a distance function $\d$ is said to be \emph{geodesic}, if every two points $a,b\in S$ can be connected by a path $p$ of length $\d(a,b)$. A geodesic metric space $S$ is said to be  \emph{$\delta$-hyperbolic} if there exists a constant $\delta\ge 0$ such that for any geodesic triangle $\Delta $ in $S$, every side of $\Delta $ is contained in the union of the closed
$\delta$-neighborhoods of the other two sides \cite{Gro}.

A group $G$ is {\it hyperbolic} if it is generated by a finite set $X$ and its Cayley graph $\Gamma (G,X)$ is a hyperbolic metric space. This definition is independent of the choice of a particular finite generating set $X$. Here and below, we think of graphs as metric spaces with respect to the natural length metric induced by identifying all open edges with the interval $(0,1)$. A hyperbolic group is called \emph{elementary} if it contains a cyclic subgroup of finite index. For examples and basic properties of hyperbolic groups, we refer the reader to \cite{Gro} and Chapters III.H,  III.$\Gamma$ of \cite{BH}.

Further, Let $G$ be a group, $\Hl$ a collection of subgroups of $G$, $X\subseteq G$. If $X$ and the union of all $H_i$ together generate $G$, we say that $X$ is a \emph{relative generating set} of $G$ with respect to $\Hl$. We think of $X$ and subgroups $H_i$ as abstract sets and consider the disjoint unions
\begin{equation}\label{calA}
\mathcal H = \bigsqcup\limits_{i\in I} H_i\;\;\;\;\; {\rm and}\;\;\;\;\; \mathcal A= X \sqcup \mathcal H.
\end{equation}

\begin{conv}
Henceforth, we assume that all generating sets and relative generating sets are symmetric, i.e., closed under inversion. To take care of the possible ambiguity arising from the fact that distinct letters of $\mathcal A$ may represent the same element of $G$, we agree to think of $a^{-1}$ as a letter from $X$ (respectively, from $H_i$) whenever $a\in X$ (respectively, $a\in H_i$).
\end{conv}

In particular, the alphabet $\mathcal A$ defined in (\ref{calA}) is also symmetric since so are $X$ and each $H_i$. Thus every element of $G$ can be represented by a word from $\mathcal A^*$.

In these settings, we can think of the Cayley graphs $\Gamma (H_i, H_i)$ (for each $i\in I$) as a subgraphs of $\G$. For every $i\in I$, we introduce a (generalized) metric $$\dl \colon H_i \times H_i \to [0, +\infty]$$ as follows.

\begin{defn}
Given $g,h\in H_i$,  $\dl (g,h)$ is defined to be the length of a shortest path in $\G $ that connects $g$ to $h$ and contains no edges of $\Gamma (H_i, H_i)$. If no such path exists, we set $\dl (g,h)=\infty $.
\end{defn}

Clearly $\dl $ satisfies the triangle inequality, where addition is extended to $[0, +\infty]$ in the natural way. We are now ready to define the notion of a hyperbolically embedded collection of subgroups introduced in \cite{DGO}. For more detail, we refer the reader to \cite{DGO,Osi16}.

\begin{defn}\label{hedefn}
A collection of subgroups $\Hl$ of $G$ is \emph{hyperbolically embedded  in $G$ with respect to a subset $X\subseteq G$}, denoted $\Hl \h (G,X)$, if the group $G$ is generated by the alphabet $\mathcal A$ defined by (\ref{calA}) and the following conditions hold.
\begin{enumerate}
\item[(a)] The Cayley graph $\G $ is hyperbolic.
\item[(b)] For every $n\in \mathbb N$ and $i\in I$, the set $\{ h\in H_i\mid \dl(1,h)\le n\}$ is finite.
\end{enumerate}
Further we say  that $\Hl$ is \emph{hyperbolically embedded} in $G$ and write $\Hl\h G$ if $\Hl\h (G,X)$ for some $X\subseteq G$. In case of a collection consisting of a single subgroup $H$, we simply write $H\h G$ and $H\h (G,X)$ to mean $\{H\}\h G$ and $\{H\}\h (G,X)$.
\end{defn}

\begin{rem}\label{Rem:he}
If $\Hl\h G$, then $H_i\h G$ for every $i\in I$, but the converse does not hold. For details, see \cite[Remark 4.26]{DGO}.
\end{rem}

\begin{prop}[{\cite[Proposition 4.28]{DGO}}]\label{rhhe}
A group $G$ is hyperbolic relative to a finite collection of subgroups $\Hl$ if and only if $\Hl\h(G, X)$ for some finite $X\subseteq G$.
\end{prop}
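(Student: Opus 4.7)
The plan is to reduce the proposition to the Osin--Bowditch characterization of relative hyperbolicity: $G$ is hyperbolic relative to a finite collection $\{H_i\}_{i\in I}$ if and only if there exists a finite $X\subseteq G$ with $X\cup\bigcup_{i\in I}H_i$ generating $G$ such that $\Gamma(G, X\cup\mathcal H)$ is both hyperbolic and \emph{fine}, in the sense that each edge belongs to only finitely many simple circuits of any prescribed length. Condition (a) of Definition \ref{hedefn} is the hyperbolicity clause verbatim, so the content of the proposition reduces to showing that, under the standing assumption $|X|, |I|<\infty$, condition (b) is equivalent to fineness of $\Gamma(G, X\cup\mathcal H)$.

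For the forward direction, let $G$ be hyperbolic relative to $\{H_i\}_{i\in I}$ with a finite relative generating set $X$ so that $\Gamma(G,X\cup\mathcal H)$ is hyperbolic and fine; (a) is then immediate. For (b), fix $i\in I$ and $n\in\mathbb N$, and to any $h\in H_i$ with $\widehat{d}_{H_i}(1,h)\leq n$ associate the simple circuit $c_h$ obtained by closing a witnessing path $p_h$ (of length at most $n$, avoiding $\Gamma(H_i,H_i)$) with the $\Gamma(H_i,H_i)$-edge from $h$ back to $1$. If the set of such $h$ were infinite, a pigeonhole over the finitely many label types of the first edge of $p_h$ (from $X$ or from some $H_j$ with $j\neq i$), together with an inductive collapse of initial $H_j$-runs and the hyperbolicity of $\Gamma(G,X\cup\mathcal H)$, would produce infinitely many simple circuits of bounded length sharing a common edge, contradicting fineness. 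For the converse, assume $\{H_i\}_{i\in I}\hookrightarrow_h (G,X)$ with $|X|, |I|<\infty$; then the Cayley graph is hyperbolic and only fineness needs verification. Given an edge $e$, translated to begin at $1$, and a simple circuit $c=eq$ of length at most $n$, collapse each maximal run of consecutive edges of $q$ that lie in a fixed $\Gamma(H_j,H_j)$ into a single $H_j$-edge; the reduced path $\tilde q$ has successive subpaths between $\mathcal H$-edges that avoid the corresponding $\Gamma(H_j,H_j)$, so condition (b) applied to each $i\in I$, combined with finiteness of $X$ and $I$, bounds the number of possible $\tilde q$. Reversing the collapse --- which at each step introduces an $H_j$-element of bounded $\widehat{d}_{H_j}$-distance from $1$ --- and invoking (b) once more then bounds the number of original circuits.

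The main obstacle is the bookkeeping needed to translate between the \emph{global} fineness condition, which must govern all simple circuits through an edge including those dipping into several distinct $\Gamma(H_j,H_j)$'s, and the \emph{local} condition (b), which only sees paths disjoint from a single $\Gamma(H_i,H_i)$. The collapse-expand procedure on circuits is the technical crux in both directions; its correct execution requires induction on the number of $\mathcal H$-runs in a circuit and exploits the assumption $|I|<\infty$ in an essential way.
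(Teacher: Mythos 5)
Your route is necessarily different from the paper's, because the paper offers no proof at all: Proposition \ref{rhhe} is quoted from \cite[Proposition 4.28]{DGO}, where the equivalence is established by comparing Definition \ref{hedefn} with Osin's isoperimetric definition of relative hyperbolicity (linear relative Dehn function, via the machinery of components of cycles in \cite{Osi06}), not with Bowditch-style fineness. The problem is that your reduction has genuine gaps exactly at the points you label the crux. In the forward direction, the circuits $c_h$ you build share only the vertex $1$, not an edge, and fineness bounds circuits through a fixed \emph{edge}, not through a fixed vertex (infinitely many triangles glued along a single vertex form a fine graph). Your pigeonhole on the first edge of $p_h$ works only when that edge is an $X$-edge; when it is an $H_j$-edge with $j\neq i$, the pigeonhole gives a fixed $j$ but not a fixed edge, since $H_j$ is infinite, and ``collapsing initial $H_j$-runs'' leaves the initial edge an arbitrary element of the infinite set $H_j$. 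This cannot be repaired by translating the circuits to a common edge either: in $\Gamma(G,X\cup\mathcal H)$ the label of an edge is translation-invariant, so the edges of $\Gamma(H_i,H_i)$ fall into infinitely many $G$-orbits. The standard soft argument for this direction uses Bowditch's coned-off Cayley graph, where every $c_h$ can be closed up through the single cone edge joining $1$ to the cone vertex of $H_i$, which is a common edge; but then your starting characterization and your converse direction must be redone for that graph, and fineness does not transfer between the coned-off graph and $\Gamma(G,X\cup\mathcal H)$ by quasi-isometry invariance (fineness is not a QI-invariant), so that transfer itself requires proof.

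In the converse direction the key claim is false as stated: after collapsing maximal runs of consecutive edges lying in a common translate of some $\Gamma(H_j,H_j)$, it is not true that the subpaths of the reduced circuit between $\mathcal H$-edges avoid the corresponding translates. An embedded circuit in $\Gamma(G,X\cup\mathcal H)$ may re-enter the same coset $gH_j$ several times non-consecutively (precisely what is impossible in the coned-off graph, where it would force a repeated cone vertex), so Definition \ref{hedefn} (b) does not directly bound the possible ``jumps'' across a coset. This is exactly the isolated-versus-connected component difficulty that the actual proofs in \cite{DGO,Osi06} are built to handle via relative isoperimetric estimates. Finally, the characterization you take as input --- relative hyperbolicity if and only if the relative Cayley graph is hyperbolic and fine --- is itself a theorem of essentially the same depth as Proposition \ref{rhhe} and is normally stated for the coned-off graph; so even with the two reductions repaired, the argument would not be more elementary than the cited proof, and as written both reductions fail at the step you yourself identify as the technical crux.
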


Readers unfamiliar with relative hyperbolicity can regard this proposition as the definition of a relatively hyperbolic group. For alternative definitions in the case of countable groups, see the survey \cite{Hru}. 

We illustrate the definitions discussed above by two examples. 

\begin{ex}\label{Ex:RH}
\begin{enumerate}
    \item[(a)] Let $G=H_1\ast H_2$, where $H_1$, $H_2$ are arbitrary groups. It is straightforward to verify that the corresponding functions $\d_i$ ($i=1,2$) satisfy $\d_i(a,b) =\infty$ for any distinct $a,b\in H_i$. Therefore, $\{H_1,H_2\} \h (G, \emptyset)$ and $G$ is hyperbolic relative to $H_1$ and $H_2$. The latter fact also follows immediately from the Bowditch definition of relative hyperbolicity as stated in \cite[Definition~3.4]{Hru} (applied to the action on the Bass-Serre tree associated with the free product decomposition) or the definition in terms of relative Dehn functions given in \cite[Definition 1.6]{Osi06}.
    
   \item[(b)] Let $G=H_1\times H_2$ and $X=\emptyset$. It is easy to see that the corresponding functions $\d_i$ ($i=1,2$) satisfy $\d_i(a,b) \le 3$ for all $a,b\in H_i$. In particular, $\{ H_1, H_2\}\not\h (G, \emptyset)$ whenever at least one of the subgroups is infinite. 
\end{enumerate}
\end{ex}

Subgroups from the collection $\Hl$ are called \emph{peripheral subgroups}. We say that $G$ is \emph{properly hyperbolic} relative to $\Hl$ if $H_i\ne G$ for all $i\in I$. A relatively hyperbolic group is said to be \emph{elementary} if it is virtually cyclic or one of the peripheral subgroups coincides with $G$. Thus, being non-elementary relatively hyperbolic means being properly relatively hyperbolic and not virtually cyclic.

The idea behind Example \ref{Ex:RH} (b) can be used to prove the following.

\begin{prop}[{\cite[Proposition 4.33]{DGO}}]\label{Prop:maln}
Let $G$ be a group, $\Hl $ a hyperbolically embedded collection of subgroups of $G$. Then for every $i\in I$ and $g\in G\setminus H_i$, we have $|H_i \cap g^{-1}H_i g|<\infty $. Also, if $i\ne j$, then $|H_i \cap g^{-1}H_jg|<\infty$ for all $g\in G$.
\end{prop}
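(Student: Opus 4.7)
The plan is to bound $\widehat\d_{H_i}(1,h)$ for every $h\in H_i\cap g^{-1}H_j g$ by a constant depending only on $g$ and the hyperbolicity constant $\delta$ of $\Gamma(G,\mathcal A)$. Once such a bound is available, the conclusion follows at once from condition (b) of Definition~\ref{hedefn}, since balls of finite $\widehat\d_{H_i}$-radius in $H_i$ are finite.

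Given such an $h$, I would set $h':=ghg^{-1}\in H_j$ and consider the quadrilateral $Q$ in $\Gamma(G,\mathcal A)$ with consecutive vertices $1,\,g,\,gh=h'g,\,h'$. Its sides $p_1$ (from $1$ to $g$) and $p_3$ (from $h'g$ to $h'$) are chosen to be $\d_{\mathcal A}$-geodesics of length $|g|_{\mathcal A}$, while $p_2$ (from $g$ to $gh$) and $p_4$ (from $h'$ to $1$) are the single edges labeled by $h\in H_i$ and $(h')^{-1}\in H_j$, respectively. By construction, the $\widehat\d_{H_i}$-length of the $H_i$-component $p_2$ equals $\widehat\d_{H_i}(1,h)$.

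The key tool is the standard isolated components estimate for hyperbolically embedded collections proved in \cite{DGO}: for any $n\ge 1$, the $\widehat\d_{H_i}$-length of an isolated $H_i$-component of a $\d_{\mathcal A}$-geodesic $n$-gon in $\Gamma(G,\mathcal A)$ is bounded by a constant depending only on $n$ and $\delta$. Thus it suffices to realize $p_2$ as an isolated $H_i$-component of a geodesic polygon with a bounded number of sides, after possibly modifying $Q$.

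The main obstacle is precisely verifying isolation. The endpoints of $p_4$ lie in $H_j$, while those of $p_2$ lie in $gH_i$: if $i\ne j$ these are components of different peripherals, and if $i=j$ the two cosets coincide iff $g\in H_i$, which is excluded by hypothesis. Hence $p_2$ and $p_4$ are never $H_i$-connected in our setting. Within the geodesic sides $p_1$, $p_3$, every $H_i$-component is a single edge (two consecutive $H_i$-labeled edges would merge into a single $\mathcal A$-letter, contradicting geodesicity). If some such component lies in the coset $gH_i$, I would \emph{absorb} it into $p_2$ by replacing the relevant subpath of $p_1\cup p_2$ (or $p_2\cup p_3$) with a single $H_i$-edge provided by that coset, producing a strictly shorter quadrilateral of the same combinatorial type in which the new $p_2$ still has endpoints in $gH_i$. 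Iterating this process, which terminates in at most $2|g|_{\mathcal A}$ steps, yields a quadrilateral in which $p_2$ is an isolated $H_i$-component; the isolated components estimate then gives $\widehat\d_{H_i}(1,h)\le M(|g|_{\mathcal A},\delta)$, completing the argument.
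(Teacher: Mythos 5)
First, a point of comparison: the paper does not prove this proposition at all — it is quoted from \cite{DGO}, so your sketch should be measured against the argument there, which it largely reconstructs (same quadrilateral $1,\,g,\,gh,\,h'$ with two geodesic sides labelled by $g^{\pm1}$ and two peripheral edges, same bounded-isolated-component lemma, same appeal to local finiteness of $(H_i,\dl)$). The genuine gap is in your last step. When you ``absorb'' an $H_i$-component of $p_1$ (an edge from $u$ to $v$ with $u,v\in gH_i$) into $p_2$, the new $H_i$-edge is labelled not by $h$ but by $u^{-1}gh=\alpha^{-1}h$ with $\alpha=g^{-1}u\in H_i$; an absorption on the $p_3$ side likewise replaces $h$ by $\alpha^{-1}h\beta$ with $\beta\in H_i$. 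The isolated-component estimate therefore bounds $\dl(1,\alpha^{-1}h\beta)$, not $\dl(1,h)$, and there is no a priori control of $\dl(1,\alpha)$ or $\dl(1,\beta)$ in terms of $|g|_{\mathcal A}$ and $\delta$: these elements are short in $\d_{\mathcal A}$ but can be arbitrarily deep in the peripheral metric (already a single $H_i$-letter has $\d_{\mathcal A}$-length $1$ and unbounded $\dl$-length). So the concluding claim ``$\dl(1,h)\le M(|g|_{\mathcal A},\delta)$'' does not follow from what precedes it.

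The gap is repairable in two standard ways. One (this is what \cite{DGO} does) is to avoid absorption entirely: replace $g$ by an element of minimal $\mathcal A$-length in the double coset $H_jgH_i$, which changes $H_i\cap g^{-1}H_jg$ only by conjugation by an element of $H_i$ and preserves the hypothesis $g\notin H_i$ when $i=j$; then any vertex of $p_1$ or $p_3$ other than $g$, respectively $gh$, lying in $gH_i$ would yield a strictly shorter element of $H_jgH_i$, so $p_2$ is isolated in the original quadrilateral and the bound applies to $h$ itself. Alternatively, keep your absorption but fix the geodesic $p_1$ once and for all and take $p_3=(h'p_1)^{-1}$; then $\alpha$ and $\beta$ range over the finite set $\{g^{-1}u \mid u \text{ a vertex of } p_1\}\cap H_i$, independent of $h$, so $h=\alpha(\alpha^{-1}h\beta)\beta^{-1}$ lies in a product of three finite sets (the middle one being a $\dl$-ball, finite by condition (b) of Definition \ref{hedefn}), and finiteness of $H_i\cap g^{-1}H_jg$ follows without bounding $\dl(1,h)$. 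Two smaller remarks: since distinct vertices of a geodesic lying in one coset of $H_i$ must be adjacent, each geodesic side meets $gH_i$ in at most two vertices, so at most one absorption per side is ever needed (your $2|g|_{\mathcal A}$ bound is correct but the termination issue is simpler than you suggest); the rest of your isolation analysis ($H_i$-components of geodesics are single edges, and $p_2$ cannot be connected to $p_4$ because $gH_i\ne H_i$ when $i=j$ and the peripherals differ when $i\ne j$) is correct.
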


The next result will be used to modify peripheral collections of relatively hyperbolic groups.

\begin{prop}[{\cite[Corollary 1.14]{DS}}]\label{Prop:rhrh}
Let $G$ be a group hyperbolic relative to a finite collection of subgroups $\Hl$. Suppose that each $H_i$ is hyperbolic relative to a finite collection of subgroups $\{ K_{ij}\}_{j\in J_i}$. Then $G$ is hyperbolic relative to $\bigcup_{i\in I} \{ K_{ij}\}_{j\in J_i}$.
\end{prop}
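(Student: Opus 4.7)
The plan is to reformulate the statement in the language of hyperbolically embedded subgroups via Proposition \ref{rhhe} and reduce it to a composition property of hyperbolic embeddings. From the hypotheses we obtain finite relative generating sets $X\subseteq G$ and $Y_i\subseteq H_i$ for each $i\in I$ with $\{H_i\}_{i\in I}\h (G,X)$ and $\{K_{ij}\}_{j\in J_i}\h (H_i, Y_i)$. Setting $Z=X\cup \bigcup_{i\in I} Y_i$, which is finite because $I$ is, the conclusion reduces, again via Proposition \ref{rhhe}, to the hyperbolic embedding
\[
\bigcup_{i\in I}\{K_{ij}\}_{j\in J_i}\h (G, Z).
\]

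Condition (b) of Definition \ref{hedefn} for this data would be handled by a ``straightening'' argument: any path in $\Gamma(G, Z\sqcup\bigsqcup_{\ell,m}K_{\ell m})$ of bounded length starting at $1$, ending in $K_{ij}$, and avoiding $K_{ij}$-edges can be deformed, by replacing each maximal subpath lying inside a single $H_\ell$-coset with a $\Gamma(H_\ell, Y_\ell\sqcup\bigsqcup_m K_{\ell m})$-geodesic, into a path of comparable length that is essentially internal to a single coset of $H_i$; the required finiteness then follows from the analogous condition for $\{K_{ij}\}_{j\in J_i}\h(H_i, Y_i)$. The main work is verifying condition (a), the hyperbolicity of $\Gamma(G, Z\sqcup\bigsqcup_{i,j}K_{ij})$. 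I would proceed by \emph{substitution}: start from the hyperbolic graph $\Gamma(G, X\sqcup\bigsqcup_i H_i)$ supplied by the first hypothesis, and replace the ``parabolic clique'' on each coset $gH_i$ by an isometric copy of the hyperbolic graph $\Gamma(H_i, Y_i\sqcup\bigsqcup_j K_{ij})$. The vertex set is preserved and the resulting graph is identified with $\Gamma(G, Z\sqcup\bigsqcup_{i,j}K_{ij})$. Hyperbolicity should then follow from a combination lemma asserting that this kind of clique-by-hyperbolic-graph substitution preserves hyperbolicity, with the new constant depending only on the old one and on a uniform hyperbolicity constant of the replacement graphs.

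The main obstacle is the precise implementation of this substitution, since the replacement graphs are not isometrically embedded into the new graph and a single $H_i$-edge of the old graph may correspond to an arbitrarily long path after replacement, potentially creating new shortcuts that could ruin thin triangles. The cleanest workaround is the asymptotic-cone characterization of Drutu--Sapir: asymptotic cones of $G$ are tree-graded with pieces equal to ultralimits of cosets of the $H_i$, each such piece is itself tree-graded with pieces ultralimits of cosets of the $K_{ij}$, and refinement of a tree-graded structure by replacing each piece with its own tree-graded decomposition is a routine exercise. The resulting pieces are exactly the ultralimits of cosets of the $K_{ij}$, which is precisely the condition characterizing relative hyperbolicity of $G$ with respect to $\bigcup_{i\in I}\{K_{ij}\}_{j\in J_i}$, closing the argument.
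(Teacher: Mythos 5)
This proposition is not proved in the paper at all: it is quoted verbatim from Drutu--Sapir \cite{DS} (Corollary 1.14 there), so the only thing to compare your attempt with is the argument in that source, which is precisely the asymptotic-cone/tree-graded machinery you fall back on in your last paragraph. Your first, combinatorial route you abandon yourself, and rightly so: the proposed ``clique-by-hyperbolic-graph substitution preserves hyperbolicity'' lemma is exactly what fails naively, since a single $H_i$-edge becomes an arbitrarily long path and geodesics are rearranged globally. (A correct combinatorial route does exist -- transitivity of hyperbolically embedded collections, proved in \cite{DGO}, combined with Proposition \ref{rhhe} -- but that is a substantial theorem proved with isoperimetric/relative-presentation techniques, not a substitution lemma, and your sketch of condition (b) of Definition \ref{hedefn} by ``straightening'' is likewise only a plan.) What remains, then, is your asymptotic-cone argument, and there the entire weight rests on the sentence that refining a tree-graded structure piece-by-piece ``is a routine exercise.'' That is exactly where the content of \cite{DS} lives, so as written the proposal essentially cites the statement it is meant to prove.

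Concretely, the non-routine point is this. In an asymptotic cone of $G$, a piece $P=\lim_\omega g_nH_i$ carries the metric induced from $d_G$. Relative hyperbolicity of $H_i$ only tells you that asymptotic cones of $(H_i,d_{H_i})$ are tree-graded with respect to ultralimits of $K_{ij}$-cosets. To relate the two you need that the peripheral subgroups are undistorted in $G$ (a fact you never invoke), and even then $P$ is merely bi-Lipschitz, not isometric, to a cone of $(H_i,d_{H_i})$; tree-gradedness is a condition on geodesics and simple geodesic triangles, hence does not transfer along bi-Lipschitz maps formally, and you must also show the prospective sub-pieces are closed geodesic subspaces of the cone of $G$. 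This can be repaired -- for instance, a simple geodesic triangle of the big cone lying in $P$ is a simple closed curve, hence a path-connected set without cut points, and such sets are contained in pieces of any tree-graded structure, a topological (bi-Lipschitz-invariant) statement; alternatively one uses Drutu--Sapir's intrinsic characterizations of asymptotic tree-gradedness -- but none of this appears in your proposal. Until that step is supplied, the argument has a genuine gap at its central point.
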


Every hyperbolic group is hyperbolic relative to the empty collection of subgroups and every group is hyperbolic relative to itself. Thus, we obtain the following.

\begin{cor}\label{Cor:rhh}
Let $G$ be a group hyperbolic relative to a collection of subgroups $\Hl \cup \{ K_j\}_{j\in J}$. If $I$ is finite and $H_i$ is hyperbolic for every $i\in I$, then $G$ is hyperbolic relative to $\{ K_j\}_{j\in J}$. In particular, a group hyperbolic relative to a finite collection of hyperbolic subgroups is itself hyperbolic.
\end{cor}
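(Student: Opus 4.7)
The plan is to apply Proposition \ref{Prop:rhrh} directly to the given peripheral collection after rewriting each piece in a convenient way. First I note that, by Proposition \ref{rhhe}, saying that $G$ is hyperbolic relative to $\{H_i\}_{i\in I}\cup\{K_j\}_{j\in J}$ forces this collection to be finite, so in particular $J$ is finite. This is the key hidden consequence of the hypothesis that lets us invoke Proposition \ref{Prop:rhrh}.

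Next I would use two elementary observations recorded just before Proposition \ref{Prop:maln} and in the paragraph preceding Corollary \ref{Cor:rhh}: every hyperbolic group is hyperbolic relative to the empty collection, and every group is hyperbolic relative to itself. Applied here, each $H_i$ ($i\in I$) is hyperbolic relative to the empty family, and each $K_j$ ($j\in J$) is hyperbolic relative to the one-element family $\{K_j\}$. Both families are finite, so the hypotheses of Proposition \ref{Prop:rhrh} are satisfied with the outer collection being $\{H_i\}_{i\in I}\cup\{K_j\}_{j\in J}$.

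Proposition \ref{Prop:rhrh} then concludes that $G$ is hyperbolic relative to
\[
\Bigl(\bigcup_{i\in I}\emptyset\Bigr)\cup\Bigl(\bigcup_{j\in J}\{K_j\}\Bigr)=\{K_j\}_{j\in J},
\]
which is the desired statement. The ``in particular'' part then follows by specializing to $J=\emptyset$: $G$ is hyperbolic relative to the empty collection, which by definition means $G$ is a hyperbolic group.

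I do not anticipate any real obstacle; the content of the corollary is essentially a bookkeeping application of Proposition \ref{Prop:rhrh}, with the only subtle point being the implicit finiteness of $J$ extracted from the definition of relative hyperbolicity via Proposition \ref{rhhe}.
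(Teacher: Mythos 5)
Your core argument is exactly the paper's: the corollary is obtained from Proposition \ref{Prop:rhrh} together with the two observations that a hyperbolic group is hyperbolic relative to the empty collection and that every group is hyperbolic relative to itself, and the bookkeeping you perform is the intended one, including the specialization $J=\emptyset$ for the ``in particular'' clause.

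The one step that does not hold as written is the claim you single out as the ``key hidden consequence'': that Proposition \ref{rhhe} forces the peripheral collection, and hence $J$, to be finite. Proposition \ref{rhhe} is an equivalence characterizing relative hyperbolicity \emph{with respect to a finite collection} in terms of a hyperbolically embedded family with finite relative generating set; it asserts nothing about collections in general and in particular does not rule out infinite peripheral collections (in the general definition of \cite{Osi06}, which the paper cites as an alternative approach, infinite collections are allowed). So no finiteness of $J$ can be extracted from it; note also that the corollary's hypotheses only require $I$ to be finite and are silent about $J$. If one reads the paper as working throughout under the convention that ``hyperbolic relative to'' means the situation of Proposition \ref{rhhe} (finite peripheral collection), then $J$ is finite as part of the standing hypotheses, not as a consequence of Proposition \ref{rhhe}, and your application of Proposition \ref{Prop:rhrh} goes through verbatim---this is precisely the paper's proof. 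If instead $J$ is allowed to be infinite, your deduction is simply false, and a literal application of Proposition \ref{Prop:rhrh}, which is stated for finite collections, does not cover that case either. The fix is to drop the purported deduction from Proposition \ref{rhhe}: either invoke finiteness of the whole collection as a hypothesis/convention, or acknowledge that an infinite $J$ would require a version of the transitivity result beyond the one stated in Proposition \ref{Prop:rhrh}.
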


Finally, we discuss another generalization of hyperbolic groups -- the class of acylindrically hyperbolic groups -- that was introduced in \cite{Osi16} and received considerable attention in recent years (see \cite{Osi18} and references therein).

An isometric action of a group $G$ on a metric space $S$ is said to be {\it acylindrical} if, for every $\e>0$, there exist $R,N>0$ such that, for every two points $x,y\in S$ with $\d (x,y)\ge R$, there are at most $N$ elements $g\in G$ satisfying the inequalities
$$
\d(x,gx)\le \e \;\;\; {\rm and}\;\;\; \d(y,gy) \le \e.
$$
Recall that an action of a group $G$ on a hyperbolic space $S$ is \emph{non-elementary} if the limit set of $G$ on the Gromov boundary $\partial S$ has infinitely many points. An acylindrical action of a group $G$ on a hyperbolic space is non-elementary if and only if $G$ is not virtually cyclic and has unbounded orbits. Indeed, this follows from the classification of acylindrical actions obtained in \cite[Theorem 1.1]{Osi16}.

\begin{thm}\label{Thm:class}
Let $G$ be a group acting acylindrically on a hyperbolic space. Then $G$ satisfies exactly one of the following three conditions.
\begin{enumerate}
\item[(a)] $G$ has bounded orbits.
\item[(b)] $G$ has unbounded orbits and is virtually cyclic.
\item[(c)] The action of $G$ is non-elementary.
\end{enumerate}
\end{thm}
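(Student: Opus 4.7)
The plan is to leverage the standard classification of individual isometries of a Gromov-hyperbolic space $S$ into \emph{elliptic} (bounded orbits), \emph{parabolic} (unique fixed point on $\partial S$, zero translation length, sublinear orbit growth), and \emph{loxodromic} (two fixed points on $\partial S$, positive translation length). The core of the argument is to show that acylindricity rules out parabolic elements, after which the trichotomy follows from a standard analysis of the limit set.

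First I would rule out parabolic elements. Suppose $g\in G$ were parabolic, fixing $\xi\in\partial S$; then for any basepoint $x\in S$ one has $d(x,g^n x)/n\to 0$ while $d(x,g^n x)\to\infty$. Fix the acylindricity constants $R,N$ associated to some $\varepsilon$ (to be chosen in terms of $\delta$), pick $m$ with $d(x,g^m x)>R$, and set $y:=g^m x$. A pigeonhole argument on the orbit $\{g^n x\}$ — sublinearity forces at least $N+1$ indices $k$ with $d(g^k x,x)$ small — combined with $\delta$-thinness of triangles with one ideal vertex at $\xi$, yields the \emph{simultaneous} estimate $d(g^k y,y)\le\varepsilon$ for those same $k$. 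This contradicts acylindricity. Granted this, every element of $G$ is elliptic or loxodromic. If all are elliptic, a standard argument shows $G$ has bounded orbits (case (a)); otherwise $G$ contains a loxodromic $g$ with boundary fixed points $g^\pm$.

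With a loxodromic $g$ in hand, let $E\le G$ be the setwise stabilizer of $\{g^+,g^-\}$ and $E^+\le E$ the index-at-most-$2$ subgroup fixing each of $g^\pm$ pointwise. If $E=G$, then $G$ coarsely preserves a quasi-axis of $g$; acylindricity applied to two points of this axis separated by more than $R$ forces the kernel of the translation-length homomorphism $E^+\to\mathbb{R}$ to be finite, while its image is a discrete subgroup of $\mathbb{R}$. Thus $E^+$, and hence $G$, is virtually cyclic, yielding case (b). If $E\ne G$, pick $h\in G\setminus E$; then $hgh^{-1}$ is loxodromic with fixed-point set distinct from $\{g^\pm\}$, and a Klein--Schottky (ping-pong) argument on sufficiently high powers of $g$ and $hgh^{-1}$ produces a non-abelian free subgroup whose limit set is a Cantor subset of $\partial S$. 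In particular, the $G$-limit set contains at least three, and hence infinitely many, points, placing $G$ in case (c). Mutual exclusion is immediate: bounded orbits admit no loxodromic axis, a virtually cyclic group has exactly two limit points, and a non-elementary action has infinitely many.

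The main obstacle I expect is the parabolic-exclusion step, i.e.\ converting the soft statement that a parabolic orbit accumulates at a single boundary point into a quantitative violation of the acylindricity inequality for a specific large separation $d(x,y)$. The delicate point is that $\{g^n x\}$ accumulates at $\xi$ only in the Gromov sense, so transferring ``$g^k$ almost fixes $x$'' to ``$g^k$ almost fixes $y$'' simultaneously for more than $N$ values of $k$ requires careful control of thin triangles with an ideal vertex and an adequate choice of $\varepsilon$ in terms of $\delta$, $R$, $N$ and $d(x,g^m x)$.
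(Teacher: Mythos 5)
There is a genuine gap, and it sits exactly where the real content of this theorem lies. (For context: the paper does not prove this statement at all -- it is quoted verbatim from \cite[Theorem 1.1]{Osi16}, so the comparison here is with Osin's proof.) Your plan classifies \emph{individual} isometries and then asserts: ``if all are elliptic, a standard argument shows $G$ has bounded orbits.'' That is not a standard argument, and for general actions on hyperbolic spaces it is false: in Gromov's classification of group actions there are horocyclic (parabolic-type) actions, with unbounded orbits and limit set a single point, in which every element can be elliptic (think of an increasing union of elliptic subgroups acting on a tree with no global fixed point). So eliminating parabolic \emph{elements} -- which acylindricity does give you, and which the paper already records via Bowditch's lemma in Definition \ref{Def:lox} -- does not eliminate parabolic \emph{actions}. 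The heart of Osin's Theorem 1.1 is precisely the statement that an acylindrical action with unbounded orbits contains a loxodromic element, i.e.\ that horocyclic acylindrical actions do not exist; his proof uses acylindricity at the level of the whole orbit (orbit points escaping to the unique limit point $\xi$ produce, via Gromov-product estimates, more than $N$ group elements almost fixing two points at distance greater than $R$), not just the non-existence of parabolic isometries. Your proposal has no argument at this point, and the burden cannot be discharged by the element-wise step you do outline.

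The rest of your sketch is essentially sound and matches the standard route: acylindricity excludes parabolic isometries (this is the cited \cite[Lemma 2.2]{Bow}); a loxodromic $g$ with $G$ preserving $\{g^{\pm}\}$ gives the virtually cyclic case via the finiteness, by acylindricity, of the elements moving a long segment of a quasi-axis by at most $\varepsilon$ (though note the translation length on a lineal action is a priori only a quasimorphism, so ``kernel of the translation-length homomorphism'' and ``discrete image'' need the usual rephrasing); and if some conjugate of $g$ has a different fixed pair, ping-pong yields a free subgroup and an infinite limit set. Mutual exclusivity is as you say. But as written, the trichotomy is not proved: you must either reproduce Osin's argument that unbounded orbits force a loxodromic element under acylindricity, or find a substitute for it; ``all elements elliptic $\Rightarrow$ bounded orbits'' cannot simply be invoked.
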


Every group has an acylindrical action on a hyperbolic space, namely the trivial action on the point. For this reason, we want to avoid elementary actions in the definition below.

\begin{defn}\label{ahdef}
A group $G$ is \emph{acylindrically hyperbolic} if admits a non-elementary acylindrical action on a hyperbolic space.
\end{defn}

We mention an equivalent characterization.

\begin{thm}[{\cite[Theorem 1.2]{Osi16}}]\label{Thm:ah}
A group $G$ is acylindrically hyperbolic if and only if $G$ contains a proper infinite hyperbolically embedded subgroup.
\end{thm}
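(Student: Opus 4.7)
The plan is to use the notion of a \emph{loxodromic WPD element} as a bridge between the two conditions. Recall that for an isometric action of $G$ on a hyperbolic space $S$, an element $g\in G$ is \emph{loxodromic} if its orbits on $S$ are quasi-isometric to $\ZZ$, and it satisfies the \emph{WPD} (weakly properly discontinuous) condition if, for every $\e>0$ and every $x\in S$, only finitely many $h\in G$ can simultaneously $\e$-fix $x$ and $g^N x$ for $N$ large. The forward and backward directions each reduce to linking one of the two conditions of the theorem to the existence of such an element.

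For the forward direction, suppose $G$ acts acylindrically and non-elementarily on a hyperbolic space $S$. By Theorem~\ref{Thm:class}, $G$ is not virtually cyclic and has unbounded orbits; a ping-pong argument using two independent hyperbolic-type isometries (which exist because the limit set on $\partial S$ is infinite) then produces a loxodromic element $g\in G$. Acylindricity of the whole action trivially implies that $g$ is WPD. Consider the elementary closure
$$
E(g)=\{h\in G \mid \exists\, n>0,\; hg^nh^{-1}=g^{\pm n}\},
$$
which, by the WPD property, is the unique maximal virtually cyclic subgroup containing $g$. Since $G$ is not virtually cyclic, $E(g)$ is proper, and it is obviously infinite. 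The key input is \cite[Theorem 6.8]{DGO}: for any loxodromic WPD element $g$ acting on a hyperbolic space, the subgroup $E(g)$ is hyperbolically embedded in $G$. This yields a proper infinite subgroup $H=E(g)\h G$, as required.

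For the backward direction, assume $H\le G$ is proper, infinite, and $H\h (G,X)$ for some $X\subseteq G$. Then the Cayley graph $\G$ (with $\mathcal A=X\sqcup H$) is hyperbolic, and $G$ acts on it isometrically by left multiplication. Using condition (b) of Definition~\ref{hedefn} and the assumption that $H$ is infinite, one checks that vertices of $H\subset V(\G)$ are unbounded in $\G$; combined with $H\ne G$, the limit set of $G$ on $\partial \G$ is shown to contain infinitely many points, so the action is non-elementary. The next step is to produce a loxodromic WPD element: pick any $h\in H$ of sufficiently large $\dl$-length, verify that the orbit $\langle h\rangle\cdot 1$ is quasi-geodesic by exploiting the fact that paths in $\G$ from $1$ to $h^n$ avoiding $\Gamma(H,H)$ must be $\dl$-long, and deduce the WPD condition directly from properness of $\dl$. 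Finally, one invokes the general principle that a group admitting a non-elementary action on a hyperbolic space with a loxodromic WPD element is acylindrically hyperbolic: this is proved by passing from $\G$ to a quasi-tree of metric spaces via the Bestvina--Bromberg--Fujiwara projection-complex construction, which upgrades WPD-ness of a single element to acylindricity of the whole action on a new hyperbolic space.

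The main obstacle is the last step of the backward direction: hyperbolic embedding directly gives a (possibly non-acylindrical) action on the hyperbolic graph $\G$ with a WPD element, but converting this into a genuine acylindrical action requires the projection-complex machinery and is the technical core of \cite{Osi16}. The forward direction, by contrast, is comparatively routine once one knows the DGO construction of $E(g)$ for loxodromic WPD elements.
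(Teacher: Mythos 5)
First, note that the paper does not prove this statement at all: Theorem \ref{Thm:ah} is quoted verbatim from \cite[Theorem 1.2]{Osi16}, so the only meaningful comparison is with Osin's proof there. Your forward direction is essentially that argument and is fine: an acylindrical non-elementary action yields a loxodromic element (Theorem \ref{Thm:class}), acylindricity gives WPD, and \cite[Theorem 6.8]{DGO} shows $E(g)\h G$, which is proper and infinite since $G$ is not virtually cyclic. (It is in this step, inside \cite{DGO}, that the Bestvina--Bromberg--Fujiwara projection complex is actually used, not in the converse.)

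The backward direction, however, has a genuine gap. You propose to take the loxodromic WPD element to be an element $h\in H$ of large $\widehat{\d}_H$-length. This cannot work: every element of $H$ is a single letter of the alphabet $\mathcal A=X\sqcup H$, so the orbit $\langle h\rangle\cdot 1=\{h^n\}$ lies at distance at most $1$ from the basepoint in $\Gamma(G,\mathcal A)$, i.e.\ $H$ acts elliptically with orbits of diameter $\le 1$; no element of $H$ is loxodromic, regardless of its $\widehat{\d}_H$-length. Producing a loxodromic (WPD) element requires mixing letters of $H$ with elements of $G\setminus H$ and uses almost malnormality of $H$ (Proposition \ref{Prop:maln}); this is \cite[Theorem 6.11]{DGO}, exactly as invoked in the proof of Proposition \ref{Lem:DFICC} of this paper. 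Your claim that the limit set is infinite also presupposes such loxodromic elements, so it cannot be ``checked'' before this step. Finally, the concluding appeal to a BBF quasi-tree to ``upgrade WPD to acylindricity'' misplaces the technical core and is close to circular: from the hyperbolically embedded $H$ you would extract a WPD element and then (via \cite{DGO}, which itself uses the projection complex) recover a hyperbolically embedded subgroup $E(g)$, without ever exhibiting an acylindrical action. In \cite{Osi16} the implication (h.e.\ subgroup $\Rightarrow$ acylindrical hyperbolicity) is instead Theorem 5.4 there: one enlarges the relative generating set $X$ to a set $Y$ so that the action of $G$ on $\Gamma(G,Y\sqcup\mathcal H)$ itself is acylindrical, and non-elementarity then follows from Theorem \ref{Thm:class} together with the loxodromic element supplied by \cite[Theorem 6.11]{DGO}. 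Your sketch is missing both of these inputs.
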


Non-elementary hyperbolic and relatively hyperbolic groups are acylindrically hyperbolic. More precisely, we have the following (see \cite[Proposition 5.2 and Lemma 5.12]{Osi16} or \cite[Theorem 2.16]{CIOS2}).

\begin{thm}\label{Thm:RHAH}
Let $G$ be a group hyperbolic relative to a finite collection of subgroups $\Hl$, and $X$ a finite relative generating set of $G$ with respect to $\Hl$. Let also $\mathcal H$ and $\mathcal A$ be the alphabets defined by (\ref{calA}).
\begin{enumerate}
\item[(a)] The action of $G$ on $\G$ is acylindrical.
\item[(b)] If there is $i\in I$ such that $H_i$ is infinite and $H_i\ne G$, then the action of $G$ on $\G$ is non-elementary.
\end{enumerate}
\end{thm}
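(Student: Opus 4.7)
The plan is to invoke Proposition \ref{rhhe}, which reformulates the hypothesis as $\Hl \h (G, X)$. Definition \ref{hedefn} then hands us both the hyperbolicity of $\G$ (with some constant $\delta \geq 0$) and the finiteness of each $\dh_{H_i}$-ball. With these structural facts in hand, parts (a) and (b) reduce to geometric arguments on $\G$.

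For part (a), given $\varepsilon > 0$, I would seek $R, N > 0$ such that whenever $d_{\mathcal{A}}(x,y) \geq R$, only $N$ elements $g \in G$ can satisfy $d_{\mathcal{A}}(x, gx), d_{\mathcal{A}}(y, gy) \leq \varepsilon$. Translating by $x^{-1}$, we may assume $x = 1$. Fix a geodesic $\gamma$ from $1$ to $y$ in $\G$. By $\delta$-hyperbolicity and the small displacements at the endpoints, the translate $g\gamma$ remains uniformly close to $\gamma$ throughout its length. The subtle point is that a single edge labeled by an element of $\mathcal{H}$ can encode arbitrarily large motion inside a peripheral coset, so a naive synchronization count fails: a priori there are infinitely many candidates for $g$. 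This is where condition (b) of Definition \ref{hedefn} enters. Whenever a portion of $\gamma$ crosses some coset $aH_i$ via a single $H_i$-edge and the parallel portion of $g\gamma$ crosses the same coset, the relative deviation of $g$ inside that coset corresponds to an element of $H_i$ whose $\dh_{H_i}$-length is bounded by a function of $\varepsilon$ and $\delta$. The finiteness of $\dh_{H_i}$-balls then gives finitely many possibilities per crossing, and combining across all crossings along $\gamma$ yields the desired uniform bound $N$.

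For part (b), I would apply Theorem \ref{Thm:class} and rule out cases (a) and (b) of the trichotomy, namely bounded orbits and virtually cyclic action. Exclusion of the virtually cyclic case is a short application of Proposition \ref{Prop:maln}: if $G$ were virtually cyclic, the infinite subgroup $H_i$ would be of finite index in $G$, so for any $g \in G$ the intersection $H_i \cap g H_i g^{-1}$ would also be of finite index in $G$ and therefore infinite. Since $H_i \neq G$ permits the choice $g \in G \setminus H_i$, this contradicts almost malnormality. Exclusion of bounded orbits is obtained by producing an element acting loxodromically on $\G$. The standard theory of relatively hyperbolic groups (cf.\ \cite{Osi06}) furnishes an element of infinite order that is not conjugate into any $H_j$; such a \emph{hyperbolic element} has strictly positive translation length on $\G$, supplying an unbounded orbit. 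With both alternatives excluded, Theorem \ref{Thm:class} forces the action to be non-elementary.

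The hard part is the combinatorial bookkeeping in part (a): the local bound coming from finiteness of a single $\dh_{H_i}$-ball must be combined uniformly across all peripheral crossings along $\gamma$, even as the number of such crossings grows with $d_{\mathcal{A}}(1, y)$. This is carried out in \cite[Proposition 5.2]{Osi16} by a careful analysis of how the parallel geodesics $\gamma$ and $g\gamma$ synchronize their detours through cosets of the $H_i$. Part (b) is comparatively routine once (a) is in place and the existence of hyperbolic elements in properly relatively hyperbolic groups is taken as known.
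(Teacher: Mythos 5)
Your proposal is correct and matches the paper's treatment: the paper does not actually prove Theorem \ref{Thm:RHAH} but imports it from \cite[Proposition 5.2 and Lemma 5.12]{Osi16} (or \cite[Theorem 2.16]{CIOS2}), and your part (a) likewise defers the substantive acylindricity argument to \cite[Proposition 5.2]{Osi16}, with a sketch that captures the right idea (finiteness of the relative balls $\widehat{\d}_{H_i}$ controlling the synchronization of close geodesics). Your part (b) — the trichotomy of Theorem \ref{Thm:class}, almost malnormality (Proposition \ref{Prop:maln}) to exclude the virtually cyclic case, and a loxodromic element to exclude bounded orbits — is exactly the standard argument; for the existence of the loxodromic element the cleanest citation under precisely the stated hypothesis (some $H_i$ infinite and $H_i\ne G$) is \cite[Theorem 6.11]{DGO}, which is how the paper itself argues in the proof of Proposition \ref{Lem:DFICC}.
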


The class of acylindrically hyperbolic groups also includes mapping class groups of closed surfaces of non-zero genus \cite{Bow}, ${\rm Out}(F_n)$ for $n\ge 2$ \cite{BF}, groups of deficiency at least~$2$ \cite{Osi15} (see also the correction in \cite{MO19}), fundamental groups of most closed $3$-manifols \cite{MO15,MO19}, automorphism groups of non-elementary hyperbolic groups \cite{Gen, GH}, and many other examples. For more details, we refer to the survey \cite{Osi18}. 

By \cite[Theorem 2.24]{DGO}, every acylindrically hyperbolic group contains a unique maximal finite normal subgroup denoted by $K(G)$. We call $K(G)$  the \emph{ finite radical} of $G$. Let $F_n$ denote the free group of rank $n$. We will need the following result, which can be thought of as a more precise version of Theorem \ref{Thm:ah}.

\begin{thm}[{\cite[Theorem 2.24]{DGO}}]\label{Thm:Fhe}
Let $G$ be an acylindrically hyperbolic group. For every $n\in \NN$, there exists a hyperbolically embedded subgroup of $G$ isomorphic to $K(G)\times F_n$.
\end{thm}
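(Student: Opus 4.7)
By Theorem \ref{Thm:ah}, fix a non-elementary acylindrical action of $G$ on a hyperbolic space $S$. Abbreviate $K := K(G)$ and $C := C_G(K)$. Since $K$ is finite and normal, $C$ is a normal finite-index subgroup of $G$, as $G/C$ embeds in $\mathrm{Aut}(K)$. Consequently, $C$ inherits a non-elementary acylindrical action on $S$ and hence contains loxodromic elements of $G$. Any loxodromic $g \in C$ satisfies $K \leq E(g)$, where $E(g)$ is the maximal elementary subgroup of $g$: indeed, $K$ centralizes $g$, so $K$ fixes the axis of $g$ setwise and hence stabilizes the pair $\{g^{\pm\infty}\} \subset \partial S$. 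Moreover, since $K$ is finite and $g$ has infinite order, $\langle K, g \rangle \cong K \times \langle g \rangle$.

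Next, I produce $n$ pairwise \emph{independent} loxodromic elements $g_1, \ldots, g_n \in C$, meaning the pairs $\{g_i^{\pm\infty}\}$ are disjoint in $\partial S$. Non-elementarity of the action supplies infinitely many pairwise-independent loxodromics $h_1, h_2, \ldots$ in $G$; raising any $n$ of them to the $[G:C]$-th power gives elements of $C$ that remain loxodromic and retain their boundary fixed-point pairs. After replacing each $g_i$ by a sufficiently high power, the North-South dynamics on $\partial S$ and a standard ping-pong argument force $F := \langle g_1, \ldots, g_n \rangle$ to be free of rank $n$. Since each $g_i$ commutes with $K$ and $F$ is torsion-free, the subgroup $H := \langle K, g_1, \ldots, g_n \rangle$ is isomorphic to $K \times F_n$.

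It remains to upgrade this to $H \h G$. The plan is to invoke the construction from \cite[Section~6]{DGO} which, for pairwise independent loxodromic elements and a sufficiently deep choice of powers, produces a hyperbolically embedded subgroup of $G$ in which the chosen loxodromics and their elementary subgroups assemble into a free product of the $E(g_i)$. In our setting each $E(g_i)$ contains the \emph{same} normal subgroup $K$, and the ``diagonal'' identification of these copies of $K$ is exactly what turns the naive free product into $K \times F_n$. The main obstacle is to verify that this identification preserves the hyperbolic embedding: one must check condition (b) of Definition \ref{hedefn} for the resulting relative metric on $H$, showing that paths in $\Gamma(G, X \cup \mathcal{H})$ avoiding $\mathcal{H}$-edges cannot provide unexpectedly short connections between elements of $H$ once $K$ is merged across the various $E(g_i)$. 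This verification is enabled by the malnormality statement in Proposition \ref{Prop:maln} applied to each $E(g_i)$, together with the acylindricity of the action.
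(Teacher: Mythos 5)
You should first note that the paper contains no proof of Theorem \ref{Thm:Fhe} at all: it is quoted verbatim from [DGO, Theorem 2.24], so the statement is exactly the hard external input, and a self-contained argument would have to reproduce a substantial part of the machinery of that memoir. Your sketch only establishes the easy algebraic part: since $K=K(G)$ is finite and normal, $C_G(K)$ has finite index, it contains independent loxodromics, and high powers of them generate (by ping-pong) a free group commuting with $K$, so that $G$ contains a copy of $K\times F_n$ as an abstract subgroup. The decisive claim --- that such a subgroup can be chosen \emph{hyperbolically embedded} --- is deferred to ``the construction from [DGO, Section~6]''. But that construction (Theorem 6.14 there) produces a hyperbolically embedded \emph{collection} $\{E(g_1),\dots,E(g_n)\}$ of separate maximal elementary subgroups; it does not produce a single hyperbolically embedded subgroup isomorphic to $K(G)\times F_n$, and the passage from the former to the latter is precisely the content of [DGO, Theorem 2.24(b)]. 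Your assertion that the ``diagonal identification'' of the copies of $K$ can be verified using Proposition \ref{Prop:maln} together with acylindricity is not an argument: checking condition (b) of Definition \ref{hedefn} for the subgroup generated by $K$ and the chosen loxodromics requires genuine control of paths in $\Gamma(G,X\sqcup\mathcal H)$, which is the technical heart of DGO's Sections 4--6 and is nowhere attempted in your sketch.

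Moreover, the construction as you set it up generically produces a subgroup that is provably \emph{not} hyperbolically embedded. Hyperbolically embedded subgroups are almost malnormal by Proposition \ref{Prop:maln}. Your ping-pong step replaces each independent loxodromic $g_i$ by a proper power $x_i=g_i^m$ and sets $H=\langle K,x_1,\dots,x_n\rangle$. Typically $g_1\notin H$ (only its $m$-th power lies there), while $g_1Hg_1^{-1}\cap H\supseteq\langle g_1^m\rangle$ is infinite because $g_1$ commutes with $g_1^m$; hence $H$ fails almost malnormality and cannot be hyperbolically embedded. To have any chance one must choose the free generators so that $E(x_i)=\langle x_i\rangle\times K(G)$ (cf.\ Theorem \ref{Thm:Eg}(b)) rather than take powers of arbitrary independent elements, and even then the hyperbolic embeddedness of $\langle K,x_1,\dots,x_n\rangle$ --- not merely its isomorphism type --- still has to be proved. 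So the proposal has a genuine gap at its essential step; the paper itself avoids this by simply citing [DGO, Theorem 2.24].
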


\begin{rem}
For an acylindrically hyperbolic group $G$, $K(G)$ is contained in every infinite hyperbolically embedded subgroup of $G$ by Proposition \ref{Prop:maln}. Thus, $K(G)\times F_n$ cannot be replaced with $F_n$ in Theorem \ref{Thm:Fhe}. Similarly, the equality $E(g)=\langle g\rangle \times K(G)$ cannot be replaced with $E(g)=\langle g\rangle $ in Theorem \ref{Thm:Eg} below.
\end{rem}

We record a couple of useful results concerning the ICC condition.

\begin{thm}[{\cite[Theorem 2.35]{DGO}}]\label{Thm:HypICC}
An acylindrically hyperbolic group $G$ is ICC if and only if $K(G)=\{ 1\}$.
\end{thm}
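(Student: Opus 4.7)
The plan is to prove both implications, with the forward one being nearly immediate and the backward one requiring the structure of loxodromic elements under an acylindrical action. For the forward direction, note that $K(G)$ is finite and normal, so every $k \in K(G)$ has its $G$-conjugacy class contained in the finite set $K(G)$; the ICC condition then forces $k=1$, giving $K(G)=\{1\}$.

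For the backward direction I would argue by contradiction. Assume $K(G)=\{1\}$ and suppose $g \in G \setminus \{1\}$ has finite conjugacy class $C = g^G$ of cardinality $k$. The subgroup $\langle C \rangle$ is automatically normal in $G$, since $C$ itself is $G$-invariant, so it would suffice to show that $\langle C \rangle$ is finite: it would then lie in $K(G) = \{1\}$, contradicting $g \neq 1$. To establish finiteness I would fix a non-elementary acylindrical action of $G$ on a hyperbolic space $S$ (which exists by Definition \ref{ahdef}) and extract from it two independent loxodromic elements $h_1, h_2 \in G$. The existence of such a pair is standard in case (c) of Theorem \ref{Thm:class}, via a ping-pong construction producing a non-abelian free subgroup consisting of loxodromic elements; it can also be extracted from Theorem \ref{Thm:Fhe} applied with $n \geq 2$.

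The core computation is then the following. For each $i \in \{1,2\}$, conjugation by $h_i$ permutes the finite set $C$, so the power $h_i^{k!}$ commutes with every element of $C$. Thus every $c \in C$ lies in $C_G(h_i^{k!})$, which is contained in the (virtually cyclic) maximal elementary subgroup $E_G(h_i^{k!}) = E_G(h_i)$ of $G$. Hence $C \subseteq E_G(h_1) \cap E_G(h_2)$. Independence of $h_1$ and $h_2$ rules out a common axis on $\partial S$, so any element of infinite order in $E_G(h_1) \cap E_G(h_2)$ would force $h_1$ and $h_2$ to share their limit set, which is impossible; consequently $E_G(h_1) \cap E_G(h_2)$ is finite. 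Therefore $\langle C \rangle$ is finite, closing the argument.

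The main obstacle I anticipate is a careful justification of the two black-boxed facts about loxodromic elements in an acylindrical action: namely, the existence of two independent loxodromics in any non-elementary acylindrical action, and the inclusion $C_G(h^n) \subseteq E_G(h^n) = E_G(h)$ together with the virtual cyclicity of $E_G(h)$. Both are standard consequences of acylindricity, but will likely require explicit citations or short derivations using the description of $E_G(h)$ as the setwise stabilizer of the pair of fixed points of $h$ on $\partial S$.
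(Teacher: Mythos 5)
Your argument is correct, but note that the paper does not prove this statement at all: Theorem \ref{Thm:HypICC} is quoted verbatim from \cite[Theorem 2.35]{DGO}, so there is no internal proof to compare against. Your proof is a sound, essentially self-contained derivation in the same spirit as the one in \cite{DGO}: the forward direction is exactly as trivial as you say, and the backward direction correctly reduces to showing that a non-trivial finite conjugacy class $C=g^G$ generates a finite normal subgroup, which then must lie in $K(G)=\{1\}$ because $K(G)$ contains every finite normal subgroup (this is the content of the ``unique maximal'' clause of \cite[Theorem 2.24]{DGO}, quoted in the paper before Theorem \ref{Thm:Fhe}). The black-boxed ingredients are all available with precise citations: the existence of two independent loxodromic elements for a fixed non-elementary acylindrical action is part of the statement of \cite[Theorem 1.1]{Osi16} (the paper's Theorem \ref{Thm:class} is an abridged version that omits the ``infinitely many independent loxodromics'' clause, so cite Osin directly rather than Theorem \ref{Thm:class}); the fact that $E_G(h)$ exists, is virtually cyclic, satisfies $E_G(h^n)=E_G(h)$ for $n\neq 0$, and contains $C_G(h^n)$ follows from Lemma \ref{Lem:Eg} (i.e.\ \cite[Lemma 6.5]{DGO}) together with the description of $E_G(h)$ as the setwise stabilizer of the fixed pair of $h$ on $\partial S$; and finiteness of $E_G(h_1)\cap E_G(h_2)$ follows from commensurability of infinite-order elements inside a virtually cyclic group, as you indicate. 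One small simplification: since $C\subseteq E_G(h_1)\cap E_G(h_2)$ and the latter is a finite subgroup, $\langle C\rangle$ is automatically contained in it, so no separate finiteness argument for $\langle C\rangle$ is needed beyond this inclusion.
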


The next lemma is a ``baby version" of \cite[Corollary 1.7]{Osi17}. It admits a simple proof, which we provide for the convenience of the reader.

\begin{lem}\label{Lem:FA}
Suppose that $G$ is an ICC acylindrically hyperbolic group acting on a set $I$. Then $Stab_G(i)$ is acylindrically hyperbolic for every $i\in I$ or the action is faithful. In particular, every action of an ICC acylindrically hyperbolic group with amenable stabilizers is faithful.
\end{lem}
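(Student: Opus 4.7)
The plan is to analyze the kernel $K$ of the action $G\curvearrowright I$. If $K=\{1\}$, the action is already faithful. Otherwise, $K$ is a non-trivial normal subgroup of $G$, and since $G$ is ICC, Theorem \ref{Thm:HypICC} gives $K(G)=\{1\}$; because any finite normal subgroup of $G$ must lie in $K(G)$, the subgroup $K$ is infinite. As $K$ acts trivially on $I$, it is contained in every $\mathrm{Stab}_G(i)$ and normal there, so the lemma reduces to showing that $\mathrm{Stab}_G(i)$ is acylindrically hyperbolic whenever $K\ne\{1\}$.

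To do this, fix a non-elementary acylindrical action $G\curvearrowright S$ on a hyperbolic space $S$, which exists by Definition \ref{ahdef}. The restriction to any subgroup remains acylindrical, so by Theorem \ref{Thm:class} it is enough to show that $\mathrm{Stab}_G(i)$ acts non-elementarily on $S$, and for this it suffices to produce two loxodromic elements already inside $K$ with disjoint pairs of fixed points on $\partial S$. The key input, standard in the acylindrical hyperbolicity literature, is that an infinite normal subgroup of an acylindrically hyperbolic group that is not contained in the finite radical cannot consist entirely of elliptic elements for a non-elementary acylindrical action. Applied to $K$ this yields a loxodromic $g\in K$; by normality all $G$-conjugates $hgh^{-1}$ belong to $K$, and since the $G$-action on $S$ is non-elementary, some $h\in G$ can be chosen so that the fixed-point pair of $hgh^{-1}$ on $\partial S$ is disjoint from that of $g$. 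This establishes non-elementarity of the $K$-action, and hence of the $\mathrm{Stab}_G(i)$-action, so $\mathrm{Stab}_G(i)$ is acylindrically hyperbolic.

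The final assertion then follows at once: every acylindrically hyperbolic group contains a non-abelian free subgroup (e.g.\ by taking $n=2$ in Theorem \ref{Thm:Fhe}), so no amenable group can be acylindrically hyperbolic; hence if every stabilizer is amenable, the dichotomy just proved forces the action to be faithful. The main technical obstacle is the ``loxodromic element in $K$'' step, which rests on the (non-trivial but well-known) fact that an infinite normal subgroup of an acylindrically hyperbolic group outside the finite radical must contain loxodromic elements of the ambient acylindrical action; the plan is to cite this from the acylindrical hyperbolicity literature rather than reproduce the argument. Everything else is a straightforward application of the classification of acylindrical actions and the reduction to $K$.
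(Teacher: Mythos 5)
Your proposal is correct and follows essentially the same route as the paper: reduce to the kernel $K$ of the action, note that ICC forces $K$ to be infinite, show that $K$ (and hence every stabilizer, since it contains $K$) acts non-elementarily on a fixed non-elementary acylindrical $G$-space, and deduce the last claim from non-amenability of acylindrically hyperbolic groups via Theorem \ref{Thm:Fhe}. The only difference is one of citation granularity: the paper invokes \cite[Lemma 7.1]{Osi16} directly for the fact that every infinite normal subgroup acts non-elementarily, which packages your ``loxodromic element in $K$ plus conjugation to get independent loxodromics'' step.
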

\begin{proof}
Let $K=\bigcap_{i\in I}Stab_G(i)$ be the kernel of the action. Suppose that $K\ne \{ 1\}$. Since $G$ is ICC, $K$ must be infinite. Let $G\curvearrowright S$ be an acylindrical, non-elementary,  isometric action of the group $G$ on a hyperbolic space $S$. Every infinite normal subgroup of $G$ acts on $S$ non-elementarily by \cite[Lemma 7.1]{Osi16}; in particular, so does $K$. Since $K\le Stab_G(i)$ for all $i\in I$, every $Stab_G(i)$ also acts non-elementarily and, therefore, is acylindrically hyperbolic. This proves the first claim of the lemma. To prove the second claim, it suffices to note that acylindrically hyperbolic groups are non-amenable by Theorem \ref{Thm:Fhe}.
\end{proof}

We conclude this section with two useful examples of hyperbolically embedded subgroups in acylindrically hyperbolic groups.

\begin{defn}\label{Def:lox}
An element $g$ of a group $G$ acting on a hyperbolic space $S$ is called \emph{loxodromic} (or is said to \emph{act loxodromically} on $S$) if it acts as a translation along a bi-infinite quasi-geodesic in $S$. If the action of $G$ on $S$ is acylindrical, this is equivalent to the requirement that $\langle g\rangle$ has unbounded orbits (see \cite[Lemma 2.2]{Bow}).
\end{defn}

The following is proved in \cite[Lemma 6.5]{DGO}.

\begin{lem}\label{Lem:Eg}
Suppose that a group $G$ acts acylindrically on a hyperbolic space $S$. Then every loxodromic element $g\in G$ is contained in a unique maximal virtually cyclic subgroup of $G$.
\end{lem}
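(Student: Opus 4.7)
The plan is to exhibit an explicit candidate for the unique maximal virtually cyclic subgroup containing $g$, namely the subgroup $E(g)$ of elements that preserve the pair of fixed points of $g$ on the Gromov boundary, and then verify directly that (i) $E(g)$ is virtually cyclic and (ii) every virtually cyclic subgroup of $G$ containing $g$ is contained in $E(g)$.

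More precisely, since $g$ is loxodromic it acts as translation along a bi-infinite quasi-geodesic $\gamma\subset S$ with two distinct endpoints $g^{\pm\infty}\in\partial S$. Set
$$
E(g)=\{h\in G\mid h\cdot\{g^{+\infty},g^{-\infty}\}=\{g^{+\infty},g^{-\infty}\}\},
$$
which is manifestly a subgroup of $G$ containing $\langle g\rangle$. To show $E(g)$ is virtually cyclic, fix a basepoint $x_0\in\gamma$ and let $\tau>0$ be the translation length of $g$. For any $h\in E(g)$, the image $h\gamma$ is a quasi-geodesic with the same endpoints at infinity as $\gamma$, hence by a standard Morse-type argument it stays within some constant Hausdorff distance $D$ of $\gamma$, where $D$ depends only on the hyperbolicity constant of $S$ and the quasi-geodesic constants of $\gamma$. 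Thus there is an integer $k(h)$ with $d(x_0,g^{-k(h)}hx_0)\le D+\tau$; after passing to the index-at-most-$2$ subgroup $E^+(g)\le E(g)$ of orientation-preserving elements (those fixing each of $g^{\pm\infty}$), one also gets $d(g^Nx_0,g^{-k(h)}hg^Nx_0)\le D+\tau$ for all $N$. Choosing $N$ large enough that $d(x_0,g^Nx_0)\ge R$, with $R=R(D+\tau)$ provided by the acylindricity of the action, one concludes that $g^{-k(h)}h$ belongs to a fixed finite subset of $G$. This bounds $[E^+(g):\langle g\rangle]$, and hence $[E(g):\langle g\rangle]$, so $E(g)$ is virtually cyclic.

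For (ii), let $V\le G$ be any virtually cyclic subgroup containing $g$. Since $g$ has infinite order (as it is loxodromic), $\langle g\rangle$ has finite index in $V$, and so every $v\in V$ satisfies $vg^nv^{-1}=g^{\pm n}$ for some $n\ge 1$. The loxodromic element $g^{\pm n}$ has fixed-point set $\{g^{+\infty},g^{-\infty}\}$ on $\partial S$, while its conjugate $vg^nv^{-1}$ has fixed-point set $v\cdot\{g^{+\infty},g^{-\infty}\}$; equality of these sets yields $v\in E(g)$. Hence $V\le E(g)$, which simultaneously shows that $E(g)$ itself is virtually cyclic containing $g$ (taking $V=\langle g,v\rangle$ for any $v$) and that it is the unique maximal such subgroup.

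The main obstacle is the quantitative acylindricity step used to bound $[E(g):\langle g\rangle]$: one must ensure that a chosen coset representative $g^{-k(h)}h$ simultaneously nearly fixes two points $x_0$ and $g^Nx_0$ that are far enough apart to trigger the acylindricity estimate, and handle the orientation-reversing cosets by the standard device of first passing to $E^+(g)$. The other ingredients, namely the boundary dynamics of loxodromic elements and the Morse-type stability of quasi-geodesics with common endpoints at infinity, are standard in hyperbolic geometry.
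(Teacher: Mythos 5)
Your proof is correct and follows essentially the same route as the paper's source: the paper does not prove this lemma itself but cites \cite[Lemma 6.5]{DGO}, whose argument is exactly the one you give --- take $E(g)$ to be the setwise stabilizer of the endpoint pair of a quasi-axis of $g$, bound $[E(g):\langle g\rangle]$ by combining the Morse lemma with acylindricity applied to two far-apart points $x_0$ and $g^Nx_0$ on the axis, and obtain maximality/uniqueness by observing that any virtually cyclic $V\ni g$ conjugates a power of $g$ to a power of $g$ and hence preserves the endpoint pair. Two small points a complete write-up should tighten: the displacement bound at $g^Nx_0$ holds with a constant somewhat larger than $D+\tau$ and needs the short orientation/no-sliding argument for elements of $E^+(g)$ that you allude to, and the parenthetical ``taking $V=\langle g,v\rangle$ for any $v$'' is circular but superfluous, since part (i) already shows $E(g)$ is virtually cyclic.
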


\begin{defn}
In the setting of Lemma \ref{Lem:Eg}, we denote the unique maximal virtually cyclic subgroup of $G$ containing $g$ by $E(g)$.
\end{defn}

The next result follows immediately from \cite[Corollary 3.12 and Lemma 5.15]{AMS}.

\begin{thm}\label{Thm:Eg}
Let $G$ be a group, $\Hl$ a finite collection of subgroups of $G$ such that $\Hl\h (G,X)$ for some $X\subseteq G$. Let also $\mathcal A$ be the alphabet defined by (\ref{calA}). Suppose that the action of $G$ on $\G$ is acylindrical. Then the following hold.
\begin{enumerate}
\item[(a)] For any element $g\in G$ acting loxodromically on $\Gamma (G,A)$, we have $\{ E(g)\}\cup \Hl \h (G, X)$.
\item[(b)] If the action of $G$ on $\G$ is non-elementary, then there exists an element $g\in G$ acting loxodromically on $\Gamma (G,A)$ such that $E(g)=\langle g\rangle \times K(G)$.
\end{enumerate}
\end{thm}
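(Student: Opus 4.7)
My plan is to derive both parts from the general machinery on loxodromic elements in acylindrically hyperbolic groups developed by Antolín--Minasyan--Sisto; specifically, part (a) is essentially \cite[Corollary 3.12]{AMS} and part (b) is \cite[Lemma 5.15]{AMS}. Below I sketch the underlying ideas.

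For part (a), I would verify the two conditions of Definition \ref{hedefn} for the enlarged collection $\{E(g)\}\cup\Hl$ with respect to the same subset $X$. By Lemma \ref{Lem:Eg}, $E(g)$ is the unique maximal virtually cyclic subgroup of $G$ containing $g$; geometrically, it coincides with the set-wise stabilizer of the pair of limit points of $\langle g\rangle$ on the Gromov boundary of $\G$. The cosets of $E(g)$ in $G$ are quasi-convex in $\G$, and acylindricity of the action guarantees that any two distinct such cosets fellow-travel only along a uniformly bounded segment. The local finiteness condition for the metric $\widehat{d}_{E(g)}$ is then a direct translation of acylindricity: a path of bounded length joining two elements of $E(g)$ and avoiding edges of $\Gamma(E(g),E(g))$ would produce many group elements moving two widely separated points along the axis of $g$ by uniformly bounded amounts, contradicting acylindricity once the endpoints are taken sufficiently far apart. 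Hyperbolicity of the enlarged Cayley graph $\Gamma(G, X\sqcup\mathcal H\sqcup E(g))$ is then obtained via a cone-off over the orbit of the axis of $g$; the well-separatedness of $E(g)$-cosets ensures that this cone-off remains hyperbolic.

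For part (b), non-elementarity of the action produces some loxodromic element $g_0$. Its elementary closure $E(g_0)$ is virtually cyclic by Lemma \ref{Lem:Eg}, hyperbolically embedded by part (a), and contains $K(G)$ by the remark following Theorem \ref{Thm:Fhe}. The strategy is to replace $g_0$ by a suitable element $g$ whose elementary closure is exactly $\langle g\rangle \times K(G)$. Since $K(G)$ is finite, $C_G(K(G))$ has finite index in $G$, so some positive power of $g_0$ centralizes $K(G)$. After taking further powers, one can arrange that the image of this power in $E(g_0)/K(G)$ lies in the torsion-free part and in fact generates the infinite cyclic image of $C_G(K(G))\cap E(g_0)$ in $E(g_0)/K(G)$. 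The delicate point is to ensure that the resulting $g$ has its \emph{full} elementary closure equal to $\langle g\rangle \times K(G)$ (and not strictly smaller than some surrounding $E(g_0)$). This cannot be achieved merely by passing to powers of a given $g_0$, since $E(g_0^n)=E(g_0)$ for every $n\ge 1$; instead, one must choose the loxodromic element $g$ itself to be generic from the outset, for instance by applying a ping-pong argument to a pair of loxodromic elements with independent axes (whose existence is guaranteed by Theorem \ref{Thm:class} together with non-elementarity).

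The main obstacle is precisely this genericity step in part (b): producing \emph{some} virtually cyclic subgroup of the form $\langle g\rangle \times K(G)$ inside a given $E(g_0)$ is easy, but the maximality of the elementary closure means that such a $g$ will still satisfy $E(g)=E(g_0)$, which may be strictly larger than the desired direct product. Overcoming this requires an initial construction of a loxodromic element whose elementary closure is already minimal; this is the structural content of \cite[Lemma 5.15]{AMS}.
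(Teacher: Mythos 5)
Your proposal is correct and takes essentially the same route as the paper, which proves this statement simply by citing \cite[Corollary 3.12 and Lemma 5.15]{AMS} for parts (a) and (b) respectively. Your additional sketches of the ideas behind those two results (acylindricity giving local finiteness of $\widehat{d}_{E(g)}$, and the genericity issue in choosing $g$ with $E(g)=\langle g\rangle\times K(G)$) correctly identify where the real work lies, but the paper itself delegates all of it to \cite{AMS}.
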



\subsection{Suitable subgroups and quotients of relatively hyperbolic groups} \label{Sec:Suit}


Let $G$ be a group hyperbolic relative to a collection of subgroups $\Hl$ and let $X$ be a finite relative generating set of $G$ with respect to $\Hl$. Recall that an element $g\in G$ is \emph{loxodromic with respect to the peripheral collection} $\Hl$ if it acts as a loxodromic isometry on the hyperbolic space $\G$, where $\mathcal A$ is defined by (\ref{calA}). By Theorems \ref{Thm:class} and \ref{Thm:RHAH}, this is equivalent to saying that $g$ acts on $\G$ with unbounded orbits. Yet another equivalent condition is that $g$ has infinite order and is not conjugate to an element of some $H_i$ (see \cite[Theorem 4.23]{DGO}). Given a subgroup $S\le G$, we denote by $\mathcal {\rm L}(S; \Hl)$ the set of all loxodromic elements of $S$ with respect to $\Hl$. In this notation, we have the following.

\begin{defn}\label{Def:suit}
A subgroup $S\le G$ is said to be {\it suitable with respect to the peripheral structure $\Hl$} (or simply \emph{suitable} if the peripheral structure can be understood from the context) if $\mathcal {\rm L}(S; \Hl)\ne \emptyset$, $S$ is not virtually cyclic, and $S$ does not normalize any non-trivial finite subgroup of $G$.
\end{defn}

The definition of a suitable subgroup was first formulated in \cite{Osi10} in a slightly different way; it was shown to be equivalent to Definition \ref{Def:suit} in \cite{AMO} (see Lemma 3.3 and Proposition 3.4 there). It is worth noting that the existence of a suitable subgroup in a group $G$ as above implies that $G$ is acylindrically hyperbolic and $K(G)=\{ 1\}$. Thus, all the results discussed below only apply to acylindrically hyperbolic groups with trivial finite radical.

\begin{lem}\label{Lem:suit}
Let $G$ be a non-elementary relatively hyperbolic group with peripheral subgroups $\Hl$. If $K(G)=\{ 1\}$, then every non-trivial normal subgroup of $G$ is suitable with respect to $\Hl$.
\end{lem}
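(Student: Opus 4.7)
The plan is to verify the three conditions of Definition \ref{Def:suit} for the nontrivial normal subgroup $N \triangleleft G$ one at a time, exploiting the fact that $G$ is acylindrically hyperbolic with trivial finite radical. First I would set up the acylindrical action: since $G$ is non-elementary relatively hyperbolic with respect to $\Hl$, Proposition \ref{rhhe} and Theorem \ref{Thm:RHAH} ensure that for any finite relative generating set $X$ and the associated alphabet $\mathcal A = X \sqcup \bigsqcup_{i\in I} H_i$, the action $G \curvearrowright \G$ is acylindrical and non-elementary; in particular $G$ is acylindrically hyperbolic. The hypothesis $K(G) = \{1\}$ then forces $G$ to be ICC by Theorem \ref{Thm:HypICC}, so the nontrivial normal subgroup $N$ must be infinite.

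Next I would argue that $N$ acts non-elementarily on $\G$. This is the content of \cite[Lemma 7.1]{Osi16}, already invoked in the proof of Lemma \ref{Lem:FA}: an infinite normal subgroup of a group acting non-elementarily acylindrically on a hyperbolic space itself acts non-elementarily. Consequently $N$ contains an element $g$ that is loxodromic for the action on $\G$, which witnesses $\mathcal{L}(N;\Hl) \ne \emptyset$, and by the classification Theorem \ref{Thm:class} $N$ cannot be virtually cyclic. This handles the first two bullets of the definition of suitability.

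The main work is in verifying that $N$ does not normalize any nontrivial finite subgroup $F \le G$, which I would do by contradiction. Assume such an $F$ exists and fix a loxodromic $g \in N$ as above. Since $N$ normalizes $F$ and $Aut(F)$ is finite, some nonzero power $g^n$ centralizes $F$. For every $f \in F$ the subgroup $\langle g^n, f\rangle$ is then abelian and contains the loxodromic element $g^n$; by acylindricity and Theorem \ref{Thm:class} it must be virtually cyclic, hence by maximality (Lemma \ref{Lem:Eg}) contained in $E_G(g^n) = E_G(g)$. This gives $F \le E_G(g)$. Using normality of $N$, for every $k \in G$ the conjugate $kgk^{-1}$ is again a loxodromic element of $N$, and the same argument yields $F \le kE_G(g)k^{-1}$.

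Therefore $F$ lies in the normal core $M := \bigcap_{k \in G} kE_G(g)k^{-1}$, a normal subgroup of $G$ contained in the virtually cyclic group $E_G(g)$. If $M$ were infinite, it would be an infinite normal subgroup of $G$ and hence act non-elementarily on $\G$ by \cite[Lemma 7.1]{Osi16} again, contradicting Theorem \ref{Thm:class} since $M$ is virtually cyclic. Thus $M$ is finite, so $M \le K(G) = \{1\}$, which forces $F = \{1\}$, the desired contradiction. The main delicacy is ensuring $\langle g^n, f\rangle \le E_G(g)$; once that is in place, the normality of $N$ upgrades ``$F$ sits in one virtually cyclic subgroup'' to ``$F$ sits in a finite normal subgroup of $G$,'' which must be trivial by $K(G) = \{1\}$.
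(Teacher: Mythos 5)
Your proof is correct, and for the first two conditions of Definition~\ref{Def:suit} it runs along the paper's own route: a non-trivial normal subgroup is infinite since $K(G)=\{1\}$, and \cite[Lemma 7.1]{Osi16} makes it act non-elementarily on $\Gamma(G,\mathcal A)$, which yields loxodromic elements and rules out virtual cyclicity. Where you genuinely diverge is the third condition. The paper settles it in one line by citing \cite[Lemma 3.3]{AMO}: a subgroup containing loxodromics and not virtually cyclic normalizes a unique maximal finite subgroup $E\le G$, and uniqueness together with normality of the subgroup in $G$ forces $E\lhd G$, hence $E\le K(G)=\{1\}$. You instead reprove the needed portion of that fact from ingredients already in the paper: you pass to a power $g^n$ of a loxodromic $g\in N$ centralizing the putative finite subgroup $F$, push $F$ into $E_G(g)$ via Lemma~\ref{Lem:Eg}, and then use normality of $N$ to place $F$ in the normal core $\bigcap_{k\in G} kE_G(g)k^{-1}$, which is finite (again by \cite[Lemma 7.1]{Osi16} and Theorem~\ref{Thm:class}) and hence contained in $K(G)=\{1\}$. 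Your version buys self-containedness at the cost of length; the paper's citation buys brevity. Two small points in your write-up deserve tightening, though neither is a real gap: Theorem~\ref{Thm:RHAH}(b) gives non-elementarity of $G\curvearrowright\Gamma(G,\mathcal A)$ only when some $H_i$ is infinite and proper, so the case where all peripherals are finite (then $\mathcal A$ is finite and $G$ is a non-elementary hyperbolic group acting on a locally finite Cayley graph) should be noted separately; and the step that the abelian group $\langle g^n,f\rangle$ containing a loxodromic must be virtually cyclic needs the extra remark that a non-elementary acylindrical action would make it acylindrically hyperbolic and hence non-amenable by Theorem~\ref{Thm:Fhe}, exactly as in the proof of Lemma~\ref{Lem:FA}.
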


\begin{proof}
If $K(G)=\{ 1\}$ and $S$ is a non-trivial normal subgroup of $G$, then $S$ is infinite. It follows from \cite[Proposition 5.2 and Lemma 7.1]{Osi16}) that $S$ is not virtually cyclic and contains loxodromic elements. By \cite[Lemma 3.3]{AMO}, for every such a subgroup, there exists a unique maximal finite subgroup $E\le G$ normalized by $S$. Since $S$ is normal in $G$, so is $E$. In particular, $E\le K(G)$, which implies $E=\{ 1\}$.
\end{proof}

The next theorem can be found in \cite[Theorem 2.4]{Osi10}. For a group $G$ and a subset $S\le G$, we denote by $\ll S\rr $ the normal closure of $S$ in $G$. I.e., $\ll S\rr$ is the smallest normal subgroup of $G$ containing $S$.

\begin{thm}\label{glue}
Let $G$ be a group hyperbolic relative to a collection of subgroups $\Hl$ and let $S\le G$ be a suitable subgroup of $G$. For every finite subset $\mathcal F =\{f_1, \ldots , f_k\}$ of $G$, there exist elements $s_1, \ldots , s_k\in S$ such that the quotient group
\begin{equation}\label{Eq:Gbar}
\overline{G}=G/\ll f_1s_1, \ldots , f_ks_k\rr
\end{equation}
satisfies the following conditions.
\begin{enumerate}
\item[(a)]  The restriction of the natural homomorphism $\gamma\colon G\to \overline{G}$ to the set $\bigcup \limits_{i\in I} H_i$ is injective and $\overline{G}$ is hyperbolic relative to $\{ \gamma(H_i)\}_{i\in I}$.
\item[(b)]  Every finite order element of $\overline{G}$ is the image of a finite order element of $G$.
\item[(c)] $\gamma(S)$ is a suitable subgroup of $\overline G$ with respect to $\{ \gamma(H_i)\}_{i\in I}$. In particular, $\overline G$ is non-elementary relatively hyperbolic and ICC.
\end{enumerate}
\end{thm}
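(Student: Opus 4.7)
The plan is to realize $\overline G$ as a small-cancellation quotient of $G$ in the sense developed by Osin for relatively hyperbolic groups: I will choose the $s_i\in S$ so that the set of relators $\mathcal R=\{f_1s_1,\ldots,f_ks_k\}$ satisfies an arbitrarily stringent small-cancellation condition over $\G$, and then invoke the general small-cancellation theorem in this setting. Once such a condition is in place, items (a)--(c) will follow as essentially formal consequences.

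Since $S$ is suitable, it contains loxodromic elements, is not virtually cyclic, and normalizes no non-trivial finite subgroup of $G$; in particular, $K(G)=\{1\}$ (else $S$ would normalize it), and elementary closures of loxodromic elements are infinite cyclic. Using Theorem \ref{Thm:Eg} together with North--South dynamics on the Gromov boundary of $\G$, I will extract two loxodromic elements $g_1,g_2\in S$ with pairwise disjoint attracting and repelling fixed points and $E(g_1)\cap E(g_2)=\{1\}$; together they will generate a free Schottky-like subgroup of $G$. For each $i$, I will then take
$$
s_i=g_1^{n_{i,1}}g_2^{n_{i,2}}\cdots g_1^{n_{i,2\ell-1}}g_2^{n_{i,2\ell}}
$$
with rapidly growing, generic exponents. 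For $\ell$ large enough, each $f_is_i$ will act loxodromically on $\G$ along a long quasi-geodesic axis, and any common ``piece'' shared between cyclic shifts of $f_is_i$ and $f_js_j$ (measured in $\da$) will occupy only an arbitrarily small fraction of the relator's length. This is precisely the small-cancellation condition $C_1(\e,\mu,\rho)$ of \cite{Osi10}.

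Granting this, item (a) will be a direct output of the general small-cancellation theorem: $\gamma$ is injective on $\bigcup_{i\in I}H_i$ and $\overline G$ is hyperbolic relative to $\{\gamma(H_i)\}_{i\in I}$. Item (b) is the standard ``no new torsion'' consequence of the same theorem: finite subgroups of the quotient are conjugate to images of finite subgroups of $G$ or of some $H_i$, and in the latter case they lift through (a). For item (c), I will verify the three clauses of Definition \ref{Def:suit} for $\gamma(S)$ in $\overline G$: the images $\gamma(g_1),\gamma(g_2)$ remain independent loxodromic elements for the new peripheral structure (their axes continue to diverge in $\Gamma(\overline G,\mathcal A)$ by the genericity of the relators), so $\gamma(S)$ contains loxodromic elements and is not virtually cyclic; and a non-trivial finite subgroup of $\overline G$ normalized by $\gamma(S)$ would, via the finite-subgroup form of (b), correspond to a non-trivial finite subgroup of $G$ normalized by $S$, contradicting suitability. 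The ICC conclusion for $\overline G$ then follows from Theorem \ref{Thm:HypICC}.

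The hard part, as is typical in small-cancellation arguments in the relatively hyperbolic setting, will be the careful verification of $C_1(\e,\mu,\rho)$ for $\mathcal R$. One must uniformly bound, in the relative metric $\da$, the length of the fellow-travel between distinct translates of the quasi-axes of the relators $f_is_i$; the prefix $f_i$ can interact nontrivially with the initial segment of the alternating word $s_i$, so the estimates have to absorb these finite perturbations into the Schottky-like dynamics of $\langle g_1,g_2\rangle$ before the $\{g_1,g_2\}$-alternation takes over. Once this is under control, the remaining conclusions are essentially automatic from the machinery of \cite{Osi10}.
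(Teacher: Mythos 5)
Your route and the paper's differ in a basic way: the paper does not reprove this statement at all. Theorem \ref{glue} is quoted from \cite[Theorem 2.4]{Osi10}, and the authors only add two short supplements: (i) the observation that Osin's quotient is of the form (\ref{Eq:Gbar}), which is visible from his proof, and (ii) the derivation of the ``in particular'' clause of (c) from the first clause, using the equivalence (from \cite{AMO}) of Definition \ref{Def:suit} with the existence of two loxodromic elements $a,b\in\gamma(S)$ satisfying $E(a)\cap E(b)=\{1\}$; this gives that $\overline G$ is non-elementary relatively hyperbolic, then $K(\overline G)$ is normalized by $\gamma(S)$ and hence trivial, and Theorem \ref{Thm:HypICC} gives ICC. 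What you propose is, in effect, to re-derive Osin's theorem by his own method: relators $f_is_i$ with $s_i$ long alternating words in two independent loxodromic elements of $S$, verified to satisfy $C_1(\e,\mu,\rho)$ over $\Gamma(G,\mathcal A)$. That is the correct strategy (it is exactly how \cite{Osi10} proves the cited result), but as written it is a programme rather than a proof: the entire content of the theorem lies in the small-cancellation verification you explicitly defer, together with the nontrivial facts that such quotients enjoy (a), (b) and that $\gamma(g_1),\gamma(g_2)$ remain independent loxodromics; none of this comes cheaper than simply invoking \cite{Osi10}.

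Two concrete soft spots in the sketch. First, your argument for the third clause of suitability in (c) does not work as stated: item (b) only says that finite-order \emph{elements} of $\overline G$ lift to finite-order elements of $G$; it does not provide a lift of a finite \emph{subgroup} of $\overline G$ normalized by $\gamma(S)$ to a finite subgroup of $G$ normalized by $S$. The clean fix is the one the paper uses: verify instead that $\gamma(g_1),\gamma(g_2)$ stay loxodromic with $E(\gamma(g_1))\cap E(\gamma(g_2))=\{1\}$ (this is the form in which suitability is phrased in \cite{Osi10}) and invoke the \cite{AMO} equivalence, after which triviality of $K(\overline G)$ and the ICC property follow as in the paper. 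Second, the aside that $K(G)=\{1\}$ makes elementary closures of loxodromic elements infinite cyclic is not true in general (for an arbitrary loxodromic $g$, $E(g)$ may contain torsion even when the finite radical is trivial); it is harmless only because all you actually need is one pair of loxodromics in $S$ with trivially intersecting elementary closures, which suitability already supplies via \cite{AMO}.
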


Two remarks are in order. First, \cite[Theorem 2.4]{Osi10} was stated in a slightly different way: only the existence of a quotient group $\overline{G}$ of $G$ satisfying (a), (b), and the condition $\gamma(f)\in \gamma(S)$ for all $f\in \mathcal F$ is claimed in \cite{Osi10}. However, the fact that $\overline{G}$ is obtained from $G$ by passing to the quotient of the form (\ref{Eq:Gbar}) is obvious from the proof.

Second, only the first claim in part (c) is stated explicitly \cite[Theorem 2.4]{Osi10}. The second claim can be derived as follows.  By \cite[Definition 3.2 and Proposition 3.4]{AMO}, the definition of a suitable subgroup given in this paper is equivalent to the existence of two loxodromic elements $a,b \in \gamma(S)$ such that $E(a)\cap E(b) = \{1\}$, where $E(a)$ and $E(b)$ are the maximal virtually cyclic subgroups of $\overline{G}$ containing $a$ and $b$, respectively (such maximal subgroups always exist by \cite[Theorem 4.3]{Osi06b} or Lemma \ref{Lem:Eg}). Obviously, this implies that $\overline{G}$ is not virtually cyclic and properly relatively hyperbolic. Thus, $\overline{G}$ is non-elementary relatively hyperbolic and, in particular, acylindrically hyperbolic. Note that $K(\overline{G})$ is normalized by $\gamma(S)$. Therefore, we have $K(\overline{G})=\{ 1\}$ by Definition \ref{Def:suit}. The latter condition is equivalent to being ICC by Theorem \ref{Thm:HypICC}.

Recall that every hyperbolic group is hyperbolic relative to the empty set of subgroups. In this case, Definition \ref{Def:suit} reduces to the following: a subgroup $S$ of a hyperbolic group $G$ is \emph{suitable} if and only if $S$ is not virtually cyclic, contains an element of infinite order, and does not normalize any non-trivial finite subgroup of $G$. Applying Theorem \ref{glue} to a suitable subgroup $S$ of a hyperbolic group $G$ and the empty collection $\Hl$, we obtain a hyperbolic quotient group $\overline{G}$. In these settings, a similar theorem was formulated by Gromov \cite{Gro} and proved by Olshanskii in \cite{Ols93}.

We record a few simple applications of Theorem \ref{glue}. For the first application, we do not need relative hyperbolicity and results of \cite{Ols93} would suffice.

\begin{cor}\label{Cor:TrAb}
Every non-elementary hyperbolic group $H$ has a non-elementary hyperbolic quotient $\overline{H}$ with trivial abelianization. Moreover, if $H$ is torsion-free, then so is $\overline{H}$.
\end{cor}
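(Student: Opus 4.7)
The plan is to apply Theorem \ref{glue} directly, with the empty peripheral collection, using the commutator subgroup $[H,H]$ as the suitable subgroup $S$ and taking $\mathcal F$ to be preimages of a generating set of the abelianization. Before doing so, I would reduce to the case where $H$ has trivial finite radical.

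If $K(H) \neq \{1\}$, replace $H$ by $H/K(H)$. Quotienting by a finite normal subgroup preserves hyperbolicity and non-elementariness, and the resulting group has trivial finite radical by construction; any further quotient is still a quotient of $H$, so we may assume $K(H) = \{1\}$ (in the torsion-free case this holds automatically since $H$ has no non-trivial finite subgroups).

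Since $H$ is non-elementary hyperbolic, it is non-abelian, so $[H,H]$ is a non-trivial normal subgroup. By Lemma \ref{Lem:suit} applied with the empty peripheral collection, $[H,H]$ is suitable in $H$. Because $H$ is finitely generated, so is its abelianization $H^{ab}$; pick $\mathcal F = \{f_1, \ldots, f_k\} \subset H$ whose images generate $H^{ab}$. Theorem \ref{glue} then yields $s_1, \ldots, s_k \in [H,H]$ and a quotient
\[
\overline{H} = H/\ll f_1 s_1, \ldots, f_k s_k \rr
\]
which, by part (c), is non-elementary relatively hyperbolic with empty peripheral collection, i.e., non-elementary hyperbolic. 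In $\overline{H}^{ab}$, the relation $\overline{f_i s_i} = 1$ together with $s_i \in [H,H]$ forces the image of each $f_i$ to vanish; since the $f_i$ generate $H^{ab}$, their images generate $\overline{H}^{ab}$, which is therefore trivial.

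For the torsion-free claim, part (b) of Theorem \ref{glue} states that every finite-order element of $\overline{H}$ lifts to a finite-order element of $H$; if $H$ is torsion-free this forces $\overline{H}$ to be torsion-free. The only subtlety is verifying the suitability hypothesis of Theorem \ref{glue}, which is precisely why the preliminary reduction to $K(H) = \{1\}$ is required so that Lemma \ref{Lem:suit} applies; after that reduction, the rest is immediate.
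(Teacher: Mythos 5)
Your proposal is correct and follows essentially the same route as the paper: pass to $H/K(H)$ to ensure a trivial finite radical, use Lemma \ref{Lem:suit} to see that $[H,H]$ is suitable, and apply Theorem \ref{glue} with the empty peripheral collection (the paper simply takes $\mathcal F$ to be a generating set of $H$ rather than lifts of generators of $H^{ab}$, an immaterial difference). Your explicit remark that no reduction is needed in the torsion-free case, so that part (b) of Theorem \ref{glue} yields torsion-freeness of $\overline{H}$, is exactly what the paper's "if necessary" implicitly relies on.
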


\begin{proof}
By passing from $H$ to $H/K(H)$ if necessary, we can assume that $K(H)=\{ 1\}$. Since $H$ is non-elementary, $[H,H]\ne \{ 1\}$. By Lemma \ref{Lem:suit}, $[H,H]$ is a suitable subgroup of $H$. This allows us to apply Theorem \ref{glue} to a finite set $\mathcal F$ of generators of $H$ and $S=[H,H]$. The obtained quotient group has the required properties.
\end{proof}

The next corollary is a more precise version of Theorem \ref{Thm:Fhe} for relatively hyperbolic groups. It can be proved in a number of ways, utilizing either the geometric ideas from \cite{Osi06b} (further developed in \cite{DGO}) or the random walk technique used in \cite{AH}. It is somewhat surprising that this fact has not been recorded in the literature. We provide a brief proof based on the results discussed above.

\begin{cor}\label{Cor:Fhe}
Let $G$ be a group hyperbolic relative to a collection of subgroups $\Hl$ such that $K(G)=\{ 1\}$. For any $n\in \NN$ and any suitable subgroup $S\le G$, there is a free subgroup $F_n\le S$ of rank $n$ such that  $G$ is hyperbolic relative to $\Hl \cup \{ F_n\}$.
\end{cor}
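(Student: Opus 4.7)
By Proposition \ref{rhhe}, the hypothesis that $G$ is hyperbolic relative to $\Hl$ yields $\Hl\h(G,X)$ for some finite $X\subseteq G$, and Theorem \ref{Thm:RHAH}(a) shows that the resulting action of $G$ on $\G$ is acylindrical. The plan is to produce a free subgroup $F\le S$ of rank $n$ and a finite set $Y\subseteq G$ such that $\Hl\cup\{F\}\h(G,Y)$; a second application of Proposition \ref{rhhe} will then give that $G$ is hyperbolic relative to $\Hl\cup\{F\}$, and we set $F_n=F$.

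First I would exploit the suitability of $S$: by the characterization of suitable subgroups recalled right after Theorem \ref{glue}, $S$ contains two loxodromic elements $a,b$ with $E(a)\cap E(b)=\{1\}$. Two loxodromic elements of an acylindrically hyperbolic group are either commensurable (in which case their $E(\cdot)$'s coincide) or satisfy $E(\cdot)\cap E(\cdot)=\{1\}$, and $\langle a,b\rangle\le S$ plainly contains infinitely many pairwise non-commensurable loxodromic elements (for instance, cyclically reduced words of the form $a^{k_1}ba^{k_2}b\cdots$ for large, varying $k_i$). Selecting $n$ of these yields elements $g_1,\ldots,g_n\in S$, loxodromic on $\G$, with $E(g_i)\cap E(g_j)=\{1\}$ for $i\ne j$.

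Next I would apply Theorem \ref{Thm:Eg}(a) iteratively---or, equivalently, invoke the combination theorem \cite[Theorem 6.14]{DGO} for hyperbolically embedded families of virtually cyclic subgroups---to conclude that $\{E(g_1),\ldots,E(g_n)\}\cup\Hl\h(G,X)$. For sufficiently large $N$, standard ping-pong for loxodromic isometries on the hyperbolic space $\G$ guarantees that $F:=\langle g_1^N,\ldots,g_n^N\rangle\le S$ is free of rank $n$. A subgroup-replacement argument, of the same flavor as the one appearing in the proof of Theorem \ref{Thm:Fhe} through \cite[Theorem 2.24]{DGO}, then allows us to substitute the family $\{E(g_1),\ldots,E(g_n)\}$ in the hyperbolically embedded collection by the single free subgroup $F$, possibly after enlarging $X$ to a finite set $Y\supseteq X$. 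This produces $\Hl\cup\{F\}\h(G,Y)$, from which the corollary follows by Proposition \ref{rhhe}.

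The main obstacle is this last replacement step: one must verify that condition (b) of Definition \ref{hedefn}---that for every $n\in\NN$ only finitely many elements of $F$ lie at distance at most $n$ from $1$ in the extended Cayley graph along paths avoiding edges of $\Gamma(F,F)$---still holds after collapsing the separate $E(g_i)$'s into the joint free group $F$. Taking $N$ large enough to control the geometry of $F$ via north-south dynamics of the $g_i^N$ on the Gromov boundary of $\G$ is exactly what secures this condition, and it is the only non-routine point in the argument.
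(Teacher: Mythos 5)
There is a genuine gap at exactly the step you flag as ``non-routine'', and it cannot be repaired by taking $N$ large: the subgroup $F=\langle g_1^N,\ldots,g_n^N\rangle$ is never hyperbolically embedded in $G$ once $N\ge 2$. By Proposition \ref{Prop:maln}, a hyperbolically embedded subgroup must be almost malnormal; but $g_i\notin F$ (a basis element of a free group is not a proper power, so $g_i\in F$ is impossible), while $g_iFg_i^{-1}\cap F\supseteq\langle g_i^N\rangle$ is infinite. More generally, if $F\h G$ then $F$ must contain the whole maximal elementary subgroup $E(g)$ of every $g\in F$ that is loxodromic, which your $F$ violates by construction. This is why the hyperbolically embedded copies of $K(G)\times F_n$ in Theorem \ref{Thm:Fhe} (\cite[Theorem 6.14]{DGO}) are generated by elements whose full elementary closures lie inside the subgroup, with no subsequent passage to powers; north--south dynamics cannot help, because the failure is algebraic rather than geometric. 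Two further problems: even granting some hyperbolically embedded free $F\le S$, Proposition \ref{rhhe} requires $\Hl\cup\{F\}\h(G,Y)$ with $Y$ \emph{finite}, and none of the tools you invoke produces that for a free subgroup constrained to lie in $S$; and your preliminary claim that non-commensurable loxodromic elements automatically satisfy $E(a)\cap E(b)=\{1\}$ is false in general (the intersection can be a nontrivial finite subgroup even when $K(G)=\{1\}$), though that part is fixable.

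The paper's proof proceeds quite differently and sidesteps the malnormality obstruction entirely. One forms $P=G\ast F_n$, which is hyperbolic relative to $\{G,F_n\}$ and hence, by Proposition \ref{Prop:rhrh}, relative to $\Hl\cup\{F_n\}$; one checks that $S$ remains suitable in $P$ with respect to this peripheral collection, and applies Theorem \ref{glue} to the finite set $\{f_1,\ldots,f_n\}$ (a basis of the free factor) to get $s_1,\ldots,s_n\in S$ such that $\overline P=P/\ll f_1s_1,\ldots,f_ns_n\rr$ is relatively hyperbolic with respect to the injective images of $\Hl\cup\{F_n\}$. Tietze transformations eliminating the generators $f_i$ identify $\overline P$ with $G$ itself, and under this identification the peripheral free subgroup becomes $\langle s_1^{-1},\ldots,s_n^{-1}\rangle\le S$, which is therefore free of rank $n$ and peripheral in $G$. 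In other words, the free subgroup is not built inside $G$ by ping-pong; it is imported as an abstract free factor and then glued onto elements of $S$ by a small-cancellation quotient that happens to reproduce $G$. Any intrinsic construction along your lines would have to produce generators in $S$ whose generated subgroup already contains the full elementary closure of each of its loxodromic elements and is hyperbolically embedded together with $\Hl$ over a finite relative generating set; this is precisely what Theorem \ref{glue} delivers for free in the paper's argument.
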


\begin{proof}
We fix $n\in \NN$ and a suitable subgroup $S\le G$. The free product $P=G\ast F_n$ is hyperbolic relative to $\{ G, F_n\}$ (see Example \ref{Ex:RH}. By Proposition \ref{Prop:rhrh}, $P$ is also hyperbolic relative to $\Hl\cup \{ F_n\}$. Using the standard properties of free products and the assumption $K(G)=\{ 1\}$, it is easy to show that $S$ is suitable in $P$ with respect to the peripheral collection $\Hl\cup \{ F_n\}$.

Let $\mathcal F=\{ f_1, \ldots, f_n\}$ be a finite basis of $F_n$ and let $\{ s_1, \ldots, s_n\}$ be the elements provided by Theorem \ref{glue}. Let $\langle X\mid \mathcal R\rangle $ be a presentation of $P$, where $X=G\cup \mathcal F$ and $\mathcal R$ is a set of words in the alphabet $X$. Let also $$\overline P= \langle X\mid \mathcal R, \; f_1s_1, \ldots, f_n s_n\rangle.$$ By part (a) of Theorem \ref{glue}, $\overline P$ is hyperbolic relative to the $\gamma$-image of the collection $\Hl\cup \{ F_n\}$. Using Tietze transformations, we can remove the relators $f_is_i$ and generators $f_i$ from the presentation of $\overline P$. This yields an isomorphism $\overline P\cong G$ that sends each $\gamma(H_i)$ to $H_i$ and $\gamma(F_n)$ to $F_n$. Thus, $G$ is hyperbolic relative to $\Hl \cup \{ F_n\}$.
\end{proof}

The following variant of the famous Rips construction \cite{Rips} is similar to the one considered in \cite{BO}.

\begin{prop}\label{Rips}
Let $S$ be a finitely generated group, $M$ the normal closure of finitely many elements in $S$. For any non-elementary hyperbolic group $H$, there exist a finitely generated group $G$ containing $S$ and an infinite normal subgroup $N\lhd G$ such that the following conditions hold.
\begin{enumerate}
\item[(a)] $G$ is ICC and non-elementary hyperbolic relative to $S$.
\item[(b)] $N$ is a non-trivial quotient of $H$. In particular, $N$ is finitely generated.
\item[(c)] The restriction of the natural homomorphism $G\to G/N$ to $S$ is surjective and we have a natural isomorphism $G/N\cong S/M$ (i.e., $S\cap N=M$).
\item[(d)] If $H$ and $S$ are torsion-free, then so is $G$.
\end{enumerate}
\end{prop}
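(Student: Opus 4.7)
The strategy is to construct $G$ as a Dehn-filling quotient of the free product $P = S \ast H$. Since $H$ is hyperbolic, $P$ is hyperbolic relative to $\{S\}$: Proposition \ref{Prop:rhrh} gives hyperbolicity of $P$ relative to $\{S, H\}$, and Corollary \ref{Cor:rhh} allows us to drop the hyperbolic factor $H$ from the peripheral collection. Replacing $H$ by $H/K(H)$, I may assume $K(H) = \{1\}$; this preserves non-elementary hyperbolicity and torsion-freeness, and any quotient of $H/K(H)$ is still a quotient of $H$. The key preliminary step is to verify that $H$ is \emph{suitable} in $P$ with respect to $\{S\}$ in the sense of Definition \ref{Def:suit}: $H$ is not virtually cyclic by assumption; every non-trivial element of $H$ fails to be conjugate into $S$ in the free product $S\ast H$ and is therefore loxodromic with respect to $\{S\}$; and any non-trivial finite subgroup of $P$ normalized by $H$ can be ruled out by a standard free-product centralizer argument combined with $K(H) = \{1\}$.

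Now fix finite symmetric generating sets $\{s_1, \dots, s_n\}$ of $S$ and $\{h_1, \dots, h_p\}$ of $H$, and apply Theorem \ref{glue} to the suitable subgroup $H$ and the finite set
$$
\mathcal F = \{m_1, \dots, m_k\} \,\cup\, \{\, s_i h_j s_i^{-1} : 1 \le i \le n,\; 1 \le j \le p \,\}.
$$
The theorem produces elements $\{t_f\}_{f \in \mathcal F} \subset H$ such that the quotient $G := P / \ll\{ f \cdot t_f : f \in \mathcal F \} \rr$ is ICC and non-elementary hyperbolic relative to the (injected) image of $S$, yielding (a). Let $N$ be the subgroup of $G$ generated by the image of $H$. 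The imposed relations $m_i = t_{m_i}^{-1}$ and $s_i h_j s_i^{-1} = t_{s_i h_j s_i^{-1}}^{-1}$, all landing in $H$, show that $N$ contains $M$ and is invariant under conjugation by every generator of $S$; combined with its tautological $H$-invariance, this makes $N$ normal in $G$. By construction, $N$ is a quotient of $H$, hence finitely generated. By part (c) of Theorem \ref{glue}, the image of $H$ remains a suitable subgroup of $G$, hence contains a loxodromic -- and therefore infinite-order -- element, so $N$ is infinite, establishing (b).

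For (c), the homomorphism $P \to S/M$ that quotients $S$ by $M$ and sends $H$ to $\{1\}$ trivializes every relator $f \cdot t_f$ and so descends to a surjection $\pi \colon G \to S/M$ whose kernel is exactly $N$; this yields $G/N \cong S/M$ and $S \cap N = M$. Part (d) is immediate: when $S$ and $H$ are torsion-free, so is $P = S \ast H$, and Theorem \ref{glue}(b) transports torsion-freeness to $G$. The main subtlety in the argument is guaranteeing that the image of $H$ in $G$ remains infinite; this is precisely what the preservation-of-suitability clause in Theorem \ref{glue}(c) provides, and is the reason one must perform a single Dehn filling bundling the relations that make $N$ normal with those forcing $M$ into $N$, rather than applying the theorem iteratively.
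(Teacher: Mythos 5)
Your proposal is correct and follows essentially the same route as the paper's own proof: pass to $P=S\ast H$ with $K(H)=\{1\}$, note that $P$ is hyperbolic relative to $S$ and that $H$ is suitable, apply Theorem \ref{glue} to the finite set consisting of the normal generators of $M$ together with the conjugates of the generators of $H$ by the (symmetric) generators of $S$, and take $N$ to be the image of $H$, exactly as in the paper. The only nitpick is that not every non-trivial element of $H$ is loxodromic with respect to $\{S\}$ (finite-order elements are elliptic), but since a non-elementary hyperbolic group contains infinite-order elements this does not affect suitability, and the rest of your argument goes through unchanged.
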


\begin{proof}
For convenience of the reader, we provide a commutative diagram illustrating conditions (b) and (c).
\begin{center}
\begin{tikzcd}
H \arrow[r, two heads] & N \arrow[r, hook]                 & G \arrow[r, two heads]  & G/N \\
                       & M \arrow[u, hook] \arrow[r, hook] & S \arrow[ru, two heads] \arrow[u, hook]&    
\end{tikzcd}
\end{center}

Let $X$ be a finite generating set of $S$ and let $M$ be the normal closure of a finite set $Y$ in $S$. Passing from $H$ to $H/K(H)$ if necessary, we can assume that $K(H)=\{ 1\}$. The free product $P=S\ast H$ is hyperbolic relative to $\{ S, H\}$ (see Example \ref{Ex:RH}). Since $H$ is hyperbolic, $P$ is also hyperbolic relative to $S$ by Corollary \ref{Cor:rhh}. Since $H$ is non-elementary and $K(H)=\{ 1\}$, $H$ is a suitable subgroup of $P$ with respect to $S$. Let $Z$ be a finite generating set of $H$ and let 
$$
\mathcal F=\{ x^{-1}zx, xzx^{-1}\mid x\in X, z\in Z\}\cup Y.
$$ 
By Theorem \ref{glue},  there exists a set of elements $\{ h_f\in H\mid f\in \mathcal F\}$ such that the quotient group
\begin{equation}\label{Eq:G0}
G=P/\ll fh_f \mid f\in \mathcal F\rr
\end{equation}
is hyperbolic relative to an isomorphic image of $S$; we will identify this image with $S$. Let $N$ denote the image of $H$ in $G$. 

Given an element $p\in P$, we denote by $\overline p$ its image in $G$. Since $x^{\pm 1}z\,x^{\mp 1}\in \mathcal F$ for all $x\in X$ and $z\in Z$,  formula (\ref{Eq:G0}) and the fact that $h_f\in H$ imply that $\overline x^{\pm 1}\,\overline z\, {\overline x}^{\mp 1}\in N$ for all $x\in X$ and $z\in Z$. Thus, $N\lhd G$. Furthermore, observe that $N$ is the image of the normal closure $\ll \mathcal F \cup H\rr$ in $P=S\ast H$ under the natural homomorphism $\gamma\colon P\to G$. Clearly, we have $\ll\mathcal F \cup H\rr= \ll Y \cup H\rr$ in $P$. Therefore, the restriction of the map $G\to G/N$ to $S\le G$ is surjective and the kernel of the corresponding homomorphism $S\to G/N$ coincides with $\gamma(\ll Y\rr)=M$. Thus, (c) holds. 

It remains to note that $N$ is suitable (in particular, $N\ne \{ 1\} $) and $G$ is ICC and non-elementary relatively hyperbolic with respect to $S$ by part (c) of Theorem~\ref{glue}. Part (d) also follows from Theorem \ref{glue}.
\end{proof}


\subsection{Group theoretic Dehn filling}\label{Sec:DF}


The classical Dehn surgery on a $3$-dimensional manifold consists of cutting off a solid torus, which may be thought of as ``drilling" along an embedded knot, and then gluing it back in a different way. The second part of the process, called {\it Dehn filling}, has a purely algebraic counterpart. Below we briefly review the necessary definitions and results. For details and connections to $3$-dimensional geometry, we refer to  \cite{DGO,GM,Osi07} and references therein.

\begin{defn}\label{Def:DF}
Let $G$ be a group, $\Hl$ a collection of subgroups of $G$. Given a collection $\mathcal N=\{N_i\}_{i\in I}$, where $N_i\lhd H_i$ for all $i\in I$, we call the quotient group
$$
G(\mathcal N)=G\left/\left\langle\hspace{-2mm}\left\langle  \bigcup\limits_{i\in I} N_i\right\rangle\hspace{-2mm}\right\rangle\right. .
$$
the \emph{Dehn filling of $(G, \Hl)$ corresponding to the collection of kernels $\mathcal N$}. Further, we say that a certain property $\mathcal P$ \emph{holds for any sufficiently deep subgroups $N_i\lhd H_i$} if there exist finite subsets $\mathcal F_i\subseteq H_i\setminus \{ 1\}$ such that $\mathcal P$ holds for $G(\mathcal N)$ whenever the kernels satisfy $N_i\cap \mathcal F_i=\emptyset$ for all $i\in I$.
\end{defn}

The following theorem summarizes results of \cite{DGO,Osi07} necessary for this paper.

\begin{thm}\label{Thm:DF}
Let $G$ be a group, $\Hl$ a collection of subgroups of $G$. Suppose that $\Hl\h (G,X)$ for some $X\subseteq G$. Then for any sufficiently subgroups $N_i\lhd H_i$, the following conditions hold.
\begin{enumerate}
\item[(a)] The natural maps $H_i/N_i\to G(\mathcal N)$ are injective. In what follows, we think of $H_i/N_i$ as subgroups of $G(\mathcal N)$.
\item[(b)] $\{ H_i/N_i\}_{i\in I}\h (G(\mathcal N), \e(X))$, where $\e\colon G\to G(\mathcal N)$ is the natural homomorphism. In particular, if $G$ is hyperbolic relative to a finite collection $\Hl$, then $G(\mathcal N)$ is hyperbolic relative to $\{H_i/N_i\}_{i\in I}$.
\item[(c)] There exists a subset $T_i\subset G$ for each $i$ such that
\[\ll \cup_{i\in I}N_i \rr=\ast_{i\in I}\ast_{t\in T_i}N^t_i.\]
\item[(d)] Suppose $G$ is hyperbolic relative to $\{H_i\}_{i\in I}$. If $G$ and $H_i/N_i$ are torsion-free for all $i\in I$ then so is $G(\mathcal N)$.
\end{enumerate}
\end{thm}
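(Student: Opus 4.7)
The plan is to assemble Theorem \ref{Thm:DF} from the Dehn filling results of \cite{DGO,Osi07}, where essentially equivalent statements already appear. Since $\Hl \h (G,X)$, the alphabet $\mathcal A = X \sqcup \mathcal H$ generates $G$ and $\Gxh$ is hyperbolic. The first step is to choose, for each $i \in I$, the finite exceptional set $\mathcal F_i \subseteq H_i \setminus \{1\}$ to consist of all $h \in H_i$ with $\di(1,h) \le C$, where $C$ depends only on the hyperbolicity data of $\Gxh$; condition (b) of Definition \ref{hedefn} guarantees this set is finite. The point of this choice is that, whenever $N_i \cap \mathcal F_i = \emptyset$ for every $i\in I$, every nontrivial element of $N_i$ has $\di$-length greater than $C$, and consequently the family of relators $\bigsqcup_i (N_i \setminus \{1\})$ satisfies a small cancellation condition in $\Gxh$ in the sense of \cite[Section~5]{DGO}.

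Applying the small cancellation machinery of \cite{DGO} to this relator system then produces, in a single package, the conclusions (a), (b), and (c). The injectivity statement (a) is the standard Greendlinger-type output: a nontrivial element of $H_i$ cannot equal a product of conjugates of relators from $\bigsqcup_j N_j$ unless one of those relators already lies in $H_i$. The hyperbolic embedding $\{H_i/N_i\} \h (G(\mathcal N),\e(X))$ in (b) is obtained by showing that the Cayley graph of $G(\mathcal N)$ with respect to $\e(\mathcal A)$ is again hyperbolic and that the local finiteness condition of Definition \ref{hedefn}(b) descends to the quotient. The ``in particular'' clause then follows from Proposition \ref{rhhe}, since finiteness of $\Hl$ and $X$ is preserved under $\e$. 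Part (c) is the Cohen--Lyndon-type decomposition extracted from the same van Kampen diagram analysis: after fixing a left transversal $T_i$ of $H_i$ in $G$, any identity among reduced products of conjugates $n^t$ with $n\in N_i\setminus\{1\}$ and $t\in T_i$ must already collapse in the abstract free product, giving $\ll\bigcup_{i\in I} N_i\rr \cong \ast_{i\in I}\ast_{t\in T_i} N_i^t$.

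Part (d) requires a separate argument specific to relatively hyperbolic groups and is the content of \cite[Theorem~1.1]{Osi07}. The key point is that, in the relatively hyperbolic setting, every finite-order element of $G(\mathcal N)$ is either the image of a finite-order element of $G$ or is conjugate to an element of some $H_i/N_i$. Combined with the torsion-freeness of $G$ and of each $H_i/N_i$, this forces $G(\mathcal N)$ to be torsion-free.

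The main technical obstacle is the verification of the small cancellation condition for $\bigsqcup_{i\in I} N_i$ in $\Gxh$, which is the geometric heart of \cite{DGO} and rests on a careful analysis of quasi-geodesics in a hyperbolic Cayley graph relative to the generalized metrics $\di$ on the peripheral subgroups. Once this geometric input is in place, (a), (b), and (c) flow in parallel from the diagram calculus, and (d) follows by the short additional argument of \cite{Osi07}.
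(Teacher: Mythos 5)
For parts (a)--(c) your route is essentially the paper's: the authors simply cite \cite[Theorem 7.19]{DGO} (with \cite{Osi07,GM} for (a),(b) in the relatively hyperbolic case), and your sketch of choosing $\mathcal F_i$ as a $\di$-ball so that $\bigsqcup_i(N_i\setminus\{1\})$ satisfies the small cancellation condition of \cite{DGO} is just an outline of how that cited theorem is proved, so nothing to object to there.

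The gap is in part (d). You assert the dichotomy ``every finite-order element of $G(\mathcal N)$ is either the image of a finite-order element of $G$ or is conjugate to an element of some $H_i/N_i$'' and attribute it to \cite[Theorem 1.1]{Osi07}; but that theorem only gives injectivity of $H_i/N_i\to G(\mathcal N)$ and relative hyperbolicity of the quotient (which is exactly why the paper credits \cite{Osi07} only with (a) and (b) and supplies its own proof of (d)). Since this dichotomy is the entire content of (d), leaving it to a citation that does not contain it is a genuine hole. The missing step is supplied as follows: by \cite[Theorem 7.19 (f)]{DGO}, a finite-order element $\bar g\in G(\mathcal N)$ is the image of an element $g\in G$ that acts \emph{elliptically} on $\Gamma(G,\mathcal A)$ (note that the lift need not itself have finite order -- ellipticity is what one gets); by \cite[Theorem 4.23]{Osi06}, an elliptic element of a relatively hyperbolic group either has finite order or is conjugate into some $H_i$. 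Torsion-freeness of $G$ rules out the first alternative, so $\bar g$ is conjugate into the torsion-free subgroup $H_i/N_i$ and hence trivial. (Alternatively, in the torsion-free relatively hyperbolic setting of (d) one could invoke the Groves--Manning statement on finite-order elements in fillings from \cite{GM}, but not \cite[Theorem 1.1]{Osi07}.) With that replacement your argument matches the paper's.
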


\begin{proof}
(a), (b) and (c) were proved in \cite[Theorem 7.19]{DGO}. In the particular case of relatively hyperbolic groups, (a) and (b) were proved in \cite{Osi07} (for torsion-free relatively hyperbolic groups, an alternative proof was given in \cite{GM}).

Now we prove  (d). As $G$ is hyperbolic relative to $\{H_i\}_{i\in I}$ we may assume $|X|<\infty$ by Proposition \ref{rhhe}. Let $\overline X$ be the image of $X$ under the quotient map $G\rightarrow G(\mathcal{N})$ and let $\bar g$ be a finite-order element of $G(\mathcal{N})$. By \cite[Theorem 7.19 (f)]{DGO}, $\bar g$ is the image of an element $g\in G$ that acts elliptically on the Cayley graph $\Gamma(G,\mathcal A)$, where $\mathcal A$ is defined by (\ref{calA}). By \cite[Theorem 4.23]{Osi06}, such an element $g$ either has finite order or is conjugate to an element of $H_i$ for some $i\in I$. As we assume $G$ is torsion-free, only the latter can happen, and thus $\bar g$ is conjugate to an element of $H_i/N_i$. Since $H_i/N_i$ is torsion-free, $\bar g=1$, as desired.
\end{proof}

We will also need the following.

\begin{prop}\label{Lem:DFICC}
Let $G$ be a group, $\Hl$ a collection of proper subgroups of $G$. Suppose that $\Hl\h G$ and $G$ is ICC.  Then, for any sufficiently deep subgroups $N_i\lhd H_i$,  the corresponding Dehn filling $G(\mathcal N)$ of $(G,\Hl)$ is acylindrically hyperbolic and ICC.
\end{prop}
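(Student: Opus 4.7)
The strategy is to augment $\Hl$ with infinite cyclic subgroups coming from well-chosen loxodromic elements of $G$, transport this to $G(\mathcal N)$ via Theorem~\ref{Thm:DF}, and deduce both the acylindric hyperbolicity and the ICC property of $G(\mathcal N)$ from the resulting enlarged hyperbolically embedded family.

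First, since $\Hl\h G$ consists of proper subgroups and $G$ is ICC, at least one $H_i$ is infinite (if all $H_i$ are finite, then sufficient depth forces $N_i=\{1\}$ for every $i$, so $G(\mathcal N)=G$ and there is nothing new to prove). Hence $H_i$ is a proper infinite hyperbolically embedded subgroup, $G$ is acylindrically hyperbolic by Theorem~\ref{Thm:ah}, and $K(G)=\{1\}$ by Theorem~\ref{Thm:HypICC}. Applying Theorem~\ref{Thm:Eg}(b) twice (adjusting $X$ if needed so that the $G$-action on $\G$ is acylindrical), I would produce independent loxodromic elements $g_1,g_2\in G$ with $E(g_j)=\langle g_j\rangle$ (torsion-free since $K(G)=\{1\}$) and $E(g_1)\cap E(g_2)=\{1\}$, and then invoke Theorem~\ref{Thm:Eg}(a) to obtain $\Hl\cup\{\langle g_1\rangle,\langle g_2\rangle\}\h (G,X)$. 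Given $\mathcal N=\{N_i\}$ sufficiently deep for $(G,\Hl)$, the augmented filling that assigns the trivial kernel to each $\langle g_j\rangle$ is still sufficiently deep for $(G,\Hl\cup\{\langle g_1\rangle,\langle g_2\rangle\})$ and defines the same quotient $G(\mathcal N)$. Theorem~\ref{Thm:DF}(a)(b) then yields $\{H_i/N_i\}\cup\{\langle g_1\rangle,\langle g_2\rangle\}\h (G(\mathcal N),\e(X))$.

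To apply Theorem~\ref{Thm:ah} and conclude that $G(\mathcal N)$ is acylindrically hyperbolic, I need $\langle g_1\rangle\subsetneq G(\mathcal N)$. This follows once we know $E_{G(\mathcal N)}(\e(g_j))=\langle g_j\rangle$ (argued below): the two loxodromics $\e(g_1)$ and $\e(g_2)$ then remain independent in $G(\mathcal N)$, so $\langle\e(g_1),\e(g_2)\rangle$ is non-cyclic and $\langle g_1\rangle\ne G(\mathcal N)$. For the ICC claim, Theorem~\ref{Thm:HypICC} reduces it to showing $K(G(\mathcal N))=\{1\}$. The pivotal observation is $E_{G(\mathcal N)}(\e(g_1))=\langle g_1\rangle$: any $y\in E(\e(g_1))\setminus\langle g_1\rangle$ would normalize the characteristic maximal infinite cyclic subgroup of the virtually cyclic group $E(\e(g_1))$ (since every automorphism of $\mathbb Z$ preserves each of its subgroups set-wise), forcing $y\langle g_1\rangle y^{-1}=\langle g_1\rangle$ and an infinite intersection $\langle g_1\rangle\cap y\langle g_1\rangle y^{-1}=\langle g_1\rangle$, contradicting the almost malnormality afforded by Proposition~\ref{Prop:maln} applied to $\langle g_1\rangle\h G(\mathcal N)$. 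Now let $K:=K(G(\mathcal N))$. Since $K$ is finite and normalized by $\e(g_1)$, conjugation defines a homomorphism $\langle g_1\rangle\to\mathrm{Aut}(K)$ with finite image, so $\e(g_1)^n$ centralizes $K$ for some $n\ge 1$. Because centralizers of loxodromic elements in acylindrically hyperbolic groups are contained in their maximal virtually cyclic subgroups, $K\le C_{G(\mathcal N)}(\e(g_1)^n)\le E_{G(\mathcal N)}(\e(g_1)^n)=E_{G(\mathcal N)}(\e(g_1))=\langle g_1\rangle$. Since $\langle g_1\rangle\cong\mathbb Z$ is torsion-free and $K$ is torsion, $K=\{1\}$, completing the ICC assertion.

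The main obstacle is the ICC step, which relies on three distinct ingredients coming together: the almost malnormality of hyperbolically embedded subgroups (Proposition~\ref{Prop:maln}), the careful choice of loxodromic $g$ via Theorem~\ref{Thm:Eg}(b) making $E(g)=\langle g\rangle$ as small as possible, and the elementary automorphism-of-finite-group trick that pushes $K(G(\mathcal N))$ into a torsion-free cyclic subgroup. The auxiliary point of ensuring $\langle g_1\rangle\subsetneq G(\mathcal N)$, handled by introducing a second independent loxodromic, is minor by comparison but essential for the acylindric hyperbolicity conclusion.
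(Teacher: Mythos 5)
Your outline follows the paper's own strategy quite closely: reduce to the case where some $H_i$ is infinite, produce a loxodromic $g$ with $E(g)=\langle g\rangle$ via Theorem~\ref{Thm:Eg}(b), enlarge the hyperbolically embedded collection via Theorem~\ref{Thm:Eg}(a), view a deep filling of $(G,\Hl)$ as a deep filling of the enlarged collection with trivial kernel on the cyclic factor, and read off acylindrical hyperbolicity from Theorem~\ref{Thm:ah} and ICC from $K(G(\mathcal N))=\{1\}$ plus Theorem~\ref{Thm:HypICC}. The genuine gap is at the very first invocation of Theorem~\ref{Thm:Eg}(b): that statement requires the action of $G$ on $\G$ to be \emph{non-elementary}, and this does not follow from $G$ merely being acylindrically hyperbolic, since the relevant action is the specific one on the relative Cayley graph for $\Hl$ (each $H_i$ has bounded orbits there, so even unboundedness of orbits is not automatic). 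The paper spends real effort here: after adjusting the generating set so the action is acylindrical (\cite[Theorem 5.4]{Osi16}), it uses the infinite $H_i$, an element $a\notin H_i$, Proposition~\ref{Prop:maln} and \cite[Theorem 6.11]{DGO} to produce a loxodromic, and then ICC plus Theorem~\ref{Thm:class} to upgrade unbounded orbits to non-elementarity. Without this step your $g_1$ (let alone $g_2$) is not available, so you should supply it; the related claim that Theorem~\ref{Thm:Eg}(b) can simply be applied ``twice'' to get two independent loxodromics also needs the iteration spelled out (add $E(g_1)$ via part (a), recheck acylindricity and non-elementarity of the enlarged action, then apply part (b) again).

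There is a second soft spot on the quotient side. You work with $E_{G(\mathcal N)}(\e(g_1))$, with $E(\e(g_1)^n)=E(\e(g_1))$, and with the containment of centralizers of loxodromic elements in their maximal virtually cyclic subgroups; all of this presupposes that $\e(g_1)$ is loxodromic for some acylindrical action of $G(\mathcal N)$, which is not what Theorem~\ref{Thm:DF}(b) gives you (it gives that $\langle\e(g_1)\rangle$ is hyperbolically embedded, and elements of hyperbolically embedded subgroups are elliptic for the associated action), and these facts are not among the quoted results. Also, your justification that $y\langle g_1\rangle y^{-1}=\langle g_1\rangle$ for $y\in E(\e(g_1))$ is not right as stated ($\langle g_1\rangle$ need not be normal in a virtually cyclic overgroup), although the weaker fact you actually need, that the intersection is infinite, is immediate since both subgroups have finite index in $E(\e(g_1))$. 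Fortunately the whole detour can be bypassed exactly as in the paper: your own finite-image trick shows each $k\in K(G(\mathcal N))$ commutes with some power $\e(g_1)^n$, so $\langle\e(g_1)\rangle\cap k\langle\e(g_1)\rangle k^{-1}$ is infinite, and Proposition~\ref{Prop:maln} (almost malnormality of the hyperbolically embedded cyclic subgroup) forces $k\in\langle\e(g_1)\rangle$, which is torsion-free; this is the content of the remark following Theorem~\ref{Thm:Fhe}. On the positive side, your use of a second independent cyclic subgroup to guarantee that $\langle\e(g_1)\rangle$ is \emph{proper} in $G(\mathcal N)$ is a legitimate device (Proposition~\ref{Prop:maln} applied to the two distinct infinite cyclic members of the collection already gives it, without any reference to $E_{G(\mathcal N)}$), and it addresses a point the paper's proof asserts without comment.
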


\begin{proof}
If each $H_i$ is finite, the lemma is vacuously true. Indeed, we can take $\mathcal F_i=H_i\setminus\{ 1\}$ and then only the subgroups $N_i=\{ 1\}$ are sufficiently deep.

Thus, we can assume that at least one $H_i$ is infinite. By \cite[Theorem 5.4]{Osi16}, there exists $Y\subseteq G$ such that $\Hl\h (G,Y)$ and the action of $G$ on $\Gamma (G, \mathcal H \cup Y)$ is acylindrical (recall that $\mathcal H$ is defined by (\ref{calA})). Let $a\in G\setminus H_i$. By Proposition \ref{Prop:maln}, we have $|a^{-1}H_ia\cap H_i|<\infty$. The existence of such an element $a$ and the assumption $|H_i|=\infty $ allow us to apply \cite[Theorem 6.11]{DGO}, which implies that the group $G$ contains an element acting loxodromically on $\Gamma (G, \mathcal H \cup Y)$. In particular, the action $G\curvearrowright \Gamma (G, \mathcal H \cup Y)$ has unbounded orbits. Since $G$ is ICC, it is not virtually cyclic. Therefore, the action $G\curvearrowright\Gamma (G, \mathcal H \cup Y)$ is non-elementary by Theorem \ref{Thm:class}. Note that $K(G)=\{1\}$ by Theorem \ref{Thm:HypICC}. By Theorem \ref{Thm:Eg} (b), $G$ contains a loxodromic element $g\in G$ such that $\langle g\rangle \cup \Hl \h G$.

Every Dehn filling $G(\mathcal N)$ of $(G,\Hl)$ can also be thought of as the Dehn filling of $(G, \Hl\cup \langle g\rangle )$ corresponding to the trivial kernel $\{ 1\}\lhd \langle g\rangle$. Applying Theorem \ref{Thm:DF} to the hyperbolically embedded collection of subgroups $\Hl\cup \langle g\rangle$, we obtain that the Dehn filling $G(\mathcal N)$ of $(G,\Hl)$  with respect any collection of sufficiently deep subgroups $N_i\lhd H_i$ contains a hyperbolically embedded, proper, infinite cyclic subgroup (namely, the image of $\langle g\rangle$). In particular, $G(\mathcal N)$ is acylindrically hyperbolic by Theorem \ref{Thm:ah}. It remains to note that the existence of an infinite cyclic hyperbolically embedded subgroup in an acylindrically hyperbolic group implies triviality of the finite radical by Proposition \ref{Prop:maln}. Applying Theorem \ref{Thm:HypICC} again we conclude that $G(\mathcal N)$ is ICC.
\end{proof}

We end this section with a lemma which will be used in the proof of Theorem \ref{symmetries}. We say that a group $S$ is a {\it nontrivial free product}  if $S=S_1*S_2$ with $|S_1|\geq 2$ and $|S_2|\geq 3$. Recall that a subgroup of a hyperbolic group is {\it elementary} if it contains a cyclic subgroup of finite index. Recall also that a subgroup $H$ of a group $G$ is \emph{malnormal} (respectively, \emph{almost malnormal}) if $H\cap g^{-1}Hg=\{ 1\} $ (respectively, $|H\cap g^{-1}Hg|<\infty $) for all $g\in G\setminus H$.

\begin{lem}\label{exact}
Let  $G$ be a group that is hyperbolic relative to a finite collection of finitely generated, residually finite  subgroups $\{H_i\}_{i\in I}$. Let $D$ be a non-trivial, ICC, property (T) subgroup of $G$. Suppose also that, for every $i\in I$,  $D$ is not conjugate to a subgroup of $H_i$.
Then there is a short exact sequence $1\ra S \ra D \ra K\ra 1,$ where $ S$ is either trivial or a nontrivial free product group, and $K$ is a non-elementary subgroup of a hyperbolic group.
\end{lem}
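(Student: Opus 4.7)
The plan is to pass to a sufficiently deep Dehn filling of $(G, \{H_i\}_{i \in I})$ whose kernels $N_i \lhd H_i$ are of finite index. Residual finiteness of the $H_i$ guarantees such $N_i$ can be chosen to avoid any prescribed finite subset, which is precisely the ``sufficiently deep'' condition of Definition \ref{Def:DF}. Theorem \ref{Thm:DF} then makes the quotient $G(\mathcal N)$ hyperbolic relative to the finite collection $\{H_i/N_i\}_{i \in I}$, hence hyperbolic by Corollary \ref{Cor:rhh}. I would let $\epsilon \colon G \to G(\mathcal N)$ denote the quotient map and set $K = \epsilon(D)$, $S = D \cap \ker(\epsilon)$.

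To handle $K$, note that, as a quotient of $D$, it has property (T); consequently, as a subgroup of a hyperbolic group, $K$ is either finite or non-elementary, since an infinite virtually cyclic group is amenable and so cannot have (T). It remains to show $K$ is infinite. Property (T) makes $D$ finitely generated, and the non-parabolicity and non-virtually-cyclic hypotheses on $D$ combine with Theorem \ref{Thm:class}, applied to the acylindrical action of $D$ on $\Gamma(G, \mathcal A)$ coming from Theorem \ref{Thm:RHAH}, to produce a loxodromic element $g \in D$. For sufficiently deep fillings (in particular by applying Theorem \ref{Thm:DF} to the collection $\{H_i\}\cup \{E(g)\}$, cf.\ Theorem \ref{Thm:Eg}), $\epsilon(g)$ has infinite order in $G(\mathcal N)$, so $K$ is infinite and therefore non-elementary.

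To analyze $S$, I would use Theorem \ref{Thm:DF}(c) to identify $\ker(\epsilon) = \Ast_{i\in I}\Ast_{t\in T_i} N_i^t$ as a free product, and apply the Kurosh subgroup theorem to decompose $S = F \ast (\Ast_k S_k)$ with $F$ free and each $S_k$ conjugate (inside the free product) to a subgroup of some $N_i^t$, hence to a subgroup of some peripheral $H_i$. Assuming $S$ is non-trivial, three small shapes must be excluded by combining $S \lhd D$ with the ICC condition on $D$. If $S$ is conjugate into a single $H_i$, writing $S \le g H_i g^{-1}$ and picking $d \in D \setminus g H_i g^{-1}$ (possible by non-parabolicity of $D$) yields $S \le g H_i g^{-1} \cap d g H_i g^{-1}d^{-1}$, which is finite by Proposition \ref{Prop:maln}; the ICC condition then forces $S = 1$, a contradiction. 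If $S \cong \mathbb Z$, the conjugation action $D \to \mathrm{Aut}(\mathbb Z) = \mathbb Z/2$ gives a generator of $S$ with $D$-conjugacy class of size at most two, hence trivial by ICC, contradicting $Z(D) = 1$. If $S \cong \mathbb Z/2 \ast \mathbb Z/2$, its characteristic infinite cyclic subgroup of index $2$ is normal in $D$, reducing to the previous case. In every remaining configuration of the Kurosh decomposition, $S$ visibly takes the form $S_1 \ast S_2$ with $|S_1|\ge 2$ and $|S_2|\ge 3$, completing the proof. The principal technical point is the persistence of loxodromicity under deep Dehn fillings, which is packaged by Theorem \ref{Thm:DF}; the remainder is case analysis driven by Proposition \ref{Prop:maln} and the ICC condition.
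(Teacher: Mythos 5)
Your skeleton — a sufficiently deep Dehn filling with finite-index kernels $N_i\lhd H_i$ (available by residual finiteness), $K=\varepsilon(D)$, $S=D\cap\ker(\varepsilon)$, and a Kurosh analysis of $S$ inside the free product $\ker(\varepsilon)=\Ast_{i}\Ast_{t}N_i^t$ combined with Proposition \ref{Prop:maln}, normality of $S$ in $D$, and the ICC condition — is the same as the paper's, and your treatment of $S$ is correct (modulo the cosmetic slip that in the $S\cong\mathbb Z$ case the contradiction is with ICC directly, not with $Z(D)=1$). The genuine gap is in your proof that $K$ is infinite. You assert that non-parabolicity and non-virtual-cyclicity of $D$, together with Theorem \ref{Thm:class} applied to the acylindrical action on $\Gamma(G,\mathcal A)$, produce a loxodromic element $g\in D$. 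Theorem \ref{Thm:class} only gives the trichotomy bounded orbits / unbounded orbits and virtually cyclic / non-elementary; ICC excludes the middle case, but nothing you cite excludes the first: a subgroup with bounded orbits on $\Gamma(G,\mathcal A)$ is not, as a formal consequence of the quoted results, conjugate into a peripheral subgroup. Ruling this out amounts to the fact that an infinite subgroup of a relatively hyperbolic group containing no loxodromic elements is parabolic — true, but nontrivial and not among the tools stated in the paper. As written, the existence of $g$, hence the conclusion that $\varepsilon(g)$ has infinite order and $K$ is infinite, is unjustified; everything downstream (augmenting the collection by $E(g)$ via Theorem \ref{Thm:Eg} and Theorem \ref{Thm:DF}) is fine once $g$ exists, but the key step is missing.

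The paper closes exactly this point by a different argument, for which you already have all the ingredients: if $K$ were finite, then $S=D\cap\ker(\varepsilon)$ has finite index in $D$, so it is infinite and has property (T); since $\ker(\varepsilon)$ is a free product of conjugates of the $N_i$ by Theorem \ref{Thm:DF}(c), property (T) (via FA/Kurosh, precisely the mechanism you use to exclude the $\mathbb Z$, $\mathbb Z/2\ast\mathbb Z/2$ and nontrivial free-product shapes) forces a conjugate of $S$ into some $N_i\le H_i$; then your own case (a) argument — normality of $S$ in $D$ plus almost malnormality of $H_i$ from Proposition \ref{Prop:maln} — forces $D$ into a conjugate of $H_i$, contradicting the non-parabolicity hypothesis. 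Substituting this for your loxodromic step repairs the proof, avoids any dependence of the filling on $D$, and makes the peripheral augmentation by $E(g)$ unnecessary; the final observation that an infinite quotient of a property (T) group cannot be virtually cyclic, so $K$ is non-elementary in the hyperbolic group $G(\mathcal N)$, is correct as you state it.
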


\begin{proof}
Since the groups $\{H_i\}_{i\in I}$ are residually finite, Theorem \ref{Thm:DF} implies the existence of finite index normal subgroups $N_i\lhd H_i$  such that conditions (a)--(d) of the theorem hold. Since $H_i/N_i$ is finite for all $i\in I$, Corollary \ref{Cor:rhh} implies that the quotient group $G(\mathcal N)$ is hyperbolic.

Let $N$ denote the kernel of the natural homomorphism $G\to G(\mathcal N)$ and let $K$ denote the image of $D$ in $G(\mathcal N)$. Further, let $S=D\cap N$.  Obviously, we have a short exact sequence $1\ra  S \ra D \ra K\ra 1$.

First, we claim that $K$ is infinite. Indeed, otherwise, there is a finite index subgroup $D_0\le D$ such that $D_0\le  N$. Since $D_0$ has property (T) and $N$ decomposes as a free product of conjugates of subgroups $N_i$ (see part (c) of Theorem \ref{Thm:DF}), we can find $i\in I$ and $g\in G$ such that $g D_0g^{-1}\le  N_i$. Since $ H_i$ is almost malnormal in $ G$ by Proposition \ref{Prop:maln} and $D$ is infinite (being non-trivial and ICC), we get $g D g^{-1}\le H_i$, which contradicts the assumption of the lemma.

Second, we show that $S$ is either trivial or a nontrivial free product group. Indeed, suppose this is not true. Theorem \ref{Thm:DF} (c) and the Kurosh subgroup theorem applied to $S\leqslant N$ imply that either a)  $g S g^{-1}\le N_i$, for some $i\in I$ and $g\in G$, or b) $ S\cong\mathbb Z$ or $S\cong \mathbb Z/2\mathbb Z\ast \mathbb Z/2\mathbb Z$.
Note that $S$ is infinite since $S\lhd D$ and $D$ is ICC. If a) holds, Proposition \ref{Prop:maln} implies that $g D g^{-1}\le N_i$,  which is a contradiction. Further, case (b) contradicts the assumption that $D$ is ICC. \end{proof}


\section{Wreath-like products of groups}\label{Sec:WR}



\subsection{Some algebraic properties}\label{Sec:WRAlg}


In this section, we discuss the basic algebraic properties of wreath-like products necessary for what follows. We begin by describing the structure of subgroups of wreath-like products containing the base. Our first result is obvious.

\begin{lem}\label{Lem:WRsubgr0}
Let $A$, $B$ be any groups, $B\curvearrowright I$ any action of $B$ on a set $I$. Let $W\in \WR(A,B\curvearrowright I)$ and let $\e\colon W\to B$ denote the canonical homomorphism. For any subgroup $V\le W$ containing the base $A^{(I)}$, we have $V\in \WR (A,\e(V)\curvearrowright I)$.
\end{lem}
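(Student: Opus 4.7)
The plan is to simply unwind the definitions and observe that every condition in Definition \ref{wlp} for the ambient group $W$ restricts to $V$ once $V$ contains the full base $A^{(I)}$.

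First, I would produce the required short exact sequence for $V$. Since $A^{(I)} = \Ker(\epsilon) \le V$ by hypothesis, the restriction of $\epsilon$ to $V$ has kernel exactly $A^{(I)}$, and its image is by definition $\epsilon(V)$. Hence restricting the extension
\[
1 \longrightarrow \bigoplus_{i \in I} A_i \longrightarrow W \stackrel{\epsilon}\longrightarrow B \longrightarrow 1
\]
to $V$ yields
\[
1 \longrightarrow \bigoplus_{i \in I} A_i \longrightarrow V \stackrel{\epsilon|_V}\longrightarrow \epsilon(V) \longrightarrow 1,
\]
which is the short exact sequence of shape (\ref{ext}) required for $V$, with the same base $\bigoplus_{i \in I} A_i$ and the same identifications $A_i \cong A$.

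Next, I would verify the conjugation rule with respect to the action $\epsilon(V) \curvearrowright I$, which is nothing other than the restriction of the action $B \curvearrowright I$ to the subgroup $\epsilon(V) \le B$. For any $v \in V \le W$ and any $i \in I$, the assumption that $W \in \WR(A, B \curvearrowright I)$ gives $v A_i v^{-1} = A_{\epsilon(v) i}$; since $\epsilon(v) = (\epsilon|_V)(v)$, this is exactly the conjugation rule required for $V$ relative to the restricted action. This confirms $V \in \WR(A, \epsilon(V) \curvearrowright I)$.

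There is no genuine obstacle here: the lemma is a direct consequence of the fact that both defining conditions (the extension structure and the permutation-of-summands rule) are preserved under restriction to any intermediate subgroup containing the kernel. I would present the argument in two or three sentences without further elaboration.
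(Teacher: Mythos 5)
Your argument is correct and is exactly what the paper intends: the paper's own proof simply states that the lemma ``follows immediately from the definition of a wreath-like product,'' and your two checks (the restricted extension $1\to A^{(I)}\to V\to \e(V)\to 1$, and the conjugation rule $vA_iv^{-1}=A_{\e(v)i}$ for $v\in V$) are precisely the verification being left implicit. Nothing is missing.
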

\begin{proof}
The lemma follows immediately from the definition of a wreath-like product.
\end{proof}

Under the assumptions of Lemma \ref{Lem:WRsubgr0}, the action of $\e(V)$ on $I$ is, in general, not transitive. However, if $W$ is a regular wreath-like product, $V$ has a natural regular wreath-like structure. The result below can be seen as a particular case of \cite[Lemma 2.8]{CIOS1}. We provide a simple direct proof for convenience of the reader.

\begin{lem}\label{WRsubgr}
Let $A$, $B$ be any groups, $W\in \WR(A,B)$, and let $\e\colon W\to B$ denote the canonical homomorphism. Suppose that $V$ is a subgroup of $W$ containing the base $A^{(I)}$. Then $V\in \WR (C,\e(V))$, where $C$ is the direct sum of $|B : \e(V)|$-many copies of $A$.
\end{lem}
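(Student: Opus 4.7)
The plan is to regroup the $I$-indexed summands of $A^{(I)}$ into $|\e(V)|$ blocks indexed by $\e(V)$ itself, so that $V$ permutes these blocks by the regular left-multiplication action of $\e(V)$ on itself.

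First I would use the regularity of $B\curvearrowright I$ to fix a basepoint $i_0\in I$ and identify $I$ with $B$ via $b\mapsto b\cdot i_0$. Under this identification the action of $B$ on $I$ becomes left multiplication, and the defining rule of $W\in \WR(A,B)$ reads $wA_bw^{-1}=A_{\e(w)b}$ for all $w\in W$ and $b\in B$. Next, I would choose a set of representatives $T$ for the right cosets of $\e(V)$ in $B$, so that every $b\in B$ admits a unique factorization $b=vt$ with $v\in\e(V)$ and $t\in T$; equivalently,
$$
B=\bigsqcup_{v\in \e(V)}vT,\qquad |T|=|B:\e(V)|.
$$
For each $v\in \e(V)$, set $C_v:=\bigoplus_{t\in T}A_{vt}$; each $C_v$ is isomorphic to $C:=A^{(T)}$, the direct sum of $|B:\e(V)|$ copies of $A$, and the partition above gives $A^{(I)}=\bigoplus_{v\in \e(V)}C_v$.

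With this decomposition in hand, I would verify the wreath-like structure on $V$. Since $A^{(I)}\le V$, restricting $\e$ yields the extension
$$
1\longrightarrow \bigoplus_{v\in\e(V)}C_v\longrightarrow V\stackrel{\e}{\longrightarrow}\e(V)\longrightarrow 1.
$$
For any $w\in V$ and $v\in\e(V)$,
$$
wC_vw^{-1}=\bigoplus_{t\in T}wA_{vt}w^{-1}=\bigoplus_{t\in T}A_{\e(w)vt}=C_{\e(w)v},
$$
since $\{(\e(w)v)t:t\in T\}=(\e(w)v)T$ is precisely the block indexing $C_{\e(w)v}$. The action of $\e(V)$ on itself by left multiplication is free and transitive, so this is exactly the definition of $V\in \WR(C,\e(V))$.

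The only real subtlety lies in choosing the correct combinatorial regrouping: the naive partition of $I$ into $\e(V)$-orbits produces $|B:\e(V)|$ orbits of size $|\e(V)|$, which is the wrong shape for a \emph{regular} wreath-like structure over $\e(V)$. The transversal $T$ instead provides a cross-section of the orbits, yielding $|\e(V)|$ blocks of size $|B:\e(V)|$ that are permuted regularly by $\e(V)$, which is precisely what the conclusion of the lemma demands.
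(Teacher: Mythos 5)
Your proof is correct and is essentially the same as the paper's: after identifying $I$ with $B$, you choose a right transversal $T$ of $\e(V)$ in $B$, form the blocks $C_v=\bigoplus_{t\in T}A_{vt}$ indexed by $\e(V)$, and verify $wC_vw^{-1}=C_{\e(w)v}$, which is exactly the paper's argument with $C_d=\bigoplus_{t\in T}A_{dt}$. The only differences are expository (making the identification $I\cong B$ and the regularity of the left-multiplication action of $\e(V)$ on itself explicit), which the paper leaves implicit.
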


\begin{proof}
Let $D=\e(V)$ and let $T$ be a right transversal of $D$ in $B$. That is,
\begin{equation}\label{B=DT}
B=\bigsqcup_{t\in T} Dt.
\end{equation}
For every $d\in D$, we let $C_d=\bigoplus_{t\in T} A_{dt}\le A^{(B)}$. Obviously, (\ref{B=DT}) implies that $A^{(B)}=\bigoplus_{d\in D}C_d=C^{(D)}$. For every $v\in V$, we have
$$
vC_dv^{-1}=\bigoplus_{t\in T}vA_{dt}v^{-1}=\bigoplus_{t\in T}A_{\e(v)dt}=C_{\e(v)d}
$$
and the result follows.
\end{proof}

We now turn to quotients of wreath-like products. The following result was proved by the authors in \cite{CIOS1}.

\begin{lem}[{\cite[Lemma 2.12]{CIOS1}}]\label{A/N}
Let $A$, $B$ be any groups, $W\in \WR(A,B)$. We identify $A$ with the subgroup $A_1$ of the base $\bigoplus_{b\in B}A_b\le W$. For any $N\lhd A$, we have $W/\ll N\rr \in \WR (A/N, B)$.
\end{lem}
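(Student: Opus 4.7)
The plan is to identify the normal closure $\ll N\rr$ explicitly as a direct sum inside the base of $W$, and then descend the defining extension of $W$ to the quotient.

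First, for each $b\in B$, I would pick any $w_b\in W$ with $\e(w_b)=b$ and set $N_b := w_bNw_b^{-1}\le A_b$. The main subtlety, and the only step requiring real verification, is that $N_b$ is independent of the choice of $w_b$. Any two such choices differ by right multiplication by an element $a=\prod_c a_c$ of the base $A^{(B)}=\bigoplus_c A_c$. The components $a_c$ with $c\ne 1$ commute with $A_1$ (distinct summands of a direct sum commute), while $a_1\in A_1\cong A$ normalizes $N$ because $N\lhd A$. Hence $ana^{-1}\in N$ for every $n\in N$, so $w_bNw_b^{-1}$ does not depend on the choice of $w_b$. This is the only place the normality hypothesis is used.

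Next, I would show $\ll N\rr=\bigoplus_{b\in B}N_b$. The inclusion $\supseteq$ is immediate since each $N_b$ is a $W$-conjugate of $N$. For $\subseteq$, every conjugate $wnw^{-1}$ with $n\in N\le A_1$ lies in $wA_1w^{-1}=A_{\e(w)}$ and, by the previous step, in $N_{\e(w)}$. The sum $\sum_b N_b$ is direct because $N_b\le A_b$ and $A^{(B)}=\bigoplus_b A_b$. In particular, $\ll N\rr$ sits inside the base.

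Finally, I would descend the short exact sequence $1\to A^{(B)}\to W\xrightarrow{\,\e\,}B\to 1$ modulo $\ll N\rr$. Since $\ll N\rr\le A^{(B)}$, one obtains
\[
1\longrightarrow A^{(B)}/\ll N\rr\longrightarrow W/\ll N\rr\xrightarrow{\,\bar\e\,}B\longrightarrow 1,
\]
and $A^{(B)}/\ll N\rr=\bigoplus_{b\in B}A_b/N_b\cong (A/N)^{(B)}$. Writing $\bar A_b:=A_b/N_b\cong A/N$, the conjugation action descends directly from $W$: for $\bar w\in W/\ll N\rr$ lifting $w\in W$,
\[
\bar w\bar A_b\bar w^{-1}=\overline{wA_bw^{-1}}=\overline{A_{\e(w)b}}=\bar A_{\bar\e(\bar w)b},
\]
which is exactly the defining condition of $\WR(A/N,B)$. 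The ``hard'' part here is really only the well-definedness of the $N_b$'s; everything else is routine bookkeeping.
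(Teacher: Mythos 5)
Your proof is correct: the well-definedness of $N_b=w_bNw_b^{-1}$ (using that components of the base away from the index $1$ commute with $A_1$ and that $N\lhd A$), the identification $\ll N\rr=\bigoplus_{b\in B}N_b$, and the descent of the extension with $A_b\cap\ll N\rr=N_b$ are exactly the points that need checking, and you handle each of them. The paper itself gives no proof here (it cites \cite[Lemma 2.12]{CIOS1}), and your argument is the standard one that the cited lemma uses, so there is nothing to add.
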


Next, we discuss centralizers of elements. The results obtained below will be used in the proofs of Theorems \ref{Thm:MC} and \ref{symmetries}.
As usual, by $C_G(g)$ and $C_G(S)$ we denote the centralizer of an element $g$ and a subset $S$ of a group $G$, respectively. For a subgroup $H\le G$, the \textit{virtual centralizer} of $H$ in $G$, denoted by $vC_G(H)$, is the set of all elements $g\in G$ that commute with a finite index subgroup of $H$. Clearly, $vC_G(H)$ is a subgroup.

\begin{lem}\label{Lem:Centr}
Let $A$ be a non-trivial group, $B$ an ICC acylindrically hyperbolic group and $B \curvearrowright I$ a faithful action of $B$ on a set $I$. For any $W\in \WR(A,B\curvearrowright I)$ and any finite index subgroup $A_0\leq A^{(I)}$, we have $C_W(A_0)\leqslant A^{(I)}$. Moreover, the following hold.
\begin{enumerate}
\item[(a)] If $A$ is ICC, then $C_W(A_0)=1$; in particular, $vC_W(A^{(I)})=1$;
\item[(b)] If $A$ is abelian, then $C_W(A_0)=A^{(I)}$; in particular, $vC_W(A)=A^{(I)}$.
\end{enumerate}
\end{lem}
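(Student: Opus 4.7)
The plan is to first prove the inclusion $C_W(A_0)\le A^{(I)}$ and then deduce (1) and (2). Let $w\in C_W(A_0)$ and set $b=\e(w)\in B$; I aim to show $b=1$. Conjugation by $w$ permutes the summands of $A^{(I)}=\bigoplus_{i\in I}A_i$ via $b$ (sending $A_i$ isomorphically onto $A_{bi}$), so for $v\in A_0$ the identity $wvw^{-1}=v$ forces the support of $v$ to be $b$-invariant. Since supports are finite, each one is a union of finite $b$-orbits; on any such orbit $O$, the cyclic linkage $v_{bj}=\alpha_j^w(v_j)$ determines $v|_O$ from a single fixed point of a composed automorphism $\beta_O$ of $A$, so the valid tuples on $O$ form a subgroup of $A^O$ of order at most $|A|$. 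Let $\mathrm{Val}(w):=\{v\in A^{(I)}:wvw^{-1}=v\}$; then $A_0\subseteq\mathrm{Val}(w)$, so $[A^{(I)}:\mathrm{Val}(w)]\le n:=[A^{(I)}:A_0]$. A direct index computation then forces (a) every $b$-orbit on $I$ to be finite (an infinite orbit would contribute an infinite summand to $A^{(I)}/\mathrm{Val}(w)$) and (b) $\sum_{|O|\ge 2}(|O|-1)\le\log_{|A|}n$. In particular, $\overline{I^b}:=\{i:bi\ne i\}$ is \emph{finite}.

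To rule out $b\ne 1$, consider $B_{\mathrm{fin}}:=\{b\in B:|\overline{I^b}|<\infty\}$. Using $\overline{I^{bb'}}\subseteq\overline{I^b}\cup\overline{I^{b'}}$ and $\overline{I^{gbg^{-1}}}=g\cdot\overline{I^b}$, this is a normal subgroup of $B$. Our $b$ would lie in $B_{\mathrm{fin}}$, so $b\ne 1$ would make $B_{\mathrm{fin}}$ nontrivial; as $B$ is ICC, Theorem \ref{Thm:HypICC} gives $K(B)=\{1\}$, forcing $B_{\mathrm{fin}}$ to be infinite. By \cite[Lemma 7.1]{Osi16} (as invoked in the proof of Lemma \ref{Lem:FA}), any infinite normal subgroup of the acylindrically hyperbolic group $B$ acts non-elementarily on its hyperbolic space, hence contains a loxodromic element $b_0$; by Definition \ref{Def:lox}, $b_0$ has infinite order in $B$. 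On the other hand, every $b'\in B_{\mathrm{fin}}$ acts on $I$ as a permutation supported on the finite set $\overline{I^{b'}}$, hence has finite order there, and by faithfulness $b'$ has finite order in $B$. Applied to $b_0$ this is a contradiction, so $b=1$ and $w\in A^{(I)}$.

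For (1), write $w=\prod_i w_i\in A^{(I)}$; distinct summands commute, so centralizing $A_0$ forces $w_i\in C_{A_i}(A_0\cap A_i)$ for every $i$. Since $A_0\cap A_i$ has finite index in $A_i\cong A$, the element $w_i$ has finite $A_i$-conjugacy class, and ICC of $A$ gives $w_i=1$, whence $w=1$ and $C_W(A_0)=\{1\}$. For (2), $A^{(I)}$ is abelian and hence centralizes $A_0$ tautologically, giving the reverse inclusion and therefore $C_W(A_0)=A^{(I)}$. The main obstacle is the first paragraph: converting the finite-index condition on $A_0$ into the finite-support condition on $b$ via the linkage/index analysis, and in particular noting that an infinite $b$-orbit would produce an infinite subfactor of $A^{(I)}/\mathrm{Val}(w)$. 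Once $\overline{I^b}$ is finite, the ICC and acylindrical hyperbolicity of $B$ close the argument through normality of $B_{\mathrm{fin}}$.
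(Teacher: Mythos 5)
Your proof is correct, but the route to the key inclusion $C_W(A_0)\le A^{(I)}$ is organized differently from the paper's. The paper first proves, unconditionally, that the normal subgroup $K=\{b\in B: |\{i: bi\ne i\}|<\infty\}$ is trivial: $K$ is locally finite (by faithfulness), hence amenable, hence finite by the finiteness of the amenable radical of an acylindrically hyperbolic group, hence trivial by ICC. Then, for $g\in C_W(A_0)$ with $b=\e(g)\ne 1$, triviality of $K$ makes the moved set infinite, one extracts an infinite $J\subset I$ with $bJ\cap J=\emptyset$, and a nontrivial $a\in A_0\cap A^{(J)}$ (which exists because $A_0$ has finite index and $A^{(J)}$ is infinite) gives the contradiction $a=gag^{-1}\in A^{(bJ)}$. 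You invert this logic: you use the finite index of $A_0$ inside the fixed-point subgroup $\mathrm{Val}(w)$ to bound the moved set of $b$ directly (orbitwise index count), and then kill the same normal subgroup $B_{\mathrm{fin}}=K$ by a torsion argument: faithfulness makes every finitely supported element of $B$ of finite order, ICC (via Theorem \ref{Thm:HypICC}) forces $B_{\mathrm{fin}}$ trivial or infinite, and an infinite normal subgroup of an acylindrically hyperbolic group acts non-elementarily and acylindrically, hence contains a loxodromic, infinite-order element (this last step implicitly uses the classification behind Theorem \ref{Thm:class}, or one can note that $B_{\mathrm{fin}}$ would be acylindrically hyperbolic and hence non-torsion by Theorem \ref{Thm:Fhe}). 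What the paper's argument buys is uniformity and brevity: no orbit/index bookkeeping, and the amenable-radical fact does all the work. What yours buys is avoidance of amenability altogether and a quantitative statement about how much $b$ can move; the cost is that the counting must be done carefully ($\mathrm{Val}(w)$ need not be normal, so "$A^{(I)}/\mathrm{Val}(w)$" should be read as a coset count, and your $\log_{|A|}n$ bound only makes sense for finite $A$ — for infinite $A$ a single nontrivial orbit already forces infinite index, which is what you need anyway). Your endgame for assertions (1) and (2) is the same as the paper's in substance (the paper invokes that $A^{(I)}$ is ICC rather than arguing coordinatewise), and like the paper you leave the "in particular" statements about virtual centralizers implicit.
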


\begin{proof}
We first show that the set $$S(b)=\{i\in I\mid b\cdot i\not=i\}$$ is infinite for all $b\in B\setminus\{ 1\}$. Indeed, let $K\lhd B$ be the set of all $b\in B$ such that 
$|S(b)|<\infty$. Clearly, $K$ is a normal subgroup of $B$. Since the action of $B$ on $I$ is faithful, $B$ embeds in the permutation group $Sym(I)$. Under this embedding, $K$ is mapped to the subgroup of finitely supported permutations. The latter subgroup is locally finite and, therefore, so is $K$. In particular, $K$ is amenable. By \cite[Corollary 8.1 (a)]{Osi16}, the amenable radical of an acylindrically hyperbolic group is finite. Thus, $K$ is finite. Since $B$ is ICC, we deduce that $K=\{1\}$.

Let now $A_0\leqslant A^{(I)}$ be a finite index subgroup. Assume by contradiction that $C_W(A_0)$ is not contained in $A^{(I)}$. Then there is  $g\in C_W(A_0)$ such that  $\e(g)\ne 1$, where $\e\colon W\to B$ denotes the canonical homomorphism. As shown above, the set $S(\e(g))$ must be infinite. It follows that there exists an infinite subset  $J\subseteq S(\e(g))$ such that 
\begin{equation}\label{Eq:J}
\e(g) J\cap J=\emptyset. 
\end{equation}
Indeed, let $J_1=\{i_1\}$, where $i_1$ is an arbitrary element of $S(\e(g))$. By induction, for $n\ge 2$, we define a finite set $J_{n+1}=J_{n}\cup \{i_{n+1}\}$, where $i_{n+1}$ is an arbitrary element of the (infinite) set  $S(\e(g))\setminus \big(J_{n}\cup \e(g)J_{n}\cup \e(g^{-1})J_n\big)$. It is easy to see that $J=\{ i_n\}_{n\in \NN}$ satisfies (\ref{Eq:J}).

Since  $A_0$ has finite index in $A^{(I)}$, $A_0\cap A^{(J)}$ has finite index in the (infinite) group $A^{(J)}$. In particular, $A_0\cap A^{(J)}\ne \{ 1\}$. Let $a\in (A_0\cap A^{(J)})\setminus\{1\}$. By the definition of a wreath-like product, we obtain $a=gag^{-1}\in A^{(\e(g)J)}\setminus\{1\}$ which contradicts (\ref{Eq:J}). This proves that $C_W(A_0)\leqslant A^{(I)}$.

If $A$ is ICC, then so is $A^{(I)}$. Since $A_0\leqslant A^{(I)}$ has finite index,  $C_{A^{(I)}}(A_0)=\{1\}$ and thus $C_{W}(A_0)=\{1\}$, which proves  (a). If $A$ is abelian, then $A^{(I)}\leqslant C_W(A_0)$ and hence $C_W(A_0)=A^{(I)}$, which proves (b).
 \end{proof}

Given an element $a\in A^{(I)}$ of a wreath-like product $W\in \WR(A, B\curvearrowright I)$, we define its \emph{support} by
$$
supp (a)=\{ i\in I\mid a(i)\ne 1\}.
$$

\begin{lem}\label{Lem:centr}
Let $A$ be an arbitrary group, $B$ a torsion-free group, $B \curvearrowright I$ an action of $B$ on a set $I$ with almost malnormal stabilizers. Let $W\in \WR(A,B\curvearrowright I)$ and let $\e\colon W\to B$ denote the canonical homomorphism. For every $a\in A^{(I)}$, we have $\e(C_W(a))\le Stab_B(i)$ for all $i\in supp(a)$.
\end{lem}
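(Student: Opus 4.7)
The plan is to fix $w\in C_W(a)$, let $b:=\e(w)$, and analyze what the commutation relation $waw^{-1}=a$ says about the action of $b$ on $supp(a)\subset I$. Writing $a=\sum_{i\in supp(a)}a_i$ with $a_i\in A_i\setminus\{1\}$ of finite support, the defining property of a wreath-like product, $wA_iw^{-1}=A_{bi}$, implies that $wa_iw^{-1}\in A_{bi}\setminus\{1\}$, so
\[
supp(waw^{-1})=b\cdot supp(a).
\]
Since $waw^{-1}=a$, this yields $b\cdot supp(a)=supp(a)$; that is, $b$ permutes the finite set $supp(a)$.

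Next, I would promote ``$b$ permutes $supp(a)$'' to ``$b$ fixes every element of $supp(a)$'' using the malnormality hypothesis together with torsion-freeness of $B$. Fix $i\in supp(a)$. Since $b$ permutes the finite set $supp(a)$, there exists an integer $m\ge 1$ with $b^m\cdot i=i$, so $b^m\in Stab_B(i)$. Because $b\cdot i\in supp(a)$ is also fixed by $b^m$, we have $b^m\in Stab_B(b\cdot i)=b\,Stab_B(i)\,b^{-1}$ as well, whence
\[
b^m\in Stab_B(i)\cap b\,Stab_B(i)\,b^{-1}.
\]

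Finally, I would exploit the hypotheses that $B$ is torsion-free and $Stab_B(i)$ is almost malnormal in $B$. The key observation is that, in a torsion-free group, an almost malnormal subgroup is automatically malnormal, since any finite subgroup is trivial. Therefore, if $b\notin Stab_B(i)$, then $Stab_B(i)\cap b\,Stab_B(i)\,b^{-1}=\{1\}$, which forces $b^m=1$; torsion-freeness of $B$ then gives $b=1$, contradicting $b\notin Stab_B(i)$. Hence $b\in Stab_B(i)$, as desired. The only genuine content is the almost malnormal $+$ torsion-free $\Rightarrow$ malnormal observation at the end; once this is in place, the argument is bookkeeping about supports in the direct sum $A^{(I)}$, so no substantial obstacle is anticipated.
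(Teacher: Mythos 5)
Your proof is correct, and its first half coincides with the paper's: both arguments start from the observation that any $w\in C_W(a)$ with $b=\e(w)$ satisfies $supp(waw^{-1})=b\cdot supp(a)$, so that $\e(C_W(a))$ preserves the finite set $supp(a)$ setwise. Where you diverge is in how the conclusion is extracted from almost malnormality. The paper argues with the whole subgroup at once: torsion-freeness makes $\e(C_W(a))$ infinite (once it is non-trivial), the setwise stabilization shows $\e(C_W(a))\cap Stab_B(i)$ has finite index in $\e(C_W(a))$, and intersecting two finite-index subgroups of this infinite group shows $|c^{-1}Stab_B(i)c\cap Stab_B(i)|=\infty$ for every $c\in\e(C_W(a))$, which contradicts almost malnormality unless $c\in Stab_B(i)$. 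You instead work element by element: a power $b^m$ ($m\ge 1$) fixes $i$ and hence also $b\cdot i$, so $b^m\in Stab_B(i)\cap b\,Stab_B(i)\,b^{-1}$, and the observation that an almost malnormal subgroup of a torsion-free group is malnormal forces $b^m=1$ and then $b=1$ if $b\notin Stab_B(i)$, a contradiction. Your route is slightly more elementary (it avoids the finite-index/infinite-intersection counting and uses torsion-freeness twice in a pointwise way), while the paper's version packages the same hypotheses into a single ``infinite intersection of conjugates'' contradiction; both are complete, and the only blemish in your write-up is the additive notation $a=\sum_i a_i$ for an element of $\bigoplus_{i\in I}A_i$ with $A$ possibly non-abelian, which is purely cosmetic since the components lie in distinct commuting summands.
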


\begin{proof}
The result is vacuously true if $a=1$ or $\e(C_W(a))=\{ 1\}$, so we assume that $a\ne 1$ and $\e(C_W(a))\ne\{ 1\}$. Since $B$ is torsion-free, $\e(C_W(a))$  must be infinite. The subgroup $\e(C_W(a))\le B$ stabilizes the finite non-empty subset $supp (a)\subseteq I$ setwise. Hence, the subgroup $I=\e(C_W(a))\cap Stab_B(i)$ has finite index in $\e(C_W(a))$ for all $i\in supp (a)$.  Since the intersection of finite index subgroups of an infinite group is infinite, we have $$|c^{-1} Stab_B(i) c \cap Stab_B(i)|\ge |c^{-1}Ic\cap I| =\infty $$ for any $c\in \e(C_W(a))$ and any $i\in supp (a)$. Since $Stab_B(i)$ is almost malnormal, we have $c\in Stab_B(i)$ and the claim of the lemma follows.
\end{proof}

In the rest of this section, we consider wreath-like products with abelian bases. We record a trivial yet useful observation.

\begin{lem}\label{Lem:WRAb}
Let $W\in \WR (A,B\curvearrowright I)$, where $B$ is an arbitrary group acting on a set $I$ and $A$ is abelian. Let $\e\colon W\to B$ denote the canonical homomorphism. If $a\in A^{(I)}$, $u,v\in W$, and $\e(u)=\e(v)$, then $u^{-1}au=v^{-1}av$.
\end{lem}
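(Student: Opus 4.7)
The plan is to reduce the claim to the statement that elements of the base $A^{(I)}$ commute with $a$, which holds immediately because $A^{(I)}$ is abelian when $A$ is.

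First, I would use the exact sequence defining a wreath-like product. Since $\varepsilon(u)=\varepsilon(v)$, the element $c := uv^{-1}$ satisfies $\varepsilon(c)=1$, so $c$ lies in $\ker\varepsilon = A^{(I)}$. Writing $u = cv$, I compute
\[
u^{-1}au \;=\; v^{-1}c^{-1}acv.
\]

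Next, I would observe that since $A$ is abelian, the base $A^{(I)} = \bigoplus_{i\in I} A_i$ is an abelian group. Both $a$ and $c$ belong to $A^{(I)}$, so they commute: $c^{-1}ac = a$. Substituting this back gives $u^{-1}au = v^{-1}av$, which is the desired conclusion.

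There is no real obstacle here; the lemma is essentially the observation that conjugation of a base element by $w \in W$ depends only on $\varepsilon(w) \in B$, which is built into the very structure of the extension together with commutativity of $A$. I would present the argument in three short lines rather than elaborating further.
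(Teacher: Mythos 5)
Your proof is correct and is exactly the paper's argument, just written out in more detail: $u$ and $v$ differ by an element of the abelian base $A^{(I)}$, which commutes with $a$. No issues.
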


\begin{proof}
The assumption $\e(u)=\e(v)$ implies that $u$ and $v$ differ by an element of $A^{(I)}$.  Since $A^{(I)}$ is abelian, the result follows.
\end{proof}

Let $W\in \WR(A,B)$, where $A$ is abelian and $B$ is an arbitrary group. Recall that we identify $A$ with the subgroup $A_1\le A^{(B)}$.  Fix a section $\sigma $ of the canonical homomorphism $\e\colon W\to B$. That is, $\sigma$ is any map $B\to W$ such that $\e\circ \sigma \equiv id_B$. Below we think of elements $a\in A^{(B)}$ as functions $B\to A$. For every subset $X\subseteq B$ and every $a\in A^{(B)}$, we define
$$
\pi_X(a) =\prod_{x\in X} \big(\sigma(x)^{-1} a\sigma (x) (1)\big) \in A_1= A.
$$
Note that $\pi_X(a)$ is well-defined. Indeed, conjugation by $\sigma(x)$ defines an isomorphism between $A_x$ and $A_1=\sigma(x)^{-1} A_x\sigma(x)$. This isomorphism sends $a(x)$ to $\sigma(x)^{-1} a\sigma (x)(1)$. Thus, $\sigma(x)^{-1} a\sigma (x)(1)\ne 1$  if and only if $a(x)\ne 1$. This implies that the product $\prod_{x\in X} \big(\sigma(x)^{-1} a\sigma (x) (1)\big)$ has only finitely many non-trivial terms. Note also that $\pi_X(a)$ is independent of the choice of a particular section $\sigma$ by Lemma \ref{Lem:WRAb}.

To state the next result, we need the following.

\begin{defn}\label{Def:UWR}
Let $U\in \WR(A,B \curvearrowright I)$ for some groups $A$, $B$ and some action $B\curvearrowright I$. For any $i\in I$, let $P_i$ denote the preimage of $Stab_B(i)$ in $U$ under the canonical homomorphism $U\to B$. By the definition of a wreath-like product, $P_i$ normalizes the subgroup $A_i\le A^{(I)}$. In general, elements of $P_i$ can act as non-trivial automorphisms of $A_i$.  We say that the wreath-like product $U$ is \emph{untwisted} if $P_i\le C_U(A_i)$ for all $i$. The subset of untwisted wreath-like products in $\WR(A,B \curvearrowright I)$ will be denoted by $\WR_0(A,B \curvearrowright I)$.
\end{defn}

If the action of $B$ on $I$ is transitive, it suffices to check that $P_i\le C_U(A_i)$ for at least one $i\in I$ in order to show that $W$ is untwisted. Note also that $\WR_0(A,B \curvearrowright I)$ is non-empty only if $A$ is abelian.

\begin{lem}\label{Lem:N(R)}
Let $W\in \WR(A,B)$, where $A$ is non-trivial abelian and $B$ is an arbitrary group. Let $R$ be a subgroup of $B$ and let
\begin{equation}\label{Eq:NR}
N_{R} = \left \{ \left. a\in A^{(B)} \;\right|\; \pi_{bR}(a)=1\;\, \text{ for all }\, b\in B\right\} .
\end{equation}
\begin{enumerate}
\item[(a)] $N_R$ is a normal subgroup of $W$.
\item[(b)] The quotient group $W_R=W/N_R$ belongs to $\WR_0(A,B\curvearrowright I)$, where $I$ is the set of cosets $B/R$ and the action of $B$ on $I$ is by left multiplication.
\end{enumerate}
\end{lem}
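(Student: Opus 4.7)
The plan is to show that $N_R$ is the kernel of a natural homomorphism onto $A^{(B/R)}$, from which both parts follow, and then to verify untwistedness using the explicit shift behavior of $\pi_{bR}$ under conjugation.

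For part (a), observe first that, since $A$ is abelian, each map $a \mapsto (\sigma(x)^{-1}a\sigma(x))(1)$ is a well-defined homomorphism $A^{(B)} \to A$ (independent of the choice of $\sigma$ by Lemma~\ref{Lem:WRAb}). Hence $\pi_{bR}: A^{(B)} \to A$ is a homomorphism and $N_R = \bigcap_{b \in B} \ker \pi_{bR}$ is a subgroup of $A^{(B)}$. To establish normality in $W$, note that $A^{(B)}$ is abelian and therefore centralizes $N_R$, so it suffices to check invariance under conjugation by $\sigma(c)$ for each $c \in B$. The key identity is
\begin{equation*}
\pi_{bR}(\sigma(c)a\sigma(c)^{-1}) = \pi_{c^{-1}bR}(a),
\end{equation*}
which holds because conjugation by $\sigma(c)$ sends $A_y$ to $A_{cy}$, so the factor at $x \in bR$ in the conjugate comes from the factor at $c^{-1}x \in c^{-1}bR$ in $a$. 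If $a \in N_R$, the right-hand side vanishes for every $b$, hence so does the left-hand side, giving $\sigma(c) N_R \sigma(c)^{-1} \subseteq N_R$ and therefore $N_R \lhd W$.

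For part (b), I would define the homomorphism $\Phi: A^{(B)} \to A^{(B/R)}$ by $\Phi(a)(bR) = \pi_{bR}(a)$. Since $a$ has finite support, so does $\Phi(a)$; the kernel is $N_R$ by definition, and surjectivity is immediate by choosing a representative $x_{bR}$ of each coset $bR$ in the (finite) support of a prescribed $f \in A^{(B/R)}$ and placing $f(bR)$ at coordinate $x_{bR}$. Hence $\Phi$ induces an isomorphism $A^{(B)}/N_R \cong A^{(B/R)}$. Let $\tilde A_i \le W/N_R$ be the image of the $i$-th coordinate subgroup under this isomorphism; then each $\tilde A_i \cong A$, and the identity from (a) rewrites as $\Phi(\sigma(c)a\sigma(c)^{-1})(xR) = \Phi(a)(c^{-1}xR)$, so the image of $\sigma(c)$ conjugates $\tilde A_{bR}$ onto $\tilde A_{cbR}$. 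This is exactly the wreath-like relation for the action $B \curvearrowright B/R$ by left multiplication, confirming $W/N_R \in \WR(A, B \curvearrowright B/R)$.

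Finally, for untwistedness, suppose $c \in Stab_B(bR) = bRb^{-1}$. Then $c^{-1}bR = bR$, so whenever $\Phi(a)$ is supported only at $bR$ we get $\Phi(\sigma(c)a\sigma(c)^{-1})(xR) = \Phi(a)(c^{-1}xR) = \Phi(a)(xR)$, i.e., the image of $\sigma(c)$ centralizes $\tilde A_{bR}$. Combining this with the fact that $A^{(B)}/N_R$ is abelian yields $P_i \le C_{W/N_R}(\tilde A_i)$ for every $i = bR$, so $W/N_R \in \WR_0(A, B \curvearrowright B/R)$. The main subtlety throughout is keeping the section-dependent formulas consistent, but this is absorbed uniformly by Lemma~\ref{Lem:WRAb}; this is also precisely where the hypothesis that $A$ is abelian enters.
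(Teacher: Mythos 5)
Your proposal is correct and follows essentially the same route as the paper: the shift identity $\pi_{bR}(w^{-1}aw)=\pi_{\e(w)bR}(a)$ (your version with $w=\sigma(c)^{-1}$, justified via Lemma \ref{Lem:WRAb}) gives normality, the map $a\mapsto(\pi_{bR}(a))_{bR\in B/R}$ identifies $A^{(B)}/N_R$ with $A^{(B/R)}$ exactly as in the paper's formula $\gamma(a)(i)=\pi_i(a)$, and the untwisted condition is checked by the same computation (the paper checks it only at the coset $R$, using transitivity). No gaps worth noting.
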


\begin{proof}
Since $A^{(B)}$ is abelian, for any $b\in B$ and any $a_1, a_2\in A^{(B)}$, we have 
$$
\pi_{bR}(a_1a_2)= \prod_{x\in X} \big(\sigma(x)^{-1} a_1a_2\sigma (x) (1)\big) = \prod_{x\in X} \big(\sigma(x)^{-1} a_1\sigma (x) (1)\big)\cdot  \prod_{x\in X} \big(\sigma(x)^{-1} a_2\sigma (x) (1)\big)=\pi_{bR}(a_1)\pi_{bR}(a_2)
$$
Therefore, $N_R$ is a subgroup (even a normal subgroup) of $A^{(B)}$. To prove (a), it remains to verify that $N_R$ is normal in $W$. Let $\e\colon W\to B$ denote the canonical homomorphism. Using Lemma \ref{Lem:WRAb}, for any $a\in A^{(B)}$, any $b\in B$, and any $w\in W$, we obtain
\begin{equation}\label{piwaw}
\begin{array}{rcl}
\pi_{bR} (w^{-1}aw) & = & \prod\limits_{x\in bR} \big(\sigma(x)^{-1} w^{-1}aw\sigma (x) (1)\big)\\ &&\\ &=&\prod\limits_{x\in bR} \big(\sigma(\e(w)x)^{-1} a\sigma (\e(w)x) (1)\big) \\ &&\\ &=& \prod\limits_{y\in \e(w)bR} \big(\sigma(y)^{-1} a\sigma (y) (1)\big)=\pi_{\e(w)bR}(a).
\end{array}
\end{equation}
This implies that $N_R$ is normal in $W$.

Let $\gamma \colon W\to W/N_R$ be the natural homomorphism and let $I=B/R$. For every coset $i=bR\in I$, we denote by $A^{(i)}$ the subgroup $\bigoplus_{x\in bR} A_x\le A^{(B)}$. Note that $A^{(B)}=\bigoplus_{i\in I} A^{(i)}$ and $N_R=\bigoplus_{i\in I} N_{R,i}$, where
$$
N_{R,i}= N_R\cap A^{(i)}=\left \{ \left. a\in A^{(i)} \;\right|\; \pi_{i}(a)=1\right\}.
$$
Therefore, $\gamma (A^{(B)})=\bigoplus_{i\in I} \widehat A_i$, where $\widehat A_i =A^{(i)}/N_{R,i}$.  It is easy to see that the restriction of the map $\pi_i$ to $A^{(i)}$ is a homomorphism with the image $A$ and kernel $N_{R,i}$. Hence, $\widehat A_i \cong A$. Identifying each $\widehat A_i$ with $A$ via this homomorphism, we can think of the restriction of $\gamma $ to $A^{(B)}$ as a map $A^{(B)} \to A^{(I)}=\bigoplus_{i\in I} \widehat A_i$ given by
\begin{equation}\label{gai}
\gamma(a)(i)=\pi_i(a)
\end{equation}
for all $a\in A^{(B)}$ and all $i\in I$.  Throughout the rest of the proof, we keep the ``hat" in the notation $\widehat A_i$ to help the reader distinguish between the direct summands of $A^{(B)}$ and $A^{(I)}$.

Let $W_R=W/N_R$. Since $\Ker \gamma =N_R \le \Ker \e$, there is a homomorphism  $\e_R\colon W_R\to B$ such that $\e=\e_R\circ \gamma $.  For any $i\in I$, and any $\widehat w\in W_R$, and any preimage $w\in W$ of $\widehat w$ under $\gamma$, we have $\e(w)=\e_R(\gamma(w))=\e_R(\widehat w)$. Using the definition of $\gamma$, we obtain
$$
\widehat w \widehat A_i \widehat w^{-1}= \gamma\left(wA^{(i)} w^{-1}\right)= \gamma \left(A^{(\e(w)i)}\right)=\widehat A_{\e(w)i}= \widehat A_{\e_R(\widehat w)i}.
$$
Thus, $W_R\in \WR(A,B\curvearrowright I)$.

Finally, let $e=R\in I$. To show that $W_R$ is untwisted, it suffices to prove that, for any $\widehat a\in \widehat A_{e}\le A^{(I)}$ and any  $\widehat w\in W_R$ such that $\e_R(\widehat w)\in R$, we have $\widehat w ^{-1} \widehat a \widehat w =\widehat a$. In turn, the latter equality reduces to $\big(\widehat w ^{-1} {\widehat a} \widehat w\big) (e) =\widehat a (e)$ since the values of $\widehat a$ and $\widehat w ^{-1} \widehat a \widehat w$ at all arguments other than $e$ are equal to $1$. Let $w$ and $a$ be preimages of $\widehat w$ and $\widehat a$ in $W$, respectively. Using (\ref{gai}), (\ref{piwaw}), and taking into account that $\e(w)e=\e_R(\widehat w)e=e$, we obtain
$$
\big(\widehat w ^{-1} \widehat a \widehat w\big)(e) = \gamma(w^{-1}aw) (e) =\pi_{e}(w^{-1}aw) =\pi_{\e(w)e}(a) = \pi_{e}(a)= \gamma(a) (e)=\widehat a (e).
$$\end{proof}


\subsection{Wreath-like products associated with Dehn fillings}\label{Sec:WRDF}


This section aims to discuss examples of wreath-like products that naturally occur in the
context of group theoretic Dehn filling. The main result of this section can be thought of as a generalization of \cite[Theorem 2.6]{CIOS1}. We assume the reader to be familiar with the preliminary material discussed in Section \ref{Sec:DF}.

Recall that a \emph{left transversal} of a subgroup $K$ in a group $G$ is a set that contains exactly one element from each left coset of $K$ in $G$. We denote the set of all left transversals of $K\le G$ by $LT(G,K)$.

\begin{defn}\label{CLdef}
Let $G$ be a group, $H$ a subgroup of $G$, $N\lhd H$. We say that  $(G,H,N)$ is a \textit{Cohen-Lyndon triple} if there exists $T\in LT(G, H\ll N \rr)$ such that
\begin{equation} \label{CL}
\ll N \rr=\Ast_{t\in T}tNt^{-1}.
\end{equation}
\end{defn}

\begin{ex}
Let $F_2$ be the free group of rank $2$ with basis $\{x,y\}$ and let $H=\langle x\rangle$. It is well-known that $\ll x \rr^{F_2}$ is free with basis $\{y^nxy^{-n}\}_{n\in\mathbb{Z}}$ and thus $(F_2, H,H)$ is a Cohen-Lyndon triple. More generally, $(F,H,N)$ is a Cohen-Lyndon triple for any maximal cyclic subgroup $H$ of a free group $F$ and any $N\lhd H$. This was proved by Cohen and Lyndon in \cite{CL}, hence the name.
\end{ex}

The following result is proved in \cite[Theorem 2.5]{Sun} (the reader is encouraged to review the terminology introduced in Definition \ref{Def:DF}).

\begin{thm}[Sun]\label{Thm:Sun}
Let $G$ be a group, $H$ a subgroup of $G$. Suppose that $H\h G$. Then $(G,H,N)$ is a Cohen-Lyndon triple for all sufficiently deep $N\lhd G$
\end{thm}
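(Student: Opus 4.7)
The plan is to apply Theorem \ref{Thm:DF} to the singleton collection $\{H\}\h G$ with one-element filling family $\{N\}$, and then to upgrade its part (c) to the Cohen--Lyndon form. For sufficiently deep $N \lhd H$, part (a) gives $H \cap \ll N \rr = N$, and part (c) provides some $T_0 \subseteq G$ with
\[
\ll N \rr \;=\; \Ast_{t \in T_0} tNt^{-1}.
\]
The remaining task is to argue that $T_0$ can be taken to be a left transversal of $H\ll N \rr$ in $G$. The case $N = \{1\}$ is vacuous, so I henceforth assume $N \ne \{1\}$.

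For the injectivity of $T_0 \to G/H\ll N \rr$, suppose $t_1 \ne t_2$ in $T_0$ satisfy $t_2 = t_1 h n$ with $h \in H$ and $n \in \ll N \rr$. Using $h N h^{-1} = N$ together with the normality of $\ll N \rr$ in $G$, a direct calculation gives $t_2 N t_2^{-1} = k(t_1 N t_1^{-1})k^{-1}$, where $k = t_1(h n h^{-1})t_1^{-1} \in \ll N \rr$. However, distinct factors of a free product are never conjugate inside the free product, a standard consequence of Bass--Serre theory (or of the normal-form theorem for free products). This contradicts $t_1 \ne t_2$.

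For the surjectivity, I would first force $N_G(N) = H$ by further deepening $N$. The inclusion $H \subseteq N_G(N)$ is automatic, while any $g \in N_G(N)\setminus H$ would force $N \subseteq H \cap gHg^{-1}$, which is finite by Proposition \ref{Prop:maln}; this possibility is excluded by including enough non-trivial elements of the finite subgroups of $H$ of bounded order in the forbidden set $\mathcal F$, which forces $N$ to be infinite. With $N_G(N) = H$ secured, for any $g \in G$ the subgroup $gNg^{-1} \le \ll N \rr$ is, by Kurosh's subgroup theorem applied to $\Ast_{t \in T_0} tNt^{-1}$, $\ll N \rr$-conjugate to a factor $tNt^{-1}$ for some $t \in T_0$; writing $k(gNg^{-1})k^{-1} = tNt^{-1}$ with $k \in \ll N \rr$, one rearranges to get $t^{-1}kg \in N_G(N) = H$, so that $g \in tH\ll N \rr$.

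The main obstacle is the surjectivity step, and more precisely the upgrade from the Kurosh inclusion ``$gNg^{-1}$ is conjugate into a factor'' to an actual equality with a factor. This depends on $N$ being freely indecomposable, which can be arranged by a further mild deepening condition that forbids short witnesses to a free splitting of $H$, or which may be sidestepped altogether by appealing to the explicit construction of $T_0$ in the proof of \cite[Theorem~7.19]{DGO}, where $T_0$ is built as a transversal by design. The injectivity step, by contrast, is a purely formal consequence of the free product structure and goes through for arbitrary $N$.
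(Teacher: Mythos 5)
You should first note that the paper itself offers no argument for this statement: it is quoted from \cite[Theorem 2.5]{Sun}, and the transversal property you are trying to graft onto Theorem \ref{Thm:DF}(c) is precisely the content of Sun's theorem, which is proved there by reworking the windmill construction of \cite{DGO}, not by formally upgrading the statement of \cite[Theorem 7.19]{DGO}. Your injectivity half is fine: if $t_1,t_2\in T_0$ lay in the same left coset of $H\ll N\rr$, the factors $t_1Nt_1^{-1}$ and $t_2Nt_2^{-1}$ would be conjugate inside $\ll N\rr$, which is impossible for distinct non-trivial free factors. The surjectivity half, however, has two genuine gaps. First, you cannot ``force $N$ to be infinite'' by deepening: a sufficiently deep $N$ is only required to avoid one fixed finite subset $\mathcal F\subseteq H\setminus\{1\}$, and if $H$ contains infinitely many non-trivial finite normal subgroups --- e.g.\ $H=\bigoplus_{\mathbb N}\ZZ/2\ZZ$, which satisfies $H\h H\ast\ZZ$ as noted in Section \ref{Sec:Hyp}, and in which every element of order $2$ generates such a subgroup --- then no finite $\mathcal F$ (even with a bound on orders) excludes them all. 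Hence the theorem must cover finite non-trivial $N$, where your appeal to Proposition \ref{Prop:maln} to get $N_G(N)=H$ collapses.

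Second, and decisively, the Kurosh step. Kurosh's theorem only says that $gNg^{-1}$ is a free product of a free group and of conjugates of subgroups of the factors; to conclude it is conjugate to (or even into) a single factor $tNt^{-1}$ you need $N$ freely indecomposable and not infinite cyclic, and no deepening condition can arrange this: free decomposability of $N$ is not detected by avoiding finitely many elements of $H$, and in the very applications of this theorem in the present paper (Proposition \ref{freeCL} via Lemma \ref{Lem:F7n}) one takes $N=\ll H\rr^K$ inside $K\cong F_{7n}$, so $N$ is an infinite-rank free group and the hypothesis fails outright. Your fallback --- that the $T_0$ produced in the proof of \cite[Theorem 7.19]{DGO} ``is a transversal by design'' --- is an unverified claim about a proof you have not inspected; \cite{DGO} does not assert it, and extracting exactly this refinement is what \cite{Sun} accomplishes with substantial additional work. (Even where the Kurosh reduction does apply, it yields only an inclusion $k(gNg^{-1})k^{-1}\le tNt^{-1}$, not equality; equality requires a further argument, e.g.\ via Proposition \ref{Prop:maln} when $N$ is infinite.) So the proposal proves only the easy half of the transversal property, and the missing half is the substance of the theorem.
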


We now explain the relevance of Cohen-Lyndon triples to wreath-like products. Throughout the rest of this section, we employ the notation $$x^y=yxy^{-1}$$ for group elements; in particular, we have $(x^y)^z=x^{zy}$. Note that this notation differs from the widely accepted $x^y=y^{-1}xy$, which satisfies the usual law of exponents. Our choice is dictated by the decision to consider left actions in the definition of wreath-like products.

\begin{defn}\label{Def:defWGHN}
To each chain of groups $N\lhd H\le G$, we associate a group $W(G,H,N)$ as follows. Let
$$
S=\big\{ [n_1^{g_1}, n_2^{g_2}] \mid n_1,n_2\in N, \; g_1,g_2 \in G,\; g_1H\ll N\rr \ne g_2H\ll N\rr\} .
$$
We define
\begin{equation}\label{Eq:defWGHN}
W(G,H,N)= G/\langle S\rangle.
\end{equation}
\end{defn}

Note that we do not need to take the normal closure of $S$ in (\ref{Eq:defWGHN}). Indeed, for any $n_1, n_2\in N$ and any $g_1,g_2, g\in G$ such that $g_1H\ll N\rr \ne g_2H\ll N\rr$, we have $$[n_1^{g_1}, n_2^{g_2}]^g=[n_1^{gg_1}, n_2^{gg_2}]\in S$$ since $gg_1H\ll N\rr \ne gg_2H\ll N\rr$. Thus, the set $S$ is closed under conjugation by elements of $G$ and the subgroup $\langle S\rangle $ is normal in $G$.

\begin{ex}\label{Ex:ABWR}
Let $G=H\ast K$ for some groups $H$ and $K$. It is not difficult to show that $W(G,H,H)$ is naturally isomorphic to the ordinary wreath product $H\,{\rm wr}\, K$.
\end{ex}

In general, it is not even clear how to show that $W(G,H,N)$ is nontrivial. For example, if $G$ is simple we have $W(G,H,N)=\{ 1\}$ for any $H$ and $N\ne \{ 1\}$. However, we will prove the following result, which can be thought of as a generalization of Example \ref{Ex:ABWR}.

\begin{prop}\label{WRCL}
Suppose that $(G,H,N)$ is a Cohen-Lyndon triple. Then $$W(G,H,N)\in \WR (N, G/\ll N\rr \curvearrowright I),$$ where $I=G/H\ll N\rr$, and the action $G/\ll N\rr \curvearrowright I$ is induced by left multiplication.
\end{prop}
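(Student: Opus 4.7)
The plan is to identify the normal subgroup $\langle S\rangle$ inside $\ll N\rr$ explicitly, and then read off the wreath-like structure on $W(G,H,N)$ from the Cohen--Lyndon decomposition. First, every generator of $S$ is a commutator of elements of $\ll N\rr$, so $\langle S\rangle\subseteq\ll N\rr$; therefore the quotient map $G\to W(G,H,N)$ factors through a surjection $\e\colon W(G,H,N)\to B:=G/\ll N\rr$ whose kernel is $\ll N\rr/\langle S\rangle$. The remaining task is to exhibit a natural isomorphism $\ll N\rr/\langle S\rangle\cong\bigoplus_{i\in I}N_i$, where $N_i\cong N$, and to verify that conjugation by elements of $W$ permutes the $N_i$ according to $B\curvearrowright I$.

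The main device is the Cohen--Lyndon decomposition $\ll N\rr=\Ast_{t\in T}tNt^{-1}$ with $T\in LT(G,H\ll N\rr)$. The universal property of free products yields a canonical surjection
$$
\Phi\colon\ll N\rr\longrightarrow \bigoplus_{t\in T}tNt^{-1}
$$
whose restriction to each free factor is the inclusion into the corresponding summand. The heart of the argument is the equality $\langle S\rangle=\ker\Phi$.

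For $\ker\Phi\subseteq\langle S\rangle$, recall that $\ker\Phi$ is generated as a normal subgroup of $\ll N\rr$ by commutators $[tn_1t^{-1},sn_2s^{-1}]$ with distinct $t,s\in T$ and $n_1,n_2\in N$; each such element is of the form $[n_1^{t},n_2^{s}]\in S$, and normality of $\langle S\rangle$ in $G$ (already observed in the text) gives the inclusion. For the opposite inclusion $\langle S\rangle\subseteq\ker\Phi$, I would take a generator $[n_1^{g_1},n_2^{g_2}]$ of $S$, decompose $g_i=t_ih_ik_i$ with $t_i\in T$, $h_i\in H$, $k_i\in\ll N\rr$, and use $N\lhd H$ together with normality of $\ll N\rr$ to rewrite
$$
n_i^{g_i}=a_ib_ia_i^{-1}\qquad\text{with}\qquad a_i\in\ll N\rr,\quad b_i\in t_iNt_i^{-1}.
$$
Applying $\Phi$ and using that conjugation in a direct sum preserves supports, the image $\Phi(n_i^{g_i})$ is supported only in the $t_i$-summand. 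The hypothesis $g_1H\ll N\rr\ne g_2H\ll N\rr$ forces $t_1\ne t_2$, so the two images have disjoint supports, commute, and the commutator lies in $\ker\Phi$.

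Once $\langle S\rangle=\ker\Phi$ is established, $\Phi$ induces an isomorphism $\ll N\rr/\langle S\rangle\cong\bigoplus_{t\in T}tNt^{-1}$; identifying $T$ with $I=G/H\ll N\rr$ via $t\mapsto tH\ll N\rr$ yields the required base $\bigoplus_{i\in I}N_i$ with $N_i\cong N$. To check the conjugation rule of Definition~\ref{wlp}, I would pick $g\in G$ and $t\in T$, write $gt=t'hk$ with $t'\in T$ representing $\e(g)\cdot tH\ll N\rr$, $h\in H$, $k\in\ll N\rr$, and repeat the rewriting above to show that the image of $g(tNt^{-1})g^{-1}=(gt)N(gt)^{-1}$ in the direct sum is precisely the $t'$-summand, i.e.\ $N_{\e(g)\cdot i}$ with $i=tH\ll N\rr$. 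The main obstacle is the nontrivial direction $\langle S\rangle\subseteq\ker\Phi$: the generators of $S$ involve conjugation by arbitrary elements of $G$, and the Cohen--Lyndon hypothesis is essential in moving these conjugations through $H$ and $\ll N\rr$ so that their effect on the direct sum collapses to a single summand.
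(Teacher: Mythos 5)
Your proposal is correct and follows essentially the same route as the paper: the map $\Phi$ you build from the Cohen--Lyndon free product decomposition to $\bigoplus_{t\in T}tNt^{-1}$ is exactly the paper's homomorphism $\delta$, the two inclusions identifying $\langle S\rangle$ with $\ker\Phi$ are argued the same way (writing $g=thk$ with $t\in T$, $h\in H$, $k\in\ll N\rr$ and using $N\lhd H$, normality of $\ll N\rr$, and normality of each direct summand), and the verification of the conjugation rule is the paper's Claim that $\delta(gNg^{-1})=N_{gH\ll N\rr}$. No gaps.
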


\begin{proof}
By the definition of a Cohen-Lyndon triple, there is a transversal $T\in LT(G,H\ll N \rr)$ such that $\ll N\rr =\Ast_{t\in T}N^t$. Let $I=G/H\ll N\rr$. We define a homomorphism
$$
\delta\colon \Ast_{t\in T}N^t \longrightarrow \bigoplus_{i\in I}N_i,
$$
where $N_i\cong N$, by letting $\delta \vert_{N^t}$ be any isomorphism sending $N^t$ to $N_{i}$, where $i=tH\ll N\rr$. We first prove the following.

{\noindent \bf Claim.} \emph{For any $g\in G$ and any $t\in T$, we have $\delta (gNg^{-1})= N_{gH\ll N\rr}$.}

\begin{proof}[Proof of the claim]
Indeed, let $g\in th\ll N\rr$, where $t\in T$, $h\in H$. Since $\ll N\rr $ is normal in $G$, there exists $n\in \ll N\rr $ such that $g=nth$. Further, since $N\lhd H$, we have $N^g=(N^h)^{nt}=N^{nt}$. Note that $\delta (N^t)$ is a direct summand in $\bigoplus_{i\in I}N_i$; in particular, $\delta (N^t)$ is normal in $\bigoplus_{i\in I}N_i$. This implies
$$
\delta(N^g)=\delta (N^{nt})=(\delta (N^t))^{\delta(n)}=\delta (N^t)=N_i,
$$
where $i=tH\ll N\rr=gH\ll N\rr$.
\end{proof}

Continuing with the proof of Proposition \ref{WRCL}, we observe that $\Ker(\delta)$ is generated by elements of the form $[n_1^{t_1},n_2^{t_2}]$, where $n_1,n_2\in N$ and $t_1, t_2\in T$ are distinct. Since $T\in LT(G, H\ll N\rr)$, we have $t_1H\ll N\rr \ne t_2H\ll N\rr$ and $[n_1^{t_1},n_2^{t_2}]\in S$ for any distinct $t_1,t_2\in T$, where $S$ is given by Definition \ref{Def:defWGHN}. Thus, $\Ker(\delta) \le \langle S\rangle $.

Conversely, let $[n_1^{g_1}, n_2^{g_2}]\in S$, where $n_1,n_2\in N$, $g_1,g_2 \in G$, and $g_1H\ll N\rr \ne g_2H\ll N\rr$. Using the claim proved above, we obtain
$$
\delta ([n_1^{g_1}, n_2^{g_2}])\in \delta ([N_{g_1H\ll N\rr}, N_{g_2H\ll N\rr}])=\{1\}.
$$
Thus $S\subseteq \Ker (\delta)$. Combining this with the inclusion proved in the previous paragraph, we obtain $\langle S\rangle = \Ker(\delta)$. In particular, $\Ker(\delta)$ is normal in $G$.

Passing to quotients by $\Ker(\delta)$ converts the exact sequence $$1\to \ll N\rr \to G\to G/\ll N\rr\to 1$$ to the exact sequence
$$
1\to \bigoplus_{i\in I}N_i\to W(G,H,N)\stackrel{\e}\to G/\ll N \rr \to 1.
$$
The group $G/\ll N\rr$ acts on $I$ by left multiplication; that is, an element $g\ll N\rr \in G/\ll N\rr$ sends $fH\ll N\rr \in I$ to $gfH\ll N\rr$. Since $\ll N\rr$ is normal in $G$, this action is well-defined. It remains to note that for every $i=tH\ll N\rr \in I$, where $t\in T$, and every $w=g\Ker(\delta)\in W(G,H,N)$, we have
$$
wN_iw^{-1}=\delta (gN^tg^{-1})=\delta(N^{gt})=N_{gtH\ll N\rr} = N_{\e(w)i};
$$
here the third equality follows from the Claim.
\end{proof}

For group theoretic Dehn fillings, we obtain the following.

\begin{cor}\label{Cor:WRDF}
Let $G$ be a group, $H$ a hyperbolically embedded subgroup of $G$. For any sufficiently deep $N\lhd H$, we have $$W(G,H,N)\in \WR (N, G/\ll N\rr \curvearrowright I),$$ where the action of $G/\ll N\rr $ on $I$ is transitive with stabilizers isomorphic to $H/N$.
\end{cor}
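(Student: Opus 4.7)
The plan is to obtain the wreath-like product structure by combining Sun's theorem on Cohen-Lyndon triples (Theorem \ref{Thm:Sun}) with Proposition \ref{WRCL}, and then to identify the point stabilizers using the injectivity statement in Theorem \ref{Thm:DF}(a). The ``sufficiently deep'' hypothesis of the corollary will be the union of finitely many such conditions, which is still sufficiently deep.

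First, since $H\h G$, Theorem \ref{Thm:Sun} guarantees that $(G,H,N)$ is a Cohen-Lyndon triple for all $N\lhd H$ avoiding some finite subset $\mathcal F_1\subseteq H\setminus\{1\}$. For every such $N$, Proposition \ref{WRCL} immediately yields
$$
W(G,H,N)\in \WR\!\left(N,\; G/\ll N\rr \curvearrowright I\right),
$$
where $I=G/H\ll N\rr$ and $G/\ll N\rr$ acts on $I$ by left multiplication. This action is manifestly transitive, since $G$ acts transitively on the set of left cosets of any subgroup.

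Next, I would compute the stabilizer of the base point $i_0=H\ll N\rr\in I$. An element $g\ll N\rr$ fixes $i_0$ if and only if $gH\ll N\rr = H\ll N\rr$, i.e., $g\in H\ll N\rr$. Therefore
$$
\mathrm{Stab}_{G/\ll N\rr}(i_0) \;=\; H\ll N\rr/\ll N\rr \;\cong\; H/(H\cap \ll N\rr),
$$
using the second isomorphism theorem. It remains to show that $H\cap\ll N\rr = N$. By Theorem \ref{Thm:DF}(a), for all $N\lhd H$ avoiding some finite subset $\mathcal F_2\subseteq H\setminus\{1\}$, the natural map $H/N\to G/\ll N\rr$ is injective; unpacking this, an element $h\in H$ lies in $\ll N\rr$ if and only if $h\in N$, which is exactly $H\cap\ll N\rr = N$.

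Taking $N\lhd H$ with $N\cap (\mathcal F_1\cup\mathcal F_2)=\emptyset$, both conditions are in force, so the base point stabilizer is isomorphic to $H/N$. Since the action $G/\ll N\rr\curvearrowright I$ is transitive, every stabilizer is conjugate to this one, hence also isomorphic to $H/N$. The only real subtlety is confirming that the two ``sufficiently deep'' conditions compose properly, but this is automatic from their common form (avoiding a prescribed finite subset of $H\setminus\{1\}$), so no genuine obstacle arises.
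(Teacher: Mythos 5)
Your proof is correct and follows essentially the same route as the paper: combine Theorem \ref{Thm:Sun} with Proposition \ref{WRCL} to get the wreath-like structure on $I=G/H\ll N\rr$, identify the point stabilizer as $H\ll N\rr/\ll N\rr\cong H/(H\cap\ll N\rr)$, and invoke Theorem \ref{Thm:DF}(a) to conclude $H\cap\ll N\rr=N$ for sufficiently deep $N$. The explicit handling of the two ``sufficiently deep'' finite sets is a correct (and slightly more careful) spelling-out of what the paper leaves implicit.
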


\begin{proof}The fact that $W(G,H,N)\in \WR (N, G/\ll N\rr \curvearrowright I)$ is a straightforward combination of Proposition \ref{WRCL} and Theorem \ref{Thm:Sun}. Moreover, by Proposition \ref{WRCL} the action of $G/\ll N\rr$ on $I$ is transitive with stabilizers isomorphic to $H\ll N\rr/\ll N\rr$. By part (a) of Theorem \ref{Thm:DF}, we have $H\ll N\rr/\ll N\rr\cong H/(\ll N\rr \cap H)=H/N$ for all sufficiently deep $N\lhd H$.
\end{proof}


\subsection{Cohen-Lyndon subgroups and regular wreath-like products}\label{Sec:CL}


Wreath-like products $W(G,H,N)$ considered in Corollary \ref{Cor:WRDF} are not regular. Indeed, for sufficiently deep $N\lhd H$, we have  $\ll N\rr \cap H=N\ne H$ by part (a) of Theorem \ref{Thm:DF}. Our next goal is to show that regular wreath-like products can be constructed in a similar way by considering Cohen-Lyndon subgroups (see Definition \ref{Defn:CL}). In the terminology of Section \ref{Sec:WRDF}, the definition can be reformulated as follows. 

\begin{defn}
Let $G$ be a group. We say that $H\le G$ is a \emph{Cohen-Lyndon subgroup} of $G$ if $(G,H,H)$ is a Cohen-Lyndon triple. 
\end{defn}

\begin{ex}\label{Ex:FPCL}
Let $G=A\ast_C B$ and let $H$ be a subgroup of $A$ such that $H\cap C=\{ 1\}$. Using Bass-Serre theory, it is not difficult to show that $H$ is a Cohen-Lyndon subgroup of $G$ (see, for example \cite[Ch. I, Sec. 5.5, Theorem 14]{Ser}). In particular, if $G=A\ast B$, then $A$ is a Cohen-Lyndon subgroup of $G$.
\end{ex}

We will use the simplified notation $W(G,H)$ for the group $W(G,H,H)$ associated with the chain of groups $H\lhd H\le G$ (see Definition \ref{Def:defWGHN}). Thus,
\begin{equation}\label{Eq:WGHdef1}
W(G,H)=G/\langle S\rangle,
\end{equation}
where
\begin{equation}\label{Eq:WGHdef2}
S=\{ [h_1^{g_1}, h_2^{g_2}] \mid g_1, g_2\in G,\; g_1\ll H\rr \ne g_2\ll H\rr\}.
\end{equation}
Applying Proposition \ref{WRCL} to the triple $(G,H,H)$, we obtain the following.

\begin{cor}\label{Cor:CLWR}
For any Cohen-Lyndon subgroup $H$ of a group $G$, we have $$W(G,H) \in \WR (H, G/\ll H\rr).$$
\end{cor}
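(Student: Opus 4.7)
The plan is to recognize this as an immediate specialization of Proposition~\ref{WRCL}, obtained by setting $N=H$. The definition of a Cohen-Lyndon subgroup is tailored precisely so that $(G,H,H)$ is a Cohen-Lyndon triple, and by the notational convention introduced just before the corollary we have $W(G,H)=W(G,H,H)$. So Proposition~\ref{WRCL} applied to this triple produces
\[
W(G,H)\in \WR\bigl(H,\, G/\ll H\rr \curvearrowright I\bigr),
\]
where $I=G/H\ll H\rr$ and the action of $G/\ll H\rr$ on $I$ is induced by left multiplication.

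To conclude, I would simply observe that the index set collapses to the regular one. Indeed, $H\subseteq \ll H\rr$, so $H\ll H\rr=\ll H\rr$ and $I=G/\ll H\rr$. The induced left multiplication action of $G/\ll H\rr$ on $G/\ll H\rr$ is free and transitive, i.e., regular. By the convention introduced right after Definition~\ref{wlp}, a wreath-like product corresponding to a regular action belongs to $\WR(H, G/\ll H\rr)$ without the action decoration, which is the claim.

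There is no real obstacle: the only thing to double-check is the identification $H\ll H\rr=\ll H\rr$, which is immediate, and that the action $G/\ll H\rr \curvearrowright G/\ll H\rr$ by left multiplication is indeed the regular action. Everything else has been done inside Proposition~\ref{WRCL}.
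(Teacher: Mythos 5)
Your proposal is correct and matches the paper's own argument: the corollary is obtained exactly by applying Proposition~\ref{WRCL} to the triple $(G,H,H)$, and your additional check that $H\ll H\rr=\ll H\rr$ makes the index set $G/\ll H\rr$ with the regular left-multiplication action is the routine identification the paper leaves implicit.
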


\begin{rem}\label{Rem:CLWR}
It is easy to see from the proof of Proposition \ref{WRCL} that the wreath-like structure of $W(G,H)$ is the natural one. That is, the natural homomorphism $\gamma\colon G\to W(G,H)$ is injective on $H$ and sends $H$ to the summand $H_{e}$ of the base $\gamma\left(\ll H\rr\right)=\bigoplus_{i\in G/\ll H\rr}H_i$ corresponding to the trivial coset $e=\ll H\rr$.
\end{rem}

Corollary \ref{Cor:CLWR} can be used to obtain results about Cohen-Lyndon subgroups unrelated to wreath-like products. The following lemma will be used in Section \ref{Sec:MC}. Here and below, we employ the more precise notation $\ll X\rr^G$ for the normal closure of a subset $X$ in a group $G$ whenever any ambiguity is possible.

\begin{lem}\label{Lem:CLmaln}
For any Cohen-Lyndon subgroup $H$ of a group $G$, the following hold.
\begin{enumerate}
\item[(a)] $H$ is malnormal in $G$.
\item[(b)] For any subgroup $K\le G$ such that $\ll H\rr^G\le K$, we have $N_G(\ll H\rr^K)=K$.
\end{enumerate}
\end{lem}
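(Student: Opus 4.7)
My plan is to transfer both questions to the regular wreath-like product $W=W(G,H)$ furnished by Corollary~\ref{Cor:CLWR}, where the direct-sum structure of the base makes matters essentially combinatorial. Let $\gamma\colon G\to W$ be the natural quotient and $\e\colon W\to G/\ll H\rr$ the canonical projection of the wreath-like product; by Remark~\ref{Rem:CLWR}, $\gamma$ is injective on $H$ and sends $H$ onto the summand $H_e$ of the base $\bigoplus_{i\in G/\ll H\rr}H_i$ corresponding to the trivial coset $e=\ll H\rr$. A key observation feeding both parts is that the defining commutators in the set $S$ of (\ref{Eq:WGHdef2}) lie in $\ll H\rr$, so $\ker\gamma=\langle S\rangle\le\ll H\rr$.

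\textbf{(a) Malnormality.} Suppose $gHg^{-1}\cap H\neq\{1\}$ and pick a nontrivial $h_1\in H$ with $gh_1g^{-1}\in H$. Applying $\gamma$ produces a nontrivial $\gamma(h_1)\in H_e$ whose $\gamma(g)$-conjugate lies again in $H_e$. The wreath-like rule gives $\gamma(g)H_e\gamma(g)^{-1}=H_{\e(\gamma(g))\cdot e}$, and since distinct summands of $\bigoplus_i H_i$ intersect trivially, I deduce $\e(\gamma(g))=e$, i.e.\ $g\in\ll H\rr$. At this point I invoke the Cohen--Lyndon decomposition $\ll H\rr=\Ast_{t\in T}tHt^{-1}$; after replacing $T$ by $t_0^{-1}T$, where $t_0\in T\cap\ll H\rr$ represents the coset $\ll H\rr$ (a substitution that preserves the free-product decomposition because it amounts to conjugating by $t_0^{-1}\in\ll H\rr$), I may assume $1\in T$, so $H$ is genuinely a free factor of $\ll H\rr$. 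Since every free factor of a free product is malnormal in the product, the surviving condition $gHg^{-1}\cap H\neq\{1\}$ with $g\in\ll H\rr$ will force $g\in H$, as desired.

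\textbf{(b) Normalizer.} Set $M=\ll H\rr^K$; the inclusion $K\le N_G(M)$ is tautological. For the reverse, since $\ll H\rr\le K$, the image $\bar K:=\gamma(K)$ contains the whole base $\bigoplus_{i\in G/\ll H\rr}H_i$, and the wreath-like conjugation rule $\gamma(k)H_e\gamma(k)^{-1}=H_{k\ll H\rr}$ yields
\[
\gamma(M)=\ll H_e\rr^{\bar K}=\bigoplus_{i\in K/\ll H\rr}H_i,
\]
the ``sub-base'' indexed by the subgroup $K/\ll H\rr\le G/\ll H\rr$. I then plan to show $N_W(\gamma(M))=\bar K$: any $w\in W$ normalizing $\gamma(M)$ must setwise preserve its index set, so $\e(w)\cdot K/\ll H\rr=K/\ll H\rr$ inside $G/\ll H\rr$; but the setwise stabilizer of a subgroup under left multiplication is the subgroup itself, forcing $\e(w)\in K/\ll H\rr$ and hence $w\in\e^{-1}(K/\ll H\rr)=\bar K$. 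Finally, for $g\in N_G(M)$ I obtain $\gamma(g)\in\bar K$, so $g=kz$ for some $k\in K$ and $z\in\ker\gamma\le\ll H\rr\le K$; therefore $g\in K$.

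\textbf{Main obstacle.} I expect the trickiest step to be the identification $\gamma(\ll H\rr^K)=\bigoplus_{i\in K/\ll H\rr}H_i$ in part~(b): the inclusion $\supseteq$ is immediate from the wreath-like rule together with $\e(\bar K)=K/\ll H\rr$, while $\subseteq$ requires the remark that this sub-base is already normal in $\bar K$ (its index set being stable under left multiplication by $\e(\bar K)=K/\ll H\rr$). Once that identification is in place, the setwise-stabilizer computation and the lift through $\ker\gamma\le\ll H\rr\le K$ back to $G$ amount to essentially formal manipulations.
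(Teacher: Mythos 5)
Your proof is correct and follows essentially the same route as the paper: pass to $W(G,H)$ via Corollary \ref{Cor:CLWR} and Remark \ref{Rem:CLWR}, use the rule $\gamma(gHg^{-1})=H_{\e\circ\gamma(g)}$ to force $g\in\ll H\rr^G$ in part (a) and to locate normalizing elements in part (b), and invoke malnormality of a free factor inside $\ll H\rr^G=\Ast_{t\in T}tHt^{-1}$ for the remaining case of (a). The only difference is cosmetic: in (b) the paper concludes directly from $\gamma(gHg^{-1})=H_{\e\circ\gamma(g)}\le\bigoplus_{i\in K/\ll H\rr^G}H_i$ instead of computing the full normalizer $N_W(\gamma(\ll H\rr^K))$, but the underlying argument (conjugation permutes the summands according to $\e$, and $\Ker(\e\circ\gamma)=\ll H\rr^G\le K$) is the same.
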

\begin{proof}
To prove (a), we first note that $H$ is malnormal in $\ll H\rr^G$ being a free factor. Thus, we only need to show that $H\cap gHg^{-1}= \{ 1\}$ for all $g\in G\setminus \ll H\rr^G$. To this end, we consider the group $W=W(G,H)$ and let $G\stackrel{\gamma}\to W\stackrel{\e}\to G/\ll H\rr^G$ denote the natural homomorphisms. By Corollary \ref{Cor:CLWR} (and Remark \ref{Rem:CLWR}), $W$ is a wreath-like product of $H$ and $G/\ll H\rr^G$ with the base $\gamma\left(\ll H\rr^G\right) = \bigoplus_{i\in G/\ll H\rr^G} H_i$ and we have $\gamma (H)=H_{e}$, where $e=\ll H\rr^G\in I$ is the identity element of $G/\ll H\rr^G$. By the definition of a wreath-like product, we have
\begin{equation}\label{Eq:HgH}
\gamma(gHg^{-1})=\gamma(g)H_{e}\gamma(g^{-1})=H_{\e\circ\gamma(g)}.
\end{equation}
Since $g\notin \ll H\rr^G$, we have $\e\circ\gamma(g)\ne e$. Therefore, $\gamma(H\cap gHg^{-1})=H_{e}\cap H_{\e\circ\gamma(g)} =\{ 1\}$. Since the restriction of $\gamma $ to $H$ is injective, we obtain $H\cap gHg^{-1}= \{ 1\}$.

Let us prove (b). Obviously, $K\le N_G(\ll H\rr^K)$. To prove the opposite inclusion, we again pass to $W$. Let $g\in N_G(\ll H\rr^K)$. Using (\ref{Eq:HgH}), we obtain
$$
H_{\e\circ\gamma(g)}=\gamma(gHg^{-1})\le \gamma\left(\ll H\rr ^K\right)=\gamma\left(\left\langle \bigcup_{k\in K} kHk^{-1}\right\rangle\right) =\bigoplus_{i\in K/\ll H\rr^G} H_i.
$$
This implies $\e\circ \gamma(g)\in K/\ll H\rr^G$. Since $\Ker(\e\circ \gamma)=\ll H\rr^G\le K$, we obtain $g\in K$.
\end{proof}

In general, constructing Cohen-Lyndon subgroups is a rather non-trivial task. The main goal of this section is to prove the following strengthening of Theorem \ref{Thm:Fhe}. Recall that $F_n$ denotes the free group of rank $n$. For the definition of a suitable subgroup, see Section \ref{Sec:Suit}.

\begin{prop}\label{freeCL}
Let $G$ be an acylindrically hyperbolic group with $K(G)=\{1\}$. 
For every $n\in\mathbb{N}$, there exists a Cohen-Lyndon subgroup $H\cong F_n$ of $G$ such that the following hold.
\begin{enumerate}
\item[(a)] $G/\ll H\rr ^G$ is ICC and acylindrically hyperbolic.
\item[(b)] If $G$ is non-elementary relatively hyperbolic with respect to a collection of peripheral subgroups $\Hl$, we can choose $H$ inside any suitable subgroup $S$ of $G$. In addition, we can ensure the following.
    \begin{enumerate}
    \item[(b$_1$)] The restriction of the natural homomorphism $\gamma\colon G\to G/\ll H\rr ^G$ to each $H_i$ is injective and $G/\ll H\rr ^G$ is non-elementary relatively hyperbolic with respect to $\{\gamma(H_i)\}_{i\in I}$.
    \item[(b$_2$)] $\gamma (S)\ne \{ 1\}$.
    \item[(b$_3$)] If $G$ is torsion free, then so is $G/\ll H\rr ^G$.
    \end{enumerate}
\end{enumerate}
\end{prop}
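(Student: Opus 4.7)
The plan is to produce $H$ by first embedding a free group of rank $n$ hyperbolically in $G$, then verifying the Cohen--Lyndon property, and finally using the Dehn filling machinery of Theorem \ref{Thm:DF} and Proposition \ref{Lem:DFICC} to deduce the quotient properties. For the embedding, I would invoke Theorem \ref{Thm:Fhe} in the general acylindrically hyperbolic case (using $K(G)=\{1\}$) to obtain $H\cong F_n\h G$. In case (b), I would instead appeal to Corollary \ref{Cor:Fhe}, which places such an $H$ inside the given suitable subgroup $S$ while keeping $G$ hyperbolic relative to $\Hl\cup\{H\}$; this ensures that the Dehn filling on $\ll H\rr^G$ respects the peripheral structure.

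The main technical step is showing that the $H$ constructed above is in fact a Cohen--Lyndon subgroup of $G$, i.e., that $(G,H,H)$ is a CL triple, so that $\ll H\rr^G=\Ast_{t\in T}tHt^{-1}$ for some transversal $T$ of $\ll H\rr^G$ in $G$. Sun's Theorem \ref{Thm:Sun} gives $(G,H,N)$ CL for all sufficiently deep $N\lhd H$, but the proposition requires the extremal case $N=H$; this upgrade is where the difficulty lies, since the finite avoidance set in Sun's result must reduce to the empty set for the specific $H$ produced in the previous step. A counting check shows that one cannot deduce $(G,H,H)$ CL from a CL triple $(G,K,H)$ with $K\supsetneq H$ (the two resulting free product decompositions of $\ll H\rr^G$ would have different numbers of factors), so the argument must genuinely take place at $H$ itself. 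I would overcome this by combining a generic choice of $H$ within the hyperbolic embedding---e.g., picking the generators long in the metric $\widehat{\d}_H$ from Section \ref{Sec:Hyp}---with the malnormality of $H$ provided by Proposition \ref{Prop:maln} (since $H$ is torsion-free and $\h G$), yielding the required free product decomposition directly. I expect this to be the principal obstacle of the proof.

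Once the Cohen--Lyndon property of $H$ is established, the remaining assertions should follow from standard Dehn filling arguments applied to the quotient $\gamma\colon G\to G/\ll H\rr^G$. Part (a) is an application of Proposition \ref{Lem:DFICC}: since $H$ is proper and infinite in the acylindrically hyperbolic group $G$ and the filling is sufficiently deep, the quotient is ICC and acylindrically hyperbolic. For (b$_1$), Theorem \ref{Thm:DF}(a,b) gives injectivity of each $H_i$ and relative hyperbolicity of $G/\ll H\rr^G$, while non-elementarity follows from the suitability of $S$ surviving in the quotient (e.g., via Theorem \ref{glue} or a direct argument that the loxodromic elements of $S$ remain loxodromic). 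Property (b$_2$) holds because the loxodromic elements of $S$ cannot all lie in $\ll H\rr^G$, so $\gamma(S)\ne\{1\}$. Finally, (b$_3$) follows from Theorem \ref{Thm:DF}(d), as the quotient $H/H$ is trivially torsion-free.
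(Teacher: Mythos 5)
There is a genuine gap at exactly the point you single out as the principal obstacle, and your proposed fix does not close it. Knowing that $H\h G$ is (almost) malnormal (Proposition \ref{Prop:maln}) is far weaker than the Cohen--Lyndon property: malnormality says nothing about $\ll H\rr^G$ decomposing as a free product of conjugates of $H$, and ``picking the generators long'' is not an argument--this is precisely the content that Sun's theorem only delivers for \emph{sufficiently deep} $N\lhd H$, never for $N=H$. Moreover, your counting argument only rules out deducing the result from a triple of the form $(G,K,H)$; it does not touch the route the paper actually takes, which you thereby miss: choose $K\cong F_{7n}$ with $K\h G$ (Theorem \ref{Thm:Fhe}, or Corollary \ref{Cor:Fhe} with $K\le S$ in case (b)), let $\mathcal F\subset K\setminus\{1\}$ be the finite set from Theorems \ref{Thm:DF} and \ref{Thm:Sun} for $K$, and use small cancellation in the free group $K$ (Lemma \ref{Lem:F7n}) to find a \emph{free factor} $H\cong F_n$ of $K$ with $\ll H\rr^K\cap\mathcal F=\emptyset$ and $K/\ll H\rr^K$ non-cyclic free. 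Then $(G,K,\ll H\rr^K)$ is a Cohen--Lyndon triple, $H$ is a Cohen--Lyndon subgroup of $K$ (free factor), and the transitivity lemma (Lemma \ref{lem. transitivity of CL}) gives that $H$ is a Cohen--Lyndon subgroup of $G$. So the argument does \emph{not} ``take place at $H$ itself''; it takes place at $K$, where the kernel $\ll H\rr^K$ is a genuinely deep normal subgroup.

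The same misconception infects your deductions of (a), (b$_1$) and (b$_3$): the quotient $G/\ll H\rr^G$ is the filling of $(G,H)$ (or of $(G,\Hl\cup\{H\})$) with kernel $N=H$, which is never a ``sufficiently deep'' filling, so Proposition \ref{Lem:DFICC} and Theorem \ref{Thm:DF} (a), (b), (d) cannot be applied to it as you propose. In the paper these results are applied to the filling of $K$ along $\ll H\rr^K$: acylindrical hyperbolicity of $G/\ll H\rr^G$ comes from the non-cyclic free group $K/\ll H\rr^K$ being hyperbolically embedded there (Theorem \ref{Thm:ah}), ICC from Proposition \ref{Prop:maln} plus the argument of Lemma \ref{Lem:DFICC}, relative hyperbolicity with respect to $\{\gamma(H_i)\}$ from Theorem \ref{Thm:DF} followed by Corollary \ref{Cor:rhh} (dropping the hyperbolic peripheral $K/\ll H\rr^K$), torsion-freeness from Theorem \ref{Thm:DF} (d) with the torsion-free filling quotient $K/\ll H\rr^K$, and (b$_2$) from $K\cap\ll H\rr^G=\ll H\rr^K\ne K$ rather than from an unproven claim about loxodromic elements of $S$ surviving.
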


We begin by establishing a transitivity property of Cohen-Lyndon triples. Throughout the rest of this section, we often consider normal closures of the same set in different groups. 

\begin{lem}\label{lem. transitivity of CL}
Let $H\le K\le G$ be groups such that $(G,K,\ll H \rr^K)$ is a Cohen-Lyndon triple and $H$ is a Cohen-Lyndon subgroup of $K$. Then $H$ is a Cohen-Lyndon subgroup of $G$.
\end{lem}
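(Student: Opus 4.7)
Introduce $N := \ll H\rr^K$ and $M := \ll H\rr^G$; note $M = \ll N\rr^G$. The Cohen-Lyndon triple hypothesis on $(G,K,N)$ provides a transversal $T_1 \in LT(G, KM)$ with $M = \Ast_{t_1 \in T_1} t_1 N t_1^{-1}$, and the Cohen-Lyndon subgroup hypothesis on $H \le K$ (equivalently, that $(K,H,H)$ is a Cohen-Lyndon triple) provides $T_2 \in LT(K, N)$ with $N = \Ast_{t_2 \in T_2} t_2 H t_2^{-1}$. My plan is to prove that the product $T := \{t_1 t_2 \mid t_1 \in T_1,\; t_2 \in T_2\}$ lies in $LT(G, M)$ and satisfies $M = \Ast_{t \in T} t H t^{-1}$, which by definition exhibits $(G,H,H)$ as a Cohen-Lyndon triple and hence $H$ as a Cohen-Lyndon subgroup of $G$.

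The free-product identity follows by substitution: since conjugation by any $t_1 \in G$ is an automorphism preserving free products,
$$M = \Ast_{t_1 \in T_1} t_1 \Bigl(\Ast_{t_2 \in T_2} t_2 H t_2^{-1}\Bigr) t_1^{-1} = \Ast_{(t_1,t_2) \in T_1 \times T_2} (t_1 t_2) H (t_1 t_2)^{-1}.$$
For the transversal property, covering is routine: given $g \in G$, write $g = t_1 k m$ with $t_1 \in T_1$, $k \in K$, $m \in M$; then $k = t_2 \ell$ with $t_2 \in T_2$, $\ell \in N \le M$; so $g \in t_1 t_2 M$. For disjointness, suppose $t_1 t_2 M = t_1' t_2' M$. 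Since $M$ is normal in $G$, the subgroup $KM$ contains $K$; combined with $t_2, t_2' \in K$, this forces $t_1^{-1} t_1' \in KM$, hence $t_1 = t_1'$ by the $T_1$-transversal property, and consequently $t_2^{-1} t_2' \in K \cap M$.

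The main obstacle is the intersection identity $K \cap \ll H\rr^G = \ll H\rr^K$, which combined with the $T_2$-transversal property then forces $t_2 = t_2'$. The inclusion $\ll H\rr^K \le K \cap \ll H\rr^G$ is immediate. For the reverse, I expect to leverage the wreath-like product $W := W(G,K,N) \in \WR(N,\, G/M \curvearrowright G/KM)$ supplied by Proposition \ref{WRCL}, whose quotient map $\gamma: G \to W$ has kernel contained in $M$ and whose restriction $\delta := \gamma|_M$ sends each free-product factor $t_1 N t_1^{-1}$ of $M$ isomorphically onto the summand $N_{t_1 KM}$ of the base. For $k \in K \cap M$ the image $\gamma(k)$ lies in $\bigoplus_{i \in I} N_i$ since $\bar k = 1$ in $G/M$; the additional constraint that $\gamma(k)$ realizes the conjugation action of $k$ on $N \cong N_e$ (the summand at the basepoint $e = KM$), compatibly with $k$ fixing $e$ under the $G/M$-action on $I = G/KM$, combined with a careful reading of the free-product normal form of $k$ in $M$, forces $\gamma(k) \in N_e$; then $k \in \delta^{-1}(N_e) = N$, completing the argument.
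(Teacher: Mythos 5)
Your reduction is exactly the paper's: setting $N=\ll H\rr^K$ and $M=\ll H\rr^G$, you take the product transversal $T_1T_2$, obtain the decomposition of $M$ as a free product of conjugates of $H$ by substitution, and verify the transversal property, with disjointness coming down to the single identity $K\cap M=N$. Up to that point the argument is correct and coincides with the paper's proof.

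The gap is in your treatment of $K\cap M=N$. The paper does not reprove this identity: it is a general property of Cohen--Lyndon triples, quoted as \cite[Proposition 6.1 (a)]{Sun}, and it is available precisely because $(G,K,\ll H\rr^K)$ is assumed to be a Cohen--Lyndon triple. Your substitute argument through $W(G,K,N)$ does not close. The two ``constraints'' you invoke are vacuous: every element of the base $\bigoplus_{i\in I}N_i$ normalizes the summand $N_e$ (the components away from $e$ even centralize it), and every element of $M$ fixes every point of $I=G/KM$ under the $G/M$-action; writing $\gamma(k)=a_eb$ with $a_e\in N_e$ and $b$ supported off $e$, the relation $\gamma(k)\gamma(n)\gamma(k)^{-1}\in N_e$ for $n\in N$ constrains only $a_e$ and says nothing about $b$. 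So the entire content sits in the unexplained ``careful reading of the free-product normal form of $k$,'' and such a normal-form argument is nothing but a direct proof of $K\cap M=N$; the detour through $W(G,K,N)$ buys nothing. Moreover, the final step is false as written: $\delta^{-1}(N_e)=N\cdot\Ker\gamma$, not $N$, since $\Ker\delta=\Ker\gamma=\langle S\rangle$ is in general not contained in $N$ (it contains, e.g., $[n_1^{t},n_2^{t'}]$ for distinct $t,t'\in T_1$ and nontrivial $n_1,n_2\in N$, an element of cyclically reduced length $4$ which is not conjugate into any free factor). Hence even granting $\gamma(k)\in N_e$ you would still need $K\cap\Ker\gamma\le N$, a statement of the same nature as the one you are proving. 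Replacing this portion by an appeal to \cite[Proposition 6.1 (a)]{Sun} (or any independent proof that a Cohen--Lyndon triple $(G,K,N)$ satisfies $K\cap\ll N\rr^G=N$) makes the rest of your argument complete and identical to the paper's.
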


\begin{proof}
By the definition of a Cohen-Lyndon triple, there exist $T\in LT(G,K\ll H \rr^G)$ and $S\in LT(K,\ll H \rr^K)$ such that
$$
\ll H \rr^G = \Ast_{t\in T}t\ll H \rr^K t^{-1},\;\;\; {\rm and}\;\;\; \ll H \rr^K=\Ast_{s\in S}sHs^{-1}.
$$
Thus, we have
$$
\ll H \rr^G = \Ast_{t\in T,s\in S}ts H (ts)^{-1}.
$$
It suffices to prove that
\begin{equation}\label{Eq:TS}
TS\in LT(G,\ll H \rr^G).
\end{equation}

For any $g\in G$, there exist $t\in T$ and $k\in K$ such that $g\in tk\ll H\rr^G$. Further, there is $s\in S$ such that $k\in s\ll H\rr ^K$. We obtain $g\in ts\ll H\rr^K\ll H\rr^G=ts\ll H\rr^G$. Thus, $G=TS \ll H\rr^G$. Further, suppose there are $t_1,t_2\in T$ and $s_1,s_2\in S$ such that $t_1s_1\in t_2s_2\ll H\rr^G$. Then
$$
t^{-1}_2t_1\in s_2\ll H \rr^G s^{-1}_1= s_2s^{-1}_1\ll H \rr^G\subseteq K\ll H \rr^G.
$$
Therefore, $t_1=t_2$. This implies $s_2 ^{-1}s_1\in \ll H\rr ^G$. Obviously, we also have $s_2^{-1}s_1\in K$. By \cite[Proposition 6.1 (a)]{Sun}, we have $K\cap \ll H\rr^G=\ll H\rr ^K$ as $(G,K,\ll H\rr^K)$ is a Cohen-Lyndon triple. Therefore, $s_2^{-1}s_1\in \ll H\rr ^K$, which implies $s_1=s_2$.  This completes the proof of \eqref{Eq:TS} and the lemma.
\end{proof}

Our strategy of constructing a Cohen--Lyndon subgroup is first look at a hyperbolically embedded free subgroup $K$ of an acylindrically hyperbolic group $G$ with $K(G)=1$, and then look at a free factor $H$ of $K$ and argue that $H$ is a Cohen--Lyndon subgroup of $G$ by using Lemma \ref{lem. transitivity of CL}. The lemma below is the second step of this strategy.

\begin{lem}\label{Lem:F7n}
Let $k\ge 7$ and $n$ be positive integers, $F_{kn}$ the free group with the basis $f_1, \ldots, f_{kn}$. For any finite subset $\mathcal{F}\subset F_{kn}\setminus\{1\}$, there is $M\in \NN$ such that for any $m>M$, the subgroup $H$ generated by elements
$$
r_i= f_{(i-1)k+1} f_{(i-1)k+2}^m\cdots f_{(i-1)k+k}^m, \;\;\; i=1, \ldots, n
$$
is a free factor of $F_{kn}$ of rank $n$, the quotient group $F_{kn}/\ll H\rr$ is free of rank $(k-1)n$, and we have $\ll H \rr \cap \mathcal{F}=\emptyset$. In particular, $H$ is a Cohen-Lyndon subgroup of $F_{kn}$.
\end{lem}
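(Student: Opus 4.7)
The plan is to prove the lemma in two stages: first, establish the algebraic structure (free factor, quotient, Cohen–Lyndon) via a Tietze transformation, which works for every $m \geq 1$; second, choose $M$ large enough using a direct cancellation analysis to ensure $\ll H \rr \cap \mathcal{F} = \emptyset$.

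For the first stage, I would introduce the set
\[
B = \{r_1,\ldots,r_n\} \cup \{f_{(i-1)k+j} : 1 \leq i \leq n,\ 2 \leq j \leq k\}
\]
and show it is a free basis of $F_{kn}$. Solving the defining equation for $r_i$ gives $f_{(i-1)k+1} = r_i f_{(i-1)k+k}^{-m}\cdots f_{(i-1)k+2}^{-m}$, so $B$ generates $F_{kn}$. Since $|B|=kn=\mathrm{rank}(F_{kn})$ and any generating set of a free group of rank $r$ with exactly $r$ elements is a free basis (Hopfian property), $B$ is a free basis. Consequently $H=\langle r_1,\ldots,r_n\rangle$ is generated by a subset of a free basis and is therefore a free factor: $F_{kn} = H * K$ where $K = \langle f_{(i-1)k+j} : 1\leq i\leq n,\ 2\leq j\leq k\rangle$. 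This yields $H\cong F_n$ and $F_{kn}/\ll H\rr \cong K \cong F_{(k-1)n}$. The Cohen--Lyndon property then follows from Example~\ref{Ex:FPCL} since $H$ is a free factor.

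For the second stage, let $\pi\colon F_{kn}\to F_{kn}/\ll H\rr$ be the quotient map; in the original generators it is the substitution homomorphism
\[
f_{(i-1)k+1}\ \longmapsto\ w_i := f_{(i-1)k+k}^{-m}\cdots f_{(i-1)k+2}^{-m}, \qquad f_{(i-1)k+j}\ \longmapsto\ f_{(i-1)k+j}\ \text{for}\ j\geq 2.
\]
I need to show that $\pi(g)\ne 1$ for every $g\in\mathcal F$ once $m$ is sufficiently large. Fix $g\in\mathcal F$ and write it as a reduced word of length $\ell$ in $\{f_1,\ldots,f_{kn}\}^{\pm 1}$. Apply $\pi$ letter-by-letter: each occurrence of $f_{(i-1)k+1}^{\pm 1}$ is replaced by $w_i^{\pm 1}$, which is a reduced word of length $(k-1)m$ starting with $f_{(i-1)k+k}^{\mp 1}$ and ending with $f_{(i-1)k+2}^{\mp 1}$. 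The key observation is that at any junction between two adjacent blocks in the resulting word, free cancellation is severely limited: between two distinct $w_i^{\pm 1}, w_{i'}^{\pm 1}$ with $i\ne i'$ the letter sets are disjoint (no cancellation); between two copies of $w_i^{\pm 1}$ with the same sign, the meeting letters are $f_{(i-1)k+2}^{\mp 1}$ and $f_{(i-1)k+k}^{\mp 1}$ with $k\ne 2$ (no cancellation), while opposite signs cannot occur since $g$ is reduced; and between $w_i^{\pm 1}$ and an unchanged letter $f_j^{\pm 1}$ ($j\not\equiv 1\pmod k$), at most one power $f^{\pm 1}$ is absorbed before meeting a letter of a different index. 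Hence each junction absorbs at most $1$ letter, so the reduced form of $\pi(g)$ has length at least $c_k\cdot m\cdot N - C\ell$, where $N$ is the number of occurrences of letters $f_{(i-1)k+1}^{\pm 1}$ in $g$ and $c_k,C$ depend only on $k$. Taking $M$ larger than a linear function of $L=\max_{g\in\mathcal F}|g|_X$ ensures $\pi(g)\ne 1$ whenever $N\geq 1$, while if $N=0$ then $\pi(g)=g\ne 1$ automatically. Setting $M$ to be the maximum such bound over the finite set $\mathcal F$ completes the proof.

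The main obstacle is the cancellation analysis in the second stage: one must rule out any unexpectedly long cancellation at the junctions between substituted blocks and between substituted and unsubstituted letters. The argument is elementary once one observes that the indices appearing at the boundaries of distinct $w_i$ are pairwise distinct, so any cancellation is bounded by a constant depending only on $k$, independent of $m$ and $\ell$.
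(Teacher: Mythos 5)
Your first stage is correct and is essentially what the paper does: the paper simply asserts the decomposition $F_{kn}=H\ast L$ with $L$ generated by the generators $f_{(i-1)k+j}$, $j\ge 2$, and your Tietze/Hopfian argument is a valid way to justify it; the identification $F_{kn}/\ll H\rr\cong L\cong F_{(k-1)n}$ and the Cohen--Lyndon property via Example \ref{Ex:FPCL} then go through exactly as you say.

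The second stage, however, has a genuine gap. The two quantitative claims that drive your estimate --- that each junction absorbs at most one letter, and that all cancellation at a junction is bounded by a constant depending only on $k$, independent of $m$ and $\ell$ --- are false. Take $g=f_1f_2f_1^{-1}$. Then $\pi(g)=w_1f_2w_1^{-1}$, and after the single unchanged letter $f_2$ is absorbed, the syllable $f_2^{-m}$ at the end of $w_1$ cancels directly against the syllable $f_2^{m}$ at the start of $w_1^{-1}$: this one junction removes $2(m-1)$ letters. In general, once the intermediate unchanged letters between two blocks $w_i^{\epsilon}$ and $w_i^{-\epsilon}$ are fully consumed, syllables of the two blocks cancel against each other, so cancellation at a junction can be of order $m$ and can in principle cascade (a block being eaten brings further blocks into contact); ruling out such cascades is precisely the content that has to be proved, and letter-by-letter junction bookkeeping does not address it, so your lower bound $c_k mN-C\ell$ is unproven as stated. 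It is also telling that your argument never uses $k\ge 7$, while some such input is needed: already for $k=2$ one has $f_2r_1f_2^{-1}r_1^{-1}=f_2f_1f_2^{-1}f_1^{-1}\in\ll H\rr$, a word of length $4$ for every $m$. The paper's proof gets exactly this control from small cancellation: since each generator occurs in a single syllable of a single $r_i$, the pieces of the symmetrized set are powers of one generator of length at most $m$, while $|r_i|=1+(k-1)m\ge 1+6m$, so $\{r_1,\dots,r_n\}$ is $C'(1/6)$ precisely because $k\ge 7$; Greendlinger's lemma \cite[Chapter V, Theorem 4.4]{LS} then shows that every nontrivial freely reduced element of $\ll H\rr$ contains more than half of a cyclic conjugate of some $r_i^{\pm1}$, hence has length greater than $(1+(k-1)m)/2>3m$, which exceeds $\max_{g\in\mathcal F}|g|$ once $m$ is large. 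Your stage-2 strategy could likely be repaired by an induction on the number of substituted letters that treats the case of fully absorbed intermediate material, but as written the crucial step is missing.
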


\begin{proof}
Since the set $\{r_i\}^n_{i=1}$ satisfies the classical $C^\prime (1/6)$ small cancellation condition, we have  $\ll H \rr\cap \mathcal{F}=\emptyset$ for all sufficiently large $m$ by \cite[Chapter V Theorem 4.4]{LS}. It remains to note that $F_{kn}$ decomposes as a free product $F_{kn}=H\ast L$, where $L$ is the subgroup of $F_{kn}$ generated by the set $\{ f_{(i-1)k+j}\mid i=1,\ldots,n,\; j=2,\ldots,k\}$. In particular, $H$ is a Cohen-Lyndon subgroup of $F_{kn}$ (see Example \ref{Ex:FPCL}).
\end{proof}

\begin{proof}[Proof of Proposition \ref{freeCL}]
By Theorem \ref{Thm:Fhe}, there is a subgroup $K\cong F_{7n}$ such that $K\hookrightarrow_h G$. By Theorems \ref{Thm:DF} and \ref{Thm:Sun}, there exists a finite set $\mathcal{F}\subset K\smallsetminus\{1\}$ such that if $N\lhd K$ and $N\cap \mathcal{F}=\emptyset$, then $(G,K,N)$ is a Cohen-Lyndon triple, $K\cap \ll N\rr^G =N$, and $K/N\h G/\ll N\rr^G$.

By Lemma \ref{Lem:F7n}, there exists a Cohen-Lyndon subgroup $H\cong F_n$ of $K$ such that $\ll H \rr^K\cap \mathcal{F}=\emptyset $ and $K/\ll H \rr^K$ is a non-cyclic free group. In particular, $(G,K,\ll H \rr^K)$ is a Cohen-Lyndon triple. By Lemma \ref{lem. transitivity of CL}, $H$ is a Cohen-Lyndon subgroup of $G$. Since $K/\ll H\rr^K$ is non-cyclic free and hyperbolically embedded in $G/\ll H\rr^G$, the latter quotient group is acylindrically hyperbolic by Theorem \ref{Thm:ah}. Note also that the existence of a non-trivial, torsion-free, hyperbolically embedded subgroup implies the ICC condition by Proposition \ref{Prop:maln} and Lemma \ref{Lem:DFICC}.

If $G$ is hyperbolic relative to a collection of subgroups $\Hl$ and $S$ is a suitable subgroup of $G$, we can choose $K$ inside $S$ and assume that $G$ is hyperbolic relative to $\Hl\cup\{K\}$ by Corollary \ref{Cor:Fhe}. In these settings, Theorem \ref{Thm:DF} yields relative hyperbolicity of $G/\ll H\rr^G$ with respect to the collection of isomorphic images of subgroups $H_i$ and the free group $K/\ll H\rr^K$. Applying Corollary \ref{Cor:rhh}, we conclude that $G/\ll H\rr^G$ is hyperbolic relative to $\{\gamma(H_i)\}_{i\in I}$. We have $S\not\subset \ll H\rr ^G $ since $K\cap \ll H\rr ^G= K\cap \ll H\rr^K \ne K$. Finally, if $G$ is torsion-free, then so is $G/\ll H\rr^G$  by Theorem \ref{Thm:DF} (d).
\end{proof}

We mention one simple application.

\begin{thm}\label{Thm:AHQ}
Let $G$ be a non-elementary hyperbolic (respectively, acylindrically hyperbolic) group. For every finitely generated group $A$, there exists a quotient $W$ of $G$ such that $W\in \WR (A,B)$ for some non-elementary hyperbolic (respectively, acylindrically hyperbolic) group $B$.
\end{thm}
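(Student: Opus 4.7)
The plan is to build $W$ as a two-step quotient of $G$: first pass to a wreath-like product $W(G,H)$ with free base of the correct rank via Corollary \ref{Cor:CLWR}, then collapse the base from a free group to $A$ via Lemma \ref{A/N}.

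To set up, I first reduce to the case $K(G)=\{1\}$ by replacing $G$ with $G/K(G)$, which remains non-elementary hyperbolic (respectively, acylindrically hyperbolic) since $K(G)$ is finite. Next, since $A$ is finitely generated, I fix an integer $n\ge 1$ and a normal subgroup $N\lhd F_n$ so that $A\cong F_n/N$.

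The core step is to produce a Cohen-Lyndon subgroup of $G$ isomorphic to $F_n$ with a well-behaved quotient. This is exactly what Proposition \ref{freeCL} delivers. In the acylindrically hyperbolic case, part (a) yields a Cohen-Lyndon subgroup $H\cong F_n$ of $G$ such that $B:=G/\ll H\rr^G$ is ICC and acylindrically hyperbolic. In the non-elementary hyperbolic case, I view $G$ as relatively hyperbolic with respect to the empty peripheral collection; by Lemma \ref{Lem:suit} the subgroup $S=G$ is suitable, so part (b$_1$) of Proposition \ref{freeCL} produces a Cohen-Lyndon subgroup $H\cong F_n$ of $G$ such that $B:=G/\ll H\rr^G$ is non-elementary relatively hyperbolic with respect to the empty collection, hence non-elementary hyperbolic by Corollary \ref{Cor:rhh}.

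By Corollary \ref{Cor:CLWR}, the quotient $W(G,H)$ of $G$ lies in $\WR(F_n,B)$. Moreover, by Remark \ref{Rem:CLWR}, the natural projection $\gamma\colon G\to W(G,H)$ restricts to an injection $H\hookrightarrow W(G,H)$ whose image is the summand $H_e$ of the base $\bigoplus_{i\in G/\ll H\rr^G}H_i$ corresponding to the trivial coset. Via this identification, $N\lhd F_n$ becomes a normal subgroup of $H_e\cong A_1$, so Lemma \ref{A/N} applies and gives
\[
W \;:=\; W(G,H)/\ll \gamma(N)\rr^{W(G,H)} \;\in\; \WR(F_n/N,\,B) \;=\; \WR(A,B).
\]
Since $W$ is a quotient of the quotient $W(G,H)$ of $G$, it is a quotient of $G$, and $B$ has the required property by construction.

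There is no genuine obstacle here: all of the difficulty is absorbed by Proposition \ref{freeCL}, which simultaneously ensures the existence of the Cohen-Lyndon free subgroup $H\cong F_n$ and controls the quotient $G/\ll H\rr^G$. The only bookkeeping point is to verify, via Remark \ref{Rem:CLWR}, that $N\lhd F_n$ transports faithfully into a single base summand of $W(G,H)$ so that Lemma \ref{A/N} gives the base $A=F_n/N$ rather than some uncontrolled quotient.
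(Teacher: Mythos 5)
Your proposal is correct and follows essentially the same route as the paper: the paper's proof is exactly the combination of Proposition \ref{freeCL} with Corollary \ref{Cor:CLWR} to get a quotient $U\in\WR(F_n,B)$, followed by Lemma \ref{A/N} applied to $N\lhd F_n$ to collapse the base to $A$. Your extra bookkeeping (passing to $G/K(G)$, taking $S=G$ as the suitable subgroup via Lemma \ref{Lem:suit} in the hyperbolic case, and using Remark \ref{Rem:CLWR} to place $N$ in the summand $H_e$) only makes explicit steps the paper leaves implicit.
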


\begin{proof}
The quotient group $G/K(G)$ is acylindrically hyperbolic whenever $G$ is and we have $K(G/K(G))=\{ 1\}$ (see \cite[Lemma 5.10]{Hull}). Further, if $G$ is non-elementary hyperbolic, then so is $G/K(G)$ as it is quasi-isometric to $G$. Thus, passing from $G$ to $G/K(G)$, we can assume $K(G)=\{1\}$ without loss of generality. 

Let $A=F_n/N$ for some $n\in \NN$ and $N\lhd F_n$. Combining Corollary \ref{Cor:CLWR} and Proposition \ref{freeCL}, we obtain a quotient group $U$ of $G$ such that $U\in \WR (F_n, B)$, where $B$ is hyperbolic or acylindrically hyperbolic whenever so is $G$. Applying now Lemma \ref{A/N} to the wreath product $U$ and $N\lhd F_n$, we obtain the required group $W$.
\end{proof}

In particular, we obtain examples of wreath-like products with strong fixed point properties.  Recall that a group $G$ is said to have {\it property $FL^p$} if every affine isometric action of $G$ on an $L^p$-space has a fixed point. Note that having property $FL^p$ for all $p\geq 1$ is a significant strengthening of property (T) since the latter is equivalent to property $FL^2$ by the Delorme-Guichardet theorem.


\begin{cor}\label{Cor:FLp}
For every finitely generated group $A$, there is an infinite group $B$ and a regular wreath-like product $W\in \WR(A,B)$ such that $W$ has property $FL^p$ for all $p\ge 1$. In particular, $W$ has property (T) of Kazhdan.
\end{cor}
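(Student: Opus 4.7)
The plan is to feed Theorem~\ref{Thm:AHQ} a non-elementary hyperbolic group that already enjoys the strong fixed-point property, and then to observe that $FL^p$ is inherited by quotients.

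First, take $G$ to be a uniform lattice in $Sp(n,1)$ with $n\ge 2$ (or in $F_{4}^{(-20)}$). Since $G$ acts properly cocompactly on a negatively curved rank-one symmetric space, $G$ is a non-elementary Gromov hyperbolic group. By the work of Bourdon--Pajot (building on Pansu's rigidity), such a lattice has property $FL^p$ for every $p\ge 1$; in particular, the case $p=2$ recovers Kazhdan's property (T).

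Second, given any finitely generated group $A$, apply Theorem~\ref{Thm:AHQ} to this $G$. This produces a quotient $W$ of $G$ with $W\in\WR(A,B)$, where $B$ is non-elementary hyperbolic (hence infinite). Note that in the paper's convention $\WR(A,B)$ already refers to the class of \emph{regular} wreath-like products, so no additional work is needed to secure regularity. Thus $W$ is an infinite group and a regular wreath-like product of the prescribed form.

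Third, verify that $FL^p$ descends to quotients: if $\pi\colon G\twoheadrightarrow W$ is surjective and $W\curvearrowright X$ is any affine isometric action on an $L^p$-space, then pulling back through $\pi$ yields an affine isometric $G$-action on $X$, which has a fixed point by $FL^p$ for $G$; any such point is automatically fixed by $W=\pi(G)$. Hence $W$ inherits $FL^p$ for every $p\ge 1$, and in particular has property (T). No step of the argument is a serious obstacle once Theorem~\ref{Thm:AHQ} is in hand; the only substantive external input is the existence of hyperbolic groups with $FL^p$ for all $p\ge 1$, which is the content of the Bourdon--Pajot theorem for cocompact lattices in $Sp(n,1)$.
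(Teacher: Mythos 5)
Your overall scheme---start from a group with $FL^p$ for all $p\ge 1$ in the class covered by Theorem~\ref{Thm:AHQ}, pass to the wreath-like quotient, and use that $FL^p$ is inherited by quotients---is the paper's scheme, and your quotient argument and the remark about regularity of $\WR(A,B)$ are fine. However, the external input you invoke is false: a uniform lattice $G$ in $Sp(n,1)$ does \emph{not} have $FL^p$ for every $p\ge 1$. Pansu's $L^p$-cohomology computations and the Bourdon--Pajot results you cite give the fixed-point property only for $p$ below the conformal dimension $4n+2$ of the boundary of quaternionic hyperbolic space; for $p$ above this critical exponent such a lattice admits a \emph{proper} affine isometric action on an $L^p$-space, so $FL^p$ fails there. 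More generally, by Yu's theorem every Gromov hyperbolic group admits a proper affine isometric action on $\ell^p(G\times G)$ for all sufficiently large $p$, so no infinite hyperbolic group at all has $FL^p$ for every $p\ge 1$. Consequently no choice of hyperbolic input group can make your argument work: the hyperbolic case of Theorem~\ref{Thm:AHQ} is simply not the relevant one.

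This is precisely why the paper's proof quotes \cite{MO}: Minasyan and Osin construct an acylindrically hyperbolic (necessarily non-hyperbolic) group with $FL^p$ for all $p\ge 1$, and the corollary follows by applying the acylindrically hyperbolic case of Theorem~\ref{Thm:AHQ} to that group, the resulting $B$ being acylindrically hyperbolic (hence infinite) rather than hyperbolic. To repair your proof, replace the $Sp(n,1)$ lattice by the group of \cite{MO}; the rest of what you wrote, including the descent of $FL^p$ to quotients, then goes through verbatim.
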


\begin{proof}[Proof of Corollary \ref{Cor:FLp}]
It was proved in \cite{MO} that there exists an acylindrically hyperbolic group $G$ having property $FL^p$ for all $p\ge 1$. Applying Theorem \ref{Thm:AHQ} to this group $G$ yields the desired result.
\end{proof}


\subsection{Automorphisms of wreath-like products}\label{Sec:WRAut}


Let $W\in \WR (A,B\curvearrowright I)$, where $A$, $B$ are some groups and $B\curvearrowright I$ is an action of $B$ on some set $I$. Throughout this section, we keep the notation introduced in Definition \ref{wlp}. In particular, we denote by $A^{(I)}$ the base of $W$ and by $\e\colon W\to B$ the natural homomorphism with kernel $A^{(I)}$.

If  $A^{(I)}$ is characteristic in $W$, then every automorphism $\alpha \in Aut(W)$ induces an automorphism $\phi(\alpha) \in Aut(B)$ such that the following diagram is commutative:
$$
\begin{tikzcd}
W\ar{d}{\e}\ar{r}{\alpha} & W \ar{d}{\e} \\
B\ar{r}{\phi(\alpha)}& B
\end{tikzcd}
$$
Equivalently, we have
$$
\phi(\alpha) (wA^{(I)}) = \alpha (w) A^{(I)}
$$
for all $w\in W$. The rule $\alpha \mapsto \phi(\alpha)$ induces a homomorphism $Aut(W)\to Aut(B)$. The main result of this section -- Proposition \ref{Prop:WRAut} -- allows us to control the kernel of this map under certain additional assumptions. We begin with a lemma that provides a sufficient condition for $\phi$ to be well-defined.

\begin{lem}\label{Lem:char}
Suppose that $W\in \WR (A,B\curvearrowright I)$, where $A$ is amenable and $B$ is an ICC acylindrically hyperbolic  group acting on a set $I$. Then $A^{(I)}$ is a characteristic subgroup of $W$.
\end{lem}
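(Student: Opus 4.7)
The plan is to show that $A^{(I)}$ is precisely the amenable radical of $W$, which is automatically a characteristic subgroup.

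First, I would verify that $A^{(I)}$ is amenable. This is immediate: $A^{(I)}=\bigoplus_{i\in I}A_i$ is the directed union of its finitely supported sub-sums, each of which is a finite direct product of copies of the amenable group $A$, hence amenable. Since amenability is preserved under directed unions, $A^{(I)}$ is amenable. Moreover, any group $G$ has a well-defined amenable radical $\mathrm{Rad}_{\mathrm{am}}(G)$, since the class of normal amenable subgroups is stable under taking joins (using closure of amenability under extensions) and under directed unions; call this radical $N=N(W)$.

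Because $A^{(I)}$ is a normal amenable subgroup of $W$, we have $A^{(I)}\le N$. Consider the quotient $N/A^{(I)}$, which is a normal amenable subgroup of $W/A^{(I)}\cong B$. By \cite[Corollary 8.1(a)]{Osi16} (invoked earlier in the paper), the amenable radical of any acylindrically hyperbolic group is finite; since $B$ is assumed ICC, it has no nontrivial finite normal subgroup, so its amenable radical is trivial. Consequently $N/A^{(I)}=\{1\}$, which gives $N=A^{(I)}$.

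Thus $A^{(I)}=\mathrm{Rad}_{\mathrm{am}}(W)$. The amenable radical is visibly preserved by every automorphism of $W$, so $A^{(I)}$ is characteristic in $W$, as required. The only step requiring any content is the triviality of the amenable radical of $B$, and this is handed to us by the cited result of Osin together with the ICC hypothesis; the rest of the argument is purely formal.
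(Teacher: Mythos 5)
Your argument is correct. Where the paper fixes an arbitrary automorphism $\alpha$ and shows directly that $Q=\e\circ\alpha(A^{(I)})$ must vanish — noting that $Q$ is a normal amenable subgroup of $B$, that a nontrivial normal subgroup of the ICC group $B$ is infinite, that infinite normal subgroups of acylindrically hyperbolic groups are themselves acylindrically hyperbolic by \cite[Corollary 1.5]{Osi16}, and that acylindrically hyperbolic groups are non-amenable by Theorem \ref{Thm:Fhe} — you instead package the same underlying fact (namely, that $B$ admits no nontrivial normal amenable subgroup) through \cite[Corollary 8.1(a)]{Osi16} plus ICC, and use it to identify $A^{(I)}$ with the amenable radical of $W$, which is characteristic by construction. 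Both routes are legitimate, and the key input from acylindrical hyperbolicity is equivalent in strength; your version buys a slightly more intrinsic statement ($A^{(I)}=\mathrm{Rad}_{\mathrm{am}}(W)$) at the modest cost of invoking the existence of the amenable radical (closure of normal amenable subgroups under joins and directed unions), whereas the paper's version is more hands-on and tracks the automorphism directly without that formalism. The one thing to keep explicit if you write this up is the normality and amenability of the image $N/A^{(I)}$ in $B$, which you do state; with that, the proof is complete.
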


\begin{proof}
Let $\alpha \in Aut(W)$, $Q=\e\circ \alpha (A^{(I)})$, where $\e\colon W\to B$ is the canonical homomorphism. It suffices to show that $Q=\{ 1\}$. Arguing by contradiction, assume that $Q\ne \{ 1\}$. Then $|Q|=\infty $ since $B$ is ICC. By \cite[Corollary 1.5]{Osi16}, the class of acylindrically hyperbolic groups is closed under passing to infinite normal subgroups.  Therefore, $Q$ is acylindrically hyperbolic. The group $Q$ is also amenable being a homomorphic image of the amenable group $A^{(I)}$. However, this contradicts Theorem \ref{Thm:Fhe} which gives non-abelian free subgroups of $Q$.
\end{proof}

The following result is a consequence of Corollary \ref{CS}, which will be proved later using an extension of Popa's cocycle superrigidity theorem recorded in \cite{CIOS1}.

\begin{prop}\label{Prop:WRAut}
Let $A$ and $B$ be any countable groups and let $W\in \WR(A,B\curvearrowright I)$, where the action $B\curvearrowright I$ has infinite orbits. Assume that $W$ has property (T) and $A^{(I)}$ is a characteristic subgroup of $W$. Then every automorphism $\alpha \in Aut(W)$ such that $\phi(\alpha)\in Inn(B)$ belongs to $Inn (W)$. In particular, $\Ker (\phi)\le Inn (W)$.
\end{prop}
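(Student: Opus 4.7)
The plan is to reduce the statement to a 1-cocycle vanishing problem, which is then handled by the cited Corollary \ref{CS}. The second assertion follows from the first, so I will focus on showing that an automorphism $\alpha \in Aut(W)$ with $\phi(\alpha) \in Inn(B)$ must be inner.

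First, I would normalize the situation so that $\phi(\alpha)$ is the identity. Suppose $\phi(\alpha)$ is the inner automorphism of $B$ given by conjugation by some $b \in B$. Choose any $w_0 \in W$ with $\e(w_0) = b$ and replace $\alpha$ by $Ad(w_0)^{-1}\circ\alpha$. The new automorphism differs from $\alpha$ by an inner automorphism of $W$, and its image under $\phi$ is $id_B$. Hence we may assume from the outset that $\e\circ\alpha = \e$, equivalently that $\alpha$ preserves each coset of $A^{(I)}$ in $W$. Since $A^{(I)}$ is characteristic, $\alpha$ restricts to an automorphism of $A^{(I)}$.

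Next, define $c \colon W \to A^{(I)}$ by $c(w) = \alpha(w)w^{-1}$; this lies in $A^{(I)} = \Ker(\e)$ by the previous paragraph. A direct computation, inserting $w_1^{-1}w_1$ in the middle of $\alpha(w_1)\alpha(w_2)w_2^{-1}w_1^{-1}$, shows
\[
c(w_1w_2) = c(w_1)\cdot w_1 c(w_2) w_1^{-1},
\]
so $c$ is a $1$-cocycle for the conjugation action $W \curvearrowright A^{(I)}$. Recovering $\alpha$ from $c$ is immediate: $\alpha(w) = c(w)w$. Consequently, $\alpha$ is inner, realized by conjugation by some $a \in A^{(I)}$, if and only if $c$ is a coboundary of the form $c(w) = awa^{-1}w^{-1}$ for some $a \in A^{(I)}$ (which one verifies is indeed the coboundary associated with $a$ for this action).

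Finally, I would invoke Corollary \ref{CS}, which is the cocycle-superrigidity statement promised in the text and which uses property (T) of $W$ together with the generalized-Bernoulli-like structure of the conjugation action of $W$ on its base $A^{(I)}$. The corollary should yield exactly that every such $1$-cocycle $c \colon W \to A^{(I)}$ is a coboundary of the desired form, producing the required $a \in A^{(I)}$ with $\alpha = Ad(a)$. The main technical obstacle is of course Step~3, i.e.\ the passage from the abstract cocycle to an inner implementation; but since Corollary \ref{CS} is assumed available, the proof of Proposition \ref{Prop:WRAut} amounts to the reduction outlined in the first two steps.
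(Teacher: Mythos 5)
Your Steps 1 and 2 coincide with the paper's reduction: normalize so that $\phi(\alpha)=id_B$, and observe that $c(w)=\alpha(w)w^{-1}$ defines a $1$-cocycle for the conjugation action of $W$ on $A^{(I)}$, with $\alpha$ inner (via $a\in A^{(I)}$) exactly when $c(w)=awa^{-1}w^{-1}$. The gap is in Step 3. Corollary \ref{CS} is not a group-cohomological vanishing statement for $c\colon W\to A^{(I)}$; it is a von Neumann algebraic one. Applied here, it says that the unitary cocycle $d_w=u_{c(w)}$ for the trace-preserving action $\sigma_w=\mathrm{Ad}(u_w)$ on $\mathrm{L}(A^{(I)})$ is cohomologous to a character: there exist a character $\eta\colon W\to\mathbb T$ and a unitary $u\in\mathcal U(\mathrm{L}(A^{(I)}))$ with $d_w=\eta_w\, u\,\sigma_w(u)^*$. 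Two issues are then left open which your proposal silently assumes away: the untwisting element $u$ is only a unitary of the group von Neumann algebra, not an element of $A^{(I)}$, and the character $\eta$ need not be trivial. So the corollary does not "yield exactly" a coboundary $c(w)=awa^{-1}w^{-1}$ with $a\in A^{(I)}$.

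Bridging this is precisely the substantive part of the paper's proof, and it is missing from yours. From $u_{\alpha(w)}uu_w^*=\eta_w u$ one expands $u=\sum_{a\in A^{(I)}}\zeta_a u_a$ and compares Fourier coefficients: square-summability forces some $a$ with $\zeta_a\neq 0$ to have finite orbit under $w\mapsto\alpha(w)aw^{-1}$, which gives $\alpha(w)=awa^{-1}$ only on a finite-index normal subgroup $W_0\le W$. One then replaces $\alpha$ by $\mathrm{Ad}(a^{-1})\circ\alpha$ (so $\alpha|_{W_0}=\mathrm{id}$, still with $\alpha(w)w^{-1}\in A^{(I)}$), uses normality of $W_0$ to see that $\alpha(w)w^{-1}$ centralizes $W_0$ for every $w\in W$, and finally invokes the hypothesis that $B\curvearrowright I$ has infinite orbits to conclude that such a centralizing element of $A^{(I)}$ must be trivial, i.e. $\alpha=\mathrm{id}$ and the original $\alpha$ lies in $Inn(W)$. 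Note that this last hypothesis is never used at the group level in your argument, which is a further sign that the deduction cannot be as short as stated: without the discretization and the final centralizer step, the conclusion does not follow from Corollary \ref{CS} alone.
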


\begin{proof}
Assume that $\phi(\alpha)\in Inn(B)$.
After composing $\alpha$ with an inner automorphism of $W$, we may assume that $\phi(\alpha)=id_B$.
In other words, $\alpha(w)w^{-1}\in A^{(I)}$ for every $w\in W$. The map $c\colon W\rightarrow A^{(I)}$ sending each $w\in W$ to $\alpha(w)w^{-1}$ is a $1$-cocycle, where $A^{(I)}$ is endowed with the $W$-module structure induced by the conjugation action of $W$. Equivalently, the map $ d\colon W\rightarrow\mathcal U(\text{L}(A^{(I)}))$ given by $d_w=u_{c(w)}$ is a $1$-cocycle for the action $W\curvearrowright^{\sigma}\text{L}(A^{(I)})$ defined by $\sigma_w=\text{Ad}(u_w)$.

By Corollary \ref{CS}, $w$ is cohomologous to a character $\eta$ of $W$. Thus, there is $u\in\mathcal U(\text{L}(A^{(I)}))$ such that $d_w=\eta_wu\sigma_w(u)^*$, for every $w\in W$. Since $\sigma_w(u)=u_wuu_w^*$ and $d_w=u_{\alpha(w)}u_w^*$, we get that $u_{\alpha(w)}=\eta_wuu_wu^*$ and so $u_{\alpha(w)}uu_w^*=\eta_wu$, for every $w\in W$. Let $u=\sum_{a\in A^{(I)}}\zeta_au_a$ be the Fourier decomposition of $u$ and let $a\in A^{(I)}$ with $\zeta_a\not=0$. Then the set $\{\alpha(w)a w^{-1}\mid w\in W\}$ is finite. Thus, there is a finite index subgroup $W_0\le W$, which can be taken normal, such that $\alpha(w)=awa^{-1}$, for every $w\in W_0$. Thus, after replacing  $\alpha$ by $\text{Ad}(a^{-1})\circ\alpha$,  we may assume that $\alpha(w)=w$, for every $w\in W_0$, while still having that $\alpha(w)w^{-1}\in A^{(I)}$ for every $w\in W$. Let $w\in W$. Since $W_0\lhd W$ is normal, $\alpha(w)^{-1}v\alpha(w)=\alpha(w^{-1}vw)=w^{-1}vw$ for every $v\in W_0$. Hence, $\alpha(w)w^{-1}\in A^{(I)}$ commutes with $W_0$. Since $W_0$ has finite index in $W$ and the action $B\curvearrowright I$ has infinite orbits, it follows that $\alpha(w)w^{-1}=1$ for all $w\in W$.
\end{proof}

\section{Constructing groups with prescribed outer automorphisms}\label{Sec:App}


\subsection{Embedding countable groups into quotients of residually finite groups}

The main goal of this section is to prove the following embedding result, which plays a crucial role in the proof of Corollary \ref{Out}.

\begin{prop}\label{Prop:Embed}
Every countable group can be embedded in a quotient group $S/M$, where both $S$ and $M$ are finitely generated and $S$ is residually finite and torsion-free.
\end{prop}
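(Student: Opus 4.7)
The plan is to combine the classical Higman--Neumann--Neumann embedding theorem with a Rips-type construction enhanced by the cubulation machinery of Agol, Haglund, and Wise that is invoked elsewhere in the paper.

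First I would reduce to the finitely generated case. By the Higman--Neumann--Neumann embedding theorem, every countable group $Q$ embeds into a $2$-generator group $\widetilde Q$; it thus suffices to construct a finitely generated, residually finite, torsion-free group $S$ together with a finitely generated normal subgroup $M\lhd S$ such that $\widetilde Q$ embeds into $S/M$. The embedding $Q\hookrightarrow S/M$ will then be obtained as the composition $Q\hookrightarrow \widetilde Q\hookrightarrow S/M$.

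Second, given the finitely generated target $\widetilde Q$, I would construct $S$ and $M$ via a Rips-type construction. The outcome of such a construction is a hyperbolic (or relatively hyperbolic) group $\Gamma$ together with a surjection $\Gamma\twoheadrightarrow\widetilde Q$ whose kernel $K$ is finitely generated as a subgroup. To obtain residual finiteness of the ambient group, I would choose a variant of the Rips construction (in the style of Wise or Belegradek--Osin) so that $\Gamma$ acts properly cocompactly on a $\mathrm{CAT}(0)$ cube complex and is virtually compact special. Residual finiteness of $\Gamma$ then follows from Agol's theorem (the Haglund--Wise--Agol framework already cited in the paper). Passing to a torsion-free finite-index subgroup $S\le \Gamma$ (available since such $\Gamma$ is virtually torsion-free) and setting $M=K\cap S$ yields the required data, with $\widetilde Q$ embedded (after possibly passing to a finite-index subgroup that still contains a copy of $Q$) in $S/M$.

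The main obstacle is that the standard Rips-type constructions require a \emph{finitely presented} input, whereas the $2$-generator group $\widetilde Q$ produced by Higman--Neumann--Neumann is in general only finitely generated (as happens, for instance, when $Q$ is not recursively presented). To handle this, I would iterate the construction along an exhaustion of $\widetilde Q$ by finitely presented quotients, building $S$ by successive HNN-type extensions while carefully maintaining the finite generation of $S$ and $M$, the residual finiteness, and the torsion-freeness at each stage; alternatively, one may replace $\widetilde Q$ by a finitely presented overgroup into which $Q$ still embeds, and apply the cubulated Rips-type construction directly to that overgroup.
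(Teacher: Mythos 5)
The skeleton you set up (HNN embedding into a $2$-generated group, Rips construction, cubulation and Agol--Haglund--Wise for residual finiteness) matches the paper's toolkit, but the entire difficulty of the proposition is concentrated in your last step, and neither escape route offered there works as stated. The alternative ``replace $\widetilde Q$ by a finitely presented overgroup into which $Q$ still embeds'' is impossible in general: by Higman's embedding theorem only recursively presented groups embed into finitely presented ones, and a counting argument (countably many finitely presented groups, hence countably many isomorphism types of their finitely generated subgroups, versus $2^{\aleph_0}$ finitely generated groups) shows most countable groups admit no such overgroup. The alternative ``iterate along an exhaustion by finitely presented quotients'' is precisely where the theorem's content lies: the limit group is finitely generated but no longer finitely presented, the Wise--Haglund--Wise--Agol theorem applies only to the finitely presented stages, and residual finiteness does not pass to such limits --- indeed the paper stresses that finitely generated, infinitely presented $C'(1/6)$ groups need not be residually finite. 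You give no mechanism forcing a fixed nontrivial element of the limit $S$ to survive in a finite quotient, nor for keeping $M$ finitely generated, the presentation small cancellation, and torsion-freeness through the iteration. (A smaller unaddressed point: after passing to a finite-index subgroup $S\le\Gamma$, you only get that $Q\cap (S/M)$ has finite index in $Q$, not that $Q$ embeds in $S/M$; this is patchable, e.g.\ by first embedding $Q$ into a countable group with no proper finite-index subgroups, but it is not automatic.)

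For comparison, the paper never tries to realize $\widetilde Q$ itself as $S/M$. It uses SQ-universality of Higman's group $H$ (finitely generated, acylindrically hyperbolic, with \emph{no} nontrivial finite quotients) to embed the given countable group $C$ into a quotient $K$ of $H$, and then builds a finitely presented $C'(1/6)$ Rips-type group $S_0$ with a finitely generated normal subgroup $M_0$ such that $S_0/M_0\cong H$. Because $H$ has no finite quotients, $M_0$ surjects onto every finite quotient of every stage; this is the key device that lets each relator $R_n$ of $K$ over $H$ be corrected by a word in the generators of $M_0$ so that the corrected relator lies in a prescribed, arbitrarily deep finite-index normal subgroup, while being padded by long blocks $y_1^t\cdots y_{20}^t$ so as to preserve $C'(1/6)$ and avoid proper powers (giving torsion-freeness). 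Each finite stage is then finitely presented $C'(1/6)$, hence residually finite by Agol--Haglund--Wise, and the depth condition guarantees that every nontrivial element of the infinitely presented limit $S$ survives in a finite quotient of some stage, while $S/M\cong K\supseteq C$. This survival mechanism is exactly what your proposal would still need to supply.
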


The proof of the proposition is based on small cancellation theory and deep results on groups acting on cubical complexes obtained by Wise, Haglund-Wise, and Agol. 

We begin by recalling the definition of the small cancellation condition used below. Let $W$ be a word in the alphabet $\{x_1, x_1^{-1}, x_2, x_2^{-1}, \ldots \} $. One says that $W$ is \emph{reduced} if it contains no subwords of the form $x_ix_i^{-1}$ and $x_i^{-1}x_i$ and \emph{cyclically reduced} if every cyclic shift of $W$ is reduced. When talking about group presentations, we always assume that relators are cyclically reduced. A group presentation
\begin{equation}\label{eq-pres}
G=\langle X \mid \mathcal R\rangle.
\end{equation}
is said to be \emph{symmetric} if for every word $R\in \mathcal R$, all cyclic shifts of $R^{\pm 1}$ belong to $\mathcal R$. If the presentation (\ref{eq-pres}) is not symmetric, by its \emph{symmetrization} we mean the presentation obtained by adding all cyclic shifts of $R^{\pm 1}$ for all $R\in \mathcal R$ to the set of relators. A symmetric presentation (\ref{eq-pres}) satisfies the $C^\prime(\lambda)$ \emph{small cancellation condition} for some $\lambda \in [0, 1]$ if any common initial subword $U$ of two distinct words $R,S\in \mathcal R$ satisfies
$$
\| U\| < \lambda \min\{ \| R\|, \, \| S\|\},
$$
where $\| \cdot \| $ denotes the number of letters in the corresponding word. A non-symmetric group presentation satisfies $C^\prime(\lambda)$ if so does its symmetrization.

In the paper \cite{Wis}, Wise proved that every finitely presented $C^\prime(1/6)$ group acts geometrically (i.e., properly cocompactly) on a CAT(0) cube complex. Agol \cite{A} showed that every hyperbolic group $G$ acting geometrically on a CAT(0) cubical complex satisfies certain additional conditions, which imply that $G$ is residually finite by the work of Haglund and Wise \cite{HW}. Since finitely presented $C^\prime(1/6)$ groups are hyperbolic, we obtain the following.

\begin{thm}[Wise, Haglund-Wise, Agol]\label{Thm:WHWA}
Every finitely presented $C^\prime(1/6)$ group is residually finite.
\end{thm}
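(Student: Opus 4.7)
Plan: The argument naturally splits into two stages: a classical reduction to the finitely generated case, followed by a Rips-style small-cancellation construction to which Theorem \ref{Thm:WHWA} applies.

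Stage 1 (reduction to the finitely generated case): By the classical Higman--Neumann--Neumann embedding theorem, every countable group $Q$ embeds into a $2$-generator group $\bar Q$. Hence it is enough to produce $S$ and $M$ with the required properties so that $\bar Q \hookrightarrow S/M$; then the composition $Q \hookrightarrow \bar Q \hookrightarrow S/M$ finishes the job.

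Stage 2 (Rips construction plus cubulation): I would then apply a Rips-type small-cancellation construction, either the classical Rips construction or a suitable variant in the spirit of Proposition~\ref{Rips}, to produce a finitely presented $C'(1/6)$ group $S$ equipped with a finitely generated normal subgroup $M\lhd S$ such that $\bar Q$ embeds into $S/M$ (the simplest instance being $S/M \cong \bar Q$). Choosing the defining relators so that none of them is a proper power guarantees that $S$ is torsion-free. Since $S$ is then a finitely presented $C'(1/6)$ group, Theorem~\ref{Thm:WHWA} directly yields that $S$ is residually finite, and setting $S, M$ as above completes the construction.

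Main obstacle: The classical Rips construction produces a finitely presented $C'(1/6)$ output only when the input group is itself finitely presented, whereas $\bar Q$ from Stage~1 is in general only finitely generated. Worse, by Higman's embedding theorem, for a generic countable $Q$ the finitely generated group $\bar Q$ will fail to be recursively presented and so cannot be embedded into any finitely presented group. Thus the heart of the argument must be a more flexible construction---I expect a graphical or iterated small-cancellation construction, possibly combined with an HNN-style encoding similar to those used in Higman's theorem---yielding a finitely generated (but in general not finitely presented) torsion-free $S$ with a finitely generated normal subgroup $M$ such that $\bar Q \hookrightarrow S/M$, together with an extension of the Wise--Haglund--Agol cubulation machinery (of which Theorem~\ref{Thm:WHWA} is the finitely presented prototype) guaranteeing residual finiteness of $S$. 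The hardest and most technical step will be verifying that this cubulation/residual finiteness argument survives outside the finitely presented setting.
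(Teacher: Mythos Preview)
Your proposal does not address the stated theorem at all. Theorem~\ref{Thm:WHWA} asserts that every finitely presented $C'(1/6)$ group is residually finite; its proof consists of combining three deep external results (Wise's cubulation of finitely presented $C'(1/6)$ groups, Agol's virtual specialness theorem for hyperbolic cubulated groups, and the Haglund--Wise theory of special cube complexes), together with the classical fact that finitely presented $C'(1/6)$ groups are hyperbolic. The paper records exactly this chain of implications in the paragraph preceding the statement.

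What you have written is instead a proof sketch for Proposition~\ref{Prop:Embed} (embedding an arbitrary countable group into $S/M$ with $S$ finitely generated, torsion-free, residually finite), in which Theorem~\ref{Thm:WHWA} is invoked as a tool. You even reference Theorem~\ref{Thm:WHWA} explicitly as an ingredient. So there is a basic mismatch between the statement you were asked to prove and the statement you are actually discussing. For the record, your analysis of the difficulties in proving Proposition~\ref{Prop:Embed} is reasonable---the paper does indeed circumvent the finite-presentation obstacle by an iterated construction that applies Theorem~\ref{Thm:WHWA} at each finite stage rather than once at the end---but none of that is relevant to proving Theorem~\ref{Thm:WHWA} itself.
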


It is worth noting that Theorem \ref{Thm:WHWA} does not extend to finitely generated (but not necessarily finitely presented) $C^\prime (1/6)$ groups.

\begin{proof}[Proof of Proposition \ref{Prop:Embed}]
Consider the group
$$
H=\left\langle a_1,a_2,a_3,a_4 \mid a_\ell^{-1}a_{\ell+1}a_\ell=a_{\ell+1}^2\;\; (\ell=1,\ldots, 4)\right\rangle.
$$
Here and below, the indices of elements $a_1, \ldots, a_4$ are always taken modulo $4$, i.e., $a_{\ell+1}=a_1$ for $\ell=4$. Higman \cite{Hig} proved that the group $H$ has no non-trivial finite quotients. On the other hand, $H$ is acylindrically hyperbolic by \cite[Corollary 4.26]{MO}. Every acylindrically hyperbolic group is $SQ$-universal by \cite[Theorem 2.33]{DGO}. This means that every countable group embeds in a quotient of $H$. Let $K$ denote a quotient of $H$ containing a given countable group $C$.

We adopt the following convention. Given a group presentation $O=\langle X\mid \mathcal P\rangle $ and a (finite or infinite) set of words $R_1, R_2, \ldots $ in the alphabet $X$, we write $\langle O\mid R_1, R_2, \ldots \rangle $ for the presentation obtained from $O$ by adding $R_1, R_2, \ldots $ to the set of relators. By abuse of notation, we do not distinguish between group presentations and groups represented by them.

By construction, we have
$$
K= \langle H \mid  R_1,\; R_2, \ldots \rangle,
$$
where $R_1, R_2, \ldots$ is a (possibly infinite) set of words in the alphabet $\{ a_1^{\pm 1}, a_2^{\pm 1}, a_3^{\pm 1}, a_4^{\pm 1}\}$.
Let $$X=\{ a_1,\ldots, a_4, x_1, \ldots, x_{20}, y_1, \ldots, y_{20}\}.$$ It is easy to find words $S_{ij}$, $T_{ij}$, $U_{k}$, $V_{\ell}$ in the alphabet $\{ x_1, \ldots, x_{20}\}$, where $i, \ell=1, \ldots, 4$ and $j, k=1, \ldots , 20$ such that the presentation
\begin{equation}\label{Eq:S0}
S_0= \left\langle \;X \;\;\left| \;
\begin{array}{l}
a_i^{-1}x_ja_iS_{ij},\;\; a_ix_ja_i^{-1}T_{ij} \;\;\; (i=1, \ldots, 4, \; j=1, \ldots , 20)
\\ y_kU_{k}\;\;\;(k=1, \ldots , 20)\\
a_\ell^{-1}a_{\ell+1}a_\ell a_{\ell+1}^{-2}V_\ell\;\;\; (\ell=1, \ldots, 4)
\end{array}
\right.\right\rangle
\end{equation}
satisfies $C^\prime(1/6)$. (E.g., one can take $S_{ij}$, $T_{ij}$, $U_{k}$, $V_{\ell}$ to be any distinct words from the set $\{ x_1^{n}x_2^{n} \ldots x_{20}^{n} \mid n\in \NN\}$; the verification of the small cancellation condition is straightforward.) Let $M_0$ denote the subgroup of $S_0$ generated by $\{x_1, \ldots, x_{20}, y_1, \ldots, y_{20}\}$. Relations in the first and the second rows of (\ref{Eq:S0}) ensure that $$M_0=\langle x_1, \ldots, x_{20}, y_1, \ldots, y_{20}\rangle =\langle x_1, \ldots, x_{20}\rangle\lhd S_0;$$ relations in the third row yield $S_0/M_0\cong H$. Since $H$ has no non-trivial finite quotients, we have the following:
\begin{enumerate}
\item[($\ast$)] $M_0$ maps surjectively onto every finite quotient of $S_0$.
\end{enumerate}

We construct the required group $S$ as a quotient of $S_0$ by induction. Let $K_0=S_0$ and let $n\in \NN$. Assume that we have already constructed a group presentation
\begin{equation}\label{Sn-1}
S_{n-1}=\langle S_0\mid P_1, \ldots , P_{n-1}\rangle
\end{equation}
(the set of relators is empty for $n=1$) and a finite index normal subgroup $K_{n-1}$ of $S_{n-1}$ such that the following conditions hold.

\begin{enumerate}
\item[(a)] The presentation (\ref{Sn-1}) satisfies $C^\prime (1/6)$.
\item[(b)] Let $R$ be one of the relators $P_1,\ldots,P_{n-1}$ or a relator of the presentation \eqref{Eq:S0}. Then every subword of a cyclic shift of $R^{\pm 1}$ of length at least $\| R\|/6$ contains either two consecutive letters from the alphabet $\{ a_1,\ldots, a_4, x_1, \ldots, x_{20}\}^{\pm 1}$ or a subword of the form $(y_1y_2^qy_3)^{\pm 1}$ for some $q\in \ZZ\setminus \{ 0\}$.
\item[(c)] Any non-trivial element of $K_{n-1}$ has length at least $n-1$ with respect to $X$.
\end{enumerate}

The group $S_{n}$ is obtained from $S_{n-1}$ as follows. By Theorem \ref{Thm:WHWA}, $S_{n-1}$ is residually finite. Therefore,  there exists a finite index normal subgroup $L_{n-1}$ of $S_{n-1}$ such that $L_{n-1}\le K_{n-1}$  and $L_{n-1}$ contains no non-trivial elements of length less than $n$. Suppose that $$R_n=a_{i_1}^{\alpha_1}\ldots a_{i_r}^{\alpha_r},$$ where $i_1, \ldots, i_r\in \{ 1,\ldots ,4\}$ and $\alpha_1, \ldots, \alpha_r=\pm 1$. By ($\ast$) there exists a word $x_{j_1}^{\xi_1} \ldots x^{\xi_s}_{j_s}$, where $j_1, \ldots, j_s\in \{ 1, \ldots, 20\}$ and $\xi_1, \ldots, \xi_s=\pm 1$, such that the word
$$
W_n=a_{i_1}^{\alpha_1}\ldots a_{i_r}^{\alpha_r}x_{j_1}^{\xi_1} \ldots x^{\xi_s}_{j_s}
$$
represents $1$ in the finite quotient group $S_{n-1}/L_{n-1}$. Further, we can choose a natural number $m$ satisfying the following two conditions:
\begin{enumerate}
\item[(+)] $m$ is divisible by $|S_{n-1}/L_{n-1}|$;
\item[(++)] for any $i\in \{ 1, \ldots, 20\}$, no cyclic shift of a relation in (\ref{Sn-1}) contains a subword of the form $y_i^{\pm m}$.
\end{enumerate}
For every $t\in \NN$, we let
$$
Z_t= y_1^t\ldots y_{20}^t.
$$
Let $S_n$ be the group presentation obtained from $S_{n-1}$ by adding the relation
\begin{equation}\label{Eq:Pn}
P_n=a_{i_1}^{\alpha_1}\, Z_{m}\,a_{i_2}^{\alpha_2}\,Z_{2m}\,\ldots \,a_{i_r}^{\alpha_r}\,Z_{rm}\,x_{j_1}^{\xi_1}\,Z_{(r+1)m}\, x_{j_2}^{\xi_2}\,Z_{(r+2)m}\,\ldots\, x^{\xi_s}_{j_s}\,Z_{(r+s)m}.
\end{equation}
Condition (++) and parts (a), (b) of the inductive assumption easily imply that $S_n$ satisfies $C^\prime(1/6)$.  Note also that condition (b) for the word $P_n$ holds by inspection.

Let $K_n$ denote the image of $L_{n-1}$ in $S_n$. By (+), $Z_t$ represents $1$ in $S_{n-1}/L_{n-1}$ whenever $t$ is a multiple of $m$. Since $W_n$ represents $1$ in $S_{n-1}/L_{n-1}$, the word $P_n$ has the same property; equivalently, $P_n$ represents an element of $L_{n-1}$ in $S_{n-1}$. Thus, the kernel of the natural homomorphism $\gamma_n\colon S_{n-1}\to S_n$ satisfies
\begin{equation}\label{Kern}
\Ker (\gamma_n)=\ll P_n\rr^{S_{n-1}} \le L_{n-1}\le K_{n-1}.
\end{equation}
By the choice of $L_{n-1}$, the shortest non-trivial element of $K_n$ has length at least $n$. This concludes the inductive step. We also note that
\begin{equation}\label{Kn}
K_n= \gamma_n(L_{n-1})\le \gamma_n(K_{n-1}).
\end{equation}

Let
$$
S=\langle S_0\mid P_1, P_2, \ldots \rangle
$$
be the inductive limit of the sequence $S_0\stackrel{\gamma_1}\to S_1\stackrel{\gamma_2}\to \ldots $. Let also $M$ denote the image of $M_0$ is $S$. We claim that $S$ is residually finite.

Indeed, let $W$ be a word in the alphabet $X\cup X^{-1}$ representing a non-trivial element $s\in S$ and let $n=\| W\|$. By part (c) of the inductive assumption, $W$ represents a non-trivial element in $S_{n+1}/K_{n+1}$. An easy induction using (\ref{Kern}) and (\ref{Kn}) shows that the kernel of the natural homomorphism $S_{n+1}\to S$ is contained in $K_{n+1}$. Therefore, there is a natural surjection $S\to S_{n+1}/K_{n+1}$ sending $s$ to the non-trivial element of $S_{n+1}/K_{n+1}$ represented by $W$. Thus, $S$ is residually finite.

The presentation of the group $S$ given above satisfies $C^\prime(1/6)$ and it is easy to see that no relation is a proper power. By the main result of \cite{Lip}, this implies that $S$ is torsion-free. Since $M_0$ is generated by $\{ x_i, y_i\mid i=1, \ldots, 20\}$, a presentation of the group $S/M$ can be obtained from the presentation (\ref{Eq:S0}) by adding all relations $P_n=1$ ($n\in \NN$) and relations $x_i=y_i=1$ ($i=1, \ldots, 20$). Using the obvious Tietze transformations to remove the generators  $x_i$ and $y_i$ from this presentation, yields the presentation $\langle H \mid  R_1,\; R_2, \ldots \rangle$ of the group $K$ (note that, after deleting all letters $x_i$ and $y_i$ in the word $P_n$, we obtain the word $R_n$ for all $n\in \NN$). Thus, $S/M \cong K$. Since $K$ contains $C$, the proposition is proved.
\end{proof}


\subsection{The main construction}\label{Sec:MC}


We begin with an auxiliary definition.

\begin{defn}\label{Def:IndOut}
With each short exact sequence of groups
\begin{equation}\label{Eq:SES}
1\to N\to G\stackrel{\e}\to Q\to 1,
\end{equation}
we associate a homomorphism
$$
\iota\colon G\to Aut(N)
$$
as follows. For every $g\in G$, $\iota(g)$ is the automorphism of $N$ given by $n\mapsto gng^{-1}$ for all $n\in N$.
\end{defn}

We summarize some necessary results about the map $\iota$.

\begin{lem}\label{Lem:OutAH}
Let $G$ be an ICC acylindrically hyperbolic group and let $N$ be a non-trivial normal subgroup. In the notation of Definition \ref{Def:IndOut}, we have the following.
\begin{enumerate}
\item[(a)] The map $\iota\colon G\to Aut(N)$ associated with the short exact sequence (\ref{Eq:SES}) is injective.
\item[(b)] If, in addition, $G$ is non-elementary relatively hyperbolic and $N$ has property (T), then $|Aut(N):\iota (G)|<\infty$.
\end{enumerate}
\end{lem}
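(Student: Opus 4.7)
The kernel of $\iota$ is the centralizer $C_G(N)$, which I plan to show is trivial by exhibiting two highly independent loxodromic elements inside $N$. Since $G$ is ICC and acylindrically hyperbolic, Theorem~\ref{Thm:HypICC} gives $K(G)=\{1\}$, so the non-trivial normal subgroup $N$ is infinite. Fix a non-elementary acylindrical action $G\curvearrowright S$ on a hyperbolic space. Exactly as in the proof of Lemma~\ref{Lem:FA}, $N$ acts non-elementarily on $S$, and hence contains two non-commensurable loxodromic elements $h_1,h_2$ with $E(h_1)\cap E(h_2)=K(G)=\{1\}$. Any $g\in C_G(N)$ commutes with each $h_i$, hence normalizes $\langle h_i\rangle$ and therefore preserves the endpoint pair of $h_i$'s quasi-axis; by Lemma~\ref{Lem:Eg}, this forces $g\in E(h_i)$ for $i=1,2$, whence $g=1$.

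\textbf{Plan for part (b).} I first reduce the statement. The containment $Inn(N)\subseteq \iota(G)$ (since $\iota(n)\in Inn(N)$ for every $n\in N$) gives $|Aut(N):\iota(G)|=|Out(N):\widetilde\iota(G/N)|$, where $\widetilde\iota\colon G/N\hookrightarrow Out(N)$ is the injection induced by $\iota$. Injectivity of $\widetilde\iota$ follows from part (a) once one notes that $N$ is itself ICC acylindrically hyperbolic (the restriction of a non-elementary acylindrical $G$-action to the infinite normal subgroup $N$ remains acylindrical and non-elementary, and $K(N)\subseteq K(G)=\{1\}$), so $Z(N)=\{1\}$.

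\textbf{The main obstacle} is showing the reduced index $|Out(N):\widetilde\iota(G/N)|$ is finite. My strategy is a proof by contradiction via a Bestvina--Paulin style limiting argument combined with property (T). Assume $\widetilde\iota(G/N)$ has infinite index in $Out(N)$, and pick representatives $\{\alpha_n\}\subset Aut(N)$ of pairwise distinct cosets. Property (T) implies that $N$ is finitely generated; fix a finite symmetric generating set $S\subset N$. Consider the sequence of length functions on $N$ induced by the generating sets $\alpha_n(S)$; after rescaling by their suprema on $S$ and taking an asymptotic-cone/ultralimit, one extracts a minimal, non-trivial isometric $N$-action on an $\mathbb{R}$-tree $T_\infty$. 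Acylindrical hyperbolicity of $N$ supplies the compactness needed to pass to the limit, while inequivalence of the cosets modulo $\iota(G)$ forces $T_\infty$ to have no global fixed point. But property (T) implies that every isometric $N$-action on an $\mathbb{R}$-tree has a global fixed point (property $F\mathbb{R}$, cf.~\cite{HV}); this contradiction yields the lemma. The delicate technical steps are (i) constructing the $\mathbb{R}$-tree limit in the acylindrically hyperbolic setting, and (ii) verifying that coset inequivalence genuinely produces geometric divergence that survives rescaling; both rely on the non-elementary relatively hyperbolic structure of $G$, which gives fine control over loxodromic dynamics and elementary subgroups of $N$ through the restricted $G$-action.
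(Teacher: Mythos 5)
Your part (a) is fine as a self-contained argument (the paper simply cites \cite{CIOS2} for this): the only small repair needed is that for two independent loxodromics $h_1,h_2\in N$ one gets $C_G(N)\subseteq E(h_1)\cap E(h_2)$, which is a priori only \emph{finite}, not literally equal to $K(G)$; but since $C_G(N)$ is normal in $G$ and $G$ is ICC (equivalently $K(G)=\{1\}$ by Theorem \ref{Thm:HypICC}), a finite $C_G(N)$ is trivial, so the conclusion stands.

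For part (b), however, your proposal leaves the actual mathematical content unproved. The entire difficulty of the statement is the production of a fixed-point-free isometric action of $N$ on an $\mathbb R$-tree from infinitely many cosets of $\iota(G)$ in $Aut(N)$; you describe this as a Bestvina--Paulin limiting argument and then explicitly defer the two steps that constitute it (construction of the limit tree, and the verification that coset-inequivalence survives rescaling). As written this is a plan, not a proof. Worse, the one structural claim you do make about how the limit is obtained is wrong: it is not ``acylindrical hyperbolicity of $N$'' that supplies the compactness. The limit is taken in the geometry of the \emph{target} $G$: one considers the homomorphisms $\gamma_n=\iota_N\circ\alpha_n\colon N\to G$ (inclusion precomposed with the automorphisms), which are pairwise non-conjugate in $G$ exactly because the $\alpha_n$ lie in distinct cosets of $\iota(G)$, and one rescales displacement functions for the induced actions of $N$ on a space attached to $G$; what makes the ultralimit a (tree-graded, hence usable) space and makes bounded displacement force conjugacy is the relatively hyperbolic structure of $G$ with finitely generated peripherals, together with the fact that the common image $N$ of all the $\gamma_n$ is \emph{non-parabolic} -- an infinite normal subgroup cannot lie in a conjugate of a peripheral by almost malnormality (Proposition \ref{Prop:maln}). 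Your sketch never addresses the non-parabolicity issue, and intrinsic word metrics on $N$ with respect to $\alpha_n(S)$, as you propose, are not the right objects. The paper closes precisely this gap by quoting \cite[Theorem 1.2]{BS}, which is the Belegradek--Szczepanski form of the Bestvina--Paulin theorem for relatively hyperbolic targets: infinitely many pairwise non-conjugate homomorphisms from a finitely generated group to $G$ with non-parabolic image yield an action on an $\mathbb R$-tree without global fixed point, contradicting property (T) via \cite{HV}. Either cite that result (after verifying its hypotheses as above) or supply the limiting construction in full; in its current form your argument for (b) has a genuine gap.
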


\begin{proof}
Part (a) is proved in \cite{CIOS2}.

Part (b) easily follows from the results of \cite{BS}. Indeed, assume that $G$ is properly relatively hyperbolic, $N$ has property (T), and $|Aut(N):\iota (G)|=\infty$. Composing automorphisms of $N$ with the embedding $N\le G$, we obtain infinitely many pairwise $G$-non-conjugate homomorphisms $\gamma_i\colon N\to G$. Note that $N$ must be infinite as $G$ is ICC. An infinite normal subgroup of a properly relatively hyperbolic group cannot belong to a conjugate of a parabolic subgroup by Proposition \ref{Prop:maln}. In the language of \cite{BS}, this means that our homomorphisms $\gamma_i$ have non-parabolic images. This allows us to apply \cite[Theorem 1.2]{BS}, which yields an action of $N$ on an $\mathbb R$-tree without a global fixed point. However, every action of a property $(T)$ group on an $\mathbb R$-tree has a global fixed point \cite{HV}. This contradiction proves the claim.
\end{proof}

We are now ready to prove the main result of this section.

\begin{thm}\label{Thm:MC}
For any a countable group $C$, there exist a countable group $B$, a countable set $I$,  and $2^{\aleph_0}$ pairwise non-isomorphic finitely generated groups $\{U_j\}_{j\in J}$ such that the following conditions hold.
\begin{enumerate}
\item[(a)] For any $j\in J$,  $U_j\in \WR(A_j,B\curvearrowright I)$, where $A_j$ is abelian and $B\curvearrowright I$ is a faithful action with infinite orbits.

\item[(b)] $B$ is an ICC subgroup of a finitely generated, relatively hyperbolic group with residually finite peripheral subgroups.

\item[(c)] For any $j\in J$, $U_j$ has property (T), $[U_j,U_j]=U_j$, and $Out(U_j)\cong C$.
\end{enumerate}
\end{thm}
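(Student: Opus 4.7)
The plan has three layers. The goal is an ICC property (T) group that carries a regular wreath-like structure with abelian base, realizes $C$ as its outer automorphism group, and sits inside a finitely generated relatively hyperbolic group with residually finite peripherals; continuum-many non-isomorphic examples are then extracted by small cancellation over the parent.

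For the first layer, I would apply Proposition \ref{Prop:Embed} to embed $C$ into a quotient $P/M$ where $P$ is finitely generated, residually finite, torsion-free and $M \lhd P$ is finitely generated. Feeding $P$ and a non-elementary hyperbolic Kazhdan group $H$ (say a uniform lattice in $\mathrm{Sp}(n,1)$) into the Rips-style construction of Proposition \ref{Rips} produces a finitely generated, torsion-free, ICC group $G_0$ non-elementary hyperbolic relative to $P$, together with a normal subgroup $N_0 \lhd G_0$ that is a quotient of $H$ (so has property (T)) with $G_0/N_0 \cong P/M$. By Lemma \ref{Lem:OutAH} the conjugation map $\iota\colon G_0 \to \mathrm{Aut}(N_0)$ is injective with image of finite index. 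Applying the small cancellation theorem for relatively hyperbolic groups (Theorem \ref{glue}) to a suitable subgroup of $G_0$ (provided by Lemma \ref{Lem:suit}), with a finite system of killing words chosen to collapse $P/M$ onto $C$ and to destroy the remaining cosets of $\iota(G_0)$ in $\mathrm{Aut}(N_0)$, I pass to a quotient $G$ with normal subgroup $N \lhd G$ so that $G/N \cong C$, $\iota$ is surjective, and $G$ remains ICC, torsion-free, non-elementary hyperbolic relative to the (still residually finite) image of $P$. Since $N$ is ICC one has $\mathrm{Inn}(N) \cong N$, and so $\mathrm{Out}(N) \cong G/N \cong C$.

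For the second layer, I would apply Proposition \ref{freeCL} inside $G$ with $N$ as the ambient suitable subgroup, producing for any chosen $n$ a Cohen-Lyndon subgroup $H \cong F_n$ of $G$ contained in $N$. Corollary \ref{Cor:CLWR} gives $W(G,H) \in \WR(F_n, G/\ll H \rr^G)$, and Lemma \ref{A/N} applied to $[F_n,F_n] \lhd F_n$ delivers
\[
U \ := \ W(G,H) \big/ \ll [F_n,F_n] \rr \ \in \ \WR(\mathbb{Z}^n,\, B \ca I),
\]
with $B := G/\ll H \rr^G$ acting regularly on $I := B$, automatically faithfully and with infinite orbits; this gives (a). Condition (b) follows because Proposition \ref{freeCL}(b$_1$) preserves the relative hyperbolicity and peripheral structure of $G$ in $B$. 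Property (T) of $U$ is ensured by arranging $G$ itself to have property (T) — either through a Belegradek-Osin-type variant of the Rips construction at the first layer, or by an intermediate Dehn filling step — since property (T) passes to quotients; perfectness $[U,U]=U$ is imposed via a preliminary Dehn filling killing the abelianization, using Corollary \ref{Cor:TrAb}. For $\mathrm{Out}(U)$: by Lemma \ref{Lem:char} the base $(\mathbb{Z}^n)^{(I)}$ is characteristic in $U$, by Proposition \ref{Prop:WRAut} the induced $\phi\colon\mathrm{Aut}(U) \to \mathrm{Aut}(B)$ has kernel contained in $\mathrm{Inn}(U)$, and a lifting analysis via the parent $G$ — each automorphism of $N$ extends to one of $G$ by the surjectivity of $\iota$ engineered in the first layer, and descends through the Cohen-Lyndon surgery because $\ll [F_n,F_n]\rr$ is a characteristic subgroup of $W(G,H)$ — matches $\mathrm{Out}(U)$ with $\mathrm{Out}(N) \cong C$.

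For the third layer, the small cancellation step of the first layer admits a continuum of distinct parameter choices (independent defining relations each satisfying the $C'(1/6)$-type condition of Theorem \ref{glue}), all deep enough not to disrupt any arranged feature; a standard cardinality argument (each isomorphism class is realized by only countably many parameter sets, while the parameter space is of size $2^{\aleph_0}$) extracts $2^{\aleph_0}$ non-isomorphic $U_j$. The hard step is the surjectivity of $\iota$ in the first layer: Lemma \ref{Lem:OutAH}(b) only yields finite index, and promoting it to equality while simultaneously preserving property (T) of $N$, the ICC property, relative hyperbolicity with residually finite peripherals, and the room needed for Cohen-Lyndon surgery in $N$ at the second layer, requires a careful choreography of small cancellation parameters. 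Each ingredient is individually supplied by the Dehn filling machinery of Section \ref{Sec:DF}, but their simultaneous execution is the crux of the argument.
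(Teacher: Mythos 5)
Your plan diverges from the paper's proof in a way that introduces a fatal gap at the second layer: you take $U$ to be a quotient of the \emph{whole} ambient group $G$ (namely $W(G,H)/\ll[F_n,F_n]\rr$ with $H\le N$). Since $H\le N$ and $N\lhd G$, the kernel of $G\to U$ is contained in $N$, so $U$ surjects onto $G/N\cong C$. Consequently, for an arbitrary countable $C$ the group $U$ cannot have property (T), cannot be perfect, and admits nontrivial characters whenever $C$ does; conditions (c) fail outright. This is precisely why the paper takes $U_j$ to be the image of the property (T) subgroup $N$ (a quotient of the Kazhdan hyperbolic group, hence perfect and (T)), while the image $K$ of the preimage $C_0$ of $C$ sits \emph{outside} $U_j$ inside the larger wreath-like product $W_{j,R}$ and produces the outer automorphisms by conjugation, with $K/B\cong C_0/N\cong C$. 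Relatedly, your identification $Out(U)\cong Out(N)\cong C$ is unsubstantiated in both directions: nothing in your outline shows that every automorphism of $U$ arises from the parent, and the genuinely hard inclusion in the paper, $\psi_j(Out(U_j))\le \iota(K)/\iota(B)$, is obtained by a specific mechanism you do not have — passing to the non-regular, untwisted product over $K\curvearrowright K/R$ (Lemma \ref{Lem:N(R)}), showing the subgroup generated by centralizers of base elements is characteristic and projects onto $\ll R\rr^K$, and then applying Lemma \ref{Lem:CLmaln}(b) to the Cohen-Lyndon subgroup $\iota(R)$ of $L=Aut(B)$ to get $N_{\rm L}(\ll\iota(R)\rr^{\iota(K)})=\iota(K)$. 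With your regular product $\WR(\ZZ^n,B)$ this control is absent: you only get an embedding $Out(U)\hookrightarrow Out(B)$ with no handle on the image.

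The first layer also contains a step that would fail as described: upgrading $\iota\colon G\to Aut(N)$ from finite index to surjectivity by small cancellation quotients of $G_0$. The missing cosets of $\iota(G_0)$ in $Aut(N_0)$ are not elements of $G_0$ that can be killed, and any quotient changes $N_0$ to a new group $N$, hence changes $Aut(N)$ entirely; there is no well-defined target to ``destroy''. The paper never needs surjectivity of $\iota$: the finite-index statement of Lemma \ref{Lem:OutAH}(b) is used only to transfer relative hyperbolicity to $L=Aut(B)$ (via quasi-isometry invariance), so that Proposition \ref{freeCL} can be applied \emph{inside $L$} to find the cyclic Cohen-Lyndon subgroup $\iota(R)$; the exact identification of the outer automorphism group then comes from the normalizer computation above, not from $\iota$ being onto. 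Two further, more minor points: the paper embeds $C\oplus\ZZ$ (not $C$) via Proposition \ref{Prop:Embed} to force $C$ to have infinite index, which is needed for the regrouping Lemma \ref{WRsubgr}; and the continuum of non-isomorphic groups is obtained explicitly by varying the base $A_j$ over sets of primes and detecting prime torsion, rather than by a parameter-counting argument, which in your sketch would still require proving that distinct small cancellation parameters yield distinct kernels.
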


\begin{proof}
Let $C$ be a countable group. By Proposition \ref{Prop:Embed} applied to the countable group $C\oplus \ZZ$, there exist a finitely generated, torsion-free, residually finite group $S$ and a finitely generated subgroup $M\lhd S$ such that $S/M$ contains a subgroup isomorphic to $C\oplus \ZZ$. It follows that $S/M$ contains an infinite index subgroup isomorphic to $C$. (The only reason we use $C\oplus \ZZ$ instead of $C$ when applying Proposition \ref{Prop:Embed} is to ensure the infinite index condition.)

There exists a non-elementary hyperbolic group $H$ with Kazhdan's property (T). For example, we can let $H$ to be a cocompact lattice of $Sp(n,1)$ for $n\geqslant 2$. Using Selberg's Lemma and passing to a finite index subgroup if necessary, we may assume that $H$ is torsion-free. By Corollary \ref{Cor:TrAb}, we may further assume that $H$ has trivial abelianization. Let $G$ and $N\lhd G$ be the group and the subgroup provided by Proposition \ref{Rips} for groups $M\lhd S$ and $N$. Note that $G$ is torsion-free and $N$ has property (T) and trivial abelianization being a quotient of $H$. Since $G/N\cong S/M$, $C$ embeds in $G/N$. Henceforth, we think of $C$ as a subgroup of $G/N$ and denote by $C_0$ its full preimage in $G$.

By Lemma \ref{Lem:suit}, $N$ is a suitable subgroup of $G$. By Proposition \ref{freeCL}, we can choose an element $x\in N$ of infinite order such that $\langle x\rangle $ is a Cohen-Lyndon subgroup of $G$, $\ll x\rr ^G\ne N$, and the quotient group $\overline G= G/\ll x\rr^G$ is torsion-free, ICC, and non-elementary hyperbolic relative to a subgroup isomorphic to $S$. In particular, the peripheral subgroup of $\overline G$ is residually finite.

Let $V=W(G, \langle x\rangle)$ be the group defined by (\ref{Eq:WGHdef1}) and (\ref{Eq:WGHdef2}). By Corollary \ref{Cor:CLWR}, we have $V\in \WR(\ZZ, \overline G)$. In what follows, we will always think of $V$ as a wreath-like product with the structure described by Corollary \ref{Cor:CLWR} and Remark \ref{Rem:CLWR}.  

Further, let $B$ (respectively, $K$) denote the image of $N$ (respectively, $C_0$) in $\overline G$ and let $U$ (respectively, $W$) denote the image of $N$ (respectively, $C_0$) in $V$. We have $B\ne \{ 1\}$ since $\ll x\rr ^G\ne N$. By Lemma \ref{Lem:OutAH}, $\overline G$ embeds as a finite index subgroup in $L=Aut(B)$ via the map $\iota \colon \overline G\to L$ defined by $\iota (g) (b)= gbg^{-1}$ for all $g\in \overline G$ and $b\in B$. Note that $\iota (B)=Inn (B)$. The following commutative diagram visualizes our notation.

$$
\begin{tikzcd}
N \arrow[d, phantom, sloped, "\le"]\ar[r]&U\ar[r]\arrow[d, phantom, sloped, "\le"] & B=N/\ll x\rr^G\arrow[d, phantom, sloped, "\le"]\\
C_0\ar[r]\arrow[d, phantom, sloped, "\le"] &W\ar[r]\arrow[d, phantom, sloped, "\le"] & K = C_0/\ll x\rr^G\arrow[d, phantom, sloped, "\le"]\\
G\ar[r] & V \ar[r] & \overline G = G/\ll x\rr^G \arrow[d, hook, "\iota"]\\
&& L=Aut (B)\\
\end{tikzcd}
$$

Since $C$ is of infinite index in $S/M$, we have $|\overline G:K|=|G:C_0|=\infty $. Since $x\in N$, $W$ contains the image of $x$ in $V$. Therefore, $W$ contains the base of the wreath-like product $V$ by Remark \ref{Rem:CLWR}. Applying By Lemma \ref{WRsubgr} to $W\le V$, we obtain $W\in \WR (A, K)$, where $A$ is the free abelian group of countably infinite rank. 

We identify $A$ with the subgroup $A_1$ of the base of $W$. Let $J$ denote the set of all non-empty subsets of the set of prime numbers. For every $j=\{ p_1, p_2, \ldots \}\in J$, we let $N_j$ be a subgroup of $A$ such that
\begin{equation}\label{Eq:ANj}
A_j=A/N_j\cong \ZZ_{p_1}\oplus \ZZ_{p_2}\oplus \ldots .
\end{equation}
Let also $$W_j=W/\ll N_j\rr ^W.$$ By Lemma \ref{A/N}, we have $W_j\in \WR (A_j, K)$. 

Since non-elementary relative hyperbolicity is preserved under quasi-isometries \cite{Dru} and $|L:\iota(\overline G)|<\infty$, the group $L$ is non-elementary relatively hyperbolic. We want to show that $L$ is ICC. As usual, we denote by $K(L)$ the finite radical of $L$. For every $k\in K(L)$, and any $b\in \iota(B)$, we have $[k,b]=k^{-1}b^{-1}kb \in \iota(B)\cap K(L)$ since both $\iota(B)$ and $K(L)$ are normal in $L$. Since $\iota(B)\cong B$ is torsion-free and $K(L)$ is finite, we obtain $[k,b]=1$. This means that every element of $K(L)$ acts as the trivial automorphism of $B$. Thus, $K(L)=\{1\}$ and $L$ is ICC by Theorem \ref{Thm:HypICC}.

The subgroup $\iota(B)=Inn(B)$ is suitable in $L$ by Lemma \ref{Lem:suit}. Therefore, by part (b) of Proposition \ref{freeCL}, we can find an infinite cyclic subgroup $R\le B$ such that $\iota(R)$ is a Cohen-Lyndon subgroup of $L$. We consider the corresponding quotients $W_{j,R}=W_j/N_{R}$ of wreath-like products $W_j$ defined as in Lemma \ref{Lem:N(R)}. Let $I$ denote the set of left cosets $K/R$. By Lemma \ref{Lem:N(R)} (b), we have $W_{j,R}\in \WR_0(A_j, K \curvearrowright I)$, where the action of $K\curvearrowright I$ is by left multiplication. We denote by $\e_j\colon W_{j,R}\to K$ the canonical homomorphism. Letting $U_j$ be the natural image of $U$ in $W_{j,R}$, we obtain another commutative diagram:

$$
\begin{tikzcd}
1\ar[r] & A_j^{(I)}\ar[r]\arrow[d,equal] & U_j\ar[r]\arrow[d, phantom, sloped, "\le"] & B\ar[r]\arrow[d, phantom, sloped, "\le"] &1\\
1\ar[r] & A_j^{(I)}\ar[r] & W_{j,R}\ar[r, "\e_j"] & K\ar[r] &1\\
\end{tikzcd}
$$

We claim that $U_j$, $B$, and $I$ satisfy properties (a)--(c). From now on, we fix some $j\in J$. We have $U_{j}\in \WR(A_j,B\curvearrowright I)$ by  Lemma \ref{Lem:WRsubgr0}.  Note that $B$ is infinite since it is non-trivial and torsion-free. By \cite[Corollary 1.5]{Osi16}, any infinite normal subgroup of an acylindrically hyperbolic group is acylindrically hyperbolic. Therefore, $B$ is acylindrically hyperbolic. Since $R\cong \ZZ$, we have $|B:R|=\infty $ (see Theorem  \ref{Thm:class}). This means that $B \curvearrowright I$ has infinite orbits. Furthermore, the action is faithful by Lemma \ref{Lem:FA} since $R$ is not acylindrically hyperbolic.  This proves claim (a).

Since $B$ is torsion-free, it is ICC by Theorem \ref{Thm:HypICC}. As explained above, $\overline G$ is hyperbolic relative to a residually finite subgroup. Note also that $\overline G$ is finitely generated being a quotient of a finitely generated group $G$. Thus, we obtain (b).

The groups $U_j$ have property (T) and trivial abelianization being quotients of $N$. It remains to show that $Out(U_j)\cong C$. To this end, we define a homomorphism
$$
\psi_j\colon Out(U_j) \to Out(B)=L/\iota(B)
$$
as follows. Let $\phi_j\colon Aut(U_j)\to L=Aut (B)$ be the homomorphism defined in Section \ref{Sec:WRAut}. That is, for every $\alpha \in Aut(U_j)$ and $u\in U_j$, we have
\begin{equation}\label{Eq:Defphi}
\phi_j(\alpha) \left(uA_j^{(I)}\right) = \alpha (u) A_j^{(I)}.
\end{equation}
The map $\phi_j $ is well-defined since $A_j^{(I)}$ is characteristic in $U_j$ by Lemma \ref{Lem:char}. Note that $\phi_j (Inn (U_j))=Inn (B)$. This allows us to define $\psi_j$ by the rule
\begin{equation}\label{Eq:Defpsi}
\psi_j (\alpha Inn (U_j))= \phi_j(\alpha) Inn (B)
\end{equation}
for every $\alpha \in Aut(U_j)$. Clearly, $\psi_j $ is a homomorphism.

We first show that $\psi_j $ is injective. Suppose that $\alpha Inn (U_j)\in \Ker \psi_j$ for some $\alpha \in Aut(U_j)$. Then $\phi_j (\alpha)\in Inn(B)$ by (\ref{Eq:Defpsi}). By Proposition \ref{Prop:WRAut}, we have $\alpha \in Inn (U_j)$. Therefore, $\Ker \psi_j$ is trivial.

Further, we show that $\psi_j (Out(U_j))\le \iota (K)/\iota(B)$. Consider any $\alpha Inn (U_j)\in Out(U_j)$, where $\alpha \in Aut(U_j)$. Let also
$$
S_j=\left\langle \bigcup_{a\in A_j^{(I)}\setminus\{ 1\}} C_{W_{j,R}} (a) \right\rangle.
$$
For every $a\in A_j^{(I)}\setminus\{ 1\}$, we have
\begin{equation}\label{Eq:CWK}
\e_j(C_{W_{j,R}}(a))\le Stab_{K}(i)
\end{equation}
for some $i\in I$ by Lemma \ref{Lem:centr} and Lemma \ref{Lem:CLmaln} (a). Conversely, for every $i\in I$, we have $Stab_{K}(i)\le \e_j(C_{W_{j,R}}(a))$ for some $a\in A_j^{(I)}\setminus\{ 1\}$ since the wreath-like product $W_{j,R}$ is untwisted (see Definition \ref{Def:UWR} and part (b) of Lemma \ref{Lem:N(R)}). This implies that
\begin{equation}\label{Eq:ejSj}
\e_j(S_j) = \left\langle \bigcup_{i\in I}  Stab_K(i)\right\rangle = \left\langle \bigcup_{k\in K}  kRk^{-1}\right\rangle =\ll R\rr^K.
\end{equation}
Since $Stab_{K}(i)$ is a conjugate of $R\le B$ in $K$ and $B\lhd K$, (\ref{Eq:CWK}) implies $\e_j(C_{W_{j,R}}(a)) \le B$ for all $a\in A_j^{(I)}\setminus\{ 1\}$. Hence, $C_{W_{j,R}}(a)\le U_j$ and $C_{W_{j,R}}(a)=C_{U_j}(a)$ for all $a\in A_j^{(I)}\setminus\{ 1\}$. This means that
$$
S_j=\left\langle \bigcup_{a\in A_j^{(I)}\setminus\{ 1\}} C_{U_j} (a) \right\rangle.
$$
In particular, $S_j$ is a characteristic subgroup of $U_j$ since so is $A_j^{(I)}$. Combining this with (\ref{Eq:ejSj}) and (\ref{Eq:Defphi}), we obtain
$$
\phi_j(\alpha)(\ll R\rr^K)= \phi_j(\alpha) (\e_j(S_j)) = \e_j\circ \alpha (S_j) =\e_j(S_j) = \ll R\rr^K.
$$
Thus, $\phi_j(\alpha) \in N_{\rm L}(\iota(\ll R\rr ^{K}))= N_{\rm L}(\ll \iota(R)\rr ^{\iota (K)})$. Note that $\ll \iota(R)\rr ^L\le \iota(B) \le \iota(K)$ since $\iota(B)\lhd L$. By Lemma \ref{Lem:CLmaln} (b), we obtain $N_{\rm L}(\ll \iota(R)\rr ^{\iota (K)})=\iota(K)$. Thus, $\phi_j(\alpha) \in \iota(K)$. By (\ref{Eq:Defpsi}), we have $\psi_j (\alpha Inn(U_j))\in \iota (K)/\iota(B)$.

Next, we show that $\iota (K)/\iota(B) \le \psi_j (Out(U_j))$. Let $k$ be an arbitrary element of $K$ and let $w\in W_{j,R}$ be a preimage of $k$ under $\e_j$. By construction, we have $U_j\lhd W_{j,R}$. Therefore, conjugation by $w$ defines an automorphism $\alpha\in Aut(U_j)$. Clearly, $\phi_j (\alpha)$ is the automorphism of $B$ corresponding to conjugation by $k$, i.e., $\phi_j(\alpha)=\iota (k)$. Using (\ref{Eq:Defpsi}), we obtain $\iota (K)/\iota(B) \le \psi_j (Out(U_j))$.

Summarising the previous three paragraphs, we obtain that $\psi_j$ defines an isomorphism $Out(U_j)\to \iota (K)/\iota(B)\cong K/B\cong C_0/N\cong C$. This completes the proof of part (c).

Finally, using (\ref{Eq:ANj}) and torsion-freeness of $B$, it is easy to see that $U_j$ contains an element of a prime order $p$ if and only if $p\in J$. In particular, $U_i\not\cong U_j$ for $i\ne j$.
\end{proof}

\begin{rem}
    It would be interesting to ask whether the condition that groups $U_j$ are pairwise non-isomorphic can be upgraded to being pairwise non-quasi-isometric. The approach to constructing non-quasi-isometric groups suggested in \cite{MOW} can be relevant here.
\end{rem}



\section{Preliminaries on von Neumann algebras}\label{prelimvN}


\subsection{Tracial von Neumann algebras}\label{tracialvN}


We start this section by recalling some terminology involving tracial von Neumann algebras. We refer the reader to \cite{AP} for more information.
A {\it tracial von Neumann algebra} is a pair $(\M,\tau)$ consisting of a von Neumann algebra $\M$  and a normal  faithful tracial state $\tau\colon\M\rightarrow\mathbb C$. We denote by L$^2(\M)$ the Hilbert space obtained as the closure of $\M$ with respect to the $2$-norm given by $\|x\|_2=\sqrt{\tau(x^*x)}$. We always assume that $\M$ is {\it separable}, i.e., that L$^2(\M)$ is a separable Hilbert space.
We denote by $\sU(\M)$ the group of {\it unitaries}  of $\M$.
For a set $I$, we denote by $(\M^I,\tau)$ the tensor product of tracial von Neumann algebras $\,\overline{\otimes}\,_{i\in I}(\M,\tau)$. Given a subset $J\subset I$, we view $\M^J$ as a subalgebra of $\M^I$ by identifying it with $(\,\overline{\otimes}\,_{i\in J}\M)\,\overline{\otimes}\,(\,\overline{\otimes}\,_{i\in I\setminus J}\mathbb C1)$. For $i\in I$, we denote $\M^{\{i\}}$ by $\M^i$.

Let $\Q\subset \M$ be a von Neumann subalgebra, which we always assume to be unital. 
We denote 
by $\sN_\M(\Q)=\{u\in\sU(\M)\mid u\Q u^*=\Q\}$ the {\it normalizer} of $\Q$ in $\M$.
We say that $\Q\subset \M$ is an {\it irreducible subfactor} if $\Q'\cap \M=\mathbb C1$. We say that $\Q$ is {\it regular} in $\M$ if $\sN_\M(\Q)''=\M$. If $\Q\subset \M$ is regular and maximal abelian, we call it a {\it Cartan subalgebra}.

{\it Jones' basic construction} $\langle \M,e_\Q\rangle$ is defined as the von Neumann subalgebra of $\mathbb B(\text{L}^2(\M))$ generated by $\M$ and the orthogonal projection $e_\Q$ from L$^2(\M)$ onto L$^2(\Q)$. The basic construction $\langle \M,e_\Q\rangle$ has a faithful semi-finite trace given by $\text{Tr}(xe_\Q y)=\tau(xy)$, for every $x,y\in \M$. For $p\in\{1,2\}$, we define the L$^p$-norm on $\langle \M,e_\Q\rangle$ by $\|T\|_p=\text{Tr}(|T|^p)^{1/p}$, where $|T|=(T^*T)^{1/2}$. 
We denote by L$^p(\langle \M,e_\Q\rangle)$ the Banach space obtained by completing $\{T\in \langle \M,e_\Q\rangle\mid \|T\|_p<\infty\}$ with respect to $\|\cdot\|_p$. Note that L$^2(\langle \M,e_\Q\rangle)$ is a Hilbert space which carries a natural $\M$-bimodule structure. 
Also, note that every $T\in$ L$^p(\langle \M,e_\Q\rangle)$ can be viewed as a closed operator on L$^2(\langle \M,e_\Q\rangle)$ which is affiliated with $\langle \M,e_\Q\rangle$.
We also denote by $E_\Q\colon\M\rightarrow \Q$ the unique $\tau$-preserving {\it conditional expectation} onto $\Q$.

If $\Q\subset \M$ are II$_1$ factors, the {\it Jones index} $[\M:\Q]$ of the inclusion $\Q\subset \M$ is the dimension of L$^2(\M)$ as a left $\Q$-module \cite{Jo83}. By \cite[Theorem 2.2, Remark 2.4]{PP86}, this notion can be extended to inclusions of tracial von Neumann algebras $\Q\subset \M$ by letting $[\M:\Q]$ be the smallest $\lambda>0$ such that $\lambda\|E_\Q(x)\|_2^2\geq \|x\|_2^2$, for every positive $x\in \M$.

A tracial von Neumann algebra $(\M,\tau)$  is called {\it amenable} if there exists a sequence $\xi_n\in \text{L}^2(\M)\otimes \text{L}^2(\M)$ such that $\langle x\xi_n,\xi_n\rangle\rightarrow\tau(x)$ and $\|x\xi_n-\xi_nx\|_2\rightarrow 0$, for every $x\in \M$.
Let $\P\subset p\M p$ be a von Neumann subalgebra.
Following Ozawa and Popa \cite[Section 2.2]{OP07} we say that $\P$ is  {\it amenable relative to $\Q$ inside $\M$} if there exists a sequence $\xi_n\in \text{L}^2(\langle \M,e_\Q\rangle)$ such that $\langle x\xi_n,\xi_n\rangle\rightarrow\tau(x)$, for every $x\in p\M p$, and $\|y\xi_n-\xi_ny\|_2\rightarrow 0$, for every $y\in \P$. By \cite[Proposition 2.4]{PV11}, we may additionally assume that $\xi_n\in \text{L}^2(\langle \M,e_\Q\rangle)$ is positive and $\|\langle\cdot \xi_n,\xi_n\rangle-\tau(\cdot)\|=\|\langle\xi_n\cdot,\xi_n\rangle-\tau(\cdot)\|\rightarrow 0$.




Following \cite[Proposition 4.1]{Po01b}, we say that $\Q\subset \M$ has the {\it relative property (T)} if for every $\varepsilon>0$, we can find a finite set $F\subset \M$ and $\delta>0$ such that if $\mathcal H$ is an $\M$-bimodule and $\xi\in\mathcal H$ satisfies $\|\langle\cdot\xi,\xi\rangle-\tau(\cdot)\|\leq\delta,\|\langle\xi\cdot,\xi\rangle-\tau(\cdot)\|\leq\delta$ and $\|x\xi-\xi x\|\leq\delta$, for every $x\in F$, then there exists $\eta\in\mathcal H$ such that $\|\eta-\xi\|\leq\varepsilon$ and $y\eta=\eta y$, for every $y\in \Q$.


\subsection {Intertwining-by-bimodules}


We recall from  \cite [Theorem 2.1, Corollary 2.3]{Po03} Popa's {\it intertwining-by-bimodules} theory.

\begin{thm}[\cite{Po03}]\label{corner} Let $(\M,\tau)$ be a tracial von Neumann algebra and $\P\subset p\M p, \Q\subset \M$ be von Neumann subalgebras. Then the following conditions are equivalent.
\begin{enumerate}
\item[(a)] There exist projections $p_0\in \P, q_0\in \Q$, a $*$-homomorphism $\theta\colon p_0\P p_0\rightarrow q_0\Q q_0$  and a nonzero partial isometry $v\in q_0\M p_0$ such that $\theta(x)v=vx$, for all $x\in p_0\P p_0$.

\item[(b)] There is no sequence $u_n\in\sU(\P)$ satisfying $\|E_\Q(x^*u_ny)\|_2\rightarrow 0$, for all $x,y\in p\M$.

\item[(c)] There exists a projection $e\in \P'\cap p\langle \M,e_\Q\rangle p$ such that  $0<\emph{Tr}(e)<\infty$.
\end{enumerate}
\end{thm}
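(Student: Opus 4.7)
The plan is to prove the cyclic chain of implications (a) $\Rightarrow$ (c) $\Rightarrow$ (a) and (c) $\Leftrightarrow$ (b). The main technical payoff comes from identifying finite-trace projections in $\P' \cap p\langle \M, e_\Q\rangle p$ with finitely generated (in the Hilbert-module sense) $\P$-$\Q$-sub-bimodules of $p\,\text{L}^2(\M)$: the two ingredients I will repeatedly exploit are (i) $\text{Tr}$ on the basic construction satisfies $\text{Tr}(xe_\Q y) = \tau(xy)$, so finite-trace projections are traceable against $\M$-indexed expressions, and (ii) a finitely generated right Hilbert $\Q$-module embeds isomorphically into some $q_0\Q^{\oplus n}$ via a Pimsner-Popa type basis.

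For (a) $\Rightarrow$ (c), starting from $\theta\colon p_0\P p_0 \to q_0\Q q_0$ and $v$ satisfying $\theta(x)v=vx$, I would take the orthogonal projection $f$ onto the norm-closure of $v\,p_0\text{L}^2(\M)\subseteq p\text{L}^2(\M)$ viewed as a right $\Q$-module; one checks that $f \in p\langle \M, e_\Q\rangle p$ and that left multiplication by $x\in\P$ commutes with $f$, because the defining identity $\theta(x)v=vx$ forces $x$ to preserve this subspace up to an element of $\theta(p_0\P p_0)\subset \Q$, which acts on the right. Finiteness of $\text{Tr}(f)$ reduces to $\text{Tr}(ve_\Q v^*)=\tau(v^*v)<\infty$ after a small check that the module is singly generated by $v$. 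Summing such projections over a family of partial isometries in $\P$ covering $p$ (if $p_0$ does not have central support equal to $p$) promotes $f$ into $\P'$ without destroying finiteness of the trace.

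For (c) $\Rightarrow$ (a), given a nonzero finite-trace projection $e\in\P'\cap p\langle\M,e_\Q\rangle p$, I would consider the Hilbert $\Q$-bimodule $\mathcal{K}:=e\,\text{L}^2(\M)$, which is a $\P$-$\Q$-sub-bimodule of $p\,\text{L}^2(\M)$; finiteness of $\text{Tr}(e)$ translates into $\mathcal{K}$ being finitely generated as a right Hilbert $\Q$-module. Choosing a nonzero Pimsner-Popa generator $\xi\in\mathcal{K}$ that is right-bounded, polar decomposition of $\xi$ in the semifinite algebra $\langle\M,e_\Q\rangle$ yields a partial isometry $v\in p\M q_0$ (after cutting by a projection $q_0\in\Q$), and the left $\P$-action on $\mathcal{K}$ compresses to a $\ast$-homomorphism $\theta\colon p_0\P p_0\to q_0\Q q_0$ defined by $\theta(x) := vxv^*$ on the range of $v$; the intertwining relation $\theta(x)v=vx$ follows automatically. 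The trickiest piece here will be verifying the finite generation and producing a genuine bounded vector from an abstract finite-trace projection — this is the real technical core of the proof.

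For (c) $\Leftrightarrow$ (b), the implication (c) $\Rightarrow$ (b) is the short direction: if such $e$ exists and $u_n\in\sU(\P)$, then $u_neu_n^*=e$ gives $\text{Tr}(xe_\Q y \cdot u_n e u_n^*) \not\to 0$ for suitable $x,y$, which unravels via $\text{Tr}(xe_\Q y)=\tau(xy)$ into a lower bound of the form $\sum_i \|E_\Q(x_i^* u_n y_i)\|_2^2 \not\to 0$ for finitely many $x_i,y_i\in p\M$. For (b) $\Rightarrow$ (c), I would run the standard averaging argument: the convex hull $\overline{\text{co}}^{\,w}\{u e_\Q u^*\mid u\in\sU(\P)\}$ in the unit ball of $\langle\M,e_\Q\rangle$ has a unique element of minimal $\|\cdot\|_{2,\text{Tr}}$-norm, which lies in $\P'\cap p\langle\M,e_\Q\rangle p$; failure of (b) produces finitely many $x_i,y_i$ with $\sum_i\|E_\Q(x_i^*u y_i)\|_2^2\geq c>0$ for every $u\in\sU(\P)$, and this uniform lower bound prevents the minimizer from being zero, yielding a nonzero positive element of finite $\text{Tr}$ in $\P'\cap p\langle\M,e_\Q\rangle p$ whose spectral projections supply the required $e$.
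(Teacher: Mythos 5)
Your plan hinges on proving (a) $\Rightarrow$ (c) directly, and that is where the argument breaks. (Note the paper itself gives no proof of this theorem; it quotes it from Popa's [Po03, Theorem 2.1, Corollary 2.3], so the comparison is with the standard argument there.) From the intertwiner $\theta(x)v=vx$ the natural finite-trace projection is the one onto the closure of $v^*\Q$ (not onto $vp_0\mathrm{L}^2(\M)$, which is a right $\M$-module sitting in $q_0\mathrm{L}^2(\M)$ and whose projection is $vv^*$, typically of infinite $\text{Tr}$); this projection $f$ is the support of $v^*e_\Q v$, has $\text{Tr}(f)\le 1$, but commutes only with the corner $p_0\P p_0$, not with $\P$. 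Your repair --- summing $w_ifw_i^*$ over partial isometries $w_i\in\P$ with $w_i^*w_i\le p_0$ and $\sum_i w_iw_i^*$ equal to the central support of $p_0$ --- does produce a positive element of $\P'\cap p\langle\M,e_\Q\rangle p$, but the claim that this happens ``without destroying finiteness of the trace'' is unjustified: $\text{Tr}\bigl(\sum_i w_ifw_i^*\bigr)=\sum_i\text{Tr}(fw_i^*w_i)$, and when $\P$ is not a factor the covering family may have to be infinite (e.g.\ $\P$ of the form $\bigoplus_n M_n(\mathbb C)\,\overline{\otimes}\,A_n$ with $p_0$ meeting the $n$-th block in a minimal diagonal corner needs $n$ isometries per block), in which case the sum of traces can diverge; truncating to finitely many $w_i$ restores finiteness but loses commutation with $\P$. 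There is no direct fix: any averaging of $f$ over $\sU(\P)$ could have zero limit, and ruling that out is exactly the content of condition (b). This is why the standard proof closes the cycle as (c) $\Rightarrow$ (a) $\Rightarrow$ (b) $\Rightarrow$ (c): the step (a) $\Rightarrow$ (b) is a direct estimate in which one covers the central support of $p_0$ by finitely many $w_i$ only up to an $\varepsilon$ of trace --- harmless for a contradiction, but insufficient to manufacture an honest finite-trace element of $\P'\cap p\langle\M,e_\Q\rangle p$, which is instead delivered by the averaging argument (b) $\Rightarrow$ (c). As it stands, your scheme proves (c) $\Leftrightarrow$ (b) and (c) $\Rightarrow$ (a) but nothing emanating from (a), so the equivalence is not established.

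The remaining pieces are essentially the standard ones and are fine in outline: (c) $\Rightarrow$ (b) via $\text{Tr}(xe_\Q y)=\tau(xy)$ and $\|\cdot\|_{2,\text{Tr}}$-approximation of $e$ by elements of $\operatorname{span}\M e_\Q\M$, and (b) $\Rightarrow$ (c) via the unique minimal-norm element of the closed convex hull of $\{ue_\Q u^*p \mid u\in\sU(\P)\}$ (your compactness step extracting finitely many $x_i,y_i$ with a uniform lower bound uses separability, which the paper assumes). In (c) $\Rightarrow$ (a), however, be aware that setting $\theta(x):=vxv^*$ does not by itself land in $q_0\Q q_0$: the fact that the compressed left $\P$-action takes values in (an amplification of) $\Q$ comes from expressing the finite right $\Q$-module $e\mathrm{L}^2(\M)$ in a Pimsner--Popa basis of right-bounded vectors, equivalently from $e_\Q\langle\M,e_\Q\rangle e_\Q=\Q e_\Q$ after conjugating by a partial isometry of $\langle\M,e_\Q\rangle$ with range under $e_\Q$; you flag this as the technical core, but it needs to be carried out, and it produces first a homomorphism into $qM_n(\Q)q$ which is then cut down to a corner of $\Q$.
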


If (a)-(c) hold true,  we write $\P\prec_{\M}\Q$ and say that {\it a corner of $\P$ embeds into $\Q$ inside $\M$.}
Moreover, if $\P p'\prec_{\M}\Q$ for any nonzero projection $p'\in \P'\cap p\M p$, we write $\P\prec^{\text s}_{\M}\Q$.

We also recall  two key conjugacy results for Cartan subalgebras \cite[Theorem A.1]{Po01b} and regular irreducible subfactors \cite[Lemma 8.4]{IPP05} (stated here in the more general form from \cite[Lemma 4.1]{VV14}).

\begin{thm}[\cite{Po01b}]\label{Po01b} Let $\M$ be a II$_1$ factor and $\P,\Q\subset \M$ be Cartan subalgebras. If $\P\prec_\M\Q$, then there is $u\in\sU(\M)$ such that $u\P u^*=\Q$.
\end{thm}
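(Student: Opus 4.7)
My plan combines Theorem~\ref{corner} with the two defining features of a Cartan subalgebra — maximal abelianness and regularity. From $\P\prec_\M\Q$ and Theorem~\ref{corner}(a), I extract projections $p_0\in\P$, $q_0\in\Q$, a $*$-homomorphism $\theta\colon p_0\P p_0\to q_0\Q q_0$ and a non-zero partial isometry $v\in q_0\M p_0$ with $\theta(x)v=vx$ for all $x\in p_0\P p_0$. Taking adjoints yields $v^*\theta(x)=xv^*$, so $v^*v$ commutes with $p_0\P p_0$ and $vv^*$ commutes with $\theta(p_0\P p_0)$. Maximal abelianness of $\P$ gives $(p_0\P p_0)'\cap p_0\M p_0=p_0\P p_0$, so $v^*v\in\P$, and after replacing $p_0$ by $v^*v$ I may assume $v^*v=p_0$.

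The next step — and the main technical obstacle — is to replace $v$ by a partial isometry $v'$ with the same source projection $p_0\in\P$ but whose range projection $q_1:=v'(v')^*$ lies in $\Q$ and satisfies $v'(p_0\P p_0)(v')^*=q_1\Q q_1$. The difficulty is that \emph{a priori} $vv^*$ lives only in $\theta(p_0\P p_0)'\cap q_0\M q_0$, which may be strictly larger than $q_0\Q q_0$. To remedy this, I observe that $x\mapsto vxv^*$ is a $*$-isomorphism of $p_0\M p_0$ onto $q_1\M q_1$ that sends the masa $p_0\P p_0$ onto a masa $v(p_0\P p_0)v^*$ of $q_1\M q_1$ which, via the identity $vxv^*=\theta(x)q_1$, sits inside the abelian algebra $q_1(q_0\Q q_0)q_1$. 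A further compression of $p_0$ inside $\P$ and a modification of $v$ by a suitable element from the groupoid normalizer of $\Q$ (which preserves the property $v^*v\in\P$) allow one to straighten the range projection into $\Q$; once $q_1\in\Q$, the equality $v'(p_0\P p_0)(v')^*=q_1\Q q_1$ follows because both sides are masas in $q_1\M q_1$ and one contains the other.

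Finally I globalize using regularity. Consider families $\{(e_i,f_i,w_i)\}_{i\in I}$ of mutually orthogonal projections $e_i\in\P$, $f_i\in\Q$ together with partial isometries $w_i\in\M$ satisfying $w_i^*w_i=e_i$, $w_iw_i^*=f_i$ and $w_i(\P e_i)w_i^*=\Q f_i$; the previous step supplies the triple $(p_0,q_1,v')$, so this set is non-empty. By Zorn's lemma I pick a maximal family and set $e:=\sum_ie_i\in\P$, $f:=\sum_if_i\in\Q$. If $e\neq 1$, then the identity $\tau(e)=\sum_i\tau(e_i)=\sum_i\tau(f_i)=\tau(f)$ forces $f\neq 1$ too. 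Using $\sN_\M(\P)''=\M$ together with the fact that $\M$ is a II$_1$ factor and $\P$ is a diffuse masa, the standard groupoid-comparison argument for normalizers yields a partial isometry $a\in\sN_\M(\P)$ with $a^*a\leq 1-e$ and $aa^*\leq p_0$, both projections in $\P$; similarly $\sN_\M(\Q)''=\M$ produces $b\in\sN_\M(\Q)$ with $b^*b\leq q_1$ and $bb^*\leq 1-f$ in $\Q$. After matching traces by shrinking $a$ and $b$ through the isomorphism $\theta$, the composition $w:=bv'a^*$ is a new partial isometry adjoinable to the family, contradicting maximality. Hence $e=f=1$ and $u:=\sum_iw_i$ is the desired unitary with $u\P u^*=\Q$.
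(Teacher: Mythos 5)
The paper does not actually prove this statement; it is quoted verbatim from Popa's appendix \cite[Theorem A.1]{Po01b}, so your proposal has to stand on its own. Its overall architecture (local intertwining from Theorem \ref{corner}, straightening, then a Zorn's lemma patching argument using regularity and factoriality) is the standard route, and your first and third paragraphs are essentially fine modulo small slips: with $a^*a\leq 1-e$ and $aa^*\leq p_0$ the correct composition is $w=bv'a$ rather than $bv'a^*$ (as written, $v'a^*$ can vanish, e.g.\ if $p_0\leq e$), you need $b^*b\leq v'(aa^*)(v')^*$ rather than merely $b^*b\leq q_1$, and the elements $a,b$ are groupoid normalizers, not elements of $\sN_\M(\P)$, $\sN_\M(\Q)$ as defined in the paper.

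The genuine gap is the second paragraph, which is exactly where the theorem lives. You correctly identify that $q_1=vv^*$ only lies in $\theta(p_0\P p_0)'\cap q_0\M q_0$, which can be much larger than $q_0\Q q_0$, but then you assert that ``a modification of $v$ by a suitable element from the groupoid normalizer of $\Q$'' straightens $q_1$ into $\Q$. No argument is given, and the proposed mechanism does not obviously work: left multiplication by a partial isometry normalizing $\Q$ carries $\Q$-projections to $\Q$-projections, but there is no reason it carries the a priori arbitrary projection $q_1\notin\Q$ into $\Q$, nor does it preserve the intertwining relation unless the multiplier commutes with $\theta(p_0\P p_0)$. (Also, $q_1(q_0\Q q_0)q_1$ is only an operator system, not an abelian algebra, since $q_1$ need not commute with $\Q q_0$.) The standard repair is different: work inside $\mathcal D:=\theta(p_0\P p_0)'\cap q_0\M q_0$, observe that $\Q q_0$ is maximal abelian in $\mathcal D$ (because $\Q$ is a masa in $\M$), and use comparison theory in the finite von Neumann algebra $\mathcal D$ to produce a partial isometry $d\in\mathcal D$ with $d^*d=q_1$ and $dd^*\in\Q q_0$; since $d$ commutes with $\theta(p_0\P p_0)$, replacing $v$ by $dv$ keeps the relation $vxv^*=\theta(x)vv^*$ and forces $vv^*\in\Q$, after which your masa-versus-masa argument gives $v(\P p_0)v^*=\Q vv^*$. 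Even the existence of such a $d$ (every projection of $\mathcal D$ is equivalent in $\mathcal D$ to a projection of the masa $\Q q_0$) needs justification. As written, the key step is asserted rather than proved, so the proposal does not yet constitute a proof; this is precisely the content supplied by \cite[Theorem A.1]{Po01b}.
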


\begin{thm}[\cite{IPP05}]\label{IPP05} Let $\M$ be a II$_1$ factor and $\P,\Q\subset \M$ be regular, irreducible subfactors such that the countable groups $\sN_\M(\P)/\sU(\P)$, $\sN_\M(\Q)/\sU(\Q)$ are ICC. If $\P\prec_\M\Q$ and $\Q\prec_\M\P$,  then there is $u\in\sU(\M)$ such that $u\P u^*=\Q$.
\end{thm}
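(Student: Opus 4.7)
The plan is to combine the two intertwinings with the regularity and ICC hypotheses to produce a unitary $u\in\sU(\M)$ with $u\P u^*=\Q$.

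First, I would unpack $\P\prec_\M\Q$ via Theorem \ref{corner}(a) to obtain projections $p_0\in\P$, $q_0\in\Q$, a $*$-homomorphism $\theta\colon p_0\P p_0\to q_0\Q q_0$, and a nonzero partial isometry $v\in q_0\M p_0$ with $\theta(x)v=vx$ for all $x\in p_0\P p_0$. Since $\P\subset\M$ is irreducible and irreducibility is inherited by compressions, $(p_0\P p_0)'\cap p_0\M p_0=\mathbb Cp_0$, which forces $v^*v=p_0$; symmetrically $vv^*\in\theta(p_0\P p_0)'\cap q_0\M q_0$. Running the analogous argument for $\Q\prec_\M\P$ yields a partial isometry $w$ intertwining a corner of $\Q$ into $\P$. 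After cutting down by suitable projections with matching traces (using factoriality of $\P$ and $\Q$, together with the fact that both corners arise from finite-trace projections in the same Jones basic construction), I would arrange that $v$ spatially implements a $*$-isomorphism $v(p_0\P p_0)v^*=q_0\Q q_0$ of the corners.

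Next, I would use the regularity hypotheses to propagate this local corner identification to a global one. For each $u\in\sN_\M(\P)$, the element $vuv^*$ carries $v(p_0\P p_0)v^*=q_0\Q q_0$ to another subfactor of $\M$ which, by the corresponding mutual intertwining with $\Q$, is forced to be of the form $q_1\Q q_1$ (for some $q_1\in\Q$) conjugated by an element of $\sN_\M(\Q)$. A standard chopping/maximality argument using partitions of unity in $\P$ and $\Q$ and the fact that $\sN_\M(\P)''=\M=\sN_\M(\Q)''$ then extends $v$ to a partial isometry $V\in\M$ with $V^*V=1$ and $V\P V^*\subset\Q$. The symmetric argument, applied to $w$, yields an isometry $W$ with $W\Q W^*\subset\P$, so that $WV$ implements an inclusion $\P\hookrightarrow\P$.

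The ICC conditions now enter to conclude the argument. Modifying $V$ by elements of $\sN_\M(\P)$ on one side and by elements of $\sN_\M(\Q)$ on the other produces a $1$-cocycle taking values in the ICC quotients $\sN_\M(\P)/\sU(\P)$ and $\sN_\M(\Q)/\sU(\Q)$; the obstruction to replacing $V$ by a unitary $u$ with $u\P u^*=\Q$ is precisely the cohomology class of this cocycle. As in \cite[Lemma 8.4]{IPP05} and the generalization \cite[Lemma 4.1]{VV14}, the ICC hypothesis forces the cocycle to be a coboundary, which absorbs the defect of $V$ into $\P$ and yields the desired unitary $u\in\sU(\M)$ with $u\P u^*=\Q$. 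I expect the main obstacle to be precisely this final cocycle-untwisting step: the ICC assumption on the normalizer quotients is essential, and without it one obtains only conjugacy of $\P$ and $\Q$ modulo an outer automorphism rather than by an inner automorphism of $\M$.
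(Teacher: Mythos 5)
There is no in-paper proof to compare against: the paper states this result as a quotation of \cite[Lemma 8.4]{IPP05}, in the more general form of \cite[Lemma 4.1]{VV14}, so your sketch has to be judged against those arguments, and as written it has a genuine gap at its central step. From Theorem \ref{corner} you correctly get $\theta\colon p_0\P p_0\to q_0\Q q_0$ and $v$ with $v^*v=p_0$ and $vv^*\in\theta(p_0\P p_0)'\cap q_0\M q_0$; but the passage from the two intertwinings to ``after cutting down by trace-matched projections, $v(p_0\P p_0)v^*=q_0\Q q_0$'' does not follow. The image $\theta(p_0\P p_0)$ is merely a subfactor of $q_0\Q q_0$ (and $vv^*$ need not lie in $\Q$ at all); composing the two intertwinings only yields an embedding of a corner of $\P$ into a corner of $\P$, which can a priori have index $>1$, and matching traces cannot exclude that, since a II$_1$ factor admits unital embeddings into its corners of many indices. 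Tellingly, this step of your argument uses neither regularity nor the ICC hypothesis, yet mutual intertwining alone never implies unitary conjugacy: take $\P=\M$ and $\Q\subset\M$ a proper finite-index irreducible regular subfactor (e.g.\ the fixed points of an outer action of a finite group); then $\P\prec_\M\Q$ and $\Q\prec_\M\P$, but $\P$ and $\Q$ are not conjugate. The theorem survives only because $\sN_\M(\Q)/\sU(\Q)$ is then a nontrivial finite group, so whatever forces the corner embedding to be ``onto'' must come precisely from the hypotheses your step ignores. The subsequent propagation claim, that $vuv^*$ carries $q_0\Q q_0$ to a corner of $\Q$ conjugated by an element of $\sN_\M(\Q)$, assumes essentially what is to be proved.

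The final ``cocycle untwisting'' also misdescribes how ICC enters. It is not true that ICC-ness of $\sN_\M(\P)/\sU(\P)$ and $\sN_\M(\Q)/\sU(\Q)$ forces the relevant cocycle to be a coboundary; ICC groups have abundant nontrivial $1$- and $2$-cohomology, so that sentence is a restatement of the citation rather than an argument. In the actual proofs of \cite{IPP05,VV14}, regularity and irreducibility are used to decompose $\mathrm{L}^2(\M)$ over the countable groups $\sN_\M(\P)/\sU(\P)$ and $\sN_\M(\Q)/\sU(\Q)$, and the ICC condition is used to show that a finite-trace projection in $\P'\cap\langle\M,e_\Q\rangle$ (equivalently, a $\P$--$\Q$ bimodule of finite dimension) must concentrate over a single group element, because a finite conjugation-invariant set in an ICC group is trivial; this is the same mechanism by which ICC is exploited in the paper's own Proposition \ref{inj}, where a finite conjugation orbit collapses a Fourier expansion to one term, and it is exactly the ingredient your outline defers to the references instead of supplying.
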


We end this section by using the structure theorems for normalizers in crossed products arising from actions of hyperbolic \cite{PV12} and free product groups \cite{Io12, Va13} to establish the following result. Let $G$ be a countable group. Recall that a {\it trace preserving} action $G\curvearrowright^{\sigma}(\P,\tau)$ is an action of $G$ on $\P$ through $\tau$-preserving $*$-automorphisms, where $(\P,\tau)$ is a tracial von Neumann algebra.


\begin{thm}\label{relativeT}
Let $G,H$ be countable groups, $\delta\colon G\rightarrow H$ a homomorphism and $G\curvearrowright (\Q,\tau)$ a trace preserving action on a tracial von Neumann algebra  $(\Q,\tau)$. Let $\M=\Q\rtimes G$.

Assume that $H$ is a non-elementary subgroup of a hyperbolic group. Then the following hold:
\begin{enumerate}
\item [(a)]\emph{\cite{CIOS1}}
Let $\P\subset p\M p$ be a von Neumann subalgebra which is amenable relative to $\Q\rtimes\ker(\delta)$ inside $\M$ and let $\Nn=\sN_{p\M p}(\P)''$. If there is a von Neumann subalgebra $\R\subset \Nn$ with the relative property (T) and $\R\nprec_{\M}\Q\rtimes\ker(\delta)$, then  $\P\prec_{\M}^{\text s}\Q\rtimes\ker(\delta)$.
\item [(b)] Let  $\A,\B \subset p \M p$ be commuting von Neumann subalgebras  and let $\Nn=\sN_{p\M p}(\A\vee \B)''$. If  there is a von Neumann algebra $\R\subset  \Nn$ with the relative property (T) and such that $\R\nprec_{\M} \Q\rtimes \ker(\delta)$, then $\A\prec_{\M} \Q\rtimes {\rm ker}(\delta)$ or $\B\prec_{\M} \Q\rtimes {\rm ker}(\delta)$.
\end{enumerate}
Assume that $\delta(G)=H= H_1\ast H_2$, where $|H_1|\geq 2$ and $|H_2|\geq 3$. Then the following hold:
\begin{enumerate}
    \item [(c)] Let $\P\subset p\M p$ be a von Neumann subalgebra which is amenable relative to $\Q\rtimes\ker(\delta)$ inside $\M$ and  $\Nn=\mathscr N_{p\M p}(\P)''$.  If $\Nn\subset p\M p$ has finite index, then $\P\prec_{\M}^{\rm s} \Q \rtimes\ker( \delta)$.
    \item [(d)] Let  $\A,\B \subset p\M p$ be commuting von Neumann subalgebras  and let $\Nn=\mathscr N_{p\M p}(\A\vee \B)''$. Assume that $\Nn \subset p \M p$ has finite index. Then $\A\prec_\M \Q \rtimes {\rm ker}(\delta)$ or $\B\prec_\M \Q\rtimes {\rm ker}(\delta)$.     \end{enumerate}

\end{thm}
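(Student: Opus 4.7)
Part (a) is quoted directly from \cite[Theorem 3.1]{CIOS1}, so I will concentrate on (b), (c), (d). The unifying strategy is to rewrite $\M = \Q\rtimes G = (\Q\rtimes\ker(\delta))\rtimes H$ and to apply, to a suitable subalgebra, the normalizer dichotomies of Popa--Vaes \cite{PV12} in the hyperbolic case and Ioana--Vaes \cite{Io12, Va13} in the free-product case. Both dichotomies, in the form needed here, say: for any subalgebra $\P_0\subset p\M p$, either $\P_0\prec_\M \Q\rtimes\ker(\delta)$, or $\sN_{p\M p}(\P_0)''$ is amenable relative to $\Q\rtimes\ker(\delta)$ inside $\M$.

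For (b), I would argue by contradiction. Assume $\A\nprec_\M \Q\rtimes\ker(\delta)$ and $\B\nprec_\M \Q\rtimes\ker(\delta)$. The Popa--Vaes dichotomy applied to $\A$ gives that $\sN_{p\M p}(\A)''$ is amenable relative to $\Q\rtimes\ker(\delta)$; since $[\A,\B]=0$ forces $\sU(\B)\subset \sN_{p\M p}(\A)$, we obtain $\B\subset \sN_{p\M p}(\A)''$, so $\B$ is amenable relative to $\Q\rtimes\ker(\delta)$. The symmetric argument shows that $\A$ is also amenable relative to $\Q\rtimes\ker(\delta)$. Stability of relative amenability under joins of commuting subalgebras (see, e.g., \cite[Proposition 2.5]{OP07}) then yields that $\A\vee\B$ is amenable relative to $\Q\rtimes\ker(\delta)$. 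Now apply part (a) with $\P=\A\vee\B$: the normalizer contains $\R$, which has the relative property (T) and satisfies $\R\nprec_\M \Q\rtimes\ker(\delta)$, so $\A\vee\B\prec^{\text s}_\M \Q\rtimes\ker(\delta)$. Because $(\A\vee\B)'\subset \A'$, Theorem \ref{corner}(c) immediately gives $\A\prec_\M \Q\rtimes\ker(\delta)$, contradicting the assumption.

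For (c), I would mirror the proof of (a) using the Ioana--Vaes dichotomy in place of Popa--Vaes. If $\P\nprec_\M \Q\rtimes\ker(\delta)$, the dichotomy forces $\Nn$ to be amenable relative to $\Q\rtimes\ker(\delta)$; finite index of $\Nn\subset p\M p$ propagates relative amenability to $p\M p$, and hence to $\M$. Viewing $\M = (\Q\rtimes\ker(\delta))\rtimes H$, relative amenability of $\M$ over its base is equivalent to amenability of $H = H_1\ast H_2$. But $|H_1|\geq 2$ and $|H_2|\geq 3$ imply that $H$ contains a non-abelian free subgroup and is therefore non-amenable, a contradiction; so $\P\prec_\M \Q\rtimes\ker(\delta)$. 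To upgrade to $\prec^{\text s}$, I would run the same argument on each corner $\P p'$ for a nonzero projection $p'\in\P'\cap p\M p$, noting that $p'\in\Nn$, so the finite-index and relative amenability hypotheses persist in $p'\M p'$.

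For (d), I would combine the commuting-subalgebra reduction of (b) with part (c). If neither $\A$ nor $\B$ intertwines into $\Q\rtimes\ker(\delta)$, the Ioana--Vaes dichotomy together with $\A\subset\sN_{p\M p}(\B)''$ and $\B\subset \sN_{p\M p}(\A)''$ forces both $\A$ and $\B$, hence $\A\vee\B$, to be amenable relative to $\Q\rtimes\ker(\delta)$. The normalizer $\sN_{p\M p}(\A\vee\B)''$ contains $\Nn$, which has finite index in $p\M p$, so part (c) applies to $\A\vee\B$ and yields $\A\vee\B\prec^{\text s}_\M \Q\rtimes\ker(\delta)$; whence $\A\prec_\M \Q\rtimes\ker(\delta)$, a contradiction. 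The main technical obstacle throughout is the precise formulation of the Popa--Vaes and Ioana--Vaes dichotomies in the relativized setting where $\Q\rtimes\ker(\delta)$ plays the role occupied in the original references by a Cartan (for \cite{PV12}) or by the base $\Q$ (for \cite{Io12, Va13}); this passage is by now routine but requires careful bookkeeping involving the iterated basic construction and the identification $\M = (\Q\rtimes\ker(\delta))\rtimes H$.
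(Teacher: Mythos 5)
Your plan departs from the paper's route in a way that matters: the paper never decomposes $\M$ over $\Q\rtimes\ker(\delta)$ directly, but instead uses the embedding $\Delta\colon\M\rightarrow\M\,\overline{\otimes}\,{\rm L}(H)$, $\Delta(xu_g)=xu_g\otimes v_{\delta(g)}$, views $\M\,\overline{\otimes}\,{\rm L}(H)$ as the crossed product $\M\rtimes H$ for the trivial action, and pulls conclusions back through \cite[Proposition 3.4]{CIK13}. This is not deferred bookkeeping: your identification $\M=(\Q\rtimes\ker(\delta))\rtimes H$ is false in general. In (a),(b) the homomorphism $\delta$ need not be onto, and even when $\delta(G)=H$ the extension $1\to\ker(\delta)\to G\to H\to 1$ need not split, so $H$ acts on $\Q\rtimes\ker(\delta)$ only up to a cocycle; the structural theorems of \cite{PV12}, \cite{Io12}, \cite{Va13} you invoke are stated for genuine crossed products and amalgamated free products, and getting around the cocycle is precisely what the comultiplication device accomplishes. (For (c),(d) one could instead use $G\cong\delta^{-1}(H_1)\ast_{\ker(\delta)}\delta^{-1}(H_2)$, which makes $\M$ an honest amalgamated free product over $\Q\rtimes\ker(\delta)$, but your proposal does not do this, and it does not help with (b).)

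Two further steps fail as written. First, the ``relativized dichotomy'' you state --- for an arbitrary $\P_0\subset p\M p$, either $\P_0\prec_{\M}\Q\rtimes\ker(\delta)$ or $\sN_{p\M p}(\P_0)''$ is amenable relative to $\Q\rtimes\ker(\delta)$ --- is false without the hypothesis that $\P_0$ itself is amenable relative to $\Q\rtimes\ker(\delta)$ (already $\P_0=\M$ with $\Q=\mathbb C$, $\ker(\delta)=\{1\}$ and $G$ non-amenable violates it); \cite[Theorem 1.4]{PV12} carries exactly that hypothesis. In (b) you apply this dichotomy to $\A$ and $\B$, which come with no amenability assumption, and your fallback --- that the join of two commuting subalgebras, each amenable relative to $\Q\rtimes\ker(\delta)$, is again relatively amenable, attributed to \cite{OP07} --- is not established by the cited result and would itself require proof. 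The paper instead applies \cite{PV12} to arbitrary amenable subalgebras $\C\subset\Delta(\A)$ commuting with $\Delta(\B)$, aggregates over all such $\C$ via \cite[Corollary F.14]{BrOz08}, and eliminates the relative-amenability branch using the relative property (T) subalgebra $\R$ through Lemma \ref{amenT}; in your outline of (b) the hypothesis on $\R$ enters only at the very end, after the unjustified steps. Second, in (c),(d) the results of \cite{Io12}, \cite{Va13} are trichotomies, not dichotomies: besides intertwining into the base and relative amenability of the normalizer there is the branch $\Delta(\Nn)\prec_{\M\,\overline{\otimes}\,{\rm L}(H)}\M\,\overline{\otimes}\,{\rm L}(H_i)$ (equivalently $\Nn\prec_{\M}\Q\rtimes\delta^{-1}(H_i)$). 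Your plan omits it entirely; excluding it is where the finite-index hypothesis is used a second time, via $[G:\delta^{-1}(H_i)]=[H:H_i]=\infty$.
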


The proof of Theorem \ref{relativeT} uses the following easy fact which we prove for completeness.

\begin{lem}\label{amenT}
Let $(\M,\tau)$ be a tracial von Neumann algebra and $\P,\Q\subset\M$ be von Neumann subalgebras. Assume that $\P$ is amenable relative to $\Q$ inside $\M$ and that $\P\subset\M$ has the relative property (T).
Then $\P\prec_{\M}\Q$.
\end{lem}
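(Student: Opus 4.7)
Our plan is to verify criterion (c) of Theorem \ref{corner}, i.e., to produce a nonzero projection $e\in\P'\cap\langle\M,e_\Q\rangle$ with $\text{Tr}(e)<\infty$, which immediately yields $\P\prec_\M\Q$.

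First, by relative amenability of $\P$ over $\Q$ together with \cite[Proposition 2.4]{PV11}, one obtains a sequence of positive vectors $\xi_n\in\mathcal H:=L^2(\langle\M,e_\Q\rangle)$ satisfying $\|\langle\cdot\,\xi_n,\xi_n\rangle-\tau(\cdot)\|\to 0$, $\|\langle\xi_n\,\cdot\,,\xi_n\rangle-\tau(\cdot)\|\to 0$, and $\|y\xi_n-\xi_n y\|_2\to 0$ for every $y\in\P$. Viewing $\mathcal H$ as an $\M$-bimodule, the $\xi_n$ are thus approximately bi-tracial and approximately $\P$-central. The next step is to invoke the relative property (T) of $\P\subset\M$ to extract from the $\xi_n$ an exactly $\P$-central nonzero vector $\eta\in\mathcal H$: choosing $\varepsilon<1$ in the definition of relative property (T) and $n$ large enough, we obtain $\eta\in\mathcal H$ with $y\eta=\eta y$ for every $y\in\P$ and $\|\eta-\xi_n\|\le\varepsilon$, so in particular $\eta\neq 0$.

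Finally, since $\langle\M,e_\Q\rangle$ is a semifinite von Neumann algebra with trace $\text{Tr}$, the positive cone of $L^2(\langle\M,e_\Q\rangle,\text{Tr})$ is identified with the set of positive self-adjoint operators $T$ affiliated with $\langle\M,e_\Q\rangle$ satisfying $\text{Tr}(T^2)<\infty$. After replacing $\eta$ by its positive part (which is still $\P$-central and remains nonzero provided $\varepsilon$ was chosen small enough), we may view $\eta$ as a nonzero positive operator $T$ affiliated with $\langle\M,e_\Q\rangle$ that commutes with $\P$. Its spectral projections therefore lie in $\P'\cap\langle\M,e_\Q\rangle$; picking $c>0$ small enough that $e:=\mathbf{1}_{(c,\infty)}(T)\neq 0$, we get $\text{Tr}(e)\le c^{-2}\,\text{Tr}(T^2)<\infty$, and Theorem \ref{corner}(c) yields $\P\prec_\M\Q$.

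The delicate step is the application of relative property (T): the $\xi_n$ are approximately $\P$-central but not, a priori, approximately $\M$-central as literally demanded by the definition in Section \ref{tracialvN}. The standard remedy, implicit throughout Popa's deformation/rigidity theory (cf.\ \cite{Po01b}), is to exploit the approximate bi-traciality of the $\xi_n$ via a convex averaging over unitaries in $\M$, producing modified vectors that satisfy all three rigidity hypotheses for the prescribed finite set $F\subset\M$ while retaining approximate $\P$-centrality and norm bounded away from $0$; this is the only place where the rigidity assumption is genuinely used.
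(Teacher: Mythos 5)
Your outline reproduces the paper's proof in its first and last steps: the paper likewise takes the positive, almost bi-tracial, almost $\P$-central vectors $\xi_n\in \mathrm{L}^2(\langle\M,e_\Q\rangle)$ furnished by \cite[Proposition 2.4]{PV11}, invokes the relative property (T) of $\P\subset\M$ to obtain a nonzero $\P$-central vector $\xi$, and then produces a finite-trace projection in $\P'\cap\langle\M,e_\Q\rangle$, concluding via Theorem \ref{corner}(c). (The paper's endgame uses $\zeta=\xi^*\xi\in \mathrm{L}^1(\langle\M,e_\Q\rangle)$ and the spectral projection ${\bf 1}_{[t,\infty)}(\zeta)$, which is slightly cleaner than your passage through the positive part of $\eta$, but both versions of that step are fine.)

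The genuine gap is your final paragraph. There is no averaging over unitaries of $\M$ that converts the $\xi_n$ into vectors which are $\delta$-central for the prescribed finite critical set $F\subset\M$ while remaining almost bi-tracial and bounded away from $0$: producing such vectors in $\mathrm{L}^2(\langle\M,e_\Q\rangle)$ for arbitrary finite $F\subset\M$ would say precisely that $\M$ itself is amenable relative to $\Q$, which is not among the hypotheses, so your ``standard remedy'' proves too much and cannot exist. Indeed, the difficulty you flag cannot be repaired using only the stated hypotheses: take $\M=\mathrm{L}(\mathbb{Z}^2\rtimes SL_2(\mathbb{Z}))$, $\P=\mathrm{L}(\mathbb{Z}^2)$ and $\Q=\mathbb{C}1$; then $\P\subset\M$ has the relative property (T), and $\P$, being amenable, is amenable relative to $\mathbb{C}1$, yet $\mathrm{L}^2(\langle\M,e_\Q\rangle)$ is the coarse bimodule and contains no nonzero $\P$-central vector (such a vector would give a nonzero finite-rank projection commuting with the diffuse algebra $\P$), i.e.\ $\P\nprec_{\M}\mathbb{C}1$. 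Hence no argument whose only input is ``almost bi-tracial plus almost $\P$-central vectors'' together with rigidity of $\P\subset\M$ (with critical set $F\subset\M$ as in the definition of Section \ref{tracialvN}) can close this step — and this caveat applies equally to the paper's own three-line proof, which applies the definition directly to the $\xi_n$. What makes the argument legitimate in the situations where the lemma is actually used (e.g.\ case 5) in the proof of Theorem \ref{relativeT}(b)) is that the relatively amenable algebra is large enough to contain the critical set of the rigid inclusion: there the whole algebra $\Delta(\Nn)\supset\Delta(\R)$ is amenable relative to $\M\,\overline{\otimes}\,1$, so the witnessing vectors almost commute with every finite subset of the algebra in which $\Delta(\R)$ is rigid. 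A correct self-contained statement therefore needs a correspondingly stronger hypothesis (for instance, that $\P$ has property (T), so that $F$ may be taken inside $\P$, or that some intermediate algebra containing the critical set is amenable relative to $\Q$); your averaging step, as described, is not a valid substitute.
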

\begin{proof}
 Since $\P$ is amenable relative to $\Q$ inside $\M$, we can find a sequence $\xi_n\in \text{L}^2(\langle \M,e_{\Q}\rangle)$ with $\|\langle \cdot\xi_n,\xi_n\rangle-\tau(\cdot)\|,\|\langle\xi_n\cdot,\xi_n\rangle-\tau(\cdot)\|\rightarrow 0$, and $\|y\xi_n-\xi_n y\|_2\rightarrow 0$, for every $y\in\P$.
 Since  $\P\subset \M$ has the relative property (T), we get that  there is a nonzero $\xi\in \text{L}^2(\langle \M,e_{\Q}\rangle)$ such that $y\xi=\xi y$, for every $y\in\P$.
Then $\zeta=\xi^*\xi\in  \text{L}^1(\langle \M,e_{\Q}\rangle )$ is a nonzero positive operator that is affiliated with $\langle \M,e_{\Q}\rangle$. Hence, $\zeta$ has a spectral decomposition $\zeta=\int_0^\infty z\;\text{d}E(z)$, with its spectral projections belonging to $\langle \M,e_{\Q}\rangle$.
 Let $t>0$ such that the spectral projection $a={\bf 1}_{[t,\infty)}(\zeta)$ corresponding to the interval $[t,\infty)$  is nonzero. 
Since $\zeta$ satisfies $y\zeta=\zeta y$, for every $y\in\P$, we have that $a\in \P'\cap \langle \M,e_{\Q}\rangle$. As $ta\leq\zeta$, we get that $\text{Tr}(a)\leq{\text{Tr}(\zeta)}/{t}<\infty$.  Theorem \ref{corner} implies that $\P\prec_{\M}\Q$.
\end{proof}

\begin{proof}[Proof of Theorem \ref{relativeT}] Since part (a) was proved in \cite[Theorem 3.10]{CIOS1}, we only prove (b)-(d).   Let $(u_g)_{g\in G}\subset\sU(\M)$ and $(v_h)_{h\in H}\subset\sU(\text{L}(H))$ be the canonical unitaries. Following \cite[Section 3]{CIK13}, define $\Delta\colon \M\rightarrow \M\,\overline{\otimes}\,\text{L}(H)$ by letting $\Delta(xu_g)=xu_g\otimes v_{\delta(g)}$, for every $x\in \Q$ and $g\in G$.
Write $\M\,\overline{\otimes}\,\text{L}(H)=\M\rtimes H$, where $H$ acts trivially on $\M$.
We will use the following fact proved in \cite[Proposition 3.4]{CIK13}.

\begin{fact}\label{CIK}
If $\Ss\subset q\M q$ is a von Neumann subalgebra such that $\Delta(\Ss)\prec_{\M\,\overline{\otimes}\,\text{L}(H)}\M\,\overline{\otimes}\,\text{L}(\Sigma)$, for some subgroup $\Sigma<H$, then $\Ss\prec_\M\Q\rtimes\delta^{-1}(\Sigma)$.
\end{fact}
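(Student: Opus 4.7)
The plan is to argue by contraposition: assuming $\Ss\not\prec_\M\Q\rtimes\delta^{-1}(\Sigma)$, I will produce a sequence $(\Delta(u_n))\subset\sU(\Delta(\Ss))$ certifying $\Delta(\Ss)\not\prec_{\widetilde\M}\widetilde\P$, where I write $\widetilde\M:=\M\,\overline{\otimes}\,\text{L}(H)$ and $\widetilde\P:=\M\,\overline{\otimes}\,\text{L}(\Sigma)$. The starting observation, verified directly on Fourier generators $xu_g\in\M$, is the compatibility of conditional expectations
\[E_{\widetilde\P}\circ\Delta \;=\; \Delta\circ E_{\Q\rtimes\delta^{-1}(\Sigma)},\]
reflecting the identity $\Delta^{-1}(\widetilde\P)=\Q\rtimes\delta^{-1}(\Sigma)$ (which holds because $\delta(g)\in\Sigma$ iff $g\in\delta^{-1}(\Sigma)$). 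Theorem \ref{corner}(b) will then furnish a sequence $(u_n)\subset\sU(\Ss)$ with $\|E_{\Q\rtimes\delta^{-1}(\Sigma)}(x^*u_ny)\|_2\to 0$ for all $x,y\in q\M$, and a routine approximation using that the $\Delta(u_n)$ are unitaries will reduce the verification of $\|E_{\widetilde\P}(X^*\Delta(u_n)Y)\|_2\to 0$ on $\Delta(q)\widetilde\M$ to elementary tensors $X=A\otimes v_{h_1}$, $Y=B\otimes v_{h_2}$ with $A,B\in\M$ and $h_1,h_2\in H$.

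The core computation will be a Fourier expansion: writing $u_n=\sum_{g\in G}a_{n,g}u_g$ and multiplying out,
\[X\,\Delta(u_n)\,Y \;=\; \sum_{g\in G}(A\,a_{n,g}\,u_g\,B)\otimes v_{h_1\delta(g)h_2}.\]
Regrouping by $h\in H$ and using that distinct $g$'s yield orthogonal $u_g$'s in $L^2(\M)$ yields
\[\|E_{\widetilde\P}(X\,\Delta(u_n)\,Y)\|_2^{\,2} \;=\; \sum_{\{g\,:\,h_1\delta(g)h_2\in\Sigma\}}\|A\,a_{n,g}\,u_g\,B\|_2^{\,2}.\]
If the index set is empty this is zero; otherwise I will fix $g_0\in G$ with $h_1\delta(g_0)h_2\in\Sigma$ and observe that the index set is the left coset $g_0K$ with $K:=\delta^{-1}(h_2\Sigma h_2^{-1})\leq G$. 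Substituting $g=g_0k$ rearranges the sum into $\|E_{\Q\rtimes K}(u_{g_0}^{-1}Au_nB)\|_2^{\,2}$.

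It then remains to match $E_{\Q\rtimes K}$ with the hypothesis on $E_{\Q\rtimes\delta^{-1}(\Sigma)}$. When $h_2\in\delta(G)$, writing $h_2=\delta(g_2)$ yields $K=g_2\delta^{-1}(\Sigma)g_2^{-1}$, whence $\Q\rtimes K=u_{g_2}(\Q\rtimes\delta^{-1}(\Sigma))u_{g_2}^{-1}$ and conjugation invariance of conditional expectations gives
\[\|E_{\Q\rtimes K}(u_{g_0}^{-1}Au_nB)\|_2=\|E_{\Q\rtimes\delta^{-1}(\Sigma)}\bigl((u_{g_0}u_{g_2})^{-1}Au_nBu_{g_2}\bigr)\|_2,\]
which tends to zero by hypothesis after absorbing the projection $q$ on the appropriate side. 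The principal obstacle is the case $h_2\in H\setminus\delta(G)$, in which $K$ is not necessarily a $G$-conjugate of $\delta^{-1}(\Sigma)$: all applications of Fact \ref{CIK} within this paper involve $\delta$ surjective so this issue does not arise, while in full generality it can be resolved by first passing to the intermediate subalgebra $\M\,\overline{\otimes}\,\text{L}(\delta(G))\subset\widetilde\M$ (which contains $\Delta(\M)$) and noting that $\widetilde\P\cap\bigl(\M\,\overline{\otimes}\,\text{L}(\delta(G))\bigr)=\M\,\overline{\otimes}\,\text{L}(\Sigma\cap\delta(G))$, reducing to the surjective situation already handled.
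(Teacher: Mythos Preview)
The paper does not supply a proof of this fact; it merely cites \cite[Proposition 3.4]{CIK13}. Your contrapositive Fourier-coefficient argument follows the standard route taken there, but two steps are not correct as written.

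First, the displayed identity
\[\|E_{\widetilde\P}(X\,\Delta(u_n)\,Y)\|_2^{\,2} \;=\; \sum_{\{g:\,h_1\delta(g)h_2\in\Sigma\}}\|A\,a_{n,g}\,u_g\,B\|_2^{\,2}\]
fails for general $A,B\in\M$ whenever $\ker(\delta)\neq\{1\}$: after regrouping by $h\in H$, the inner sum runs over a single $\ker(\delta)$-coset, and left multiplication by $A\in\M$ destroys the orthogonality of the $a_{n,g}u_g$. The same defect afflicts the next identification with $\|E_{\Q\rtimes K}(u_{g_0}^{-1}Au_nB)\|_2^{\,2}$ (here $E_{\Q\rtimes K}$ selects $g_0^{-1}agb\in K$, not $g\in g_0K$). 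The clean repair is to abandon exact computation: factor
\[E_{\widetilde\P}(X\,\Delta(u_n)\,Y)\;=\;(A\otimes v_{h_1})\,\Delta\Bigl(\textstyle\sum_{g\in g_0K}a_{n,g}u_g\Bigr)\,(B\otimes v_{h_2}),\]
bound by $\|A\|\,\|B\|\,\bigl\|\sum_{g\in g_0K}a_{n,g}u_g\bigr\|_2$, and observe that this last norm equals $\|E_{\Q\rtimes K}(u_{g_0}^{-1}u_n)\|_2$. Your conjugation argument then applies with $A,B$ absorbed into the constant.

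Second, your reduction when $h_2\notin\delta(G)$ via the intermediate subalgebra $\M\,\overline{\otimes}\,\mathrm{L}(\delta(G))$ does not work as stated: the relation $\prec_{\widetilde\M}$ need not descend to an intermediate subalgebra (easy finite-dimensional counterexamples exist). Your caveat is nonetheless correct in spirit, though slightly misstated: in parts (a) and (b) of Theorem~\ref{relativeT} the map $\delta$ is \emph{not} assumed surjective, but there $\Sigma=\{e\}$ is normal, so $h_2\Sigma h_2^{-1}=\Sigma$ and $K=\delta^{-1}(\Sigma)$ for every $h_2$; in parts (c) and (d) one has $\delta(G)=H$. In all applications within the paper, therefore, $K$ is a $G$-conjugate of $\delta^{-1}(\Sigma)$ and the obstacle evaporates.
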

(b). Since $H$ is a subgroup of a hyperbolic group it is biexact \cite{BrOz08} and weakly amenable \cite{Oz05}. Let $\C\subset \Delta(\A)$ be an arbitrary amenable von Neumann subalgebra. Since $\C$ and $\Delta(\B)$ commute, by applying \cite[Theorem 1.4]{PV12}
to the crossed product von Neumann algebra $\M\, \overline\otimes \, \text{L}(H)=(\M\, \overline\otimes \, 1)\rtimes H$ (where we consider the trivial action of $H$)
we get that either  1) $\C \prec_{\M\,\overline{\otimes}\,\text{L}(H)} \M\,\overline{\otimes}\, 1$ or 2) $ \Delta(\B)$ is amenable relative to $\M\,\overline{\otimes}\, 1$ inside $\M\, \overline\otimes \, \text{L}(H)$. If 1) holds for all such $\C$, then \cite[Corollary F.14]{BrOz08}  implies that 3) $\Delta(\A)\prec_{\M\,\overline{\otimes}\,\text{L}(H)} \M\,\overline{\otimes}\, 1$. If 2) holds, by applying \cite[Theorem 1.4]{PV12} twice,  we get that either 4) $\Delta(\B) \prec_{\M\, \overline\otimes \, \text{L}(H)}\M\,\overline{\otimes}\, 1$  or 5) $\Delta (\Nn)$ is amenable relative to $\M \,\overline{\otimes}\, 1$ inside $\M\,\overline{\otimes}\, \text{L}(H)$.  Using Fact \ref{CIK}, we see that 3) and 4) imply that $\A\prec_{\M} \Q\rtimes {\rm ker}(\delta)$ and $\B\prec_{\M} \Q\rtimes {\rm ker}(\delta)$, respectively, yielding the desired conclusion.

 Finally, assume 5) holds.  Since  $\R\subset \Nn$ has the relative property (T),   
 so does $\Delta(\R )\subset\Delta(\Nn )$. By Lemma \ref{amenT}, we get that $\Delta(\R )\prec_{\M\,\overline{\otimes}\,\text{L}(H)}\M\,\overline{\otimes}\, 1$.  Applying Fact \ref{CIK} again, we get that $\R\prec_{\M}\Q\rtimes\ker (\delta)$, a contradiction.

(c). We will prove first that $\P\prec_{\M}\Q\rtimes\ker(\delta)$. Since $\Delta(\P)\subset \M\, \overline\otimes \, \text{L}(H)$ is amenable relative to $\M\,\overline{\otimes}\,1$, by \cite[Theorem 1.6]{Io12} or \cite[Theorem A]{Va13},  one of the following must hold: 1) $\Delta(\P)\prec_{\M\, \overline\otimes \, \text{L}(H)} \M\,\overline{\otimes}\, 1$, 2) $\Delta(\Nn)\prec_{\M\,\overline{\otimes}\,\text{L}(H)} \M\, \overline\otimes \, \text{L}(H_i)$ for some $1\leq i\leq 2$, or 3) $\Delta(\Nn)$ is amenable relative to $\M\,\overline{\otimes}\, 1$ inside $\M\, \overline\otimes \, \text{L}(H)$. By Fact \ref{CIK}, 1) gives that $\P\prec_\M \Q\rtimes\ker(\delta)$.
If 2) holds, since  $\Delta(\Nn)\subset \Delta(p\M p)$ has finite index, we get that $\Delta(p\M p)\prec_{\M\, \overline\otimes \, \text{L}(H)} \M\, \overline\otimes \, \text{L}(H_i)$ and Fact \ref{CIK} implies that $\M\prec_{\M} \Q\rtimes\delta^{-1}(H_i)$. This yields that $[H:H_i]=[G:\delta^{-1}(H_i)]<\infty$, which is a contradiction. Finally, assume 3) holds. As $\Delta(\Nn)\subset \Delta(p\M p)$ has finite index,  $\Delta(p\M p)$ is amenable relative to $\Delta(\Nn)$ inside $\M\,\overline{\otimes}\,\text{L}(H)$ and \cite[Proposition 2.4]{OP07} gives that $\Delta (p\M p)$ is amenable relative to $\M \,\overline{\otimes}\, 1$. Then \cite[Proposition 3.5]{CIK13} implies that $\delta(G)=H$ is amenable, a contradiction.

To see that $\P\prec_{\M}^{\text s}\Q\rtimes\ker(\delta)$, let $z\in\mathscr N_{p\M p}(\P)'\cap p\M p$ be a nonzero projection.  Since $\mathscr N_{p\M p}(\P)z\subset\sN_{z\M z}(\P z)$, the inclusion $\mathscr N_{z\M z}(\P z)''\subset z\M z$ has finite index. By the above, we get that $\P z\prec_{\M}\Q\rtimes\ker(\delta)$, and \cite[Lemma 2.4(2)]{DHI16} implies that $\P\prec_{\M}^{\text s}\Q\rtimes\ker(\delta)$.

(d). Let $\C\subset \Delta(\A)$ be an arbitrary amenable von Neumann subalgebra. By  \cite[Theorem A]{Va13} (see also \cite[Theorem 1.6]{Io12})  one of the following conditions must hold: 1) $\C\prec_{\M\, \overline\otimes \, \text{L}(H)} \M\,\overline{\otimes}\, 1$, 2) $\Delta(\B)\prec_{\M\,\overline{\otimes}\,\text{L}(H)} \M\, \overline\otimes \, \text{L}(H_i)$, for some $1\leq i\leq 2$, or 3) $\Delta(\B)$ is amenable relative to $\M\,\overline{\otimes}\, 1$ inside $\M\, \overline\otimes \, \text{L}(H)$. If 1) holds for all such $\C$, then \cite[Corollary F.14]{BrOz08}  implies that 4) $\Delta(\A)\prec_{\M\,\overline{\otimes}\,\text{L}(H)} \M\,\overline{\otimes}\, 1$.
If 2) holds, then using \cite[Theorem 1.1]{IPP05} twice we get that either 5) $\Delta(\B)\prec_{\M\,\overline{\otimes}\,\text{L}(H)}\M\,\overline{\otimes}\,1$ or 6) $\Delta(\Nn)\prec_{\M\,\overline{\otimes}\,\text{L}(H)}\M\,\overline{\otimes}\,\text{L}(H_i)$.
 If 3) holds then combining \cite[Theorem A]{Va13} with \cite[Theorem 1.1]{IPP05} implies that either 5), 6) or 7) $\Delta(\Nn)$ is amenable relative to  $\M\,\overline{\otimes}\, 1$ inside $\M\, \overline\otimes \, \text{L}(H)$, hold. By Fact \ref{CIK}, 4) and 5) imply the conclusion, while
as in the proof of part (c), both 6) and 7) lead to a contradiction.\end{proof}



\subsection{Cocycle superrigidity}\label{Sec:CSR}


The main goal of this subsection is to establish a cocycle superrigidity result (Corollary \ref{CS}) that will be used in the proof of Theorem \ref{symmetries'}. To this end, we first first recall some terminology concerning actions and cocycles, and then  several results stated in \cite{CIOS1}.

Let $G\curvearrowright^{\sigma}(\P,\tau)$ be a trace preserving action of a countable group $G$ on a tracial von Neumann algebra $(\P,\tau)$.
A {\it $1$-cocycle for $\sigma$} is a map $w\colon G\rightarrow\sU(\P)$ such that $w_{gh}=w_g\sigma_g(w_h)$, for every $g,h\in G$. Any character $\eta\colon G\rightarrow\mathbb T$ gives a (trivial) $1$-cocycle for $\sigma$.
Two cocycles $w,w'\colon G\rightarrow\sU(\P)$ are called {\it cohomologous} if there is $u\in\sU(\P)$ such that $w'_g=u^*w_g\sigma_g(u)$, for every $g\in G$.
Let $p\in \P$ be a projection. A {\it generalized $1$-cocycle for $\sigma$ with support projection $p$} is a map $w\colon G\rightarrow \P$ such that $w_gw_g^*=p,w_g^*w_g=\sigma_g(p)$ and $w_{gh}=w_g\sigma_g(w_h)$, for every $g,h\in G$.

Following Krogager and Vaes \cite[Definition 2.5]{KV15},  a trace preserving action $G\curvearrowright^{\sigma}(\P^I,\tau)$ is said to be {\it built over} an action $G\curvearrowright I$  if  $\sigma_g(\P^i)=\P^{g\cdot i}$, for all $g\in G$ and $i\in I$.
By \cite[Lemma 3.4]{CIOS1}, such actions appear naturally in the context of wreath-like products:

 \begin{lem} [\cite{CIOS1}]\label{coind}
Let $A,B$ be countable groups and $G\in\mathcal W\mathcal R(A,B\curvearrowright I)$, where $B\curvearrowright I$ is an action on a countable set $I$. Let $\varepsilon\colon G\rightarrow B$ be the quotient homomorphism  and  $(u_g)_{g\in G}$ the canonical generating unitaries of $\emph{L}(G)$.  Let  $G\curvearrowright I$ and $G\curvearrowright^{\sigma} \emph{L}(A^{(I)})=\emph{L}(A)^I$ be  the action  and  the trace preserving action given by $g\cdot i=\varepsilon(g) i$ and $\sigma_g=\emph{Ad}(u_g)$, for every $g\in G$ and $i\in I$.
Then $\sigma$ is built over $G\curvearrowright I$. 
 \end{lem}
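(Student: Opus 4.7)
The plan is essentially an unpacking exercise, since the conclusion is almost built into the definition of a wreath-like product. First I would recall what needs to be verified: according to the definition of an action ``built over'' $G\curvearrowright I$ (from \cite{KV15}), I must show that $\sigma_g(\mathrm{L}(A)^i)=\mathrm{L}(A)^{g\cdot i}$ for every $g\in G$ and every $i\in I$, where $\mathrm{L}(A)^i$ denotes the $i$-th tensor factor inside $\mathrm{L}(A)^I=\mathrm{L}(A^{(I)})$.

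Next I would make the identification explicit. Under the canonical isomorphism $\mathrm{L}(A^{(I)})\cong\mathrm{L}(A)^I$, the tensor factor $\mathrm{L}(A)^i$ corresponds precisely to the von Neumann subalgebra $\mathrm{L}(A_i)$ generated by the canonical unitaries $\{u_a\mid a\in A_i\}$, where $A_i\leq A^{(I)}$ is the $i$-th summand in the base $A^{(I)}=\bigoplus_{i\in I}A_i$ of $G\in\mathcal{WR}(A,B\curvearrowright I)$.

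Finally, I would invoke the defining property of a wreath-like product (Definition \ref{wlp}): for every $g\in G$ and $i\in I$, conjugation in $G$ satisfies $gA_ig^{-1}=A_{\varepsilon(g)i}$. Applying the faithful embedding $G\hookrightarrow\mathcal{U}(\mathrm{L}(G))$ via $g\mapsto u_g$, this translates into
\[
\sigma_g(\mathrm{L}(A)^i)=u_g\,\mathrm{L}(A_i)\,u_g^{*}=\mathrm{L}(u_gA_iu_g^{*})=\mathrm{L}(A_{\varepsilon(g)i})=\mathrm{L}(A)^{\varepsilon(g)i}=\mathrm{L}(A)^{g\cdot i},
\]
which is exactly the required condition. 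There is no real obstacle here: the only ``content'' of the argument is that the algebraic conjugation rule in the definition of a wreath-like product passes verbatim to the von Neumann algebras generated by the summands $A_i$. Thus the lemma is immediate, and the main thing to be careful about is simply keeping the notational identification $\mathrm{L}(A^{(I)})=\mathrm{L}(A)^I$ straight.
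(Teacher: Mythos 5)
Your proposal is correct and follows exactly the same route as the paper: the paper's proof is the one-line observation that $\sigma_g(\mathrm{L}(A_i))=\mathrm{L}(A_{g\cdot i})$, which is precisely your computation unpacking the conjugation rule $gA_ig^{-1}=A_{\varepsilon(g)i}$ from Definition \ref{wlp} under the identification $\mathrm{L}(A^{(I)})=\mathrm{L}(A)^I$.
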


\begin{proof}
Since $\sigma_g(\text{L}(A_i))=\text{L}(A_{g\cdot i})$, for every $g\in G$ and $i\in I$, $\sigma$  is built over $G\curvearrowright I$.
\end{proof}

On the other hand, as recorded in \cite[Theorem 3.6]{CIOS1}, results from \cite{Dr15,VV14} imply that actions built over satisfy Popa's cocycle superrigidity theorems \cite{Po01a,Po05}:

\begin{thm}[\cite{CIOS1}]\label{builtover} Let $G$ be a countable group with property (T), $G\curvearrowright I$ be an action on a countable set $I$ with infinite orbits and $(\P,\tau)$ be a tracial von Neumann algebra. Let $G\curvearrowright^{\sigma}(\P^I,\tau)$ be a trace preserving action built over $G\curvearrowright I$. Then the following hold.
\begin{enumerate}
\item[(a)] Any $1$-cocycle for $\sigma$ is cohomologous to a character of $G$. More generally, given a trace preserving action $G\curvearrowright^{\lambda}(\Q,\tau)$, any $1$-cocycle $w\colon G\rightarrow\sU(\P^I\,\overline{\otimes}\,\Q)$ for the product action $\sigma\otimes\lambda$ is cohomologous to a $1$-cocycle taking values into $\sU(\Q)\subset \sU(\P^I\,\overline{\otimes}\,\Q)$.

\item[(b)] Any generalized $1$-cocycle for $\sigma$ has support projection $1$.
\end{enumerate}
\end{thm}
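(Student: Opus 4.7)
My plan is to reduce this to Popa's cocycle superrigidity theorem by exhibiting a Popa-style $s$-malleable deformation for any action built over $G \curvearrowright I$ and then combining it with property (T) and the weak-mixing consequence of infinite orbits. This is essentially how \cite{Dr15, VV14} treat the analogous statements for generalized-Bernoulli-type actions, so the main work is packaging those arguments in the built-over framework.

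For the deformation, I would double the action by considering $(\P^I \bar{\otimes} \P^I, \sigma \otimes \sigma)$. On each pair $\P^i \bar{\otimes} \P^i$ fix Popa's one-parameter family $\alpha_t^i \in \mathrm{Aut}(\P^i \bar{\otimes} \P^i, \tau \otimes \tau)$ interpolating between the identity at $t=0$ and the flip $\beta^i$ at $t=1$, and set $\alpha_t = \bar{\otimes}_{i \in I} \alpha_t^i$ and $\beta = \bar{\otimes}_{i \in I} \beta^i$. Because $\sigma$ is built over $G \curvearrowright I$, the automorphism $\sigma_g \otimes \sigma_g$ sends $\P^i \bar{\otimes} \P^i$ onto $\P^{g \cdot i} \bar{\otimes} \P^{g \cdot i}$ by a diagonal automorphism $\theta_g^i \otimes \theta_g^i$; such diagonal maps commute with the intra-coordinate flip and with each $\alpha_t^i$. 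Hence $\alpha_t$ and $\beta$ commute with $\sigma \otimes \sigma$, giving a genuine $s$-malleable deformation equivariant under $G$. The infinite-orbit assumption on $G \curvearrowright I$ translates into weak mixing of $\sigma$ (and of $\sigma \otimes \lambda$ relative to $\lambda$).

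For part (a), given a $1$-cocycle $w : G \to \sU(\P^I \bar{\otimes} \Q)$ for $\sigma \otimes \lambda$, form the deformed cocycle $w_g^t = (\alpha_t \otimes \mathrm{id}_\Q)(w_g)$. Property (T) of $G$ upgrades pointwise convergence $w_g^t \to w_g$ (as $t \to 0$) to uniform convergence in $g$; Popa's transfer of rigidity then yields, for small enough $t$, a unitary $u$ such that $u^* w_g (\sigma_g \otimes \lambda_g)(u) = (\alpha_t \otimes \mathrm{id}_\Q)(w_g)$. Iterating this transfer until $t = 1$, and using the flip $\beta \otimes \mathrm{id}_\Q$, I would show $w$ is cohomologous to a cocycle fixed under $\beta \otimes \mathrm{id}_\Q$, i.e.\ taking values in the fixed subalgebra $\mathbb{C}1 \bar{\otimes} \Q = \sU(\Q)$; weak mixing of $\sigma$ eliminates the residual freedom. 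Taking $\Q = \mathbb{C}$ recovers the first sentence.

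For part (b), a generalized $1$-cocycle $w$ with support projection $p$ can be promoted to an honest $1$-cocycle by amplifying: inside $\P^I \bar{\otimes} \mathbb{M}_2(\mathbb{C})$ (with $G$ acting trivially on the matrix factor, an action still built over $G \curvearrowright I$), the element
\[
W_g = w_g \otimes e_{11} + (1-p) \otimes e_{12} + \sigma_g(1-p) \otimes e_{21} + (\text{suitable partial isometry}) \otimes e_{22}
\]
is a genuine unitary $1$-cocycle. Applying part (a), $W$ is cohomologous to a character-valued cocycle; examining the $(1,1)$-corner forces $p = \sigma_g(p)$ for all $g$, and weak mixing then forces $p \in \{0,1\}$, so $p = 1$.

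The main obstacle is verifying $G$-equivariance of $\alpha_t$ and $\beta$ when the fiberwise action of a stabilizer $\mathrm{Stab}_G(i)$ on $\P^i$ can be a nontrivial (and possibly non-ergodic) automorphism rather than trivial; the trick is that on the doubled algebra $\P^i \bar{\otimes} \P^i$ the stabilizer acts diagonally, which automatically commutes with the rotation and flip. Once this point is established, the rest of the argument is a direct translation of Popa's original deformation/rigidity proof for Bernoulli shifts.
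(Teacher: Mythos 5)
Your argument breaks down at the construction of the deformation itself. You posit, for an arbitrary tracial von Neumann algebra $(\mathcal P,\tau)$, a pointwise $\|\cdot\|_2$-continuous one-parameter family $\alpha^i_t$ of trace-preserving automorphisms of $\mathcal P^i\,\overline{\otimes}\,\mathcal P^i$ joining the identity to the flip. No such path exists in this generality: if $\mathcal P$ is atomic abelian (nothing in the statement rules this out --- in the applications $\mathcal P=\emph{L}(A)$ with $A$ possibly a finite abelian group), the trace-preserving automorphism group of $\mathcal P\,\overline{\otimes}\,\mathcal P$ is a finite permutation group and admits no continuous path to the nontrivial flip; and for a general (e.g.\ full, non-McDuff) II$_1$ factor base no flip-path is known to exist --- this is exactly why the literature handles non-hyperfinite bases by other deformations (free-product deformations in the style of \cite{IPP05}, or reduction to Bernoulli/coinduced situations), not by a path to the flip on the doubled base. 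Second, even when such a path exists (diffuse abelian base), your equivariance claim is wrong: the built-over structure gives fibre maps $\theta^i_g$ that are genuinely nontrivial automorphisms of $\mathcal P$, and while every diagonal automorphism $\theta\otimes\theta$ commutes with the flip $\beta^i$, it does not commute with a chosen interpolating path $\alpha^i_t$; demanding that $\alpha_t$ commute with all the diagonal automorphisms produced by the action is a strong constraint that fails in general. So the point you yourself flag as ``the main obstacle'' is not resolved --- your trick disposes of $\beta$ but not of $\alpha_t$ --- and it is precisely the feature distinguishing built-over actions from honest generalized Bernoulli actions.

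For comparison, the paper does not reprove this statement at all: it quotes it from \cite[Theorem 3.6]{CIOS1}, where it is deduced from Popa's cocycle superrigidity theorems \cite{Po01a,Po05} together with their extension to coinduced actions due to Drimbe \cite{Dr15} and results of \cite{VV14}; the point is that an action built over $G\curvearrowright I$ decomposes along orbits into coinduced pieces, so no new malleable deformation has to be manufactured. Your treatment of part (b) has an additional problem: $\mathcal P^I$ need not be a factor (it is abelian when $\mathcal P$ is), so the partial isometry needed to complete $W_g$ to a unitary in $\mathbb M_2(\mathcal P^I)$ may not exist, and even when it does, arranging these completions into a genuine $1$-cocycle and extracting $\sigma_g(p)=p$ from the $(1,1)$-corner is not justified as written.
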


Lemma \ref{coind} and Theorem \ref{builtover} imply the following corollary which will be used in the proof of Theorem \ref{symmetries'}.

\begin{cor}\label{CS}
In the setting of Lemma \ref{coind}, assume additionally that $G$ has property (T) and the action $B\curvearrowright I$ has infinite orbits.    Then any $1$-cocycle for $\sigma$ is cohomologous to a character of $G$ and
any generalized $1$-cocycle for $\sigma$ has support projection $1$.

\end{cor}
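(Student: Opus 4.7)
The plan is to reduce the statement to a direct application of Theorem \ref{builtover}. The only non-trivial piece is to verify that the hypotheses of that theorem are indeed satisfied by the action $G \curvearrowright I$ induced by $g\cdot i = \varepsilon(g)i$ rather than by the original action $B \curvearrowright I$.

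First, I would observe that since the $G$-action on $I$ factors through $B$ via the quotient homomorphism $\varepsilon \colon G \to B$, the $G$-orbits and $B$-orbits on $I$ coincide. Hence the assumption that $B \curvearrowright I$ has infinite orbits gives that $G \curvearrowright I$ has infinite orbits as well. Second, by Lemma \ref{coind}, the trace-preserving action $G \curvearrowright^{\sigma} \mathrm{L}(A)^I$, with $\sigma_g = \mathrm{Ad}(u_g)$, is built over $G \curvearrowright I$ in the sense of Krogager--Vaes.

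With these two observations in hand, the hypotheses of Theorem \ref{builtover} are all satisfied: $G$ has property (T) by assumption, $G \curvearrowright I$ has infinite orbits, and $\sigma$ is built over this action with $\P = \mathrm{L}(A)$. Part (a) of Theorem \ref{builtover} then yields that every $1$-cocycle for $\sigma$ is cohomologous to a character of $G$, and part (b) gives that every generalized $1$-cocycle for $\sigma$ has support projection $1$. This establishes both assertions of the corollary.

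There is essentially no obstacle here: the corollary is a packaging statement that specializes the general cocycle superrigidity Theorem \ref{builtover} to the concrete action $\sigma = \mathrm{Ad}(u_\cdot)$ on the base algebra of a wreath-like product. The only subtlety to flag in the write-up is the elementary but crucial identification of $G$-orbits with $B$-orbits on $I$, so as to transfer the infinite-orbit hypothesis from $B$ to $G$.
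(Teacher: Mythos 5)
Your proposal is correct and coincides with the paper's own (one-line) argument: the corollary is obtained exactly by noting that the $G$-action on $I$ factors through $\varepsilon$, hence shares its orbits with $B\curvearrowright I$, and then applying Lemma \ref{coind} together with Theorem \ref{builtover}. Your explicit check of the infinite-orbit transfer is the only detail the paper leaves implicit.
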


We end this section with two results on wreath-like product groups with abelian base. These use \cite[Remark 3.5]{CIOS1}
which we recall here for the reader's convenience.

\begin{rem}[\cite{CIOS1}]\label{bernoulli} Let $A,B$ be countable groups and  $G\in\mathcal W\mathcal R(A,B\curvearrowright I)$, for some action $B\curvearrowright I$. Assume that $A$ is abelian.
 Then the conjugation action of $G$ on $A^{(I)}$ descends to an action of $B=G/A^{(I)}$ on $A^{(I)}$.
 We denote by $B\curvearrowright^{\alpha}\text{L}(A^{(I)})$ the associated trace preserving action. Explicitly, for $g\in B$, we have $\alpha_g=\text{Ad}(u_{\widehat{g}})$, where $\widehat{g}\in G$ is any element such that $\varepsilon(\widehat{g})=g$.
Lemma \ref{coind} implies that $\alpha$ is built over $B\curvearrowright I$.
Moreover, assume that $I=B$ endowed with the left multiplication action of $B$, so that the action $B\curvearrowright I$ is free and transitive. Then the discussion after \cite[Definition 2.5]{KV15} implies that   $\alpha$ is conjugate to the  Bernoulli action $B\curvearrowright\text{L}(A)^B$.

For the rest of this subsection, we also denote by $\alpha$ the probability measure preserving (pmp) action $B\curvearrowright^\alpha(\widehat{A}^I,\nu^I)$, obtained by identifying $\text{L}(A^{(I)})$ with  $\text{L}^{\infty}(\widehat{A}^I,\nu^I)$,
where $\widehat{A}$ is the dual of $A$ endowed with its Haar measure $\nu$.

\end{rem}

 Theorem \ref{builtover} and Remark \ref{bernoulli}  yield an extension of Popa's OE superrigidity theorem for Bernoulli actions of property (T) groups \cite{Po05} that is needed to prove Theorem \ref{symmetries}.
Given a pmp action $G\curvearrowright (X,\mu)$, we
denote by $\mathcal R(G\curvearrowright X)=\{(x,y)\in X\times X\mid G\cdot x=G\cdot y\}$ its {\it orbit equivalence (OE) relation}. The full group $[\mathcal R(G\curvearrowright X)]$ consists of  automorphisms $\theta$ of $(X,\mu)$ such that $\theta(x)\in G\cdot x$, for almost every $x\in X$.
The action $G\curvearrowright (X,\mu)$ is called {\it OE superrigid} if for every free p.m.p. action $H\curvearrowright (X,\mu)$ such that $G\cdot x= H\cdot x$, for almost every $x\in X$, we can find $\theta\in [\sR( G\curvearrowright X)]$ such that $\theta G \theta^{-1}=H$. Here, we view $G$ and $H$ as subgroups of $\text{Aut}(X,\mu)$, the group of automorphisms of $(X,\mu)$.

\begin{lem}\label{OES}
Let $A$ be an abelian group, $B$ an ICC property (T) group and $B\curvearrowright I$ an action with infinite orbits. 
Then  $B\curvearrowright^{\alpha} (\widehat{A}^I,\nu^I)$ is OE superrigid.
\end{lem}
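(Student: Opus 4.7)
The plan is to deduce Lemma \ref{OES} from Popa's OE superrigidity theorem \cite{Po05}, with $B\curvearrowright^\alpha(\widehat A^I,\nu^I)$ playing the role of the Bernoulli action. The crucial input is Theorem \ref{builtover}(a) applied with $G$ and $\sigma$ replaced by $B$ and $\alpha$: $B$ has property (T), $B\curvearrowright I$ has infinite orbits, and by Remark \ref{bernoulli}, $\alpha$ is built over $B\curvearrowright I$, so all the hypotheses are satisfied. First, I would verify that $\alpha$ is essentially free (the case $A=\{1\}$ being vacuous) and weakly mixing; both follow from the ICC assumption on $B$ together with the fact that $B\curvearrowright I$ has infinite orbits.

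Next, given another free p.m.p.\ action $H\curvearrowright(\widehat A^I,\nu^I)$ with $H\cdot x=B\cdot x$ a.e., I would form the Zimmer OE cocycle $w\colon B\times\widehat A^I\to H$. Embedding $H\hookrightarrow\sU(\text{L}(H))$ via the canonical unitaries, $w$ corresponds to a $1$-cocycle $W\colon B\to\sU(\text{L}^\infty(\widehat A^I)\,\overline{\otimes}\,\text{L}(H))$ for the product action $\alpha\otimes\text{id}$. Applying Theorem \ref{builtover}(a) with $\Q=\text{L}(H)$ carrying the trivial $B$-action, $W$ is cohomologous to a cocycle with values in $\sU(\text{L}(H))$, i.e., to a group homomorphism $V\colon B\to\sU(\text{L}(H))$.

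A standard untwisting argument, exploiting weak mixing of $\alpha$ and the fact that the values of $W$ are partial-isometry-like elements associated to elements of $H$, then yields a group homomorphism $\delta\colon B\to H$ together with a measurable map $\phi\colon\widehat A^I\to H$ satisfying $w(b,x)=\phi(b\cdot x)^{-1}\delta(b)\phi(x)$ for a.e.\ $x$. Defining $\theta\in[\R(B\curvearrowright\widehat A^I)]$ by $\theta(x)=\phi(x)^{-1}\cdot x$, one checks $\theta(b\cdot x)=\delta(b)\cdot\theta(x)$, and freeness of the $H$-action together with the orbit equality forces $\delta$ to be a bijection $B\to H$, giving $\theta B\theta^{-1}=H$ as required. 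The main technical step is the untwisting from $\sU(\text{L}(H))$ down to $H$, which is by now standard in Popa's deformation/rigidity theory.
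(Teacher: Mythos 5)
Your proposal is correct and takes essentially the same route as the paper: via Remark \ref{bernoulli} the action $\alpha$ is built over $B\curvearrowright I$, so Theorem \ref{builtover}(a) (property (T) of $B$, infinite orbits) gives cocycle superrigidity with targets $\sU(\Q)$, and the conclusion then follows from Popa's standard untwisting machinery. The only difference is presentational: where you sketch by hand the reduction from $\sU(\text{L}(H))$-valued to $H$-valued cocycles via weak mixing and the construction of $\theta$ from the untwisted Zimmer cocycle, the paper simply invokes \cite[Proposition 3.5]{Po05} and \cite[Proposition 5.11]{Po05} (using that $B$ is ICC), which encapsulate exactly those steps.
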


\begin{proof} By Remark \ref{bernoulli}, the associated trace preserving action $B\curvearrowright^{\alpha}\text{L}^{\infty}(\widehat{A}^I,\nu^I)\equiv\text{L}(A^{(I)})$ is built over the action $B\curvearrowright I$. Since $B$ has property (T) and $B\curvearrowright I$ has infinite orbits, Theorem \ref{builtover} (a) implies that any cocycle for $\alpha$ with values into $\sU(\Q)$, where $(\Q,\tau)$ is a tracial von Neumann algebra, is cohomologous to a homomorphism $B\rightarrow\sU(\Q)$. Since $\alpha$ is weakly mixing, \cite[Proposition 3.5]{Po05} implies that
any cocycle for $\alpha$ with values into a countable group $K$ is cohomologous to a homomorphism $B\rightarrow K$.
Since $B$ has no nontrivial normal subgroups, applying \cite[Proposition 5.11]{Po05} gives that $\alpha$ is OE superrigid.
\end{proof}

Using Remark \ref{bernoulli}, we can also provide a simpler solution to a question of Popa.
Popa asked in \cite[Section 6.6]{Po05} whether $\text{H}^2(\alpha)=\text{H}^2(B)$, for Bernoulli actions $\alpha$ of property (T) groups $B$. Recovering a result from \cite{Ji15}, we show that the equality $\text{H}^2(\alpha)=\text{H}^2(B)$ fails for certain property (T) groups $B$:

\begin{prop}\label{H^2}
Let $A$ be a nontrivial abelian group and $B$ an infinite group such that $\WR(A,B)$ contains a property (T) group. Consider the Bernoulli action $B\curvearrowright^{\alpha} (\widehat{A}^B,\nu^B)$.
Then $\emph{H}^2(\alpha)\not=\emph{H}^2(B)$. Moreover, there exists a $2$-cocycle $w$ for $\alpha$ with values in $\mathbb T$ such that $\emph{L}^\infty(\widehat{A}^B)\rtimes_{\alpha,w}B$ is a \emph{II}$_1$ factor with property (T).
\end{prop}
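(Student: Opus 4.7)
The plan is to construct the required $\mathbb T$-valued $2$-cocycle $w$ from a wreath-like product $G\in\WR(A,B)$ with property (T), and then to deduce $\text{H}^2(\alpha)\neq\text{H}^2(B)$ by showing that no scalar $B$-cocycle can produce a property (T) Bernoulli crossed product.

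First, fix such a $G$ (which exists by hypothesis), let $\varepsilon\colon G\to B$ be the canonical quotient homomorphism with kernel $A^{(B)}$, and choose any set-theoretic section $\sigma\colon B\to G$ with $\sigma(1)=1$. Form the group $2$-cocycle $c(g,h):=\sigma(g)\sigma(h)\sigma(gh)^{-1}\in A^{(B)}$, set $v_g:=u_{\sigma(g)}\in\text{L}(G)$, and use Lemma \ref{coind} together with Remark \ref{bernoulli} to identify the conjugation action of $B$ on $\text{L}(A^{(B)})\cong \text{L}^\infty(\widehat A^B,\nu^B)$ with the Bernoulli action $\alpha$. The relations $v_gv_h=u_{c(g,h)}v_{gh}$ and $v_g\xi v_g^*=\alpha_g(\xi)$ then provide an isomorphism
\[
\text{L}(G)\cong \text{L}^\infty(\widehat A^B)\rtimes_{\alpha,w}B,
\]
where $w(g,h):=u_{c(g,h)}\in A^{(B)}\subset \sU(\text{L}^\infty(\widehat A^B))$. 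Since $A^{(B)}$ embeds into $\sU(\text{L}^\infty(\widehat A^B))$ as the characters of $\widehat A^B$, the Feldman-Moore correspondence \cite{FM77a} identifies $w$ with a $\mathbb T$-valued $2$-cocycle on the Bernoulli equivalence relation $\sR(\alpha)$, i.e., a class in $\text{H}^2(\alpha)$. Because $\alpha$ is free and ergodic (as $A$ is nontrivial and $B$ is infinite), the twisted crossed product is automatically a $\text{II}_1$ factor, and property (T) of $\text{L}(G)$ comes from $G$ via Connes' theorem \cite{Co80}; this proves the ``moreover'' statement.

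For the inequality, suppose toward a contradiction that the natural map $\text{H}^2(B,\mathbb T)\to\text{H}^2(\alpha)$ is surjective, so that $[w]$ is cohomologous to some scalar $w_0\in \text{Z}^2(B,\mathbb T)$, giving $\text{L}(G)\cong \text{L}^\infty(\widehat A^B)\rtimes_{\alpha,w_0}B$. The main obstacle is to rule out property (T) for a scalar-twisted Bernoulli crossed product. The plan for this is to apply Popa's $s$-malleable deformation machinery: the canonical deformation on $\text{L}^\infty((\widehat A\times\widehat A)^B)$ commutes with the diagonal $B$-action and, because $w_0$ does not depend on the underlying probability space, lifts to an $s$-malleable deformation of the enveloping algebra $\text{L}^\infty((\widehat A\times\widehat A)^B)\rtimes_{\alpha\otimes\alpha,w_0}B$, inside which $\text{L}^\infty(\widehat A^B)\rtimes_{\alpha,w_0}B$ embeds diagonally. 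Property (T) of the latter would force uniform convergence of this deformation on its unit ball by the spectral gap / transference principle (cf.\ \cite{Po05}) and, combined with the weak mixing of $\alpha$, lead to a contradiction via the standard deformation/rigidity dichotomy for Bernoulli actions. This contradicts property (T) of $\text{L}(G)$ established above and yields $\text{H}^2(\alpha)\neq\text{H}^2(B)$.
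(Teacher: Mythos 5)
Your construction of the cocycle and the ``moreover'' part coincide with the paper's argument: you lift a section of $\varepsilon\colon G\to B$, set $w_{g,h}=u_{\widehat g}u_{\widehat h}u_{\widehat{gh}}^{\,*}\in\mathscr U(\text{L}(A^{(B)}))$, and use Remark \ref{bernoulli} to identify $\text{L}(G)\cong\text{L}^\infty(\widehat A^B)\rtimes_{\alpha,w}B$, so that property (T) of $G$ gives a property (T) II$_1$ factor of the required form (the paper gets factoriality from $G$ being ICC, you from freeness and ergodicity of $\alpha$; both are fine, though the fact that (T) passes to $\text{L}(G)$ is Connes--Jones rather than \cite{Co80}).

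For the inequality $\text{H}^2(\alpha)\neq\text{H}^2(B)$, however, your proposal diverges from the paper and has a genuine gap at the decisive step. The paper's route is a two-line reduction: if $w$ were cohomologous to a scalar cocycle $v\in \text{Z}^2(B,\mathbb T)$, then pulling $v$ back along $\varepsilon\times\varepsilon$ identifies $\text{L}(G)\cong\text{L}^\infty(\widehat A^B)\rtimes_{\alpha,v}B\cong\text{L}_c(A\wr B)$, a \emph{twisted group von Neumann algebra of the ordinary wreath product}; property (T) of a twisted group algebra forces property (T) of the group by \cite[Proposition 5.1]{Po01b}, and $A\wr B$ never has (T) by \cite{BdHV08}. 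You instead propose to rule out property (T) of a scalar-twisted Bernoulli crossed product directly by Popa's malleable deformation machinery. The lifting of the deformation to the twisted algebra and the statement that property (T) forces uniform $\|\cdot\|_2$-convergence of $\theta_t$ on the unit ball are correct, but the claimed contradiction ``combined with the weak mixing of $\alpha$ \dots via the standard deformation/rigidity dichotomy'' is not an argument: the standard dichotomy for Bernoulli actions produces intertwining of \emph{rigid subalgebras} into the group part $\text{L}_{w_0}(B)$, which is not by itself a contradiction, and weak mixing plays no role in excluding property (T) of the whole algebra. What is actually needed is either (i) the explicit transversality estimate showing that for any fixed $t>0$ the unitaries $u_a$, $a\in A^{(B)}$ with large support, satisfy $\|\theta_t(u_a)-u_a\|_2$ bounded away from $0$ (the coordinatewise correlations multiply, so uniform convergence on the unit ball of the core fails, contradicting (T)), or (ii) the conclusion $\mathcal M\prec_{\mathcal M}\text{L}_{w_0}(B)$ from Popa's theorem followed by a separate argument that the diffuse core of a Bernoulli crossed product cannot intertwine into $\text{L}_{w_0}(B)$. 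Neither is supplied, so as written the proof of $\text{H}^2(\alpha)\neq\text{H}^2(B)$ is incomplete; moreover, the scalar-twist observation above (the whole point of the hypothesis that $\WR(A,B)$ contains a (T) group) makes this heavy machinery unnecessary.
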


\begin{proof}
Let $G\in\WR(A,B)$ be a group with property (T) and $\varepsilon:G\rightarrow B$ be the quotient homomorphism.
 For $g\in B$, let $\widehat{g}\in G$ with $\varepsilon(\widehat{g})=g$. By Remark \ref{bernoulli}, $\alpha_g=\text{Ad}(u_{\widehat{g}})$, for every $g\in B$. Then $w:B\times B\rightarrow \mathscr U(\text{L}(A^{(B)}))$ given by  $w_{g_1,g_2}=u_{\widehat{g_1}}u_{\widehat{g_2}}u_{\widehat{g_1g_2}}^*$ is a $2$-cocycle for $\alpha$, i.e., $w_{g_1,g_2}w_{g_1g_2,g_3}=\alpha_{g_1}(w_{g_2,g_3})w_{g_1,g_2g_3}$, for all $g_1,g_2,g_3\in B$. Moreover, we have that \begin{equation}\label{LG}\text{L}(G)\cong \text{L}(A^{(B)})\rtimes_{\alpha,w}B.\end{equation}
Since $G$ has property (T) and is ICC, \eqref{LG} implies that
$\text{L}^\infty(\widehat{A}^B)\rtimes_{\alpha,w}B\equiv\text{L}(A^{(B)})\rtimes_{\alpha,w}B$ is a \text{II}$_1$ factor with property (T).
 Assume by contradiction that $\text{H}^2(\alpha)=\text{H}^2(B)$. Then $w$ would be cohomologous to a $2$-cocycle $v:B\times B\rightarrow\mathbb T$. Hence,  $\text{L}(A^{(B)})\rtimes_{\alpha,w}B\cong  \text{L}(A^{(B)})\rtimes_{\alpha,v}B$.  Let  $c=v\circ (\varepsilon\times\varepsilon):G\times G\rightarrow \mathbb T$. 
 Then $c$ is a $2$-cocycle and we have  $\text{L}(A^{(B)})\rtimes_{\alpha,v}B \cong \text{L}_c(A \;\rm{wr}\; B)$, where $\text{L}_c(A \;\rm{wr}\; B)$ denotes the twisted group von Neumann algebra associated to $A \;\rm{wr}\; B$ and $c$ (see, e.g., \cite[Section 10(IV)]{Io10}).
 By combining the last two isomorphisms with \eqref{LG} we get that $\text{L}(G)\cong\text{L}_c(A \;\rm{wr}\; B)$ has property (T). By \cite[Proposition 5.1]{Po01b} we derive that $A\;\rm{wr}\; B$ has property (T), which is false by \cite[Proposition 2.8.2]{BdHV08}.
  \end{proof}


\section{Outer automorphism groups of wreath-like group factors}\label{OUT}
The goal of this section is to prove Theorem \ref{symmetries'}.
 We begin with an informal outline of its proof in the case of regular wreath-like products  $G\in\WR(A,B),H\in\WR(C,D)$, under the additional assumptions that $B, D$ are ICC subgroups of hyperbolic groups and $t=1$. 
Let $\theta:\text{L}(G)\rightarrow\text{L}(H)$ be a $*$-isomorphism. Denote by $(u_g)_{g\in G}\subset \text{L}(G)$ and $(v_h)_{h\in H}\subset \text{L}(H)$  the canonical generating unitaries. Denote $\mathcal M=\text{L}(G)$ and identify $\mathcal M=\text{L}(H)$, via $\theta$.

The proof of Theorem \ref{symmetries'} follows the same approach as that of \cite[Theorem 1.3]{CIOS1}.
We first show that $\mathcal P=\text{L}(A^{(B)})$ and $\mathcal Q=\text{L}(C^{(D)})$ are unitarily conjugate.
Note that $\mathcal P,\mathcal Q\subset\mathcal M$ are regular subalgebras and $G/A^{(B)}\cong B, H/C^{(D)}\cong D$ are subgroups of hyperbolic groups.
The conclusion then follows by using Popa and Vaes' structure theorem \cite{PV12} (applied via Theorem \ref{relativeT}) together with conjugacy results for Cartan subalgebras or regular irreducible subfactors  from \cite{Po01b,IPP05} (see Theorems \ref{Po01b} and \ref{IPP05}), depending on whether $A,C$ are abelian or ICC.
After unitary conjugacy, we may assume that $\mathcal P=\mathcal Q$.

The second step is a ``discretization" argument  to identify $G=(u_g)_{g\in G}$ and $H=(v_h)_{h\in H}$  modulo $\mathscr U(\mathcal P)$. This is immediate if $A, C$ are ICC, since then $\mathcal P\subset\mathcal M$ is a regular irreducible subfactor and computing its normalizer in $\mathcal M$  in two ways gives that $G\mathscr U(\mathcal P)=H\mathscr U(\mathcal P)$. If $A, C$ are abelian, then identifying the equivalence relation of the inclusion $\mathcal P\subset\mathcal M$ in two ways  gives an orbit equivalence between the Bernoulli actions $B\curvearrowright\widehat{A}^B$ and $D\curvearrowright\widehat{C}^D$. As Bernoulli actions of ICC property (T) groups are OE superrigid by a result of Popa \cite{Po04},  we again deduce that $G\mathscr U(\mathcal P)=H\mathscr U(\mathcal P)$, after a unitary conjugation. In either case, we find maps $\zeta:G\rightarrow\mathscr U(\mathcal P)$ and $\delta:G\rightarrow H$ such that $\zeta_gu_g=v_{\delta(g)}$, for every $g\in G$.

Up to this point, we have used that $B$, but not $G$, has property (T). To explain how we use property (T), let $G\curvearrowright^{\sigma}\mathcal P$ be the trace preserving action given by $\sigma_g=\text{Ad}(u_g)$.
Then $\sigma$ is an action built over the action $G\curvearrowright B$ by Lemma \ref{coind}. Moreover, since the action $G\curvearrowright B$
 is transitive, $\sigma$ is a coinduced action.
Since $G$ has property (T), any $1$-cocycle for $\sigma$ or $\sigma\otimes\sigma$ is cohomologous to a character of $G$ (see Corollary \ref{CS}). This follows from Popa's cocycle superrigidity theorem \cite{Po05}, if $A$ is abelian, and its extension to coinduced actions \cite{Dr15}, in general.
If $\delta:G\rightarrow H$ is a homomorphism, then $(\zeta_g)_{g\in G}$ is a $1$-cocycle for $\sigma$ and we can easily conclude. 
However, a priori, we only know that $\delta$ ``descends" to a homomorphism $B\rightarrow D$.
To bypass this difficulty we use the comultiplication $\Delta:\mathcal M\rightarrow\mathcal M\,\overline{\otimes}\,\mathcal M$ given by $\Delta(v_h)=v_h\otimes v_h$, $h\in H$ \cite{PV09}. Denoting $\omega_g=\Delta(\zeta_g)^*(\zeta_g\otimes\zeta_g)\in \mathcal P\,\overline{\otimes}\,\mathcal P$ we have $\Delta(u_g)=\omega_g(u_g\otimes u_g)$, for every $g\in G$. Thus, $(\omega_g)_{g\in G}$ is a $1$-cocycle for $\sigma\otimes\sigma$ and so we can  find a unitary $w\in \mathcal P\,\overline{\otimes}\,\mathcal P$ and a character $\rho:G\rightarrow \mathbb T$ such that $w\Delta(u_g)w^*=\rho(g)(u_g\otimes u_g)$, for every $g\in G$. A general result from \cite{IPV10} then implies the conclusion of Theorem \ref{symmetries'}.

\subsection{Criteria for $W^*$-superrigidity for pairs of  groups}
The proof of Theorem \ref{symmetries'} relies on two general W$^*$-superrigidity criteria for pairs of groups (Theorems \ref{criterion1} and \ref{criterion}), which  are of independent interest. The first criterion is also used in an essential way in our forthcoming paper \cite{CIOS3} to construct many new examples W$^*$-superrigid wreath-like product groups with property (T). 

\begin{thm}\label{criterion1}
Let $G$ be an ICC countable group and $A\lhd G$ a normal ICC subgroup such that $\emph{L}(A)'\cap \emph{L}(G)=\mathbb C1$. Denote by $G\curvearrowright^{\sigma}\text{L}(A)$ the trace preserving action given by $\sigma_g=\emph{Ad}(u_g)$, for every $g\in G$.
Let $p\in \emph{L}(A)$ be a projection. Let $B<H$ be an inclusion of countable groups and $\theta\colon p\emph{L}(G)p\rightarrow \emph{L}(H)$ a $*$-isomorphism such that $\theta(p\emph{L}(A)p)=\emph{L}(B)$. 
\begin{enumerate}
\item Assume that any generalized $1$-cocycle for $\sigma\otimes\sigma$ has support $1$. Then $p=1$.
\item Assume that $p=1$ and any $1$-cocycle for $\sigma\otimes\sigma$ is cohomologous to a character of $G$.
Then there are a group isomorphism $\delta\colon G\rightarrow H$, a character $\eta\colon G\rightarrow\mathbb T$ and $u\in\sU(\emph{L}(H))$ such that $\delta(A)=B$ and $\theta(u_g)=\eta(g)uv_{\delta(g)}u^*$, for all $g\in G$.

\end{enumerate}

\end{thm}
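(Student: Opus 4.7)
My plan is to adapt the comultiplication technique of Ioana--Popa--Vaes \cite{IPV10}. Let $\Delta_H\colon\text{L}(H)\to\text{L}(H)\,\overline{\otimes}\,\text{L}(H)$ be the unital $*$-homomorphism defined on canonical generating unitaries by $\Delta_H(v_h)=v_h\otimes v_h$; this is well-defined since $h\mapsto v_h\otimes v_h$ is a unitary representation of $H$. Pulling back via $\theta$ yields
\[
\Delta\colon p\,\text{L}(G)\,p\to p\,\text{L}(G)\,p\,\overline{\otimes}\,p\,\text{L}(G)\,p,\qquad \Delta(x):=(\theta^{-1}\otimes\theta^{-1})\bigl(\Delta_H(\theta(x))\bigr),
\]
a unital $*$-homomorphism satisfying $\Delta(p)=p\otimes p$. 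Since $\theta(p\,\text{L}(A)\,p)=\text{L}(B)$ and $\Delta_H(\text{L}(B))\subset\text{L}(B)\,\overline{\otimes}\,\text{L}(B)$, we obtain the key inclusion $\Delta(p\,\text{L}(A)\,p)\subset p\,\text{L}(A)\,p\,\overline{\otimes}\,p\,\text{L}(A)\,p$.

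For part (1), I would extract from $\Delta$ and the structure of $p\,\text{L}(G)\,p$ over $p\,\text{L}(A)\,p$ a generalized $1$-cocycle $w\colon G\to\text{L}(A)\,\overline{\otimes}\,\text{L}(A)$ for $\sigma\otimes\sigma$ whose support projection equals $p\otimes p$. Using the elements $pu_gp$ (or the partial isometries arising from their polar decompositions) to detect the $G$-action inside $p\,\text{L}(G)\,p$, applying $\Delta$ componentwise and twisting by $u_g^{-1}\otimes u_g^{-1}$ in $\text{L}(G)\,\overline{\otimes}\,\text{L}(G)$ should yield elements $w_g$ with $w_gw_g^*=p\otimes p$, $w_g^*w_g=(\sigma_g\otimes\sigma_g)(p\otimes p)$, and satisfying the generalized cocycle identity $w_{gh}=w_g(\sigma_g\otimes\sigma_g)(w_h)$; the key inclusion above is what forces each $w_g$ to lie in $\text{L}(A)\,\overline{\otimes}\,\text{L}(A)$. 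The hypothesis that every such generalized cocycle has support $1$ then gives $p\otimes p=1$, hence $p=1$.

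For part (2), with $p=1$ the map $\Delta$ becomes a unital $*$-homomorphism $\text{L}(G)\to\text{L}(G)\,\overline{\otimes}\,\text{L}(G)$ sending $\text{L}(A)$ into $\text{L}(A)\,\overline{\otimes}\,\text{L}(A)$. Setting $w_g:=\Delta(u_g)(u_g^{-1}\otimes u_g^{-1})\in\text{L}(G)\,\overline{\otimes}\,\text{L}(G)$ and using the multiplicativity of $\Delta$ together with $\sigma_g=\mathrm{Ad}(u_g)$ gives the cocycle identity $w_{gh}=w_g(\sigma_g\otimes\sigma_g)(w_h)$, so $w$ is a $1$-cocycle for $\sigma\otimes\sigma$ extended to $\text{L}(G)\,\overline{\otimes}\,\text{L}(G)$. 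The crucial step is to argue that $w_g\in\text{L}(A)\,\overline{\otimes}\,\text{L}(A)$: this follows from $A\lhd G$ together with the irreducibility $\text{L}(A)'\cap\text{L}(G)=\mathbb C\,1$, which combine to force $\Delta(\text{L}(A))$ to be an irreducible subfactor of $\text{L}(G)\,\overline{\otimes}\,\text{L}(G)$ and thereby control the location of unitaries normalizing $\Delta(\text{L}(A))$. The cocycle superrigidity assumption then yields $\eta\in\mathrm{Char}(G)$ and $U\in\mathscr U(\text{L}(A)\,\overline{\otimes}\,\text{L}(A))$ with $\Delta(u_g)=\eta(g)\,U(u_g\otimes u_g)U^*$; the ``$\Delta$-rigidity'' machinery of \cite[Theorem 3.1]{IPV10} then produces a group isomorphism $\delta\colon G\to H$ and $u\in\mathscr U(\text{L}(H))$ with $\theta(u_g)=\eta(g)\,u\,v_{\delta(g)}\,u^*$. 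The identity $\theta(\text{L}(A))=\text{L}(B)$ finally forces $\delta(A)=B$.

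The principal obstacle in both parts is confining the cocycles coming out of $\Delta$ to $\text{L}(A)\,\overline{\otimes}\,\text{L}(A)$ rather than merely to $\text{L}(G)\,\overline{\otimes}\,\text{L}(G)$. A priori $\Delta(u_g)$ is not $u_g\otimes u_g$, so this confinement must be extracted from the interplay of the normality $A\lhd G$, the irreducibility $\text{L}(A)'\cap\text{L}(G)=\mathbb C\,1$, and the fact that $\Delta_H$ preserves the group structure of $\text{L}(H)$ via $\Delta_H(v_h)=v_h\otimes v_h$. This is precisely where the structural hypotheses on $A$ inside $G$ are decisive.
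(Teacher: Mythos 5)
Your overall architecture---the comultiplication $\Delta_H(v_h)=v_h\otimes v_h$, confining the resulting cocycle $w_g=\Delta(u_g)(u_g^*\otimes u_g^*)$ to $\text{L}(A)\,\overline{\otimes}\,\text{L}(A)$, and then invoking cocycle superrigidity together with the IPV rigidity lemma (the paper uses \cite[Lemma 3.4]{IPV10})---is indeed the paper's route. But the step you yourself call crucial is exactly where your argument has a genuine gap. You assert that $w_g\in\text{L}(A)\,\overline{\otimes}\,\text{L}(A)$ because $A\lhd G$ and $\text{L}(A)'\cap\text{L}(G)=\mathbb C1$ force $\Delta(\text{L}(A))$ to be an irreducible subfactor of $\text{L}(G)\,\overline{\otimes}\,\text{L}(G)$, ``thereby controlling the location of unitaries normalizing $\Delta(\text{L}(A))$.'' Irreducibility of $\Delta(\text{L}(A))$ gives no such control: $\Delta$ is only known to be multiplicative on the canonical $H$-unitaries $v_h$, and $u_g$ is not one of them, so the inclusion $\Delta(\text{L}(A))\subset\text{L}(A)\,\overline{\otimes}\,\text{L}(A)$ by itself says nothing about $\Delta(u_g)$. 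Even granting that $\Delta(u_g)$ normalizes the irreducible subfactor $\Delta(\text{L}(A))$, the best one could extract from that is a factorization of $\Delta(u_g)$ as an element of $\Delta(\text{L}(A))$ times the image of some $v_{h_1}\otimes v_{h_2}$, which is not a factorization relative to $u_g\otimes u_g$ and does not place $w_g$ in $\text{L}(A)\,\overline{\otimes}\,\text{L}(A)$. The missing ingredient is a discretization theorem for normalizers: since $A\lhd G$, the unitary $\theta(u_g)$ normalizes the irreducible subfactor $\text{L}(B)\subset\text{L}(H)$, and by Smith--Wiggins--White \cite[Corollary 5.3(ii)]{SWW07} every unitary in $\sN_{\text{L}(H)}(\text{L}(B))$ has the form $\omega v_h$ with $\omega\in\sU(\text{L}(B))$ and $h\in H$. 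Only after writing $\theta(u_g)=\omega_g v_{h}$ does applying $\Delta_H$ (multiplicative on the $v_h$) give $\Delta(u_g)=\Delta(\omega_g)(\omega_g^*\otimes\omega_g^*)(u_g\otimes u_g)$ with $w_g=\Delta(\omega_g)(\omega_g^*\otimes\omega_g^*)\in\text{L}(A)\,\overline{\otimes}\,\text{L}(A)$.

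The same omission undermines your part (1). The paper's construction there uses factoriality of $\text{L}(A)$ twice: first to extend $\Delta$ to a (non-unital) $*$-homomorphism $\text{L}(G)\to\text{L}(G)\,\overline{\otimes}\,\text{L}(G)$ with $\Delta(1)=1\otimes p$, and second to choose partial isometries $\zeta_g\in\text{L}(A)$ with $\zeta_g\zeta_g^*=p$ and $\zeta_g^*\zeta_g=\sigma_g(p)$, so that $\zeta_gu_g\in\sN_{p\text{L}(G)p}(p\text{L}(A)p)=\sN_{\text{L}(H)}(\text{L}(B))$; then SWW plus a partition of $1$ by partial isometries in $\text{L}(A)$ yields $\Delta(u_g)=w_g(u_g\otimes u_g)$ with $w_g\in\text{L}(A)\,\overline{\otimes}\,\text{L}(A)$, an honest generalized $1$-cocycle with support projection $\Delta(1)=1\otimes p$, whence $p=1$. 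Your sketch (``use $pu_gp$ or its polar decomposition and twist by $u_g^{-1}\otimes u_g^{-1}$'') does not produce an exact generalized cocycle identity: the products $\zeta_gu_g\cdot\zeta_hu_h$ differ from $\zeta_{gh}u_{gh}$ by unitaries of $p\text{L}(A)p$, and it is precisely the extension of $\Delta$ beyond the corner and the SWW factorization that absorb these discrepancies. So the plan is the right one, but without the SWW normalizer theorem (or an equivalent argument) the central confinement step in both parts is unproven.
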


\begin{proof}
Identify $\text{L}(H)=p\text{L}(G)p$ and $p\text{L}(A)p=\text{L}(B)$, via $\theta$.
Let $\Delta\colon \text{L}(H)\rightarrow \text{L}(H)\,\overline{\otimes}\,\text{L}(H)$ be the $*$-homomorphism given by $\Delta(v_h)=v_h\otimes v_h$, for all $h\in H$.
Since $\Delta(\text{L}(B))\subset\text{L}(B)\,\overline{\otimes}\,\text{L}(B)$ and $\text{L}(A)$ is a factor, we can extend $\Delta$ to a $*$-homomorphism  $\Delta\colon \text{L}(G)\rightarrow\text{L}(G)\,\overline{\otimes}\,\text{L}(G)$ such that $\Delta(1)=1\otimes p$ and $\Delta(\text{L}(A))\subset\text{L}(A)\,\overline{\otimes}\,\text{L}(A)$. The proof relies on the following claim.

\begin{claim}\label{1-cocycle} There are $(w_g)_{g\in G}\subset \emph{L}(A)\,\overline{\otimes}\,\emph{L}(A)$ such that $\Delta(u_g)=w_g(u_g\otimes u_g)$, for all $g\in G$.\end{claim}

\begin{proof}[Proof of Claim \ref{1-cocycle}]
Assume first that $p=1$. Let $g\in G$. Since $u_g\in \sN_{\text{L}(H)}(\text{L}(B))$ and $\text{L}(B)'\cap \text{L}(H)=\mathbb C1$, \cite[Corollary 5.3(ii)]{SWW07} gives  $\omega_g\in\sU(\text{L}(B))$ and $h\in H$ such that $u_g=\omega_gv_h$.
Since $\Delta(v_h)=v_h\otimes v_h$,  $w_g=\Delta(\omega_g)(\omega_g^*\otimes \omega_g^*)\in\text{L}(A)\,\overline{\otimes}\,\text{L}(A)$ satisfies the claim.

To prove the claim in general, let $g\in G$. Let $\zeta_g\in \text{L}(A)$ be a partial isometry such that $\zeta_g\zeta_g^*=p, \zeta_g^*\zeta_g=\sigma_g(p)$. Then we have $\zeta_gu_g\in\sN_{p\text{L}(G)p}(p\text{L}(A)p)=\sN_{\text{L}(H)}(\text{L}(B))$.
Since $\text{L}(B)'\cap \text{L}(H)=\mathbb C1$, \cite[Corollary 5.3(ii)]{SWW07} gives $\xi_g\in\sU(\text{L}(B))$ and $h\in H$ such that $\zeta_gu_g=\xi_gv_h$.
Replacing $\zeta_g$ with $\xi_g^*\zeta_g$, we may assume that $\zeta_gu_g=v_h$.
Since $\Delta(v_h)=v_h\otimes v_h$, we get that
$\Delta(\zeta_g)\Delta(u_g)=(\zeta_g\otimes\zeta_g)(u_g\otimes u_g)$ and thus $\Delta(\sigma_g(p)u_g)=\rho_g(u_g\otimes u_g)$, where $\rho_g=\Delta(\zeta_g)^*(\zeta_g\otimes\zeta_g)\in\text{L}(A)\,\overline{\otimes}\,\text{L}(A)$.
Since $\text{L}(A)$ is a factor, we can find partial isometries $v_1,\cdots, v_n\in\text{L}(A)$ such that $\sum_{i=1}^nv_iv_i^*=1$ and $v_i^*v_i\leq \sigma_g(p)$, for every $1\leq i\leq n$.
Then \begin{align*}\Delta(u_g)=\sum_{i=1}^n\Delta(v_i\sigma_g(p)v_i^*u_g)&=\sum_{i=1}^n\Delta(v_i)\Delta(\sigma_g(p)u_g)\Delta(\sigma_{g^{-1}}(v_i^*))\\&=\sum_{i=1}^n\Delta(v_i)\zeta_g(u_g\otimes u_g)\Delta(\sigma_{g^{-1}}(v_i^*))\end{align*}
and thus
$w_g=\sum_{i=1}^n\Delta(v_i)\zeta_g(\sigma_g\otimes\sigma_g)(\Delta(\sigma_{g^{-1}}(v_i^*)))\in\text{L}(A)\,\overline{\otimes}\,\text{L}(A)$ satisfies the claim.
\end{proof}

First, Claim \ref{1-cocycle} implies that $(w_g)_{g\in G}$ is a generalized $1$-cocycle for $\sigma\otimes\sigma$ with support projection $\Delta(1)=1\otimes p$.
This observation implies part 1.
Second, under the assumptions of part 2,
$(w_g)_{g\in G}$ is a $1$-cocycle for $\sigma\otimes\sigma$, so there are $z\in\sU(\text{L}(A)\,\overline{\otimes}\,\text{L}(A))$ and a character $\psi\colon G\rightarrow\mathbb T$ such that $w_g=\psi(g)z(\sigma_g\otimes \sigma_g)(z)^*$, for all $g\in G$. Hence, $\Delta(u_g)=\psi(g) z(u_g\otimes u_g)z^*$, for all $g\in G$. 

Finally, applying \cite[Lemma 3.4]{IPV10} implies that there are an injective group homomorphism $\delta\colon G\rightarrow H$, a character $\eta\colon G\rightarrow\mathbb T$ and $u\in\sU(\text{L}(H))$ such that  $u_g=\eta(g)uv_{\delta(g)}u^*$, for all $g\in G$.
 Thus, $\text{L}(G)=u\text{L}(\delta(G))u^*$. Since $\text{L}(G)=\text{L}(H)$, this forces that $\delta(G)=H$, and hence $\delta$ is an isomorphism. Also, denoting $C=\delta(A)<H$, we get that $u\text{L}(C)u^*=\text{L}(A)=\text{L}(B)$. Thus, $B,C\lhd H$ are normal subgroups such that $u\text{L}(C)u^*=\text{L}(B)$ and $\text{L}(B)'\cap \text{L}(H)=\mathbb C1$. Lemma \ref{equality} below gives that $B=C$, hence $\delta(A)=B$. 
 \end{proof}

\begin{lem}\label{equality}
Let $H$ be a countable group and $B,C\lhd H$ be normal subgroups such that  $u\emph{L}(B)u^*=\emph{L}(C)$, for some $u\in\mathcal U(\emph{L}(H))$.
Assume that $\emph{L}(B)'\cap \emph{L}(H)=\mathbb C1$ or $\emph{L}(B)\subset\emph{L}(H)$ is a Cartan subalgebra.
 Then $B=C$.

\end{lem}

\begin{proof} 
We claim that $D:=B\cap C$ has finite index in $B$.
Indeed, since $\text{L}(B)\prec_{\text{L}(H)}\text{L}(C)$, \cite[Lemma 2.2]{CI17} implies that there is $h\in H$ such that $[B:B\cap hCh^{-1}]<\infty$. Since $C$ is normal in $H$, the claim follows.

Second, we claim that $u\in\mathcal N_{\text{L}(H)}(\text{L}(B))$. Note that $u^*\text{L}(D)u\subset u^*\text{L}(C)u=\text{L}(B)$ and let $\theta:\text{L}(D)\rightarrow \text{L}(B)$ be the $*$-homomorphism given by $\theta(x)=u^*xu$. Then $u\theta(x)=xu$, for every $x\in\text{L}(D)$. Let $S\subset H$ be a complete set of representatives of $B$-cosets, i.e., such that $H=\sqcup_{g\in S}Bg$. For $g\in S$, let $c_g=\text{E}_{\text{L}(B)}(uu_g^*)$. Then $u=\sum_{g\in S}c_gu_g$ and  by applying $\text{E}_{\text{L}(B)}$ to the equality $u\theta(x)u_g^*=xuu_g^*$ we get that \begin{equation}\label{components}\text{$c_gu_g\theta(x)=xc_gu_g$, for every $g\in S$ and $x\in\text{L}(D)$.} \end{equation}

Next, we consider two cases. Assume first that $\text{L}(B)'\cap \text{L}(H)=\mathbb C1$. Then the set $\{bgb^{-1}\mid b\in B\}$ is infinite, for every $g\in G\setminus\{e\}$. Since $D<B$ has finite index, we get that $\text{L}(D)'\cap \text{L}(H)=\mathbb C1$.
Since \eqref{components} gives that $(c_gu_g)(c_hu_h)^*\in\text{L}(D)'\cap \text{L}(H)$ we conclude that $(c_gu_g)(c_hu_h)^*\in\mathbb C1$, for every $g,h\in S$. This implies that $c_gc_g^*\in\mathbb C1$, for every $g\in S$. Let $g\in S$ with $c_g\not=0$. Let us show that $c_h=0$, for every $h\in S\setminus\{g\}$. 
Indeed, if $c_h\not=0$, then $c_g$ and $c_h$ are invertible which implies that $(c_gu_g)(c_hu_h)^*\not=0$ and contradicts that $(c_gu_g)(c_hu_h)^*\in\mathbb C1$.
Therefore, $u=c_gu_g$ and $c_g\in\mathcal U(\text{L}(B))$, and so $u\in\mathcal N_{\text{L}(H)}(\text{L}(B))$.

Now, assume that $\text{L}(B)\subset\text{L}(H)$ is a Cartan subalgebra. Then $\text{L}(B)'\cap\text{L}(H)=\text{L}(B)$, hence $\{bgb^{-1}\mid b\in B\}$ is an infinite set, for every $g\in H\setminus B$. Since $D<B$ has finite index, we get that $\text{L}(D)'\cap\text{L}(H)\subset\text{L}(B)$. Since $(c_gu_g)(c_hu_h)^*\in\text{L}(D)'\cap \text{L}(H)$  by \eqref{components}, we conclude that $(c_gu_g)(c_hu_h)^*\in\text{L}(B)$, for every $g,h\in S$. Thus, $(c_gu_g)(c_hu_h)^*=0$,  for every $g,h\in S$ with $g\not=h$.  Using this and that $B$ is abelian, for every $b\in\text{L}(B)$ we have that 
\begin{align*}ubu^*=\sum_{g,h\in S}(c_gu_g)b(c_hu_h)^* &=\sum_{g,h\in S}(c_gu_g)(c_hu_h)^*\text{Ad}(u_{h^{-1}})(b)\\&=\sum_{g\in S}c_gc_g^*\text{Ad}(u_g)(b)\in\text{L}(B).\end{align*}


In either case, we have shown that $u\in\mathcal N_{\text{L}(H)}(\text{L}(B))$. Hence, $\text{L}(C)=u\text{L}(B)u^*=\text{L}(B)$ and thus $B=C$.
\end{proof}

\begin{thm}\label{criterion}
Let $G$ be an ICC countable group and $A\lhd G$ a normal abelian subgroup such that $\emph{L}(A)\subset\emph{L}(G)$ is a Cartan subalgebra.
Consider the pmp action $G/A\curvearrowright^{\alpha}(\widehat{A},\mu)$ given by the conjugation action of $G$ on $A$, where $\mu$ is the Haar measure of $\widehat{A}$.
Let $p\in \emph{L}(A)$ be a projection. Let $B<H$ be an inclusion of countable groups and $\theta\colon p\emph{L}(G)p\rightarrow \emph{L}(H)$ a $*$-isomorphism such that $\theta(\emph{L}(A)p)=\emph{L}(B)$.
\begin{enumerate}
\item Assume that $\sR(G/A\curvearrowright \widehat{A})\cap (Y\times Y)$ is not the OE relation of any free pmp action of a countable group for any measurable set $Y\subset \widehat{A}$ with $0<\mu(Y)<1$. Then $p=1$.

\item
Assume that $p=1$, any $1$-cocycle for $\sigma\otimes\sigma$ is cohomologous to a character  of $G$ and $\alpha$ is OE superrigid. Then there are a group isomorphism $\delta\colon G\rightarrow H$, a character $\eta\colon G\rightarrow\mathbb T$ and $u\in\sU(\emph{L}(H))$ with $\delta(A)=B$ and $\theta(u_g)=\eta(g)uv_{\delta(g)}u^*$ for all $g\in G$.
\end{enumerate}
\end{thm}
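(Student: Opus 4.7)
The plan is to follow the strategy of Theorem \ref{criterion1}, substituting orbit equivalence arguments for the factor/normalizer arguments available in the ICC case. After identifying $p\text{L}(G)p=\text{L}(H)$ and $p\text{L}(A)p=\text{L}(B)$ via $\theta$, observe that $\text{L}(A)p\subset p\text{L}(G)p$ is the compression of the Cartan $\text{L}(A)\subset \text{L}(G)$ by a projection in it, hence itself Cartan; therefore so is $\text{L}(B)\subset\text{L}(H)$, and in particular $H/B\curvearrowright\widehat{B}$ is a free pmp action. By the Feldman--Moore correspondence, these Cartan pairs yield the equivalence relations $\sR(\alpha)\cap (Y\times Y)$ and $\sR(H/B\curvearrowright\widehat{B})$ respectively, where $Y\subset\widehat{A}$ is the support of $p$ (with $\mu(Y)=\tau(p)$). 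Since $\theta$ makes these relations isomorphic, and the second is a free pmp OE relation, the hypothesis of Part 1 forces $\mu(Y)=1$, so $p=1$.

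For Part 2, assume $p=1$, and use $\theta|_{\text{L}(A)}$ to identify $(\widehat{A},\mu)=(\widehat{B},\nu)$ as probability spaces. Then $\sR(\alpha)=\sR(H/B\curvearrowright\widehat{A})$, both being orbit relations of free pmp actions. OE superrigidity of $\alpha$ yields $\psi\in[\sR(\alpha)]$ with $\psi(H/B)\psi^{-1}=G/A$ inside $Aut(\widehat{A},\mu)$. The element $\psi$ is implemented by a unitary $u_0\in\sN_{\text{L}(G)}(\text{L}(A))$; replacing $\theta$ by $\text{Ad}(u_0)\circ\theta$, we may assume $G/A=H/B$ as subgroups of $Aut(\widehat{A},\mu)$. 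Now for each $g\in G$, the automorphism $\text{Ad}(u_g)|_{\text{L}(A)}$ coincides with $\text{Ad}(v_{\delta(g)})|_{\text{L}(A)}$ for a suitable $\delta(g)\in H$; since $\text{L}(A)$ is maximal abelian in $\text{L}(G)$, this produces $\zeta_g\in\sU(\text{L}(A))$ with $u_g=\zeta_g v_{\delta(g)}$.

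With this discretization in hand, the argument of Claim \ref{1-cocycle} and the conclusion of the proof of Theorem \ref{criterion1}, Part 2, apply verbatim. The comultiplication $\Delta\colon\text{L}(H)\to\text{L}(H)\,\overline{\otimes}\,\text{L}(H)$, $\Delta(v_h)=v_h\otimes v_h$, yields $\Delta(u_g)=w_g(u_g\otimes u_g)$ with $w_g=\Delta(\zeta_g)^*(\zeta_g\otimes\zeta_g)\in\text{L}(A)\,\overline{\otimes}\,\text{L}(A)$, and the family $(w_g)_{g\in G}$ is a $1$-cocycle for $\sigma\otimes\sigma$. The hypothesis supplies $z\in\sU(\text{L}(A)\,\overline{\otimes}\,\text{L}(A))$ and $\eta\in Char(G)$ with $\Delta(u_g)=\eta(g)z(u_g\otimes u_g)z^*$; applying \cite[Lemma 3.4]{IPV10} then delivers the required group isomorphism $\delta\colon G\to H$, character, and conjugating unitary $u$. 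The equality $\delta(A)=B$ follows from the chain $g\in A\iff u_g\in\text{L}(A)\iff v_{\delta(g)}\in\text{L}(B)\iff\delta(g)\in B$.

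The main obstacle is converting orbit equivalence data into a bona fide group isomorphism. OE superrigidity alone conjugates $G/A$ and $H/B$ only at the level of quotients acting on $\widehat{A}$; upgrading this to $G\cong H$ requires both the maximal abelianness of the Cartan (to produce the partial discretization $u_g=\zeta_g v_{\delta(g)}$ via $\text{L}(A)'\cap\text{L}(G)=\text{L}(A)$) and the full strength of cocycle superrigidity for $\sigma\otimes\sigma$—rather than merely $\sigma$—so that the doubled cocycle $(w_g)$ trivializes and the IPV conjugacy lemma becomes applicable.
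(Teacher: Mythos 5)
Your proposal is correct and follows essentially the same route as the paper: part 1 via the Feldman--Moore identification of the restricted relation $\sR(\alpha)\cap (Y\times Y)$ with the free orbit relation of $H/B\curvearrowright\widehat{B}$ coming from the Cartan pair $\text{L}(B)\subset\text{L}(H)$, and part 2 via OE superrigidity together with a normalizing unitary to discretize $u_g\in\sU(\text{L}(A))v_h$, after which the comultiplication/$1$-cocycle argument and \cite[Lemma 3.4]{IPV10} are applied exactly as in the proof of Theorem \ref{criterion1}. The only slip is notational: with your convention $u_g=\zeta_g v_{\delta(g)}$ the cocycle should be $w_g=\Delta(\zeta_g)(\zeta_g^*\otimes\zeta_g^*)$ rather than $\Delta(\zeta_g)^*(\zeta_g\otimes\zeta_g)$, which is harmless since $\text{L}(A)\,\overline{\otimes}\,\text{L}(A)$ is abelian and the inverse of an abelian-valued $1$-cocycle is again a $1$-cocycle.
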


\begin{proof} Identify $\text{L}(H)=p\text{L}(G)p$ and $\text{L}(A)p=\text{L}(B)$, via $\theta$.
Let $\nu$ be the Haar measure of $\widehat{B}$ and  $H/B\curvearrowright^{\beta} (\widehat{B},\nu)$ be the pmp action induced by the conjugation action of $H$ on $B$. Identify $\text{L}(A)=\text{L}^{\infty}(\widehat{A})$ and $\text{L}(B)=\text{L}^{\infty}(\widehat{B})$.
Since $\text{L}(A)\subset\text{L}(G)$ and $\text{L}(B)\subset\text{L}(H)$ are Cartan subalgebras, the actions $\alpha$ and $\beta$ are free.
Moreover, we can find $\mathbb T$-valued $2$-cocycles $v,w$  for $\alpha,\beta$, respectively, such that $\text{L}(G)\cong\text{L}^\infty(\widehat{A})\rtimes_{\alpha,v}G/A$ and $\text{L}(H)\cong\text{L}^\infty(\widehat{B})\rtimes_{\beta,w} H/B$.

Let $Y\subset\widehat{A}$ be a measurable set with $p={\bf 1}_Y$.
Using the identification $\text{L}(A)p=\text{L}(B)$, we identify the probability spaces  $(Y,\mu(Y)^{-1}\mu|Y)$ and $(\widehat{B},\nu)$. Then  $\alpha(G/A)\cdot x\cap Y=\beta(H/B)\cdot x$, for almost every $x\in Y$.
 Hence, $\sR(G/A\curvearrowright \widehat{A})\cap (Y\times Y)$ is equal to the OE relation of a free action, $\beta$. This forces $\mu(Y)=1$ and thus $p=1$, which  proves part 1.

To prove part 2, assume that $p=1$, i.e., $Y=\widehat{A}$.
Since the action $G/A\curvearrowright \widehat{A}$ is OE superrigid, there are a group isomorphism $\varepsilon\colon G/A\rightarrow H/B$ and $\rho\in [\sR(H/B\curvearrowright \widehat{B})]$ such that $\rho\circ\alpha_g=\beta_{\varepsilon(g)}\circ\rho$, for all $g\in G/A$. Let $u\in\sN_{\text{L}(H)}(\text{L}(B))$ such that $ufu^*=f\circ\rho^{-1}$, for all $f\in L^{\infty}(\widehat{B})=\text{L}(B)$.
For $g\in G/A$ and $h\in H/B$ fix $\widehat{g}\in gA$ and $\widehat{h}\in hB$.
Then $\alpha_g=\text{Ad}(u_{\widehat{g}})$ and $\beta_h=\text{Ad}(v_{\widehat{h}})$.
Hence, we can find $(\zeta_g)_{g\in  G/A}\subset\sU(\text{L}(B))$ such that $uu_{\widehat{g}}u^*=\zeta_gv_{\widehat{\varepsilon(g)}}$, for every $g\in G/A$. After replacing $\theta$ by $\text{Ad}(u)\circ\theta$, we  still have $\text{L}(A)=\text{L}(B)$ and $u_{\widehat{g}}=\zeta_gv_{\widehat{\varepsilon(g)}}$, for every $g\in G/A$.
Finally, let $g\in G$. Then $a=g\widehat{gA}^{-1}\in A$. Let $\omega_g=u_a\zeta_{\widehat{gA}}\in \sU(\text{L}(B))$ and $h=\widehat{\varepsilon(gA)}\in H$. Thus, for every $g\in G$, we found $h\in H$ and $\omega_g\in\sU(\text{L}(B))$ such that $u_g=\omega_gv_h$. Repeating the proof of part 2 of Theorem \ref{criterion1} and using Lemma \ref{equality} (in the abelian case) gives the conclusion. \end{proof}


\subsection{Computations of outer automorphism groups}\label{compute}

In this subsection we prove Theorems \ref{symmetries'} and \ref{Out}.

\begin{thm}\label{symmetries}
Let $A,C$ be non-trivial groups that are either abelian or ICC.
Let $B,D$ be ICC subgroups of hyperbolic groups, or, more generally, nonparabolic ICC subgroups of groups which are hyperbolic relative to a finite family of finitely generated, residually finite groups.

 Let $G\in\mathcal W\mathcal R(A,B \ca I)$ and $H\in\mathcal W\mathcal R(C,D\ca J)$ be property (T) groups, where $B \ca I$ and $D\ca J$ are faithful actions with infinite orbits.

Let $p\in {\rm L}(G)$ be a projection and let  $\theta:p{\rm L}(G)p\rightarrow {\rm L}(H)$ be a $*$-isomorphism.  Then $p=1$ and there are a group isomorphism $\delta:G\rightarrow H$, a character $\rho:G\rightarrow\mathbb T$ and $u\in\sU({\rm L}(H))$ such that $\delta(A^{(I)})=C^{(J)}$ and $\theta(u_g)=\rho(g)uv_{\delta(g)}u^*$, for every $g\in G$.
\end{thm}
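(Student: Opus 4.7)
The approach mirrors the outline at the start of Section \ref{OUT}: we reduce the theorem to the criteria of Theorem \ref{criterion1} (when $A, C$ are ICC) and Theorem \ref{criterion} (when $A, C$ are abelian). Identify $\M := \text{L}(H)$ with $p\text{L}(G)p$ via $\theta$, and write $\P := \theta(p\text{L}(A^{(I)})p)$ and $\Q := \text{L}(C^{(J)})$. The plan has three stages: locate the base algebra (showing $\P \prec_\M \Q$ and $\Q \prec_\M \P$), conjugate it by a unitary, and then invoke the criterion.

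For the location stage, I apply Theorem \ref{relativeT} to the quotient homomorphism $\varepsilon_H\colon H\to D$, taking the trivial von Neumann algebra in the role of the ``$\Q$'' in that theorem so that its $\ker(\varepsilon_H)$--part becomes precisely $\text{L}(C^{(J)})$. When $D$ is a subgroup of a hyperbolic group, parts (a), (b) apply directly; in the relatively hyperbolic case, Lemma \ref{exact} yields a short exact sequence $1\to S\to D\to K\to 1$ with $S$ either trivial or a non-trivial free product and $K$ a non-elementary subgroup of a hyperbolic group, and parts (c), (d) dispose of the free-product part to reduce to the hyperbolic case. In the abelian base case, $\P$ is amenable, so I take $\R := \M$ itself, which has the relative property (T) since $G$ does and satisfies $\R\not\prec_\M\Q$ because $D$ is infinite; thus $\P \prec^s_\M \Q$ by part (a). In the ICC base case, $\P$ is non-amenable, so I decompose $I = I_1\sqcup I_2$ into infinite parts, set $\A := \theta(p\text{L}(A^{(I_1)})p)$ and $\B := \theta(p\text{L}(A^{(I_2)})p)$ (two commuting subalgebras jointly normalized by $\M$), and apply part (b) with $\R := \M$ to conclude $\A\prec_\M\Q$ or $\B\prec_\M\Q$; iterating over finer partitions and amalgamating produces $\P\prec_\M\Q$. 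The reverse intertwining follows symmetrically by applying the same argument to $\theta^{-1}$.

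For the conjugation stage, faithfulness of the orbit actions together with Lemma \ref{Lem:Centr} implies that $\P, \Q$ are Cartan subalgebras of $\M$ in the abelian case, and regular irreducible subfactors with ICC normalizer quotients ($\cong B$ and $D$) in the ICC case. Theorem \ref{Po01b} or Theorem \ref{IPP05} respectively produces $w\in\sU(\M)$ with $w\P w^* = \Q$. Replacing $\theta$ with $\text{Ad}(w^*)\circ\theta$, I may assume $\theta(p\text{L}(A^{(I)})p) = \text{L}(C^{(J)})$, setting up the hypotheses of the criteria. For the invocation stage, Lemma \ref{coind} identifies the conjugation action $G\curvearrowright^\sigma \text{L}(A^{(I)})$ as an action built over $G\curvearrowright I$, which has infinite orbits since $B\curvearrowright I$ does. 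Corollary \ref{CS}, applied to both $\sigma$ and $\sigma\otimes\sigma$ (the latter built over the diagonal action on $I\sqcup I$, which also has infinite orbits), supplies the cocycle and generalized cocycle superrigidity required by Theorem \ref{criterion1}; in the abelian case, Lemma \ref{OES} additionally provides OE superrigidity of the associated pmp action for Theorem \ref{criterion}. Invoking the appropriate criterion yields $p=1$, an isomorphism $\delta\colon G\to H$ with $\delta(A^{(I)})=C^{(J)}$, a character $\rho\colon G\to\mathbb T$, and a unitary $u\in\sU(\text{L}(H))$ with $\theta(u_g)=\rho(g)uv_{\delta(g)}u^*$ for every $g\in G$.

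The main obstacle is the location stage in the ICC base case under the relatively hyperbolic hypothesis: $\P$ is non-amenable, ruling out Theorem \ref{relativeT}(a), while $D$ is only a subgroup of a relatively hyperbolic group, forcing the commuting-subalgebra argument (parts (b), (d) of Theorem \ref{relativeT}) to be combined with the decomposition of Lemma \ref{exact}. One must carefully juggle non-amenability with the reduction to hyperbolic or free-product kernels, and then glue the partial intertwinings of the commuting factors into a single intertwining of $\P$ into $\Q$.
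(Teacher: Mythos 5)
Your plan is the paper's plan in outline: locate the base algebra with Theorem \ref{relativeT} (using Lemma \ref{exact} in the relatively hyperbolic case), conjugate it onto $\Q={\rm L}(C^{(J)})$ via Theorem \ref{Po01b} or \ref{IPP05}, then conclude with Lemma \ref{coind}, Corollary \ref{CS}, Lemma \ref{OES} and Theorems \ref{criterion1}/\ref{criterion}. But the two places you leave as slogans are where the proof actually lives, and as phrased they would not go through. In the ICC-base case, your claim that non-amenability of $\P$ ``rules out'' Theorem \ref{relativeT}(a) is a misreading that hides the mechanism you need for the deferred ``amalgamation'': part (a) only requires $\P$ to be amenable \emph{relative} to ${\rm L}(T)$ (resp.\ $\Q$), and the paper's gluing of partial intertwinings ends exactly by producing that relative amenability and then applying part (a). Concretely, for every $F\subseteq I$ part (b) gives $\R_F\prec{\rm L}(T)$ or $\R_{F^c}\prec{\rm L}(T)$; these are upgraded to $\prec^{\rm s}$ using irreducibility of $p\P p$ (Lemma \ref{Lem:Centr} together with \cite[Lemma 2.4(3)]{DHI16}); the family of good subsets is closed under unions because the target is regular; and then either every singleton is good, in which case \cite[Lemmas 2.6(3) and 2.7]{DHI16} give relative amenability of $p\P p$ and part (a) finishes, or some singleton $\{i\}$ is bad, in which case $I\setminus\{i\}$ is good and the infinite-orbit hypothesis lets you translate it by a suitable $u_g$ and cover $I$ by two good sets. ``Iterating over finer partitions'' has no comparable limiting mechanism and does not by itself assemble the commuting pieces into $\P\prec\Q$.

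In the relatively hyperbolic case the two reductions also run in the opposite order from your description, and the hypotheses of parts (c)/(d) are not free. You cannot apply (c)/(d) to $\varepsilon_H\colon H\to D$: a property (T) group $D$ is never a nontrivial free product. The free product $S\cong T/C^{(J)}$ from Lemma \ref{exact} only becomes available after the hyperbolic quotient $K$ has been used, via parts (a)/(b) applied to $H\to K$, to intertwine $\theta(p\P p)$ into ${\rm L}(T)$ with $T=\ker(H\to K)$ (Claim \ref{kerpi}); only then do parts (c)/(d), applied to $T\to S$ with kernel $C^{(J)}$, take over (Claim \ref{core}). (When $D$ itself sits in a hyperbolic group, $T=C^{(J)}$ and this second step is vacuous, as you implicitly use.) Moreover (c)/(d) demand a finite-index normalizer inside the ambient algebra, and after an intertwining you only hold a corner and a $*$-homomorphism into $q{\rm L}(T)q$; verifying that hypothesis is precisely what \cite[Proposition 3.6]{CIK13} (abelian base) and the argument with $E_{q{\rm L}(T)q}(v\Nn v^*)''$ and \cite[Lemma 2.3]{CIK13} (ICC base) accomplish in the paper, and it is missing from your plan.
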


\begin{proof} 
We denote $\M={\rm L}(G), \P_0={\rm L}(A), \P={\rm L}(A^{(I)})=\P_0^{I}, \Nn={\rm L}(H), \Q_0={\rm L}(C)$ and $\Q={\rm L}(C^{(J)})=\Q_0^{J}$.  Since $\M$ is a factor we can assume $p\in \P$.
Since $H$ has property (T), so does $D$.
By applying Lemma \ref{exact} to $D$, we find a short exact sequence $$1\ra  S \ra D \overset{\varepsilon}{\ra} K\ra 1,$$ where $ S$ is either trivial or a nontrivial free product, $S= S_1\ast  S_2$ with $|S_1|\geq 2$ and $|S_2|\geq 3$, and $K$ is a non-elementary subgroup of a hyperbolic group. Let $\pi=\varepsilon\circ\eta:H\rightarrow K$, where $\eta:H\rightarrow D$ is the quotient homomorphism.
Denote $T=\ker(\pi)$. Then $\eta(T)=\ker{\varepsilon}= S$, $D^{(J)}<T<H$, and $T<H$ has infinite index.

Next we establish the following:
\begin{claim}\label{kerpi}
$\theta(p\P p)\prec_{\Nn}{\rm L}(T)$.
\end{claim}

\noindent\emph{Proof of Claim \ref{kerpi}}. We identify $\mathcal N$ with $p\mathcal Mp$, via $\theta$. Since $T<H$ has infinite index, $\Nn\nprec_{\Nn}{\rm L}(T)$.
 Assume first that $A$ is abelian. As
$p\P p\subset \Nn$ is abelian and regular, $\Nn$ has property (T) and $\Nn\nprec_{\Nn}{\rm L}(T)$,  Theorem \ref{relativeT} (a) applied to the trivial crossed product decomposition $\Nn={\rm L}(H)=\mathbb C1\rtimes H$ and the homomorphism $\pi:H\rightarrow K$
gives the conclusion.

Now assume  $A$ is ICC.  In this case, we can suppose that $p\in \P^j_0$ for some $j\in I$.  For $F\subset I$, let $\R_F=p\P_0^Fp $. As  $p\P p=\R_F\,\overline{\otimes}\,\R_{F^c}$, Theorem \ref{relativeT} (b) implies that either $\R_F\prec_{\Nn}{\rm L}(T)$ or $\R_{F^c}\prec_\Nn {\rm L}(T)$. Since $p\P p\subset\sN_{\Nn}(\R_F)''\cap \sN_{\Nn}(\R_{F^c})''$ and $(p\P p)'\cap \Nn=(p\P p)'\cap p\M p=\mathbb C1$, \cite[Lemma 2.4(3)]{DHI16} further implies that either $\R_F\prec_{\Nn}^{\text{s}}{\rm L}(T)\text{ or }\R_{F^c}\prec^{\text{s}}_\Nn {\rm L}(T)$.

Let $\sF$ be the family of all $F\subset I$ such that  $\R_F\prec_{\Nn}^{\text{s}}{\rm L}(T)$. Note that $\sF\neq \emptyset$. Since ${\rm L}(T)$ is regular in $\Nn$, $\sF$ is closed under unions.
First assume that $\{i\}\in\sF$, for every $i\in I$. Let $(i_n)_{n\in\mathbb N}$ be an enumeration of $I$. Then  $I_n=\{i_1,\ldots,i_n\}=\{i_1\}\cup\ldots\cup\{i_n\}\in\sF$, for all $n\in\mathbb N$.
 By combining \cite[Lemmas 2.6(3) and 2.7]{DHI16},
we get that $p\P p= \big({\bigcup_n \R_{I_n}}\big)''$ is amenable relative to $\text{L}(T)$ inside $\Nn$. Theorem \ref{relativeT} (a) further gives that $p\P p\prec_{\Nn}^{\text{s}}{\rm L}(T)$, as desired.
Secondly, suppose that $\{i\}\notin\sF$, for some $i\in I$. The previous paragraph implies that $I\setminus\{i\}\in\sF$, i.e., $\R_{I\setminus\{i\}}\prec_\Nn^{\text s}{\rm L}(T)$.
This implies that $\P_0^{I\setminus\{i\}}\prec_{\M}^{\text s}{\rm L}(T)$. Let $\gamma:G\rightarrow B$ the quotient homomorphism. Since the orbit $B\cdot i$ is infinite, there is $g\in G$ with $\gamma(g)\cdot i\not=i$.
Since $\P_0^{I\setminus\{\gamma(g)\cdot i\}}=u_g\P_0^{I\setminus\{i\}}u_g^*$, we get that
 $\P_0^{I\setminus\{\gamma(g)\cdot i\}}\prec_{\M}^{\text s}{\rm L}(T)$. Since $\mathcal F$ is closed under unions and $I=(I\setminus\{i\})\cup (I\setminus\{\gamma(g)\cdot i\})$, it follows that $\P\prec_{\M}^{\text s}{\rm L}(T)$ and hence $p\P p\prec_{\Nn}^{\text{s}}{\rm L}(T)$.$\hfill\blacksquare$

We continue with the following:

\begin{claim}\label{core}
$\theta(p\P p)\prec_\Nn\Q$.
\end{claim}

Before proving this claim, note that if $D$ is a subgroup of a hyperbolic group, then we can take $ S=\{e\}$, $K=D$ and $\varepsilon=\text{Id}$. Thus, in this case, $T=C^{(J)}$ and Claim \ref{core}  is redundant. Therefore,  from now on, we may assume that $ S$ is a nontrivial free product.

\noindent \emph{Proof of Claim \ref{core}}. Assume first that $A$ is abelian. Then by Lemma \ref{Lem:Centr} we have  $\theta(p\P p)\subset \Nn$ is a Cartan subalgebra. Since $\theta(p\P p)\prec_\Nn {\rm L}(T)$, \cite[Proposition 3.6]{CIK13} gives nonzero projections $e\in \theta(p\P p),q\in {\rm L}(T)$, a masa $\R\subset q{\rm L}(T)q$, a projection $q'\in \R'\cap q\Nn q$ and $u\in\sU(\Nn)$ such that the inclusion $\sN_{q {\rm L}(T)q}(\R)''\subset q {\rm L}(T)q$ has finite index and $u\theta(p\P p)eu^*=\R q'$. Since $\eta(T)= S$ is a nontrivial free product and $\ker(\eta)=C^{(J)}$,  Theorem \ref{relativeT} (c) gives that $\R\prec_{{\rm L}(T)}^{\rm s}{\rm L}(C^{(J)})=\Q$. Thus, $\R q'\prec_{\M}\Q$, which implies that $\theta(p\P p)\prec_\Nn\Q$.

Secondly, assume that $A$ is ICC. Since $\P_0$ is a factor we can assume that $p\in \P_0^j$ for some $j\in I$.  Lemma \ref{Lem:Centr} implies that $\theta(p\P p)\subset \Nn$ is an irreducible subfactor. Since $\theta(p\P p)\prec_\Nn {\rm L}(T)$, we can find nonzero projections $e\in p\P p,q\in {\rm L}(T)$,  a partial isometry $v$ and a $*$-homomorphism $\Psi:\theta(e\P e)\rightarrow q {\rm L}(T)q$ such  that $v^*v=\theta(e)$, $vv^*\leq q$ and $\Psi(x)v=vx$, for every $x\in \theta(e\P e)$. Let $\R:=\Psi(\theta(e\P e))$.
Then $v^*(\R'\cap q\Nn q)v\subset \theta(e\P e)'\cap \theta(e)\Nn \theta(e)=\mathbb C\theta(e)$, and therefore $v^*(\R'\cap q {\rm L}(T)q)v=\mathbb C\theta(e)$.  So, there is a minimal projection $q'\in \R'\cap q{\rm L}(T)q$ with $q'v\not=0$. After replacing $\Psi$ by $\Psi q'$ and $v$ by the partial isometry in the polar decomposition of $q'v$,  we may assume that $\R\subset q {\rm L}(T)q$ is an irreducible subfactor.

Next, we claim that $\sN_{q{\rm L}(T)q}(\R)''\subset q {\rm L}(T)q$ has finite index.
Since $\theta(p\P p)\subset \Nn$ is a regular irreducible subfactor, so is  $\theta(e\P e)\subset \theta(e)\Nn \theta(e)$.
Fix $u\in\sN_{\theta(e)\Nn \theta(e)}(\theta(e\P e))$ and put $\alpha=\text{Ad}(u)\in {Aut}(\theta(e\P e))$. Define $\beta=\Psi\circ\alpha\circ\Psi^{-1}\in {Aut}(\R)$. Then $\beta(y)vuv^*=vuv^*y$, for all $y\in \R$.  If $w=E_{q {\rm L}(T)q}(vuv^*)$, then
$\beta(y)w=wy$, for all $y\in \R$. Since $\R\subset q{\rm L}(T)q$ is an irreducible subfactor, we get that $w^*w,ww^*\in\mathbb Cq$, which implies that $w\in\mathbb C\sN_{q{\rm L}(T)q}(\R)$. Since $u\in\sN_{\theta(e)\Nn \theta(e)}(\theta(e\P e))$ is arbitrary and $\theta(e\P e)$ is regular in $\theta(e)\Nn \theta(e)$, we get that
\begin{equation}\label{finindex}
E_{q {\rm L}(T)q}(v\Nn v^*)''\subset\sN_{q {\rm L}( T)q}(\R)''.
\end{equation}
By \cite[Lemma 2.3]{CIK13}, we can find a nonzero projection $q_0\in  E_{q {\rm L}(T)q}(v\Nn v^*)''$ such that the inclusion $q_0E_{q {\rm L}(T)q}(v\Nn v^*)''q_0\subset q_0{\rm L}(T)q_0$ has finite index. In combination with \eqref{finindex}, it follows that $q_0\sN_{q{\rm L}(T)q}(\R)''q_0\subset
q_0{\rm L}(T)q_0$ has finite index. Since $\R\subset q\text{L}(T)q$ is an irreducible subfactor, we get that $\sN_{q{\rm L}(T)q}(\R)''$ is a factor, which implies the claim.

Since $\P_0$ is a factor, we may assume that $e\in p\P_0^jp$. For $F\subset I$,  denote $\R_F=\Psi(\theta(e\P_0^Fe))$. Since $\R=\R_F\,\overline{\otimes}\,\R_{F^c}$, $\sN_{q {\rm L}(\Omega)q}(\R)''\subset q {\rm L}(T)q$ has finite index,  $\eta(T)= S$ is a nontrivial free product and $\ker(\eta)=C^{(J)}$,  Theorem \ref{relativeT} (d)
implies that either $\R_F\prec_{{\rm L}(T)}\Q$ or $\R_{F^c}\prec_{{\rm L}(T)}\Q$.  Since $\R\subset\sN_{q {\rm L}(T)q}(\R_F)''\cap \sN_{q {\rm L}(T)q}(\R_F^c)''$ and $\R'\cap q {\rm L}(T)q=\mathbb Cq$, by \cite[Lemma 2.4(3)]{DHI16} these further give either $\R_F\prec_{{\rm L}(T)}^{\text s}\Q$ or $\R_{F^c}\prec^{\text{s}}_{{\rm L}(T)}\Q$, for any $F\subset I$. Let
$\sF$ be the family of all subsets $F\subset I$ such that $\R_F\prec^{\text{s}}_{{\rm L}(\Omega)}\Q$. If $F\in\sF$, since $\theta(e\P_0^Fe)=v^*\R_Fv$, $\theta(p\P p)\subset\sN_{\Nn}(\theta(p\P_0^Fp))''$ and $\theta(p\P p)'\cap \Nn=\mathbb C1$, we get that $\theta(p\P_0^Fp)\prec_{\Nn}^{\text s}\Q$.

We end the proof of Claim \ref{core} by treating two cases.
Assume first that $\{i\}\in\sF$, for every $i\in I$. Let $(i_n)_{n\in\mathbb N}$ be an enumeration of $I$. Since $\Q$ is regular in ${\rm L}(T)$, we get that $\{i_1,\ldots,i_n\}\in\sF$, for every $n\in\mathbb N$.
 By combining \cite[Lemmas 2.6(3) and 2.7]{DHI16},
we get that $\R=\R_{I}$ is amenable relative to $\Q$ inside ${\rm L}(T)$. Since $\sN_{q{\rm L}(T)q}(\R)''\subset q{\rm L}(T)q$ has finite index, by Theorem \ref{relativeT}  (c) we get that $\R\prec_{{\rm L}(T)}^{\text s}\Q$ and so $\theta(p\P p)\prec_\Nn \Q$.
Secondly, suppose $\{i\}\notin\sF$, for some $i\in I$. Then $I\setminus\{i\}\in\sF$, hence $\theta(p\P_0^{I\setminus\{i\}}p)\prec_\Nn^{\text s}\Q$. Using the same argument from the last part of the proof of Claim \ref{kerpi} we get
that $\theta(p\P p)\prec_\Nn^{\text s}\Q$.  This finishes the proof. $\hfill\blacksquare$

\vskip 0.05in
To finish the proof of Theorem \ref{symmetries}, let
 $G\curvearrowright^{\sigma}\P$ be the action given by $\sigma(g)=\text{Ad}(u_g)$, for every $g\in G$. By Corollary \ref{CS} any $1$-cocycle for the diagonal action $G\curvearrowright^{\sigma\otimes\sigma}\P\,\overline{\otimes}\,\P$ is cohomologous to a character of $G$.

Assume first that $A$ and $B$ are nontrivial abelian groups. Then $\theta(\P p),\Q\subset \Nn$ are Cartan subalgebras. Since by Claim \ref{kerpi} we have $\theta(\P p)\prec_\Nn\Q$, Theorem \ref{Po01b} implies that $u\theta(\P p)u^*=\Q$, for some $u\in\sU(\Nn)$. Thus, after replacing $\theta$ by $\text{Ad}\circ\theta$, we may assume that $\theta(\P p)=\Q$.
 Since $B$ has property (T) and is ICC, by Lemma \ref{OES} the action of $B$ on $\widehat{A^{(I)}}$ is OE-superrigid. Using Corollary \ref{CS} and Theorem \ref{criterion} we get the conclusion.

Secondly, assume that $A$ and $B$ are ICC groups. Claim \ref{core} entails that $\theta(p\P p)\prec_\Nn\Q$. Reversing the roles of $G$ and $H$ and arguing similarly implies that  $\theta^{-1}(\Q)\prec_\M \P$ and hence $\Q\prec_{\Nn} \theta (p\P p)$.  By Lemma \ref{Lem:Centr}, the regular subfactors $\theta(p P p) ,\Q\subset \Nn$ are irreducible. Since $\sN_\Nn(\theta(p\P p))/\sU(\theta(p\P p))\cong B,\sN_\Nn(\Q)/\sU(\Q)\cong D$ are ICC groups, Theorem \ref{IPP05} implies that $u\theta(p\P p)u^*=\Q$, for some $u\in\sU(\Nn)$. Then Corollary \ref{CS} and  Theorem \ref{criterion} give the conclusion.\end{proof}

Before proceeding to our  computations of outer automorphisms of property (T) group factors we make some general remarks and establish some notation. Let $G$ be a countable group. The group $Aut(G)$ acts naturally on $Char(G)$ by letting $(\delta \cdot \rho)(g)= \rho(\delta^{-1}(g))$, for all $\rho\in Char (G)$, $\delta\in Aut(G)$, and $g\in G$. Since inner automorphisms act trivially, this action yields an action of $Out(G)$ on $Char(G)$.  Let $Char(G)\rtimes Aut(G)$ and $Char(G)\rtimes Out(G)$ be the corresponding semidirect products. Every pair $(\rho, \delta)\in Char(G)\times Aut(G)$  induces an automorphism $\Psi_{\rho,\delta}\in Aut({\rm L}(G))$  by letting $\Psi_{\rho,\delta}(u_g)= \rho(g)u_{\delta(g)}$ for all $g\in G$. The map  $$\Psi: {Char}(G)\rtimes {Aut} (G)\ni (\rho, \delta) \ra \Psi_{\rho,\delta}\in Aut({\rm L}(G))$$ is a group monomorphism. 
 Since $1\times Inn(G)$ is a normal subgroup of $Char(G)\rtimes Aut(G)$ and $\Psi(1\times Inn(G))=\{e\}$, we can define a homomorphism
 $$\overline{\Psi}\colon {Char}(G)\rtimes {Out} (G)  \ra  Out({\rm L}(G))$$  by letting
$\overline{\Psi}((\rho,\delta Inn (G)))= \Psi_{\rho,\delta}Inn ({\rm L}(G))$
for all $\rho\in Char(G)$ and $\delta\in Aut (G)$. 

With this notation, we have the following.

\begin{prop}\label{inj} If $G$ is ICC, then $\overline{\Psi}$ is injective.

\end{prop}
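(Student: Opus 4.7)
\medskip

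\noindent\textbf{Proof plan.} Suppose $\Psi_{\rho,\delta}$ is inner, implemented by a unitary $u\in{\rm L}(G)$. Writing $u=\sum_{h\in G}\lambda_h u_h$ in its Fourier expansion, the identity $uu_gu^{*}=\rho(g)u_{\delta(g)}$ can be unpacked coefficient by coefficient to the relation
\begin{equation}\label{Eq:star}
\lambda_{\delta(g)hg^{-1}}=\rho(g)\,\lambda_h\qquad\text{for all }g,h\in G. \tag{$\ast$}
\end{equation}
So the first step will be to expand $uu_gu^{*}$ in the Fourier basis and isolate \eqref{Eq:star}. This is where all the information from unitarity of $u$ and the explicit form of $\Psi_{\rho,\delta}$ gets used.

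Next, I would exploit $\ell^{2}$ summability together with \eqref{Eq:star}. Since $u\neq 0$, fix $h_0\in G$ with $\lambda_{h_0}\neq 0$. The identity \eqref{Eq:star} shows $|\lambda_h|$ is constant along the orbit of $h_0$ under the twisted conjugation action $g\cdot h=\delta(g)hg^{-1}$; since $\sum_h|\lambda_h|^2=1$, this orbit must be finite. Hence the stabilizer
\[
C:=\{g\in G\mid \delta(g)=h_0gh_0^{-1}\}
\]
has finite index in $G$, and plugging $h=h_0$ into \eqref{Eq:star} for $g\in C$ shows $\rho|_C\equiv 1$. Replacing $\delta$ by $\tilde\delta:={\rm Ad}(h_0^{-1})\circ\delta$ we may then assume that $\tilde\delta$ pointwise fixes a finite-index, normal subgroup $C_0\leq G$ (take the normal core of $C$).

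The heart of the argument is then to use the ICC hypothesis to upgrade ``$\tilde\delta$ fixes a finite-index normal subgroup'' to ``$\tilde\delta={\rm id}$''. For any $g\in G$ and $k\in C_0$, normality of $C_0$ gives $gkg^{-1}\in C_0$, and since $\tilde\delta$ fixes $C_0$ elementwise one computes
\[
gkg^{-1}=\tilde\delta(gkg^{-1})=\tilde\delta(g)\,k\,\tilde\delta(g)^{-1},
\]
so $g^{-1}\tilde\delta(g)\in C_G(C_0)$. Here I would use the standard ICC consequence: any $z\in C_G(C_0)$ satisfies $C_0\subseteq C_G(z)$, so its $G$-conjugacy class has size $[G:C_G(z)]<\infty$, forcing $z=1$. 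Hence $\tilde\delta={\rm id}$, i.e.\ $\delta={\rm Ad}(h_0)\in{\rm Inn}(G)$. But then $C=G$, so the earlier observation $\rho|_C\equiv 1$ yields $\rho\equiv 1$. Therefore $(\rho,\delta\,{\rm Inn}(G))$ is trivial in ${\rm Char}(G)\rtimes{\rm Out}(G)$, proving injectivity.

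The only real obstacle is the last step: without ICC, an automorphism can fix a finite-index subgroup without being inner, so some input of this kind is essential. Everything else is a routine Fourier computation combined with an $\ell^2$-orbit argument.
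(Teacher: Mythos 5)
Your proof is correct and follows essentially the same route as the paper's: Fourier-expand the implementing unitary, derive the coefficient relation $c_{\delta(g)hg^{-1}}=\rho(g)c_h$, use $\ell^2$-summability to get a finite twisted-conjugation orbit (hence a finite-index subgroup), and invoke ICC to kill the resulting element that centralizes it. The only cosmetic difference is in the endgame: the paper shows the Fourier support of $u$ is a singleton, so $u$ is a scalar multiple of some $u_h$, and reads off $\rho=1$, $\delta=\mathrm{Ad}(h)$, whereas you pass to the normal core of the stabilizer and show $\mathrm{Ad}(h_0^{-1})\circ\delta$ is the identity -- both steps rest on exactly the same ICC fact that an element commuting with a finite-index subgroup must be trivial.
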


\begin{proof} 
Let $\rho\in Char(G)$ and $\delta\in Aut(G)$ with $\overline{\Psi}(\rho,\delta Inn(G))=e$. Then $\Psi_{\rho,\delta}\in Inn(\text{L}(G))$, hence $\Psi_{\rho,\delta}=\text{Ad}(u)$, for a unitary $u\in\text{L}(G)$. Then $\rho(g)u_{\delta(g)}=uu_gu^*$ and thus $\rho(g)u_{\delta(g)}uu_g^*=u$, for every $g\in G$.
Let $u=\sum_{h\in G}c_hu_h$, where $c_h\in\mathbb C$, be the Fourier decomposition of $u$. Using the last identity and identifying Fourier coefficients, we get that
\begin{equation}
\rho(g)c_h=c_{\delta(g)hg^{-1}}, \;\;\; \text{for every $g,h\in G$.}
\end{equation}
Consequently, $|c_h|=|c_{\delta(g)hg^{-1}}|$, for every $g,h\in G$. Let $h\in G$ such that $c_h\not=0$. Since $\sum_{k\in G}|c_k|^2=\|u\|_2^2=1$, the last identity implies that the set $\{\delta(g)hg^{-1}\mid g\in G\}$ is finite. Hence, there is a finite index subgroup $G_h<G$ such that $\delta(g)hg^{-1}=h$, for every $g\in G_h$.

Let $S=\{h\in H\mid c_h\not=0\}$. If $h_1,h_2\in S$, then $\delta(g)=h_1gh_2^{-1}=h_2gh_2^{-1}$, for every $g\in G_{h_1}\cap G_{h_2}$. Hence, $h_2^{-1}h_1$ commutes with the finite index subgroup $G_{h_1}\cap G_{h_2}$ of $G$. Since $G$ is ICC, we get that $h_2^{-1}h_1=e$ and hence $h_2=h_1$. This shows that $S$ is a singleton, i.e. $S=\{h\}$, for some $h\in H$. Therefore, $u=c_hu_h$, where $|c_h|=1$.
Since $\rho(g)u_{\delta(g)}=uu_gu^*$, it follows that $\rho(g)=1$ and $\delta(g)=hgh^{-1}$, for every $g\in G$. This shows that $\rho=1$ and $\delta\in Inn(G)$, which proves that $\overline{\Psi}$ is injective.
\end{proof}

\begin{rem}\label{RemFin} We briefly explain why the ICC assumption on  $G$ is necessary in Proposition \ref{inj}. Specifically, we present examples of non-ICC groups $G$, for which even the homomorphism $\Phi: Out(G)\ra Out({\rm L}(G))$ obtained by restricting $\Psi$ to $Out(G)$ is not injective.

Let $K$ be a finite group together with a non-inner automorphism $\delta_0 \in Aut(K)$ that preserves each conjugacy class of $K$ setwise; the first such examples were found by Burnside, \cite{Bur}.  We claim that the  automorphism $\Phi_{\delta_0}\in Aut (\rm L(K))$ induced by $\delta_0$ is inner. 
Note that $\Phi_{\delta_0}$ fixes the center of $\mathbb C K$,  since this is generated by sums of elements belonging to conjugacy classes. On the other hand, since $K$ is finite, ${\rm L}(K)=\mathbb C K$. By Maschke theorem, the group algebra $\mathbb C K$ is semi-simple. Therefore, any automorphism of $\mathbb C K$ fixing its center is inner by the Skolem-Noether theorem, \cite{Fa08}. 
This shows that $\Phi_{\delta_0}$ is inner and so Proposition \ref{inj} fails for finite groups.

Moreover, let $G= H\times K$, where $H$ is an arbitrary group. Then $\delta=id_H \times \delta_0 \in Aut(G)$ is non-inner. 
However,  the  automorphism $\Phi_\delta\in Aut (\rm L(G))$ induced by $\delta$ is inner. 
Indeed, since ${\rm L}(G)\cong {\rm L}(H)\,\overline \otimes\, {\rm L}(K)$, we can identify $\Phi_\delta=Id\otimes \Phi_{\delta_0}$. 
Thus, Proposition \ref{inj} fails for $G$. In particular, letting $H$ be any nontrivial ICC group, this shows that Proposition \ref{inj} does not even hold for infinite virtually ICC groups. 
\end{rem}

\begin{proof}[Proof of Corollary \ref{Cor:JC}] Let $t\in \mathcal F({\rm L}(G))$ with $t\leq 1$. Thus one can find a projection $p\in {\rm L}(G)$ so that $p{\rm L} (G)p\cong {\rm L}(G)$. Using Theorem \ref{symmetries} for $G=H$ we get $p=1$ an hence $t=1$. This shows that $\mathcal F({\rm L}(G))=\{1\}$. 

Now, let $\theta: {\rm L}(G) \rightarrow {\rm L}(G)$ be a $\ast$-automorphism. Using Theorem \ref{symmetries} for $G=H$ one can find $\delta \in Aut(G)$, $\rho\in Char(G)$ and $v\in\sU({\rm L}(G))$ such that  $\theta= {\rm Ad}(v)\circ \Psi_{\rho,\delta}$. In particular, this shows that the homomorphism $\overline{\Psi}: Char(G)\rtimes Out(G)\ra Out({\rm L}(G))$
defined above in surjective. Since this map is injective by
 Proposition \ref{inj}, it is a group isomorphism   and thus $Char (G)\rtimes Out (G)\cong Out({\rm L}(G))$.         
\end{proof}

\begin{proof}[Proof of Corollary \ref{Out}] Let $Q$ be a countable group. By Theorem \ref{Thm:MC} there is a family $G_i\in \mathcal {WR}(A_i, B_i \ca I_i)$ consisting of $2^{\aleph_0}$ non-isomorphic, property (T) groups, where $A_i$ is abelian, $B_i$ is an ICC, non-parabolic subgroup of a relative hyperbolic group with residually finite peripheral subgroups, and $B_i\ca I_i$ has infinite orbits, for all $i$. Moreover, $G_i$ has no nontrivial characters and $Out(G_i)\cong Q$, for all $i$.

Assume that $\theta\colon \text{L}(G_i)\rightarrow\text{L}(G_j)^t$ is a $*$-isomorphism, for some $i,j\in I$ and $t>0$.  Since $G_i$ has no non-trivial characters, Theorem \ref{symmetries} implies that $t=1$ and there exist a group isomorphism $\delta\colon G_i\rightarrow G_j$,  and  a unitary $u\in\text{L}(G_j)$ such that $\theta(u_g)=uu_{\delta(g)}u^*$, for every $g\in G_i$.  In particular, $i=j$. Thus combining the above with Proposition \ref{inj} we have $Out(\text{L}(G_i))\cong Out(G_i)\cong Q$ and $\mathcal F(\text{L}(G_i))=\{1\}$, which finishes the proof.
\end{proof}

\addcontentsline{toc}{section}{References}

\end{document}